\documentclass[11pt]{amsart}
\usepackage{fullpage}

\usepackage{hyperref}
\usepackage{amsmath,amsfonts,amssymb,mathtools}
\usepackage{mathrsfs}
\usepackage{verbatim}
\usepackage{amsthm}

\newtheorem{theorem}{Theorem}[section]
 \newtheorem{corollary}[theorem]{Corollary}
 \newtheorem{lemma}[theorem]{Lemma}
 \newtheorem{proposition}[theorem]{Proposition}
 \theoremstyle{definition}
 \newtheorem{definition}[theorem]{Definition}
 \newtheorem{setting}[theorem]{Setting}
 \newtheorem{hypothesis}[theorem]{Hypothesis}
 \theoremstyle{remark}
 \newtheorem{remark}[theorem]{Remark}
  \newtheorem{ex}[theorem]{Example}
 \numberwithin{equation}{section}

\def \bC {\mathbb C}

\def \bH {\mathbb H}

\def \bN {\mathbb N}

\def \bP {\mathbb P}

\def \bR {\mathbb R}

\def \bV {\mathbb V}

\def \bZ {\mathbb Z}

\def \cA {\mathcal A}
\def \cB {\mathcal B}
\def \cC {\mathcal C}
\def \cD {\mathcal D}

\def \cF {\mathcal F}
\def \cG {\mathcal G}
\def \cH {\mathcal H}

\def \cK {\mathcal K}
\def \cL {\mathcal L}

\def \cO {\mathcal O}

\def \cR {\mathcal R}
\def \cS {\mathcal S}

\def \fg {\mathfrak g}

\def \fU {\mathfrak U}

\def \sL{\mathscr L}

\def \sF{\mathscr F}
\def \tr {{\rm Tr}}

\def \id {\text{\rm I}}
\def \Op {{\rm Op}}
\def \supp{{{\rm supp}}}
\def \Gh {{\widehat G}}
\def \eps {\varepsilon}
\def \vol {{\rm vol}}

\def \sp {{\rm sp}}
\def\dom {\text {\rm Dom}}
\def\IM  {\text {\rm Im}}
\def\RE  {\text {\rm Re}}
\def\diag  {\text {\rm diag}}

\begin{document}
\author[V. Fischer]{V\'eronique Fischer}
\address[V. Fischer]
{University of Bath, Department of Mathematical Sciences, Bath, BA2 7AY, UK} 
\email{v.c.m.fischer@bath.ac.uk}

\author[S. Mikkelsen]{S\o ren Mikkelsen}
\address[S. Mikkelsen]
{University of Helsinki, Department of Mathematics and Statistics, Finland} 
\email{soren.mikkelsen@helsinki.fi}

\title{Semiclassical functional calculus on nilpotent Lie groups and their compact nilmanifolds}

\subjclass[2010]{
43A85, 43A32, 22E30, 35H20,
35P20, 81Q10}

\keywords{Harmonic analysis on nilpotent Lie groups and nilmanifolds,  semiclassical analysis.}

\begin{abstract}
In this paper, we show that the semiclassical calculus recently developed on nilpotent Lie groups and nilmanifolds include the functional calculus of  suitable   subelliptic operators.
Moreover, we obtain Weyl laws for these operators. 
Amongst these operators are sub-Laplacians in horizontal divergence form perturbed with a potential and their generalisations. 
\end{abstract}

\maketitle

\makeatletter
\renewcommand\l@subsection{\@tocline{2}{0pt}{3pc}{5pc}{}}
\makeatother

\tableofcontents

\section{Introduction}

For more than a century, 
the global analysis of elliptic operators has  attracted interest from many branches of mathematics, especially in spectral analysis on manifolds and in theoretical physics. 
Hypoelliptic operators  have been intensively studied since the 60's and the seminal works of Lars H\"ormander on the subject (eg \cite{Hormander67})
and of Rothschild and Stein 
 on the analysis of sums of square of vector fields \cite{RotschildStein}.
The focus has mostly been on 
sub-Laplacians on  CR and contact manifolds or related settings \cite{Beals+Greiner}.
For these, 
the abundance  of works on their spectral properties from both the microlocal and semiclassical viewpoint in the last decade is remarkable \cite{CdVHT,Letrouit+Clo,LetrouitQL,Burq+Sun,LetrouitNodal,arnaiz2023quantumlimitsperturbedsubriemannian}, although some initial results had been known previously
 \cite{zelditch97,Ponge}.

A fundamental tool in spectral analysis is the functional calculus. The latter is described, at least abstractly, by the spectral theorem which  gives a meaning as a well defined (possibly unbounded) operator to $\phi(T)$ 
for  any Borel measurable function $\phi$ of a self-adjoint operator $T$.  In certain situations, it can be difficult to get information about e.g. the spectral properties or behaviour of the integral kernel of $\phi(T)$. For pseudo-differential operators on a Riemannian manifold the situation is much improved: under certain conditions on the smooth function $\phi$ and the pseudo-differential operator $T$, especially its ellipticity, it is well established that $\phi(T)$ is again a pseudo-differential operator. For the case when $\phi$ is a polynomial, this follows immediately from the symbolic properties of the pseudo-differential calculus. The first results for  more general functions $\phi$ was obtained by Seeley in \cite{MR0237943}, where it is established that complex powers of elliptic pseudo-differential operators are again a pseudo-differential operator. This was further generalised by many authors; here is a short list serving a small sample  \cite{MR0310713,MR0618463,MR0474431,MR0547022,MR0811855,MR0642653,MR1385196}.
The semiclassical functional calculus for pseudo-differential operators on $\bR^d$ was developed by Helffer and Robert in \cite{MR724029}, see also the monographs \cite{MR897108,dimassi+Sj}; this generalises to Riemannian manifolds \cite{zworski}.

The objective of this paper is to study the functional calculus of a large class of hypoelliptic operators. 
We will also include the extension to semiclassical pseudo-differential calculus to obtain interesting and new Weyl asymptotics. 
The class of hypoelliptic operators studied in this paper are defined on nilpotent Lie groups and their nilmanifolds. Indeed, in these settings, a pseudo-differential calculus has been defined 
and studied in \cite{R+F_monograph,FFPisa,bologna,FFFMilan,FFFGeomInv,fischer2022semiclassicalanalysiscompactnilmanifolds}.
The importance of these settings stems from  the pioneering work of Stein \cite{RotschildStein} and many of his collaborators in the 1970's and 80's: they developed the idea that the H\"ormander condition on vector fields leads to the analysis of convolution operators acting on nilpotent Lie groups. 
It has transpired since then that in many senses (for instance in the metric geometric sense \cite{MR1421822}), the setting of stratified nilpotent Lie groups of sub-Riemannian geometry corresponds to the flat Euclidean case of Riemannian geometry.   
The functional calculus of sub-Laplacians has therefore mainly concerned left-invariant sub-Laplacians on stratified nilpotent Lie groups.
Folland proved \cite{Folland75} that 
that their heat kernels are Schwartz functions,
allowing for many results in spectral multipliers in this operator, see e.g.  \cite{Christ91,MR0657581,MR1240169,MR3283616,MR3513881,MR3985515,MR1125759} and for generalisation on groups of polynomial growth \cite{MR1172944}. 
Hulanicki generalised \cite{hulanicki} this property not only to the more general class of (left-invariant)
  Rockland operators on graded Lie groups  but also to Schwartz function not necessarily the negative exponential; this has led to study  functional calculi in several commuting left-invariant operators on Lie groups, see eg \cite{MR2772351} and references therein.
 Some holomorphic functional calculus for Rockland operators  were considered in  \cite{cardona2021analyticfunctionalcalculusgrarding}.

 To the best of the authors' knowledge, the  functional calculus of non-left-invariant operators  on Lie groups has not been considered before and our results on semiclassical Weyl laws are  new.
 This is an important step towards the future development of 
  a functional calculus for sub-Laplacians on sub-Riemannian manifolds  and obtaining semiclassical Weyl asymptotics for subelliptic operators. 

\subsection*{Novelty and importance}
Following ideas from Michael Taylor \cite{Taylor86}, 
many groups have a well defined quantization attached to their  group Fourier transform and representation theory,  see e.g. \cite{LinoCLoMe}.
When the group is non-commutative, this leads to a notion of operator-valued symbols for Fourier multipliers.  
The symbol classes on nilpotent Lie groups and its resulting  pseudo-differential calculi on nilpotent Lie groups $G$  and their nilmanifolds $M$ 
have been defined and actively studied in the past ten years by the first author and her collaborators, see eg \cite{R+F_monograph,FFPisa,bologna,FFFMilan,FFFGeomInv,fischer2022semiclassicalanalysiscompactnilmanifolds}. 
In order to give a precise and self-contained presentation in this paper, we have included the definitions for these pseudo-differential calculi as well as many proofs of expected properties that may be known to experts; this explains its length.

The  novelty of  the paper lies in relating the  semiclassical pseudo-differential and functional calculi on $G$ or $M$. Indeed, we give natural conditions on the function $\phi$ and on a semiclassical pseudo-differential operator $T$ so that $\phi(T)$ makes sense functionally and is related to the  pseudo-differential calculus.
The description of the class of functions $\phi$ is simple and traditional: it is the space 
of smooth functions $\phi:\bR \to \bC$ growing at certain rate in the sense of Definition \ref{def_cGm}. 
The conditions on $T$ are imposed on its principal symbols $\sigma_0$ which needs to be non-negative and such that $\id+\sigma_0$ is invertible, see Section \ref{subsec_settingFC}.

The case of such operators $T$ that are also  left-invariant and  obtained as the quantization of their principal symbol 
are  the Rockland operators, and our results then boils down to the ones obtained in  \cite[Section 5.3]{R+F_monograph} and recalled in Theorem \ref{thm_phi(R)} in this paper.
The main example of these is the class of invariant sub-Laplacians on stratified nilpotent Lie groups. 
It shows for instance that the functional calculus of the canonical sub-Laplacian $\cL_{\bH_n}$ on the Heisenberg group $\bH_n$
is in the nilpotent calculus (i.e. the 
pseudo-differential calculus developed in \cite[Section 5]{R+F_monograph}), 
whereas it is known that it is not in the classical H\"ormander calculus.
For instance, the square root of $\cL_{\bH_1}$ 
is an important operator for the study of the wave equations for $\cL_{\bH_1}$, but it 
is not in the  classical H\"ormander calculus of the underlying manifold $\bR^3$  of $\bH_1$
(see eg \cite[p.57]{LetrouitQL}).
However,  $\sqrt{\id + \cL_{\bH_1}}$ is in the nilpotent calculus \cite[Section 5]{R+F_monograph}, and therefore the square root of $\cL_{\bH_1}$ is in the polyhomogeneous nilpotent calculus \cite{FFPisa}. 
This paper produces more examples of subelliptic operators whose functional calculus escapes the classical H\"ormander calculus but lies in the nilpotent calculus. This paves the way for further investigations in less `flat' geometric sub-Riemannian settings. 

The proof of our main functional theorem (Theorem \ref{thm_sclFCG}) 
follows the now traditional lines of reasoning for semiclassical functional calculus in the Euclidean (abelian) setting: it
relies on constructing parametrices for the resolvent of the operator $T$ together with the Helffer--Sj\"ostrand formula. 
All these arguments are done within  the nilpotent calculus. In particular, 
as the symbols are operator valued, we first develop the functional calculus of the principal symbol.
As applications,  we obtain semiclassical Weyl asymptotics (Theorem \ref{Thm:semiclasical_weyl_law}).

\subsection*{An application}
In order to motivate the paper, let us describe  Weyl asymptotics for a particular class of operators that we call sub-Laplacians in divergence form  perturbed with a confining potential
(here, $
C^\infty_{l,b}(G)$  denotes the space of smooth bounded functions with bounded left-derivatives on $G$):

\begin{theorem}
Let $G$ be a stratified nilpotent Lie groups. 
We fix a basis $X_1,\ldots,X_{n_1}$ of the first stratum of its 	Lie algebra $\fg$.
	Let $a_{i,j}\in C_{l,b}^\infty(G)$, $1\leq i,j \leq n_1$, be such that at every point $x\in G$, the resulting matrix $A(x)=(a_{i,j}(x))$ is non-negative.
	Identifying the elements of $\fg$ with left-invariant vector fields,  
	we consider	the sub-Laplacian in divergence form:
	$$
	\cL_A:=-\sum_{1\leq i,j\leq n_1} X_i (a_{i,j}(x) X_j)
	= -\sum_{1\leq i,j\leq n_1} a_{i,j}(x) X_iX_j + (X_i a_{i,j}(x)) X_j.
	$$
	
We assume that  the  minimum and maximal eigenvalues $\lambda_{A(x),min}$ and 	$ \lambda_{A(x),\max}$ of $A(x)$ are positive and satisfy 
  the following ellipticity condition 
$$
\inf_{x\in G} \lambda_{A(x),min}>0
\qquad\mbox{and}\qquad  
\sup_{x\in G} \lambda_{A(x),max}<\infty.
$$	
Then $\cL_A$ is hypoelliptic. 
Moreover, for any  function $V\in C_{l,b}^\infty(G)$, the operator 
$\cL_A+V$ is hypoelliptic.
If  $V$ is non-negative, and if $a<b$ and $\delta>0$ are such that $V^{-1}((a-\delta,b+\delta))$ is compact, then the operator
$\eps^2 \cL_A + V$ for any $\eps\in (0,1]$ is essentially self-adjoint, and the part of its spectrum in the interval $[a,b]$ is discrete. Furthermore, this semiclassical family satisfies  the Weyl asymptotics:
$$
\lim_{\eps \to 0}\eps^{Q} \tr[1_{[a,b]}(\eps^2 \cL_A + V)] = \int_{G\times \Gh} \tr \left(1_{[a,b]}( \sigma_0 (x,\pi) ) \right)dx d\mu(\pi),
$$
where $Q$ is the homogeneous dimension, 
$\mu$ is the Plancherel measure on the unitary dual $\Gh$ of  $G$,
and $\sigma_0$ is the symbol given by:
$$
\sigma_0 (x,\pi)= \sum_{1\leq i,j\leq n_1} a_{i,j}(x)\, \pi( X_i)   \pi( X_j) \  + \ V(x).
$$
We have a similar result on the nilmanifold $M$. 
\end{theorem}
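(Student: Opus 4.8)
The plan is to derive this from the paper's two main results --- the semiclassical functional calculus of Theorem~\ref{thm_sclFCG} and the semiclassical Weyl law of Theorem~\ref{Thm:semiclasical_weyl_law} --- once $\cL_A$ and $\eps^2\cL_A+V$ have been recognised as operators in the (semiclassical) nilpotent calculus satisfying the hypotheses of Section~\ref{subsec_settingFC}. First I would expand $\cL_A=-\sum_{i,j}a_{i,j}(x)X_iX_j+\sum_{i,j}(X_ia_{i,j})(x)X_j$ and recall that a left-invariant vector field $X$ has operator-valued symbol $\pi\mapsto\pi(X)$; the hypothesis $a_{i,j}\in C^\infty_{l,b}(G)$ --- bounded with all left derivatives --- is precisely what makes $\cL_A$ a pseudo-differential operator of homogeneous order $2$ in the nilpotent calculus with principal symbol $\sum_{i,j}a_{i,j}(x)\pi(X_i)\pi(X_j)$, the divergence terms $\sum_{i,j}(X_ia_{i,j})(x)X_j$ being of order $1$ and hence lower order. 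Multiplying by $\eps^2$ and adding the bounded function $V$ then realises $\eps^2\cL_A+V$ as a semiclassical operator of semiclassical order $0$, with principal symbol $\sigma_0(x,\pi)=\sum_{i,j}a_{i,j}(x)\pi(X_i)\pi(X_j)+V(x)$ as stated and remainder of size $O(\eps)$ in the semiclassical symbol classes.

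Next I would verify the hypotheses on $\sigma_0$ required by Section~\ref{subsec_settingFC}. Non-negativity follows from $A(x)\ge0$ and $V\ge0$: factoring $A(x)=A(x)^{1/2}A(x)^{1/2}$ realises the sub-Laplacian part of $\sigma_0(x,\pi)$ as a non-negative self-adjoint operator on $\cH_\pi$ (built from $A(x)^{1/2}$ and the $\pi(X_i)$), and $V(x)\,\id\ge0$ handles the rest. For invertibility of $\id+\sigma_0$, the lower ellipticity bound $c:=\inf_x\lambda_{A(x),min}>0$ gives the operator inequality in which the sub-Laplacian part of $\sigma_0(x,\pi)$ dominates $c\,\pi(\cL)$, where $\cL=-\sum_iX_i^2$ is the canonical positive Rockland sub-Laplacian; hence $\id+\sigma_0(x,\pi)\ge\id+c\,\pi(\cL)$ is invertible with norm-bounded inverse uniformly in $x\in G$ and $\pi\in\Gh$, while the upper bound $\sup_x\lambda_{A(x),max}<\infty$ and $V\in C^\infty_{l,b}(G)$ control $\sigma_0$ from above so that the symbolic resolvent estimates of Section~\ref{subsec_settingFC} hold. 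These two facts also give hypoellipticity: the ellipticity of the principal symbol of $\cL_A$ --- unchanged by adding the bounded $V$ --- produces a left parametrix modulo smoothing in the non-semiclassical calculus, so $\cL_A$ and $\cL_A+V$ are hypoelliptic. With the setting of Section~\ref{subsec_settingFC} verified, Theorem~\ref{thm_sclFCG} gives that $\eps^2\cL_A+V$ is essentially self-adjoint on $C_c^\infty(G)$ and that $\phi(\eps^2\cL_A+V)$ lies in the semiclassical nilpotent calculus with principal symbol $\phi\circ\sigma_0$ for $\phi$ in the class of Definition~\ref{def_cGm}.

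Then I would use the confinement and conclude. Since $V\ge0$ and $\eps^2\cL_A\ge0$, the compactness of $V^{-1}((a-\delta,b+\delta))$ forces $\sigma_0(x,\pi)-\lambda$, and likewise $\eps^2\cL_A+V-\lambda$, to be invertible off a fixed compact set in $x$ for every $\lambda\in[a,b]$; this localisation makes $1_{[a,b]}(\eps^2\cL_A+V)$ a smoothing, trace-class operator, so the spectrum of $\eps^2\cL_A+V$ in $[a,b]$ is discrete, and it places us in the scope of Theorem~\ref{Thm:semiclasical_weyl_law}, which yields
\[
\lim_{\eps\to0}\eps^{Q}\tr\bigl[1_{[a,b]}(\eps^2\cL_A+V)\bigr]=\int_{G\times\Gh}\tr\bigl(1_{[a,b]}(\sigma_0(x,\pi))\bigr)\,dx\,d\mu(\pi),
\]
where the right-hand side is finite because the integrand vanishes wherever $V>b$. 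The nilmanifold statement is obtained the same way, replacing the calculus on $G$ by the one on $M$ from \cite{fischer2022semiclassicalanalysiscompactnilmanifolds}; there compactness of $M$ gives discreteness of the full spectrum directly.

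The hard part is the second step: passing from the \emph{scalar} uniform ellipticity of the matrices $A(x)$ and the mere boundedness of the $a_{i,j}$ and of $V$ to \emph{operator-valued} symbol estimates --- on $\sigma_0$, on $\id+\sigma_0$, and on its parametrix --- that are uniform simultaneously in $x\in G$ and $\pi\in\Gh$ and that literally match the regularity and ellipticity demands of Section~\ref{subsec_settingFC}. One must also check that the first-order divergence terms and the $O(\eps)$ remainder are genuinely subordinate in the semiclassical symbol classes, and that the endpoints $a,b$ satisfy whatever non-resonance condition Theorem~\ref{Thm:semiclasical_weyl_law} requires for the limit to hold; everything else is then a direct appeal to Theorems~\ref{thm_sclFCG} and~\ref{Thm:semiclasical_weyl_law}.
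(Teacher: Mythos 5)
Your overall architecture is the same as the paper's: the theorem is obtained there by feeding $\cL_A$ and $\eps^2\cL_A+V$ into the general framework of Setting~\ref{set_FC} (hypoellipticity via Lemma~\ref{lem_LApar} and Theorem~\ref{thm_Lparexists}, essential self-adjointness via Corollary~\ref{cor_Lparexists}, Weyl asymptotics via Corollary~\ref{cor1_Thm:semiclasical_weyl_law} applied to Theorem~\ref{Thm:semiclasical_weyl_law}). However, the step you yourself flag as "the hard part" is carried out in the paper by a mechanism different from, and stronger than, the one you sketch, and your sketch would not suffice. Hypothesis~\ref{hyp_FC} (equivalently, invertibility in the sense of Definition~\ref{def_symbinv}) demands the estimate \eqref{eq_hyp_FCG1} for \emph{every} $\gamma\in\bR$, i.e.\ invertibility of $\id+\sigma_0(x,\pi)$ on the whole Sobolev scale of $\pi(\cR)$, uniformly in $(x,\pi)$. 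The quadratic-form domination $\id+\sigma_0(x,\pi)\ge \id+c\,\pi(\cL_\id)$ that you propose only yields a norm-bounded inverse on $\cH_\pi$ (essentially the case of a single $\gamma$), not the family of weighted estimates. The paper's route is to freeze the coefficients at each $x$, observe that $-\sum_{i,j}a_{i,j}(x)X_iX_j$ is then itself a positive Rockland operator (Example~\ref{ex_Rockland}~(2), Proposition~\ref{prop_exRgeneral}), and invoke the comparison of Rockland operators, Lemma~\ref{lem_R1R2bdd} together with Corollary~\ref{cor_exRgeneral}, which bounds $\|(\id+\cL_\id)^{(\gamma+2)/2}(\id-\sum a_{i,j}(x)X_iX_j)^{-1}(\id+\cL_\id)^{-\gamma/2}\|_{\sL(L^2(G))}$ for every $\gamma$ by a constant depending only on $\gamma$ and on $\lambda_{A(x),\min},\lambda_{A(x),\max}$ --- hence uniformly in $x$ by the ellipticity hypothesis; the identification \eqref{eq_LinftyGh_L2bdd} then transfers this to the symbol estimate for all $\pi$. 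Without this (or an equivalent argument) the functional calculus of Theorem~\ref{thm_sclFCG} and the parametrix construction are not available.

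A second, smaller gap is the finiteness of the limit integral, which is exactly Hypothesis~\ref{hyp_weylL} and is a standing assumption of Theorem~\ref{Thm:semiclasical_weyl_law}. Your observation that the integrand vanishes where $V>b$ only localises the $dx$-integration to the compact set $V^{-1}([0,b+\delta))$; one must still prove that $\int_{\Gh}\tr\,1_{[0,b+\delta)}(\sigma_0(x,\pi))\,d\mu(\pi)$ is finite. The paper does this by dominating $1_{(a-\delta_0,b+\delta_0)}(\tau_0(x,\pi))\le 1_{[0,b+\delta)}(V(x))\,|\psi|^2(C\,\pi(\cL_\id))$ for a cut-off $\psi\in C_c^\infty(\bR)$ and using $\int_{\Gh}\tr|\psi|^2(\widehat\cL_\id)\,d\mu=\|\psi(\cL_\id)\delta_0\|_{L^2(G)}^2<\infty$, which rests on Hulanicki's theorem and the Plancherel identity of Theorem~\ref{thm_christ}. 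Finally, note that Theorem~\ref{Thm:semiclasical_weyl_law} imposes no non-resonance condition on the endpoints $a,b$ beyond Hypothesis~\ref{hyp_weylL}, so the hedge in your last paragraph is unnecessary; and the operator $\eps^2\cL_A+V$ has symbolic order $m=2$ (as required by Hypothesis~\ref{hyp_m>0}), not order $0$.
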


The hypoellipticity is obtained from Lemma \ref{lem_LApar} and the Weyl asymptotics from Corollary \ref{cor1_Thm:semiclasical_weyl_law}.
Note that no hypothesis on the multiplicity of the eigenvalues of $A(x)$ are assumed, so $\cL_A$ is not necessarily a sum of squares of vector fields, and its hypoellipticity is not a direct consequence of H\"ormander's theorem \cite{Hormander67}.

\subsection*{Organisation of the paper}
In order to make the paper self-contained, we have included preliminaries on nilpotent Lie groups $G$  and their nilmanifolds $M$ in Section 
 \ref{sec_prelM}, 
 as well as some generalities 
  in the graded case in Section \ref{sec:Graded_Lie_group}.
  We also recall many definitions and mostly known properties regarding the pseudo-differential calculi on 
$G$ and $M$  in Sections~\ref{Sec:calculi_symbols} and~\ref{sec:semiclassical_calculus_G_M}.
The novel results of this paper are 
the semiclassical functional calculi and its applications to Weyl asymptotics
in 
Sections~\ref{sec_FCG}
and~\ref{sec_WeylR} respectively. 
In appendix to the paper, we give the proof of  semiclassical composition and adjointness in Section \ref{secA_pfthm_sclexp_prod+adj} as well as some properties of almost analytical extensions in Section \ref{Appendix_almost_analytic}.

\subsection*{Acknowledgement}
The authors are  grateful to the Leverhulme Trust for their support via Research Project Grant 2020-037.
We also thank Lino Benedetto for interesting discussions and references. 

\section{Preliminaries on nilpotent Lie groups and   nilmanifolds} 
\label{sec_prelM}

In this section, we set our notation for nilpotent Lie groups and nilmanifolds.
We also recall some elements of harmonic analysis in this setting. 

\subsection{About nilpotent Lie groups and nilmanifolds}
\label{subsec_aboutGnilp}
In this paper, a nilpotent Lie group $G$ is always assumed  connected and simply connected unless otherwise stated. 
It is a smooth manifold which is identified with $\bR^n$ via the exponential mapping and
polynomial coordinate systems. 
This leads to a corresponding Lebesgue measure on its Lie algebra $\fg$ and the Haar measure $dx$ on the group $G$,
hence $L^p(G)\cong L^p(\bR^n)$.
This also allows us \cite[p.16]{corwingreenleaf}
to define the spaces 
$$
\cD(G)\cong \cD(\bR^n)
\quad \mbox{and}\quad  
\cS(G) \cong \cS(\bR^n)
$$
 of test functions which are smooth and compactly supported or Schwartz, 
and the corresponding spaces of distributions 
$$
\cD'(G)\cong \cD'(\bR^n)
\quad \mbox{and}\quad 
\cS'(G)\cong \cS'(\bR^n).
$$
Note that this identification with $\bR^n$ does not usually extend to the convolution: the group convolution, i.e. the operation between  two functions on $G$ defined formally via 
$$
 (f_1*f_2)(x):=\int_G f_1(y) f_2(y^{-1}x) dy,
$$
 is   not commutative in general whereas it is a commutative operation for functions on  the abelian group $\bR^n$.

\subsubsection{Compact nilmanifolds}
A compact nilmanifold is the quotient $M=\Gamma\backslash G$ of 
a  nilpotent Lie group $G$  by a discrete co-compact subgroup $\Gamma$ of $G$.
A concrete example of discrete co-compact subgroup is the natural discrete subgroup of the Heisenberg group, as described in 
\cite[Example 5.4.1]{corwingreenleaf}. 
Abstract characterisations are discussed in  \cite[Section 5.1]{corwingreenleaf}.

An element of $M$ is a class 
$$
\dot x := \Gamma x
$$
 of an element $x$ in $G$. If the context allows it, we may identify this class with its representative $x$. 

The quotient $M$ is naturally equipped with the structure of a compact smooth manifold. 
Furthermore, fixing a Haar measure on the unimodular group $G$, 
$M$ inherits a measure $d\dot x$ which is invariant under the translations  given by
$$
\begin{array}{rcl}
M & \longrightarrow & M\\
\dot x & \longmapsto & \dot x g = \Gamma xg
\end{array}, \quad g\in G.
$$
Recall that the Haar measure $dx$ on $G$ is unique up to a constant and, once it is fixed, $d\dot x$ is the only $G$-invariant measure on $M$ satisfying 
for any  function $f:G\to \mathbb C$, for instance continuous with compact support,
\begin{equation}
\label{eq_dxddotx}
	\int_G f(x) dx = \int_M \sum_{\gamma\in \Gamma} f(\gamma x) \ d\dot x.
\end{equation}
We denote by $\vol (M) = \int_M 1 d\dot x$  the volume of $M$.

\subsubsection{Fonctions on $G$ and $M$}
\label{subsubsec_periodicfcn}

Let $\Gamma$ be a discrete co-compact subgroup of a nilpotent Lie group $G$. 

We say that a function $f:G\rightarrow \mathbb C$  is  $\Gamma$-left-periodic or just  $\Gamma$-periodic  when we have 
$$
\forall x\in G,\;\;\forall \gamma\in \Gamma ,\;\; f(\gamma x)=f(x).
$$
This definition extends readily to measurable functions and to distributions.  

There is a natural one-to-one correspondence between the functions on $G$ which are $\Gamma$-periodic and the functions on $M$.
Indeed, for any map $F$ on $M$, 
the corresponding periodic function on $G$ is $F_G$ defined via
$$	 
F_G(x) := F(\dot x), \quad x\in G,
$$
while if $f$ is a $\Gamma$-periodic function on $G$, 
it defines a function $f_M$ on $M$ via
$$
f_M(\dot x) =f(x), \qquad x\in G.
$$
Naturally, $(F_G)_M=F$ and $(f_M)_G=f$.

We also  define, at least formally, the periodisation $\phi^\Gamma$ of a function $\phi(x)$ of the variable $x\in G$ by:
$$
\phi^\Gamma(x) = \sum_{\gamma \in \Gamma } \phi(\gamma x), \qquad x\in G.
$$

If $E$ is a space of functions or of distributions on $G$, then we denote by $E^\Gamma$ the space of elements in $E$ which are  $\Gamma$-periodic. 
Although 
$\cD(G)^\Gamma = \{0\} = \cS(G)^\Gamma,$
many other periodised functions or functional spaces have interesting descriptions on $M$ \cite{nilmanifold}:
\begin{proposition}
\label{prop_nilmanifold}
\begin{enumerate}
\item
 The periodisation of a Schwartz  function $\phi\in \cS(G)$ is a well-defined function  $\phi^\Gamma$ in $C^\infty(G)^\Gamma$.
Furthermore, the map $\phi \mapsto \phi^\Gamma$ yields a surjective morphism of topological vector spaces from
$\cS(G)$ onto $C^\infty(G)^\Gamma$
and from
$\cD(G)$ onto $C^\infty(G)^\Gamma$.
\item  For every $F\in \cD'(M)$, the tempered distribution  $F_G\in 
\cS'(G)$ is defined by
$$
\forall \phi\in \cS(G),\qquad 
\langle F_G,\phi\rangle  = \langle F , (\phi^\Gamma)_M\rangle.
$$
The map $F\mapsto F_G$ yields an isomorphism of topological vector spaces
from $\cD'(M)$ onto $\cS'(G)^\Gamma=\cD'(G)^\Gamma$.
\item 
For every $p\in [1,\infty]$,
the map $F\mapsto F_G$ is an isomorphism of the topological vector spaces (in fact Banach spaces) from $L^p(M)$ onto $L^p_{loc}(G)^\Gamma$ with inverse $f\mapsto f_M$.
\item 
Let $f\in \cS'(G)^\Gamma$ and $\kappa\in \cS(G)$.
Then $(\dot x, \dot y)  \mapsto \sum_{\gamma\in \Gamma} \kappa (y^{-1} \gamma x)$
is a smooth function on $M\times M$ and  $f*\kappa \in C^\infty(G)^\Gamma$.
Viewed as a function on $M$, 
$$
(f*\kappa)_M(\dot x) = \int_M f_M(\dot y) \ (\kappa (\cdot ^{-1} x)^\Gamma)_M (\dot y) d\dot y
= \int_M f_M(\dot y) \sum_{\gamma\in \Gamma} \kappa (y^{-1} \gamma x) \  d\dot y.
$$
\end{enumerate}
 \end{proposition}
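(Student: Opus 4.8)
The plan is to verify Proposition~\ref{prop_nilmanifold} point by point, treating each statement as essentially a careful application of absolute convergence and the integration formula~\eqref{eq_dxddotx}. For part (1), I would first establish that $\phi^\Gamma$ converges absolutely and locally uniformly for $\phi\in\cS(G)$: since $\Gamma$ is a lattice in $G\cong\bR^n$, the number of $\gamma\in\Gamma$ with $|\gamma x|\le R$ grows polynomially in $R$, so the Schwartz decay $|\phi(y)|\lesssim_N (1+|y|)^{-N}$ with $N>n$ gives a convergent majorant on compact sets, uniformly after differentiating (left-invariant derivatives of $\phi$ are again Schwartz). Hence $\phi^\Gamma\in C^\infty(G)$, and it is $\Gamma$-periodic by reindexing the sum. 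Surjectivity onto $C^\infty(G)^\Gamma\cong C^\infty(M)$ is the substantive point: given $F\in C^\infty(M)$, pick a cut-off $\chi\in\cD(G)$ with $\chi^\Gamma\equiv 1$ (a smooth partition of unity subordinate to a fundamental domain), and set $\phi=\chi\cdot F_G$; then $\phi\in\cD(G)$ and $\phi^\Gamma=F_G(\chi^\Gamma)=F_G$. Continuity of $\phi\mapsto\phi^\Gamma$ and of the section (via the fixed $\chi$) follows from the seminorm estimates, and one invokes the open mapping theorem for Fr\'echet spaces to conclude the map is a topological quotient.

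For part (2), I would note that $\phi\mapsto(\phi^\Gamma)_M$ is continuous from $\cS(G)$ to $C^\infty(M)$ by part~(1), so the pairing $\langle F_G,\phi\rangle:=\langle F,(\phi^\Gamma)_M\rangle$ defines a tempered distribution; $\Gamma$-periodicity of $F_G$ follows because $(L_\gamma\phi)^\Gamma=\phi^\Gamma$. Injectivity of $F\mapsto F_G$ is clear from surjectivity in (1) (if $F_G=0$ then $F$ annihilates all of $C^\infty(M)$), and surjectivity onto $\cS'(G)^\Gamma$ comes from checking that any periodic tempered distribution $f$ can be tested against $(\phi^\Gamma)_M$ consistently, using the section of part~(1) to see this is well-defined. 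The identification $\cS'(G)^\Gamma=\cD'(G)^\Gamma$ uses that a periodic distribution is determined by its values on a fundamental domain and that periodic test functions fill $C^\infty(M)=C^\infty_c(M)$ by compactness. Part (3) is the most elementary: for $f\in L^p(M)$, the function $f_G$ is locally $L^p$ and periodic, and conversely~\eqref{eq_dxddotx} shows $\|f_G\|_{L^p(\text{fund.\ dom.})}=\|f\|_{L^p(M)}$, giving the isometric isomorphism with $L^p_{loc}(G)^\Gamma$ (with the natural Fr\'echet topology from exhausting compacts, all of which contain a translate of a fundamental domain).

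Part (4) is where I expect the main work. The convolution $f*\kappa$ is a priori only a distribution, so I would first make sense of it: write, for $\phi\in\cD(G)$, $\langle f*\kappa,\phi\rangle=\langle f,\tilde\kappa*\phi\rangle$ with $\tilde\kappa(x)=\bar{\kappa(x^{-1})}$ appropriately, and show this equals an integral against a smooth periodic function. The cleanest route is to observe that the kernel $K(x,y):=\sum_{\gamma\in\Gamma}\kappa(y^{-1}\gamma x)$ converges absolutely and locally uniformly (same polynomial-growth-of-the-lattice versus Schwartz-decay argument as in part~(1), now in the variable $y^{-1}\gamma x$), and is smooth in $(x,y)$; it is left-$\Gamma$-invariant in $x$ and also $\Gamma$-invariant in $y$ in the sense that descends to $M\times M$. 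Then for $f\in\cS'(G)^\Gamma$ one defines $g(x):=\langle f_M, K(x,\cdot)_M\rangle$ and checks $g\in C^\infty(G)^\Gamma$ (differentiating under the duality pairing, using smoothness and the locally uniform convergence of the $x$-derivatives of $K$), and that $g=f*\kappa$ in $\cD'(G)$. This last identification is a Fubini-type manipulation: unwinding the definition of $\langle f*\kappa,\phi\rangle$, periodising $\phi$, and using~\eqref{eq_dxddotx} to fold the $G$-integral down to $M$; the absolute convergence established at the start of the argument is exactly what licenses the interchange of sum and integral. The expected obstacle throughout is bookkeeping with the lattice: one must uniformly control $\#\{\gamma\in\Gamma:|\gamma x|\le R\}$ locally in $x$, which follows because left translation by a compact set moves a fundamental domain into finitely many translates of itself, but it is the recurring technical point that makes every absolute-convergence claim go through.
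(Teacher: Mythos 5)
The paper itself offers no proof of Proposition~\ref{prop_nilmanifold}; it is stated as a known result with a citation to the reference on nilmanifolds, so there is no internal argument to compare against. Assessed on its own terms, your plan is sound and follows the standard route: periodisation via absolute convergence (polynomial lattice growth against Schwartz decay), unfolding via a cut-off $\chi\in\cD(G)$ with $\chi^\Gamma\equiv 1$, and the periodised kernel $K(x,y)=\sum_\gamma\kappa(y^{-1}\gamma x)$ for part (4). Two points deserve a small correction or more care. First, in your initial setup of $f*\kappa$ as a distribution, the transpose should be $\langle f*\kappa,\phi\rangle=\langle f,\phi*\tilde\kappa\rangle$ with $\tilde\kappa(x)=\kappa(x^{-1})$ --- no complex conjugation, and the convolution on the \emph{right} by $\tilde\kappa$, since the group is non-abelian; you hedged this with ``appropriately'' and then abandoned it for the kernel computation, which is the correct one, so this is not a structural flaw. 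Second, the claim ``$\Gamma$ is a lattice in $G\cong\bR^n$, so $\#\{\gamma:|\gamma x|\le R\}$ grows polynomially'' cannot be read off merely from $\Gamma$ being discrete co-compact in the set $\bR^n$: the group multiplication is a polynomial (not affine) map in exponential coordinates, so one must first observe that $\{\gamma : |\gamma x|\le R\}$ forces $\gamma$ into a Euclidean ball of radius $\lesssim (R+|x|)^{C}$ for some $C$ depending on the nilpotency step, and then use disjointness of the translates $\gamma F$ of a fundamental domain together with Haar-equals-Lebesgue to get the count. Once that refinement is spelled out, the rest of your argument (the section $\phi=\chi F_G$, the unfolding identity $\langle f,\chi\phi^\Gamma\rangle=\langle f,\phi\rangle$ for periodic $f$, the double $\Gamma$-invariance of $K$, and the identification $(\kappa(\cdot^{-1}x))^\Gamma(y)=K(x,y)$ after reindexing $\gamma\mapsto\gamma^{-1}$) goes through.
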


\subsubsection{Operators on $G$ and $M$}
\label{subsubsec_opGM}
A mapping $T:\cS'(G)\to \cS'(G)$
or $\cD'(G) \to \cD'(G)$ is (left-)invariant under an element $g\in G$ when  
$$
\forall f\in \cS'(G) \ (\mbox{resp.} \, \cD'(G)), \qquad T(f(g \, \cdot)) = (Tf)(g \, \cdot).
$$ 
It is invariant under a subset of $G$ if it is invariant under every element of the subset.

\begin{ex}
	Many operators considered in this paper will be (right) convolution operators $T$ on $G$,
by which we mean  operators of the form $Tf=f*\kappa$ for any $f\in \cS(G)$ with $\kappa\in \cS'(G)$.
The distribution $\kappa$  is called the convolution kernel of $T$ and may be denoted by 
$$
\kappa :=T\delta_0.
$$ 
By the Schwartz kernel theorem, a continuous operator $T:\cS(G)\to \cS'(G)$ that is invariant under (left-)translation under $G$ in the above sense is a convolution operator. 
\end{ex}

Consider a linear continuous mapping $T:\cS'(G)\to \cS'(G)$
or $\cD'(G) \to \cD'(G)$ respectively
which is invariant under $\Gamma$. Then it  naturally induces
a linear continuous mapping
 $T_M$ on $M$ given via
$$
T_M F = (TF_G)_M, \qquad F\in \cD'(M).
$$
Consequently, 
if  $T$ coincides with   a smooth differential operator 	on $G$ that is invariant under $\Gamma$, then $T_M$ is a smooth differential operator on $M$.
For convolution operators $T$, 
the results in  Proposition \ref{prop_nilmanifold} yield:
\begin{lemma}
\label{lem_IntKernel_general}
Let $\kappa\in \cS(G)$ be a given convolution kernel, and let us denote by $T$ the associated convolution operator:
$$
T (\phi) = \phi*\kappa, \qquad \phi\in \cS'(G).
$$
The operator $T$ is a linear continuous mapping $\cS'(G)\to \cS'(G)$. 
The corresponding operator $T_M$ maps  $\cD'(M)$ to $\cD'(M)$ continuously and linearly. Its  integral kernel is the smooth function $K$ on $M\times M$ given by
$$
K(\dot x,\dot y) = 
\sum_{\gamma\in \Gamma}  \kappa(y^{-1}\gamma  x).
$$
Consequently, the operator $T_M$ is Hilbert-Schmidt on $L^2(M)$ with Hilbert-Schmidt norm 
$$
\|T_M\|_{HS} = \|K\|_{L^2(M\times M)}.
$$
\end{lemma}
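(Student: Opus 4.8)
The plan is to derive all four assertions from Proposition~\ref{prop_nilmanifold}, and specifically its part~(4), once we have checked that $T$ is a well-defined continuous $\Gamma$-invariant operator on $\cS'(G)$. First I would recall that for $\kappa\in\cS(G)$ the right convolution $\phi\mapsto\phi*\kappa$ extends from $\cS(G)$ to $\cS'(G)$ by duality: writing $\tilde\kappa(x):=\kappa(x^{-1})$, one has $\langle\phi*\kappa,\psi\rangle=\langle\phi,\psi*\tilde\kappa\rangle$ for $\phi,\psi\in\cS(G)$, and since $\cS(G)$ is a convolution algebra (so that $\psi\mapsto\psi*\tilde\kappa$ is continuous on $\cS(G)$), the formula $\langle Tf,\psi\rangle:=\langle f,\psi*\tilde\kappa\rangle$ defines a continuous linear map $T\colon\cS'(G)\to\cS'(G)$. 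This operator is invariant under all of $G$ (because $(f(g\,\cdot))*\kappa=(f*\kappa)(g\,\cdot)$), in particular under $\Gamma$.

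Consequently, by the construction recalled just before the statement, $T$ induces a continuous linear operator $T_M\colon\cD'(M)\to\cD'(M)$, $T_MF=(TF_G)_M$. This is well posed: for $F\in\cD'(M)$ we have $F_G\in\cS'(G)^\Gamma$ by Proposition~\ref{prop_nilmanifold}(2), hence $TF_G=F_G*\kappa\in C^\infty(G)^\Gamma$ by Proposition~\ref{prop_nilmanifold}(4), so that $(TF_G)_M$ is a genuine element of $C^\infty(M)\subset\cD'(M)$; continuity of $T_M$ then follows by composing the continuous maps $F\mapsto F_G$, $T$, and $g\mapsto g_M$ of Proposition~\ref{prop_nilmanifold}(2).

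For the integral kernel I would simply apply Proposition~\ref{prop_nilmanifold}(4) with $f=F_G$, so that $f_M=F$. It gives at once that $K(\dot x,\dot y):=\sum_{\gamma\in\Gamma}\kappa(y^{-1}\gamma x)$ is smooth on $M\times M$ and that
\[
T_MF(\dot x)=(F_G*\kappa)_M(\dot x)=\int_M F(\dot y)\sum_{\gamma\in\Gamma}\kappa(y^{-1}\gamma x)\,d\dot y=\int_M K(\dot x,\dot y)\,F(\dot y)\,d\dot y ,
\]
i.e. $K$ is the integral kernel of $T_M$. Finally, since $M\times M$ is compact and $K\in C^\infty(M\times M)$, we have $K\in L^2(M\times M)$; as $M$ is compact, $L^2(M)$ embeds continuously in $\cD'(M)$ (Proposition~\ref{prop_nilmanifold}(3)), and the restriction of $T_M$ to $L^2(M)$ is exactly the integral operator with kernel $K$ — the textbook example of a Hilbert--Schmidt operator — so $T_M$ is Hilbert--Schmidt on $L^2(M)$ with $\|T_M\|_{HS}=\|K\|_{L^2(M\times M)}$.

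With Proposition~\ref{prop_nilmanifold} in hand, none of these steps is genuinely difficult; the only point requiring real care is the first one, namely extending the non-commutative convolution $\phi\mapsto\phi*\kappa$ to $\cS'(G)$ while keeping track of the left/right conventions so that the kernel appearing in Proposition~\ref{prop_nilmanifold}(4) is precisely $\sum_{\gamma}\kappa(y^{-1}\gamma x)$, together with the routine but error-prone bookkeeping of the various identifications between objects on $G$ and on $M$.
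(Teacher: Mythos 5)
Your proof is correct and follows exactly the route the paper intends: the lemma is stated there as a direct consequence of Proposition~\ref{prop_nilmanifold} (no separate proof is given), and you supply precisely the missing details — the duality extension of $\phi\mapsto\phi*\kappa$ to $\cS'(G)$, the $\Gamma$-invariance, the application of Proposition~\ref{prop_nilmanifold}(4) for the kernel, and the standard fact that a smooth kernel on a compact manifold gives a Hilbert--Schmidt operator. No gaps.
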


\subsection{Representation theory and Plancherel theorem}

\subsubsection{Representations of $G$ and $L^1(G)$}
In this paper,  we always assume that the representations of the group $G$ 
are strongly continuous and acting unitarily on separable Hilbert spaces. For a representation $\pi$ of $G$, 
we keep the same notation for the corresponding infinitesimal representation
which acts on the universal enveloping algebra $\fU(\fg)$ of the Lie algebra of the group.
It is characterised by its action on $\fg$:
\begin{equation}
\label{eq_def_pi(X)}
\pi(X)=\partial_{t=0}\pi(e^{tX}),
\quad  X\in \fg.
\end{equation}
The infinitesimal action acts on the space $\cH_\pi^\infty$
of smooth vectors, that is, the space of vectors $v\in \cH_\pi$ such that 
 the mapping $G\ni x\mapsto \pi(x)v\in \cH_\pi$ is smooth.

We will use the following equivalent writings for the group Fourier transform of a function  $f\in L^1(G)$
  at $\pi$: 
$$
 \pi(f) \equiv \widehat f(\pi) \equiv \cF_G(f)(\pi)=\int_G f(x) \pi(x)^*dx.
$$ 
The operator $\pi(f)$ is bounded on $\cH_\pi$ with operator norm:
\begin{equation}
	\label{eq_pifnormL1normf}
	\|\pi(f)\|_{\sL(\cH_\pi)} \leq \|f\|_{L^1(G)}.
\end{equation}
 If $\pi_1,\pi_2$ are two equivalent representations of $G$ with $\pi_1=\mathbb U^{-1}\circ  \pi_2 \circ \mathbb U$ for some intertwining operator $\mathbb U$, then 
$$
\pi_1(f) =\mathbb U^{-1}\circ \pi_2(f)\circ \mathbb U .
$$

We denote by $\Gh$ the unitary dual of $G$,
that is, the unitary irreducible representations of $G$ modulo equivalence.
We may often allow ourselves to identify a unitary irreducible representation 
with its class in $\Gh$. Moreover, for $f\in L^1(G)$,  the measurable field of operators $\widehat f = \cF_G f = \{\pi(f), \pi\in \Gh \}$ is understood  modulo intertwiners.

\smallskip
 
When studying square integrable functions on a compact nilmanifold $M=\Gamma\backslash G$, we will consider the following representation and the following group Fourier transform.
\begin{ex}
\label{ex_Regrep}
The regular representation $R:G\to \sL(L^2(M))$ is defined via
$$
R(x_0)f(\dot x) = f(\dot x x_0), \quad f\in L^2(M), \ x_0\in G, \ \dot x\in M. 
$$

Let $\kappa\in L^1(G)$.
Then $R(\kappa)$ is the operator acting on $L^2(M)$ via
$$
R(\kappa) f (\dot x) = \int_G \kappa(y) R(y)^* f (\dot x) 
dy = 
 \int_G \kappa(y)  f (y^{-1}\dot x) 
dy.
$$
\end{ex}
Recall that $R$
decomposes into a discrete direct sum of representation $\pi\in \Gh$ with finite multiplicity $m(\pi)$;
the multiplicity $m(\pi)$ may in fact be described more precisely, see \cite{Richardson}.
Denoting by $\Gamma\backslash\Gh$ the set of these representation,
this means that $L^2(M)$ decomposes into closed $R(G)$-invariant closed vector subspaces:
\begin{equation}
	\label{eq_L2Mdec}
	L^2(M) = \oplus^\perp_{\pi  \in \Gamma\backslash\Gh} L^2_\pi(M), 
\end{equation}
and on each $L^2_\pi(M)$, the representation $R$ is unitarily equivalent to $m(\pi)$ copies of $\pi$.
In this paper, we will use $R$ and its decomposition only via the following statement which gives a better estimate than \eqref{eq_pifnormL1normf}:
\begin{lemma}
\label{lem_RegRep}
We continue with the setting of Example \ref{ex_Regrep} and the above notation. 
Then 
$$
\|R(\kappa) \|_{\sL(L^2(M))}
\leq \sup_{\pi \in \Gamma\backslash\Gh} \|\pi(\kappa)\|_{\sL(\cH_\pi)}.
$$	
\end{lemma}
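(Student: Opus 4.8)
The plan is to read off the result from the decomposition \eqref{eq_L2Mdec} of the regular representation. Write $L^2(M)=\oplus^\perp_{\pi\in\Gamma\backslash\Gh}L^2_\pi(M)$, where each $L^2_\pi(M)$ is a closed $R(G)$-invariant subspace on which $R$ is unitarily equivalent to the direct sum $m(\pi)\,\pi$ of $m(\pi)\geq 1$ copies of $\pi$. First I would check that each $L^2_\pi(M)$ is also invariant under $R(\kappa)$: for $f\in L^2_\pi(M)$ and every $y\in G$ we have $R(y)^*f\in L^2_\pi(M)$, and since $L^2_\pi(M)$ is a closed subspace while (using $\kappa\in L^1(G)$ and the strong continuity of $R$) the operator $R(\kappa)f=\int_G\kappa(y)R(y)^*f\,dy$ is an $L^2(M)$-valued Bochner integral of a function taking values in $L^2_\pi(M)$, the vector $R(\kappa)f$ again lies in $L^2_\pi(M)$. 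Hence $R(\kappa)=\oplus^\perp_{\pi\in\Gamma\backslash\Gh}R(\kappa)|_{L^2_\pi(M)}$, and, the operator norm of an orthogonal direct sum being the supremum of the norms of its blocks,
$$
\|R(\kappa)\|_{\sL(L^2(M))}=\sup_{\pi\in\Gamma\backslash\Gh}\bigl\|R(\kappa)|_{L^2_\pi(M)}\bigr\|_{\sL(L^2_\pi(M))}.
$$

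Next I would identify each block. Let $\mathbb U$ be a unitary intertwiner realising $R|_{L^2_\pi(M)}=\mathbb U^{-1}\circ(m(\pi)\,\pi)\circ\mathbb U$ on $\cH_\pi^{\oplus m(\pi)}$. Passing to integrated operators commutes with unitary intertwiners (this is the property $\pi_1(f)=\mathbb U^{-1}\pi_2(f)\mathbb U$ recalled above for $L^1$ functions) and with forming direct sums of representations; therefore $R(\kappa)|_{L^2_\pi(M)}$ is unitarily equivalent to $\bigoplus_{j=1}^{m(\pi)}\pi(\kappa)$ acting on $\cH_\pi^{\oplus m(\pi)}$. Since copies of the same operator have the same norm,
$$
\bigl\|R(\kappa)|_{L^2_\pi(M)}\bigr\|=\Bigl\|\bigoplus_{j=1}^{m(\pi)}\pi(\kappa)\Bigr\|_{\sL(\cH_\pi^{\oplus m(\pi)})}=\|\pi(\kappa)\|_{\sL(\cH_\pi)}.
$$
Combining the two displays gives $\|R(\kappa)\|_{\sL(L^2(M))}=\sup_{\pi\in\Gamma\backslash\Gh}\|\pi(\kappa)\|_{\sL(\cH_\pi)}$, which is at most $\sup_{\pi\in\Gh}\|\pi(\kappa)\|_{\sL(\cH_\pi)}$ because $\Gamma\backslash\Gh\subseteq\Gh$, proving the claimed inequality (in fact with equality against the reduced supremum).

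The only mildly delicate point is the bookkeeping in the second paragraph: that taking the integrated operator $\kappa\mapsto R(\kappa)$ really does intertwine the unitary equivalence $R|_{L^2_\pi(M)}\simeq m(\pi)\,\pi$ and respects the decomposition into irreducibles. This follows directly from the definition $R(\kappa)=\int_G\kappa(y)R(y)^*\,dy$ read as a weak (or Bochner) integral, together with $\pi_1(f)=\mathbb U^{-1}\pi_2(f)\mathbb U$ for equivalent representations; should one wish to avoid vector-valued integration altogether, one can first verify all these identities for $\kappa\in C_c(G)$ by pairing against vectors and then extend to $\kappa\in L^1(G)$ using the uniform estimate \eqref{eq_pifnormL1normf}, which also controls the right-hand side. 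I do not expect any genuine obstacle.
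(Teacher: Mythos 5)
Your proof is correct and follows essentially the same route as the paper: both rest on the decomposition \eqref{eq_L2Mdec}, the invariance of each $L^2_\pi(M)$ under $R(\kappa)$, and the unitary equivalence of $R|_{L^2_\pi(M)}$ with $m(\pi)$ copies of $\pi$, the only cosmetic difference being that the paper estimates $\|R(\kappa)f_\pi\|$ componentwise and sums, while you phrase it as the norm of a block-diagonal operator. Your additional observation that equality holds against the supremum over $\Gamma\backslash\Gh$ is a correct (if unneeded) refinement.
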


\begin{proof}
	The  Hilbertian decomposition \eqref{eq_L2Mdec} implies that any $f\in L^2(M)$  decomposes as 
$$
f=\sum_{\pi  \in \Gamma\backslash\Gh} f_\pi, \quad f_\pi\in 
L^2_\pi(M), \quad\mbox{with}\quad
\|f\|_{L^2(M)}^2 = \sum_{\pi  \in \Gamma\backslash\Gh} \|f_\pi\|_{L^2(M)}^2 .
$$
We can also decompose 
$$
R(\kappa)f =
\sum_{\pi  \in \Gamma\backslash\Gh} R(\kappa) f_\pi,
\quad\mbox{with}\quad
R(\kappa) f_\pi\in L^2_\pi(M), 
$$ 
so 
$$
\|R(\kappa) f\|_{L^2(M)}^2 = \sum_{\pi  \in \Gamma\backslash\Gh} 
\| R(\kappa) f_\pi\|_{L^2(M)}^2.
$$
Since the representation $R$ on  $L^2_\pi(M)$ is unitarily equivalent to $m(\pi)$ copies of $\pi$, we have
$$
\| R(\kappa) f_\pi\|_{L^2(M)}\leq \|\pi(\kappa)\|_{\sL(\cH_\pi)}\|f_\pi\|_{L^2(M)}.
$$
Hence
\begin{align*}
\|R(\kappa)f\|_{L^2(M)}^2 
\leq  \sum_{\pi  \in \Gamma\backslash\Gh} \|\pi(\kappa)\|_{\sL(\cH_\pi)}^2
\|  f_\pi\|_{L^2(M)}^2 =\sup_{\pi\in \Gamma\backslash\Gh} \|\pi(\kappa)\|_{\sL(\cH_\pi)}^2 \sum_{\pi  \in \Gamma\backslash\Gh} 
\|  f_\pi\|_{L^2(M)}^2.
\end{align*}
We conclude with $\sum_{\pi  \in \Gamma\backslash\Gh} \|f_\pi\|_{L^2(M)}^2=\|f\|_{L^2(M)}^2$. 
\end{proof}

\subsubsection{The Plancherel measure}
 The unitary dual $\Gh$ is naturally equipped with a structure of a standard Borel space.
The Plancherel measure is the unique positive Borel measure $\mu$ 
on $\Gh$ such that 
for any $f\in C_c(G)$, we have:
\begin{equation}
\label{eq_plancherel_formula}
\int_G |f(x)|^2 dx = \int_{\Gh} \|\cF_G(f)(\pi)\|_{HS(\cH_\pi)}^2 d\mu(\pi).
\end{equation}
Here $\|\cdot\|_{HS(\cH_\pi)}$ denotes the Hilbert-Schmidt norm on $\cH_\pi$.
This implies that the group Fourier transform extends unitarily from 
$L^1(G)\cap L^2(G)$ to $L^2(G)$ onto the Hilbert space
$$
L^2(\Gh):=\int_{\Gh} \cH_\pi \otimes\cH_\pi^* d\mu(\pi),
$$
which we identify with the space of $\mu$-square integrable fields $\sigma$ on $\Gh$ with Hilbert norm 
$$
\|\sigma\|_{L^2(\Gh)}=\sqrt{\int_{\Gh} \|\sigma(\pi)\|_{HS(\cH_\pi)}^2 d\mu(\pi)}.
$$
Consequently \eqref{eq_plancherel_formula} holds for any $f\in L^2(G)$ and may be restated as
$$
\|f\|_{L^2(G)} = \|\widehat {f} \|_{L^2(\Gh)},
$$
this formula is called the Plancherel formula.
It is possible to give an expression for the Plancherel measure $\mu$, see \cite[Section 4.3]{corwingreenleaf}, although we will not need this explicit expression in this paper.
From this, the following inversion formula is deduced \cite{corwingreenleaf}:
\begin{equation}
\label{eq_FI}	
\forall x\in G,\quad
\int_{\Gh} \tr(\pi(x)\cF_G\kappa(\pi) )d\mu(\pi) 
=\kappa(x),
\end{equation}
for any continuous function $\kappa:G\to \bC$ satisfying  $\int_{\Gh} \tr|\cF_G\kappa(\pi)| d\mu(\pi)<\infty$.

\subsubsection{The von Neuman algebra and $C^*$-algebra of $G$}

The von Neumann algebra of the group $G$ may be realised as the von Neumann algebra $\sL(L^2(G))^G$ of $L^2(G)$-bounded operators commuting with the left-translations on $G$.
As our group is nilpotent, the $C^*$-algebra of the group is then the closure of the space of right-convolution operators with convolution kernels in the Schwartz space. 

Dixmier's full Plancherel theorem  \cite[Ch. 18]{Dixmier} states the the von Neumann algebra of $G$ can also be  realised  as
the space
$L^\infty (\Gh)$
of measurable fields of operators that are bounded, that is, 
of measurable fields of operators
 $\sigma=\{\sigma(\pi)\in \sL(\cH_\pi): \pi\in \Gh\}$ such that
 $$
 \exists C>0\qquad \|\sigma(\pi)\|_{\sL(\cH_{\pi})} \leq C 
 \ \mbox{for} \ d\mu(\pi)\mbox{-almost all} \ \pi \in \Gh.
 $$
  The smallest of such constant $C>0$ is the norm $\|\sigma\|_{L^{\infty}(\Gh)}$ of $\sigma$  
  in $L^\infty(\Gh)$. 
 Similarly,  the $C^*$-algebra of the group $C^*(G)$ is then 
the closure of $\cF_G\cS(G)$ for the $L^\infty$-norm, 
and 
$L^\infty (\Gh)$ is the von Neumann algebra generated by the $C^*$-algebra of the group. 

The isomorphism between 
the von Neumann algebras $L^\infty(\Gh)$ and $\sL(L^2(G))^G$ may be described as follows.
We check readily that $f\mapsto \cF_G^{-1} \sigma \widehat f$ is in $\sL(L^2(G))^G$  if $\sigma\in L^\infty(\Gh)$.
The converse is given by \cite[Ch. 18]{Dixmier}:
 if $T\in \sL(L^2(G))^G$, then 
there exists  a unique field $\widehat T \in L^\infty (\Gh)$ such that 
$T$ and $f\mapsto \cF_G^{-1} \widehat T \widehat f$ coincide; moreover, 
\begin{equation}
\label{eq_LinftyGh_L2bdd}
\|\widehat T\|_{L^\infty(\Gh)}=\|T\|_{\sL(L^2(G))}	.
\end{equation}
By the Schwartz kernel theorem, 
the operator $T$ admits a distributional convolution kernel  $\kappa\in \cS'(G)$.
We may also write $\widehat \kappa=\widehat T$ and call this field  the group Fourier transform of $\kappa$ or of $T$. 
It extends the previous definition of the group Fourier transform on $L^1(G)$ and $L^2(G)$. 

\section{Graded nilpotent Lie groups and their nilmanifolds}
\label{sec:Graded_Lie_group}

In the rest of the paper, 
we will be concerned with graded Lie groups and their Rockland operators. 
References on this subject for this section and the next ones include \cite{folland+stein_82} and 
\cite{R+F_monograph}.
Most of the analysis presented here may be already known to experts, but this section will help with setting up the notation. 

\subsection{Graded nilpotent Lie group}
\label{subsec_gradedG}

A graded Lie group $G$  is a connected and simply connected 
Lie group 
whose Lie algebra $\fg$ 
admits an $\bN$-gradation
$\fg= \oplus_{\ell=1}^\infty \fg_{\ell}$
where the $\fg_{\ell}$, $\ell=1,2,\ldots$, 
are vector subspaces of $\fg$
 all equal to $\{0\}$ except a finite number, 
and satisfying 
$[\fg_{\ell},\fg_{\ell'}]\subset\fg_{\ell+\ell'}$
for any $\ell,\ell'\in \bN$.

This implies that the group $G$ is nilpotent.
Examples of such groups are the Heisenberg group
 and, more generally,
all stratified groups (which by definition correspond to the case $\fg_1$ generating the full Lie algebra $\fg$).

For any $r>0$, 
we define the  linear mapping $D_r:\fg\to \fg$ by
$D_r X=r^\ell X$ for every $X\in \fg_\ell$, $\ell\in \bN$.
Then  the Lie algebra $\fg$ is endowed 
with the family of dilations  $\{D_r, r>0\}$
and becomes a homogeneous Lie algebra in the sense of 
\cite{folland+stein_82}.
We re-write the set of integers $\ell\in \bN$ such that $\fg_\ell\not=\{0\}$
into the increasing sequence of positive integers
 $\upsilon_1,\ldots,\upsilon_n$ counted with multiplicity,
 the multiplicity of $\fg_\ell$ being its dimension.
 In this way, the integers $\upsilon_1,\ldots, \upsilon_n$ become 
 the weights of the dilations.

 We construct a basis $X_1,\ldots, X_n$  of $\fg$ adapted to the gradation,
by choosing a basis $\{X_1,\ldots X_{n_1}\}$ of $\fg_1$ (this basis is possibly reduced to $\emptyset$), then 
$\{X_{n_1+1},\ldots,  X_{n_1+n_2}\}$ a basis of $\fg_2$
(possibly $\{0\}$ as well as the others).
We have $D_r X_j =r^{\upsilon_j} X_j$, $j=1,\ldots, n$.

 The associated group dilations are defined by
$$
D_r(x)=
rx
:=(r^{\upsilon_1} x_{1},r^{\upsilon_2}x_{2},\ldots,r^{\upsilon_n}x_{n}),
\quad x=(x_{1},\ldots,x_n)\in G, \ r>0.
$$
In a canonical way,  this leads to the notions of homogeneity for functions, distributions and operators and we now give a few important examples. 

The Haar measure is $Q$-homogeneous, where
$$
Q:=\sum_{\ell\in \bN}\ell \dim \fg_\ell=\upsilon_1+\ldots+\upsilon_n,
$$
 is called the homogeneous dimension of $G$.

Identifying the element of $\fg$ with left invariant vector fields, 
each  $X_j$ is a $\upsilon_j$-homogeneous differential operator of degree one. More generally, the differential operator 
$$
X^{\alpha}=X_1^{\alpha_1}X_2^{\alpha_2}\cdots
X_{n}^{\alpha_n}, \quad \alpha\in \bN_0^n$$
is homogeneous with degree
$$
[\alpha]:=\alpha_1 \upsilon_1 + \cdots + \alpha_n \upsilon_n.
$$

The unitary dual $\Gh$ inherits  a dilation from the one on $G$ \cite[Section 2.2]{FFPisa}: we denote by $r\cdot \pi$ the element of $\Gh$ obtained from $\pi$ through dilatation by $r$, that is, $r \cdot \pi(x) = \pi (rx)$, 
$r>0$ and $x\in G$.
The Plancherel measure is $Q$-homogeneous for these dilations in the sense that 
\begin{equation}
	\label{eq_muQhom}
	\int_{\Gh} \tr \sigma(r \cdot \pi)
d\mu(\pi) = r^{-Q} \int_{\Gh} \tr \sigma(\pi)
d\mu(\pi),
\end{equation}
for any symbol $\sigma$ such that $\int_{\Gh} \tr |\sigma(\pi)|
d\mu(\pi)$ is finite.

An important class of homogeneous maps are the homogeneous quasi-norms, 
that is, a $1$-homogeneous non-negative map $G \ni x\mapsto \|x\|$ which is symmetric and definite in the sense that $\|x^{-1}\|=\|x\|$ and $\|x\|=0\Longleftrightarrow x=0$.
In fact, all the homogeneous quasi-norms are equivalent in the sense that if $\|\cdot\|_1$ and $\|\cdot\|_2$ are two of them, then 
$$
\exists C>0 \qquad \forall x\in G
\qquad C^{-1} \|x\|_1 \leq \|x\|_2 \leq C \|x\|_1.
$$
Examples may be constructed easily, such as
\begin{equation}
\label{eq_qnormp}
|(x_1,\ldots,x_n)|_p=\big  (\sum_{j=1}^n |x_j|^{p/\upsilon_j}\big)^{1/p},
\ \mbox{for any}\ p\geq 1.
\end{equation}

\subsection{Rockland symbols and operators on $G$}
\label{subsec_cR}
Let us briefly review the definition and main properties of positive Rockland operators. 
References on this subject include \cite{folland+stein_82} and 
\cite{R+F_monograph}.

A Rockland operator
 $\cR_G$ on $G$ is 
a left-invariant differential operator 
which is homogeneous of positive degree and satisfies the Rockland condition, that is, 
for each unitary irreducible representation $\pi$ on $G$,
except for the trivial representation, 
the operator $\pi(\cR_G)$ is injective on the space  $\cH_\pi^\infty$ of smooth vectors of the infinitesimal representation.
Equivalently, the symbol $\widehat \cR = \{\pi(\cR_G): \pi\in \Gh\}$ is said to be Rockland.

Recall
that Rockland operators are hypoelliptic.
In fact, they are equivalently characterised as the left-invariant differential operators which are hypoelliptic. 
If this is the case, then lower order term may be added in the sense that the operator $\cR_G + \sum_{[\alpha]< \nu}c_\alpha X^\alpha$, where $c_\alpha\in \bC$ and $\nu$ is the homogeneous degree of $\cR_G$, is hypoelliptic.

A Rockland operator is \emph{positive} when 
$$
\forall f \in \cS(G),\qquad
\int_G \cR_G f(x) \ \overline{f(x)} dx\geq 0.
$$

Positive Rockland operators may be viewed as generalisations of the natural sub-Laplacians on Carnot groups and 
they are easily constructed on any graded Lie group, see \cite[Section 4.1.2]{R+F_monograph}:

\begin{ex}
\label{ex_Rockland}
\begin{enumerate}
\item Any sub-Laplacian with the sign convention $-(X_1^2+\ldots+X_{n_1}^2)$ of a stratified Lie group  is a positive Rockland operator; here $X_1,\ldots, X_{n_1}$ form a basis of the first stratum $\fg_1$, and we identify them with the corresponding left-invariant vector fields.
\item 
More generally, 
$-\sum_{1\leq i,j\leq n_1} c_{i,j} X_i X_j$
where $(c_{i,j})$ is a positive definite matrix 
is a sub-Laplacian and a positive Rockland operators. 
Indeed, we define the scalar product induced by the $X_1,\ldots, X_{n_1}$ on $\fg_1$ and we 
 consider an orthonormal basis $Z_1,\ldots, Z_{n_1}$ of eigenvectors of $(c_{i,j})$ of $\fg_1$, with corresponding positive eigenvalues $\lambda_1,\ldots,\lambda_{n_1}$; we may now write 
 $$
 -\sum_{1\leq i,j\leq n_1} c_{i,j} X_i X_j
=-\sum_{1\leq i,j\leq n_1} \lambda_i Z_i ^2
=-\sum_{1\leq i,j\leq n_1} (\sqrt{\lambda_i} Z_i) ^2.
 $$

	\item Let $G$ be a graded Lie group, and $X_1,\ldots, X_n$ an adapted basis to its graded Lie algebra.
	If $\nu_0$ is a common multiple of the weights $\upsilon_j$, $j=1,\ldots,n$ of the dilations,  then 
	$\cR = \sum_{j=1}^{n} (-1)^{\nu_0 / \upsilon_j} X_j^{2 \nu_0 /\upsilon_j}$ is a positive Rockland operator of homogeneous degree $2\nu_0$.  It  is also symmetric: $\cR^t = \cR$.
	
\end{enumerate}
\end{ex}

The above examples will be generalised  in Proposition \ref{prop_exRgeneral}.

\medskip

A positive Rockland operator is essentially self-adjoint on $L^2(G)$ and we keep the same notation for their self-adjoint extension.
Its spectrum is $\sp(\cR_G)$ included in $[0,+\infty)$ and the point 0 may be neglected in its spectrum \cite{FFPisa}.

For each unitary irreducible representation $\pi$ of $ G$, 
the operator 
$\pi(\cR_G)$ is  essentially self-adjoint on $\cH^\infty _\pi$
and we keep the same notation for this self-adjoint extension.
Its spectrum $\sp(\pi(\cR_G))$ is a discrete subset of $(0,\infty)$ if the representation $\pi$  is not trivial, i.e. $\pi\not =1_{\Gh}$,  while $\pi(\cR_G)=0$ if $\pi=1_{\Gh}$.

Let us denote by $E$ and $E_\pi$ the spectral measures respectively
of 
\begin{equation}
\label{eq_spectralmeas}
\cR_G = \int_\bR \lambda dE_\lambda
\quad\mbox{and}\quad 
\pi(\cR_G) = \int_\bR \lambda dE_\pi(\lambda), \ \pi\in \Gh.	
\end{equation}
Then $\widehat E(\pi)=E_\pi$ in the sense that 
for any interval $I\subset \bR$, 
the group Fourier transform 
$\widehat {E(I)}$ of the projection $E(I)\in \sL(L^2(G))^G$ coincides with the field $\{\pi(E(I))=E_\pi(I), \pi\in \Gh\}$.

If $\psi:\bR^+\to \bC$ is a measurable function,
the spectral multiplier $\psi(\cR_G) = \int_\bR \psi(\lambda) dE_\lambda$ is well defined as a possibly unbounded operator on $L^2(G)$.
If the domain of $\psi(\cR_G)$  contains $\cS(G)$ 
and defines a continuous map $\cS(G)\to \cS'(G)$, then 
it is invariant under left-translation. Its convolution kernel $\psi(\cR_G)\delta_0 \in \cS'(G)$ (in the sense of Section \ref{subsec_aboutGnilp}) satisfies the following homogeneity property:
\begin{equation}
\label{eq_homogeneitypsiR}	
 \psi(r^\nu \cR_G) \delta_0 (x) =r^{-Q} \psi(\cR_G)\delta_0(r^{-1}x),
 \quad x\in G.
\end{equation}
Furthermore, for each unitary irreducible representation $\pi$ of $G$,  
 the domain of the operator
$ \psi(\pi(\cR_G))= \int_\bR \psi(\lambda) dE_\pi (\lambda) $
contains $\cH_\pi^\infty$ and we have
$$
\widehat  {\psi(\cR_G)}(\pi)=
 \psi (\pi(\cR_G)) .
$$

\smallskip

The following statement is the famous result due to Hulanicki \cite{hulanicki}:
\begin{theorem}[Hulanicki's theorem]
\label{thm_hula}
	Let $\cR_G$ be a positive Rockland operator on $G$.
	If $\psi\in \cS(\bR)$ then the convolution kernel $\psi(\cR_G)\delta_0$ of the operator $\psi(\cR_G)$ is a Schwartz function, i.e.
	 $\psi(\cR_G)\in \cS(G)$.
\end{theorem}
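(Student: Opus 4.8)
The plan is to reduce the statement to a family of weighted $L^2$-estimates on $G$ and then to prove these using the interplay between the spectral calculus of $\cR_G$ and the homogeneous and Fourier structure of the group. Fix a homogeneous quasi-norm $|\cdot|$ on $G$ and let $\nu$ denote the homogeneous degree of $\cR_G$. Since $\psi\in\cS(\bR)$ is bounded, $\psi(\cR_G)$ is bounded on $L^2(G)$ and left-invariant, so, by the Schwartz kernel theorem, $\kappa:=\psi(\cR_G)\delta_0$ is a well-defined element of $\cS'(G)$. By the standard description of the topology of $\cS(G)\cong\cS(\bR^n)$ — using that $|\cdot|$ is, up to powers, comparable to the Euclidean norm at infinity, the $L^2$-Sobolev embedding, and the fact that the left-invariant derivatives $X^\alpha$ generate the same polynomially-weighted Sobolev scale as the Euclidean ones — it suffices to show $(1+|x|)^N X^\alpha\kappa\in L^2(G)$ for all $N\in\bN_0$ and $\alpha\in\bN_0^n$.

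The derivatives $X^\alpha$ are disposed of first: writing $X^\alpha\psi(\cR_G)=\big[X^\alpha(\id+\cR_G)^{-s}\big]\circ\big[\big((1+\lambda)^s\psi(\lambda)\big)(\cR_G)\big]$ with $s\nu\ge[\alpha]$, and using the subelliptic (Sobolev) estimates for the hypoelliptic operator $\cR_G$ — namely that $X^\alpha(\id+\cR_G)^{-s}$ is $L^2$-bounded and, for $s$ large, has convolution kernel in every polynomially-weighted $L^1(G)$ (standard, cf.\ \cite{folland+stein_82,R+F_monograph}) — a quasi-triangle-inequality splitting of the weight $(1+|x|)^N$ reduces the whole statement to the single family of estimates
\begin{equation*}
(1+|x|)^N\,\psi(\cR_G)\delta_0\in L^2(G),\qquad N\in\bN_0,\ \psi\in\cS(\bR).
\end{equation*}
Bounding $(1+|x|)^N$ by a finite sum of moduli of Euclidean coordinate monomials $x^\gamma$, this in turn follows once we show $x^\gamma\,\psi(\cR_G)\delta_0\in L^2(G)$ for every multi-index $\gamma$ and every $\psi\in\cS(\bR)$.

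The heart of the matter, which is Hulanicki's original insight, is that multiplying a convolution kernel by a coordinate function $x_j$ corresponds on the group Fourier transform side to a first-order \emph{difference operator} $\bD_j$ on operator-valued symbols, and that $\bD_j$ obeys a Leibniz-type rule with respect to the functional calculus: $\bD_j\big(\psi(\widehat{\cR_G})\big)$ is a finite sum of terms of the form $\psi^{(m)}(\widehat{\cR_G})$ composed with symbols coming from $\bD_j\widehat{\cR_G}$ and from commutators of $\cR_G$ with multiplication operators, all of lower homogeneous order. Iterating $|\gamma|$ times, $x^\gamma\psi(\cR_G)\delta_0$ becomes a finite sum of convolution kernels of operators $P_m(\cR_G)\,\psi^{(m)}(\cR_G)\,B_m$, where $P_m$ is a polynomial, $\psi^{(m)}\in\cS(\bR)$, and $B_m$ is $L^2$-bounded (built from the difference calculus, with suitably integrable kernel). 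Since $\lambda\mapsto P_m(\lambda)\psi^{(m)}(\lambda)$ is again in $\cS(\bR)$, and in particular bounded, this reduces everything to the base case $\psi(\cR_G)\delta_0\in L^2(G)$. The latter follows from the Plancherel formula \eqref{eq_plancherel_formula} together with the factorisation $\psi(\cR_G)\delta_0=\big((1+\lambda)^M\psi(\lambda)\big)(\cR_G)\big[(\id+\cR_G)^{-M}\delta_0\big]$, the $L^2$-boundedness of $\big((1+\lambda)^M\psi(\lambda)\big)(\cR_G)$, and the fact that the Bessel-type kernel $(\id+\cR_G)^{-M}\delta_0$ lies in $L^2(G)$ once $M\nu>Q/2$ (a standard consequence of the homogeneity and hypoellipticity of $\cR_G$).

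The main obstacle is to make the Leibniz rule for the difference operators $\bD_j$ rigorous for the operator-valued symbols $\pi(\cR_G)$, $\pi\in\Gh$: one must represent $\psi(\widehat{\cR_G})$ by a contour integral — via the Cauchy formula or the Helffer--Sj\"ostrand formula applied to an almost-analytic extension of $\psi$ — so that $\bD_j$ may be commuted under the integral, and then control the correction terms produced by the non-commutativity of $G$ (the commutators of $\cR_G$ with multiplication operators are genuine differential operators) while keeping track that each application of $\bD_j$ lowers the homogeneous order by one unit. The bookkeeping over the multi-index $\gamma$ must then be arranged so that, after $|\gamma|$ steps, every surviving term still has symbol a Schwartz function of $\cR_G$ times a bounded operator. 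An alternative, essentially equivalent route begins by showing that the heat kernel $e^{-t\cR_G}\delta_0$ lies in $\cS(G)$ — via hypoellipticity, analyticity of the semigroup, and Gaussian-type bounds — but representing a general (not completely monotone) $\psi\in\cS(\bR)$ through the semigroup still forces the same difference-operator estimates, so this is genuinely the crux.
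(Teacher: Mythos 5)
The paper does not prove this theorem: it is quoted as Hulanicki's result \cite{hulanicki}, with \cite[Section 4.5]{R+F_monograph} the standard modern treatment, so there is no internal argument to compare yours against. Assessing your sketch on its own terms, the reductions are sound and standard: characterising $\cS(G)$ through weighted $L^2$ estimates $\|(1+|x|)^N X^\alpha\kappa\|_{L^2(G)}<\infty$, removing the $X^\alpha$ by factoring through $(\id+\cR_G)^{-s}$, and the base case $\psi(\cR_G)\delta_0\in L^2(G)$ via Plancherel and the Bessel kernel are all correct.

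The gap lies exactly in the step you label as the main obstacle, and labelling it does not fill it. Your assertion that $\bD_j\bigl(\psi(\widehat{\cR_G})\bigr)$ is a finite sum of terms $\psi^{(m)}(\widehat{\cR_G})$ composed with lower-order bounded symbols is not a fact you can invoke: in the Helffer--Sj\"ostrand representation, $\bD_j\psi(\widehat{\cR_G})=\tfrac{1}{\pi}\int_\bC\bar\partial\tilde\psi(z)\,(z-\widehat{\cR_G})^{-1}\,(\bD_j\widehat{\cR_G})\,(z-\widehat{\cR_G})^{-1}\,L(dz)$ is a genuine double operator integral, and since $\bD_j\widehat{\cR_G}$ does not commute with $\widehat{\cR_G}$ the two resolvent factors do not collapse into a single derivative of $\psi$. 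Controlling this expression, and the multi-operator integrals produced by $|\gamma|$ iterations of $\bD_j$, is the entire content of the theorem; your text stops at saying it ``must'' be done. Moreover, the proofs in the literature do not carry out this open-ended Fourier-side iteration: they first establish a scale-localised weighted $L^2$ estimate for $\psi$ supported in a bounded interval, by a recursion on the weight exponent acting directly on convolution kernels and spectral projections, and then deduce the general case from the homogeneity relation \eqref{eq_homogeneitypsiR} together with a dyadic decomposition of $\psi$; the Schwartz property of the heat kernel is a prerequisite of that scheme, not an interchangeable alternative route. As it stands, your proposal is a correct strategic roadmap for what needs to be proved, but it does not prove it.
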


For instance,  the heat kernels 
$$
p_t:=e^{-t\cR_G}\delta_0, \quad t>0,
$$
 are Schwartz - although this property is in fact used in the proof of Hulanicki's Theorem. 

\medskip

The following result describes the isometry 
  $\psi\mapsto \psi(\cR_G)\delta_0$  from $L^2((0,\infty), c_0 \lambda^{Q/2} d\lambda/\lambda)$ to  $L^2(G)$ for some constant $c_0>0$.
This was mainly obtained by Christ for sub-Laplacians on stratified groups \cite[Proposition 3]{Christ91} and readily extended   to positive Rockland operators in \cite{nilmanifold}, see also \cite{Martini}   and the references to Hulanicki's works therein. 

\begin{theorem}
\label{thm_christ}
	Let $\cR_G$ be a positive Rockland operator of homogeneous degree $\nu$ on $G$. 
	If the measurable function  $\psi:\bR^+\to \bC$ is in  $L^2(\bR^+,  \lambda^{Q/\nu} d\lambda/\lambda)$, 
	then $\psi(\cR_G)$  defines a continuous map $\cS(G)\to \cS'(G)$ whose convolution kernel  $\psi(\cR_G)\delta_0$ is in $L^2(G)$. Moreover,   we have 
$$
\|\psi(\cR_G)\delta_0\|_{L^2(G)}^2
 	=c_0\int_0^\infty |\psi (\lambda)|^2  \lambda^{\frac Q \nu} \frac{d\lambda}{\lambda},
 $$
 where $c_0 = c_0(\cR_G)$ is a positive constant of $\cR_G$ and $G$.  
 
 Consequently, 
 we have for any $\psi\in \cS(\bR)$
$$
\psi(\cR_G)\delta_0(0)
 	=c_0\int_0^\infty \psi (\lambda)  \lambda^{\frac Q \nu} \frac{d\lambda}{\lambda}.
 $$

\end{theorem}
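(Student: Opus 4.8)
The plan is to prove Theorem \ref{thm_christ} by reducing the general Rockland case to a scaling/Plancherel argument, following Christ's strategy. First I would record the key homogeneity input: by \eqref{eq_homogeneitypsiR}, for the dilation $\psi_r(\lambda):=\psi(r^\nu\lambda)$ the convolution kernel satisfies $\psi_r(\cR_G)\delta_0(x)=r^{-Q}\psi(\cR_G)\delta_0(r^{-1}x)$, hence $\|\psi_r(\cR_G)\delta_0\|_{L^2(G)}^2=r^{-Q}\|\psi(\cR_G)\delta_0\|_{L^2(G)}^2$. On the other side, the substitution $\lambda\mapsto r^{-\nu}\lambda$ gives $\int_0^\infty|\psi_r(\lambda)|^2\lambda^{Q/\nu}\tfrac{d\lambda}{\lambda}=r^{-Q}\int_0^\infty|\psi(\lambda)|^2\lambda^{Q/\nu}\tfrac{d\lambda}{\lambda}$. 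So both sides of the claimed identity scale the same way under dilation; this means the map $\psi\mapsto\psi(\cR_G)\delta_0$, if bounded, must be a constant multiple of an isometry between the two Hilbert spaces $L^2(\bR^+,\lambda^{Q/\nu}\tfrac{d\lambda}{\lambda})$ and a dilation-covariant subspace of $L^2(G)$.

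Next I would establish that $\psi\mapsto\psi(\cR_G)\delta_0$ is indeed well-defined and bounded from $L^2(\bR^+,\lambda^{Q/\nu}d\lambda/\lambda)$ into $L^2(G)$. For $\psi\in\cS(\bR)$ this is Hulanicki's theorem (Theorem \ref{thm_hula}), which gives $\psi(\cR_G)\delta_0\in\cS(G)\subset L^2(G)$; the point is the quantitative bound. Here I would invoke the Plancherel formula \eqref{eq_plancherel_formula} together with $\widehat{\psi(\cR_G)}(\pi)=\psi(\pi(\cR_G))$, writing
$$
\|\psi(\cR_G)\delta_0\|_{L^2(G)}^2=\int_{\Gh}\|\psi(\pi(\cR_G))\|_{HS(\cH_\pi)}^2\,d\mu(\pi)
=\int_{\Gh}\Big(\sum_{\lambda\in\sp(\pi(\cR_G))}|\psi(\lambda)|^2\,m_\pi(\lambda)\Big)d\mu(\pi),
$$
where $m_\pi(\lambda)$ is the (finite) multiplicity of the eigenvalue $\lambda$ of $\pi(\cR_G)$; this uses that each $\pi(\cR_G)$ has discrete spectrum for $\pi\neq1_{\Gh}$, while the trivial representation contributes a $\mu$-null set. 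The right-hand side is a positive quadratic form in $\psi$, invariant (up to the $r^{-Q}$ factor already computed) under the dilation $r\cdot\pi$ acting on $\Gh$ via \eqref{eq_muQhom}; combining dilation covariance with the known finiteness of $\|p_1\|_{L^2(G)}$ (the time-$1$ heat kernel, which is Schwartz) one identifies the push-forward to $\bR^+$ of the "spectral measure" $\pi\mapsto\sum_\lambda\delta_\lambda m_\pi(\lambda)$ under $d\mu$ as a $Q/\nu$-homogeneous measure, i.e.\ a constant multiple $c_0\lambda^{Q/\nu}\tfrac{d\lambda}{\lambda}$. This is precisely the content of Christ's Proposition 3 and its extension in \cite{nilmanifold}; I would cite those for the computation identifying $c_0$, and note $c_0>0$ since the heat kernel is not identically zero.

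With the isometry in hand, the density of $\cS(\bR)|_{\bR^+}$ in $L^2(\bR^+,\lambda^{Q/\nu}d\lambda/\lambda)$ extends the identity and the continuity statement $\cS(G)\to\cS'(G)$ to all $\psi$ in that space: given general $\psi$, approximate by Schwartz functions $\psi_k$, note $\psi_k(\cR_G)\delta_0$ is Cauchy in $L^2(G)$, and identify the limit with $\psi(\cR_G)\delta_0$ using the spectral theorem (the operators $\psi_k(\cR_G)$ converge to $\psi(\cR_G)$ in the strong resolvent / $L^2$-operator sense on the appropriate dense domain). Finally, for the pointwise evaluation: for $\psi\in\cS(\bR)$ the kernel $\psi(\cR_G)\delta_0$ is continuous and, via the Fourier inversion formula \eqref{eq_FI} applied at $x=0$ (valid because $\widehat{\psi(\cR_G)}(\pi)=\psi(\pi(\cR_G))$ is $\mu$-integrable in trace norm, again by Hulanicki),
$$
\psi(\cR_G)\delta_0(0)=\int_{\Gh}\tr\big(\psi(\pi(\cR_G))\big)\,d\mu(\pi)=c_0\int_0^\infty\psi(\lambda)\,\lambda^{Q/\nu}\,\frac{d\lambda}{\lambda},
$$
where the last equality is the polarised/linear version of the isometry identity, using the same identification of the push-forward measure. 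I expect the main obstacle to be the rigorous identification of the push-forward of the spectral-multiplicity data under the Plancherel measure as the explicit homogeneous measure $c_0\lambda^{Q/\nu}d\lambda/\lambda$ with a genuinely positive constant — everything else (density, approximation, the scaling bookkeeping) is routine, but that step is where the real content lies, and I would handle it by citing \cite{Christ91, nilmanifold, Martini} rather than reproving it.
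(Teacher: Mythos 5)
Your proposal is correct and follows the standard argument behind this result: the Plancherel formula together with the $Q$-homogeneity of the Plancherel measure \eqref{eq_muQhom} identifies the push-forward of the spectral data as the homogeneous measure $c_0\lambda^{Q/\nu}\,d\lambda/\lambda$, and the remaining steps (density of Schwartz functions, Fourier inversion at $x=0$ for the pointwise formula) are routine. The paper itself gives no proof of Theorem \ref{thm_christ} — it defers entirely to \cite{Christ91} and \cite{nilmanifold} — so your sketch, which hands off the same key measure-identification step to those references, is consistent with (indeed more explicit than) the paper's treatment; the only loose remark is your opening claim that equal scaling behaviour alone forces the map to be a multiple of an isometry, which is not a valid inference by itself, but your second paragraph supplies the correct argument without relying on it.
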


\subsection{Sobolev spaces on $G$ and $M$}

If $\cR_G$ be a positive Rockland operator of homogeneous degree $\nu$ and $s\in \bR$, 
then we define the \emph{Sobolev spaces} $L^2_{s}(G)$ as the completion of the domain $\dom (\id+\cR_G)^\frac s\nu$
of $(\id+\cR_G)^\frac s\nu$, 
for the Sobolev norm 
$$
\|f\|_{L^2_s(G),\cR}:= \|(\id+\cR_G)^\frac s\nu f\|_{L^2(G)}.
$$
They satisfy  the following natural properties (see \cite[Section 4.4]{R+F_monograph}):
\begin{theorem}
\label{thm_sobolev_spacesG}
\begin{enumerate}
\item The Sobolev spaces $L^2_{s}(G)$ are independent of a choice of a positive Rockland operator $\cR_G$.
Different choices of positive Rockland operators yields equivalent Sobolev norms, and these equip 
the  spaces $L^2_s(G)$, $s\in \bR$, of a structure of Banach spaces.
These Hilbert spaces  satisfy the classical properties of duality (in the sense of  \cite[Lemma 4.4.7]{R+F_monograph}) and interpolation 
(in the sense of 
 \cite[Theorem 4.4.9 and Proposition 4.4.15]{R+F_monograph}).
 \item For $s,a\in \bR$, 
the operator $(\id+\cR_G)^{\frac a \nu}$ maps continuously 
$L^2_s(G)$ to $L^2_{s-a}(G)$.
 f
\label{item_thm_sobolev_spaces_continuity_Xalpha}
For any $\alpha\in \bN_0^n$, $X^\alpha$ maps continuously 
$L^2_s(G)$ to $L^2_{s-[\alpha]}(G)$
 for any $s\in \bR$.
Moreover, if $s\in \bN$ is a  common multiple of the weights $\upsilon_1,\ldots,\upsilon_n$ of the dilations, 
then  
$f\mapsto \sum_{ [\alpha]\leq s}
\|X^\alpha f\|_{L^2(G)}$ is an equivalent norm on $L^2_s(G)$.

 \item
 We have the continuous inclusions
$$
s_1 \geq s_2 \ \Longrightarrow \ 
L^2_{s_1} (G)\subset L^2_{s_2} (G) ,
  $$
  and
  $$
 L^2_s(G)\subset C_b(G), \ s>Q/2, \qquad \mbox{(Sobolev embeddings) }
$$
where $C_b(G)$ denotes the Banach space of continuous and bounded functions on $G$.
\end{enumerate}
\end{theorem}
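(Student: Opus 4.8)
The statement to prove is Theorem \ref{thm_sobolev_spacesG}, which asserts the standard properties of Sobolev spaces on a graded Lie group $G$ defined via a positive Rockland operator. Let me plan a proof.

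\textbf{Part (1): Independence of the choice of Rockland operator.}
The key tool here is the functional calculus of positive Rockland operators together with the fact (which follows from the homogeneity and the pseudo-differential / convolution-kernel analysis) that if $\cR_G^{(1)}$ and $\cR_G^{(2)}$ are two positive Rockland operators, of homogeneous degrees $\nu_1$ and $\nu_2$ respectively, then the operator $(\id + \cR_G^{(1)})^{s/\nu_1}(\id + \cR_G^{(2)})^{-s/\nu_2}$ extends to a bounded operator on $L^2(G)$ with bounded inverse. The natural route: first show that for a single positive Rockland operator $\cR_G$, the operators $(\id+\cR_G)^{a/\nu}$ form a one-parameter group acting compatibly with the spectral calculus, and that $(\id+\cR_G)^{-a/\nu}$ for $a>0$ is a convolution operator whose kernel is a Bessel-type potential, controllable via Hulanicki's theorem (Theorem \ref{thm_hula}) and the heat-kernel subordination formula $(\id+\cR_G)^{-a/\nu} = \frac{1}{\Gamma(a/\nu)}\int_0^\infty t^{a/\nu} e^{-t(\id+\cR_G)} \frac{dt}{t}$. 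Then the comparison of two such operators is reduced to showing that $\pi(\cR_G^{(1)})^{s/\nu_1}\pi(\cR_G^{(2)})^{-s/\nu_2}$ is uniformly bounded over $\pi\in\Gh$; this is where one invokes the representation-theoretic characterisation and the ellipticity built into the Rockland condition, together with a dilation argument using that both symbols are homogeneous. Duality and interpolation then follow formally from the Hilbert space structure and complex interpolation of the powers $(\id+\cR_G)^{z/\nu}$, $\RE z$ varying.

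\textbf{Part (2): Mapping properties of $(\id+\cR_G)^{a/\nu}$ and of $X^\alpha$.}
That $(\id+\cR_G)^{a/\nu}: L^2_s(G)\to L^2_{s-a}(G)$ is continuous is immediate from the definition of the norms and the group law $(\id+\cR_G)^{(s-a)/\nu}(\id+\cR_G)^{a/\nu} = (\id+\cR_G)^{s/\nu}$ on the appropriate domains, together with density. For $X^\alpha$: the essential estimate is that $X^\alpha (\id+\cR_G)^{-[\alpha]/\nu}$ is $L^2$-bounded. One proves this by examining $\pi(X^\alpha)\pi(\id+\cR_G)^{-[\alpha]/\nu}$ and using homogeneity: scaling by $r$ shows the operator norm is dilation-invariant, so it suffices to bound it, say, on the unit sphere of symbols, which follows from the Rockland (subelliptic) estimates $\|\pi(X^\alpha)v\| \lesssim \|\pi(\id+\cR_G)^{[\alpha]/\nu}v\|$ valid uniformly in $\pi$ — these are the infinitesimal form of the hypoelliptic a priori estimate for $\cR_G$. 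The equivalence of $\sum_{[\alpha]\le s}\|X^\alpha f\|_{L^2}$ with the Sobolev norm when $s$ is a common multiple of the weights uses Example \ref{ex_Rockland}(3): one builds the specific Rockland operator $\cR = \sum_j (-1)^{s/\upsilon_j} X_j^{2s/\upsilon_j}$ of degree $2s$, notes $\|(\id+\cR)^{1/2} f\|_{L^2}^2 = \|f\|_{L^2}^2 + \langle \cR f, f\rangle = \|f\|_{L^2}^2 + \sum_j \|X_j^{s/\upsilon_j} f\|_{L^2}^2$ after integration by parts, and then absorbs the remaining $X^\alpha$ with $[\alpha]\le s$ using the already-established boundedness in one direction and an iteration/commutator argument for the other.

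\textbf{Part (3): Inclusions and Sobolev embedding.}
The continuous inclusion $L^2_{s_1}(G)\subset L^2_{s_2}(G)$ for $s_1\ge s_2$ is just boundedness of $(\id+\cR_G)^{(s_2-s_1)/\nu}$ on $L^2(G)$, which holds since this function of $\cR_G$ is bounded on $\sp(\cR_G)\subset[0,\infty)$. For the embedding $L^2_s(G)\subset C_b(G)$ when $s>Q/2$: write $f = (\id+\cR_G)^{-s/\nu} g$ with $g\in L^2(G)$, so $f = g * B_s$ where $B_s = (\id+\cR_G)^{-s/\nu}\delta_0$ is the Bessel potential kernel; by Cauchy–Schwarz $\|f\|_{L^\infty}\le \|g\|_{L^2}\|B_s\|_{L^2}$, and $B_s\in L^2(G)$ precisely when $s>Q/2$ — this last fact comes from Theorem \ref{thm_christ} applied to $\psi(\lambda) = (1+\lambda)^{-s/\nu}$, whose $L^2(\bR^+,\lambda^{Q/\nu}d\lambda/\lambda)$-norm is finite iff $2s/\nu > Q/\nu$, i.e. $s>Q/2$; continuity (boundedness, not just membership in $L^\infty$) and the fact that the limit is continuous follow from approximating $g$ by Schwartz functions and noting $f$ is then continuous, with uniform convergence.

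\textbf{Main obstacle.}
The crux of the whole theorem is the \emph{uniform-in-$\pi$ subelliptic estimate} $\|\pi(X^\alpha)v\|_{\cH_\pi}\lesssim \|\pi(\id+\cR_G)^{[\alpha]/\nu}v\|_{\cH_\pi}$ (and the analogous two-sided comparison of powers of different Rockland operators). Everything else is soft functional-analytic bookkeeping once this is in hand. The honest way to obtain it is to start from the hypoellipticity of $\cR_G$ — equivalently the a priori estimate $\|X^\alpha f\|_{L^2(G)}\lesssim \|\cR_G f\|_{L^2(G)} + \|f\|_{L^2(G)}$ for $[\alpha]\le\nu$ on $\cS(G)$ — transfer it to the Fourier side via Plancherel, and then upgrade from degree $\le\nu$ to arbitrary fractional order by the complex interpolation already set up in Part (1) combined with the homogeneity/dilation invariance on $\Gh$. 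In the interest of a self-contained presentation the paper will likely just cite \cite[Section 4.4]{R+F_monograph} for these facts, since reproving the full Sobolev theory from scratch would be disproportionate; my proof sketch therefore structures the argument around that citation while making explicit which pieces (Hulanicki, Christ, the explicit Rockland operator of Example \ref{ex_Rockland}(3)) are used where.
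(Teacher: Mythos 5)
The paper gives no proof of this theorem: it is stated as a recollection of known results with a citation to \cite[Section 4.4]{R+F_monograph}, and your sketch correctly anticipates this and reproduces, in outline, the monograph's argument (subordination to the heat semigroup and Hulanicki's theorem for the Bessel-type kernels, uniform-in-$\pi$ Rockland estimates for the comparison of two Rockland operators and for the boundedness of $X^\alpha(\id+\cR_G)^{-[\alpha]/\nu}$, complex interpolation for fractional orders, and the Plancherel/Christ computation showing $(\id+\cR_G)^{-s/\nu}\delta_0\in L^2(G)$ exactly when $s>Q/2$ for the embedding into $C_b(G)$). The one imprecision worth flagging is the dilation-invariance claim for $\pi(X^\alpha)\pi(\id+\cR_G)^{-[\alpha]/\nu}$: the identity term breaks exact homogeneity, so the genuine argument bounds the homogeneous operator $\pi(X^\alpha)\pi(\cR_G)^{-[\alpha]/\nu}$ by the Rockland/hypoellipticity estimates and handles the low-frequency part separately — but this does not affect the correctness of your overall route.
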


Many parts of the theorem above are a consequence  of the following property that we will also use later on:
\begin{lemma}
\label{lem_R1R2bdd}
If $\cR_1$ and $\cR_2$ are two positive Rockland operators of homogeneous degrees $\nu_1$ and $\nu_2$ on $G$,
then for any $a\in \bR$, 
the operator
$(\id+\cR_1)^{a/\nu_1} (\id +\cR_2)^{-a/\nu_2}$ is well defined and extends naturally into  a bounded operator on $L^2(G)$.
\end{lemma}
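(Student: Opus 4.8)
The plan is to prove the statement by reducing it to a statement about the group Fourier transform and the Plancherel realisation of the von Neumann algebra $\sL(L^2(G))^G \cong L^\infty(\Gh)$. First I would note that since both $\cR_1$ and $\cR_2$ are positive Rockland operators, they are essentially self-adjoint on $L^2(G)$, so $(\id+\cR_1)^{a/\nu_1}$ and $(\id+\cR_2)^{-a/\nu_2}$ are well-defined self-adjoint operators via the spectral theorem; moreover $(\id+\cR_2)^{-a/\nu_2}$ maps $L^2(G)$ into the domain of $(\id+\cR_2)^{a/\nu_2}$, which, by Theorem \ref{thm_sobolev_spacesG}(2) (equivalence of Sobolev norms, which rests on this lemma, so I would argue directly instead), equals $L^2_{|a|}(G)$ up to the usual sign bookkeeping. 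The cleanest route avoiding circularity is to work on the Fourier side.

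Next I would pass to the group Fourier transform. For $\mu$-almost every $\pi\in\Gh$ the operators $\pi(\cR_1)$ and $\pi(\cR_2)$ are essentially self-adjoint nonnegative operators on $\cH_\pi^\infty$ with discrete spectrum in $(0,\infty)$ for $\pi\neq 1_{\Gh}$. Using the intertwining of the spectral calculus with the Fourier transform recalled after \eqref{eq_spectralmeas}, the operator in question has group Fourier transform the measurable field
\[
\pi \longmapsto (\id+\pi(\cR_1))^{a/\nu_1}\,(\id+\pi(\cR_2))^{-a/\nu_2}.
\]
By \eqref{eq_LinftyGh_L2bdd} it suffices to show this field is uniformly bounded in $\pi$, i.e. that
\[
\sup_{\pi\in\Gh}\ \big\|(\id+\pi(\cR_1))^{a/\nu_1}\,(\id+\pi(\cR_2))^{-a/\nu_2}\big\|_{\sL(\cH_\pi)}<\infty .
\]
It is enough to treat $a>0$; the case $a<0$ follows by taking adjoints and swapping the roles of $\cR_1,\cR_2$, and $a=0$ is trivial. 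The key input is a two-sided comparison: there is a constant $C>0$, independent of $\pi$, with
\[
C^{-1}(\id+\pi(\cR_2))^{1/\nu_2}\ \leq\ (\id+\pi(\cR_1))^{1/\nu_1}\ \leq\ C\,(\id+\pi(\cR_2))^{1/\nu_2}
\]
as an inequality of positive self-adjoint operators. Granting this, the Löwner–Heinz operator monotonicity of $t\mapsto t^{\theta}$ for $0\le\theta\le 1$ (applied with $\theta = a/\min(\nu_1,\nu_2)$ split into finitely many factors each with exponent in $[0,1]$) and a standard $\|BC^{-1}\|=\|C^{-1}B^2C^{-1}\|^{1/2}$ argument yield the desired uniform bound.

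The main obstacle is thus establishing the uniform two-sided comparison of $(\id+\pi(\cR_1))^{1/\nu_1}$ and $(\id+\pi(\cR_2))^{1/\nu_2}$ on every $\cH_\pi$. The way I would get it: first reduce to comparing $\pi(\cR_1)^{1/\nu_1}$ with $\id+\pi(\cR_2)^{1/\nu_2}$ by elementary manipulations; then use the dilation structure. Writing $\nu$ for a common multiple of $\nu_1,\nu_2$, the operators $\pi(\cR_1)^{\nu/\nu_1}$ and $\pi(\cR_2)^{\nu/\nu_2}$ are $\nu$-homogeneous Rockland fields in the sense that $r\cdot\pi(\cR_i)^{\nu/\nu_i} = r^{\nu}\,\pi(\cR_i)^{\nu/\nu_i}$. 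By compactness of the set of $\pi\in\Gh$ lying on a fixed ``unit sphere'' for the dilations modulo equivalence — more precisely, by the standard fact (see \cite[Section 4.1--4.3]{R+F_monograph}) that a Rockland field $\widehat\cR$ satisfies $\pi(\id+\cR)^{-1}\in L^\infty(\Gh)$ together with homogeneity — one gets that $\pi(\cR_i)^{\nu/\nu_i}+\id \simeq \pi(\cR_1)^{\nu/\nu_1}+\pi(\cR_2)^{\nu/\nu_2}+\id$ uniformly, hence the two are comparable to each other uniformly in $\pi$. Taking $\nu$-th roots (operator monotonicity again) transports this to the $(1/\nu_i)$-powers. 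Alternatively, and perhaps more transparently, I would invoke that $(\id+\cR_1)^{-1/\nu_1}(\id+\cR_2)^{1/\nu_2}$ is the Fourier side of an operator that can be analysed by the symbolic/parametrix calculus of \cite{R+F_monograph}: $(\id+\cR_2)^{1/\nu_2}$ has symbol in the relevant class and $(\id+\cR_1)^{-1/\nu_1}$ is its parametrix-type inverse, so the composition is an order-zero, hence $L^2(G)$-bounded, operator. Either route closes the argument; I would present the homogeneity-and-compactness one as primary since it keeps everything on the Fourier side and self-contained.
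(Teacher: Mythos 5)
Your reduction to the Fourier side via \eqref{eq_LinftyGh_L2bdd}, the restriction to $a>0$ by adjoints, and the L\"owner--Heinz step are all plausible ingredients, but they hang on the uniform two-sided comparison
$$
C^{-1}\big(\id+\pi(\cR_2)\big)^{1/\nu_2}\ \leq\ \big(\id+\pi(\cR_1)\big)^{1/\nu_1}\ \leq\ C\big(\id+\pi(\cR_2)\big)^{1/\nu_2},
$$
and your justification of that comparison does not close the gap. The ``standard fact'' $\pi(\id+\cR)^{-1}\in L^\infty(\Gh)$ only compares $\pi(\cR_i)$ with the identity; it does not give $\pi(\cR_2)^{\nu/\nu_2}\lesssim\pi(\cR_1)^{\nu/\nu_1}+\id$, which is the genuinely nontrivial direction and is essentially equivalent to a particular case of Lemma~\ref{lem_R1R2bdd}. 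Homogeneity of the two fields is likewise not enough: two $\nu$-homogeneous positive Rockland symbols need not be comparable until one proves exactly what the lemma asserts. The appeal to ``compactness of the unit sphere for the dilations in $\Gh$'' also fails as a mechanism. Even when a compact parameter set for the generic representations exists, the operators $\pi(\cR_i)$ are unbounded on infinite-dimensional $\cH_\pi$, and there is no continuity principle in $\pi$ for the operator norm $\|(\id+\pi(\cR_1))^{1/\nu_1}(\id+\pi(\cR_2))^{-1/\nu_2}\|_{\sL(\cH_\pi)}$ that would promote a fibrewise estimate to a uniform one; nor is $\Gh$ Hausdorff in general. Finally, the ``alternatively, use the symbolic/parametrix calculus'' route is circular: the classes $S^m(G\times\Gh)$ and the seminorms $L^\infty_{a,b}(\Gh)$ are defined relative to a choice of Rockland operator, and their independence of that choice is precisely the content of Lemma~\ref{lem_R1R2bdd}.

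You also talked yourself out of the route the paper actually takes. You wrote that the equivalence of Sobolev norms with $\sum_{[\alpha]\leq s}\|X^\alpha f\|_{L^2}$ ``rests on this lemma,'' but that is not so: Theorem~\ref{thm_sobolev_spacesG}(2) is established for a \emph{single} positive Rockland operator $\cR$ using hypoellipticity and a priori estimates for that one operator, with no reference to a second Rockland operator. Since the right-hand side $\sum_{[\alpha]\leq s}\|X^\alpha f\|_{L^2}$ is manifestly independent of $\cR$, this immediately proves Lemma~\ref{lem_R1R2bdd} for $a=s$ with $s$ a large enough common multiple of the dilation weights and of $\nu_1,\nu_2$. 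The case $a=-s$ follows by taking adjoints, and all intermediate $a\in\bR$ by complex interpolation. This is the argument in \cite[Section~4.4]{R+F_monograph}, and it stays entirely on the physical side, bypassing any pointwise-in-$\pi$ operator comparison. That is the key missing idea in your proposal.
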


\begin{remark}
\label{rem_lem_R1R2bdd}
	The proof of Lemma \ref{lem_R1R2bdd} shows that 
	the $L^2$-operator norm of $(\id+\cR_1)^{a/\nu_1} (\id +\cR_2)^{-a/\nu_2}$ is bounded by a constant $C(G, \cR_1,\cR_2, a)$ depending on the structural constants:
$$
\|(\id+\cR_1)^{a/\nu_1} (\id+\cR_2)^{-a/\nu_2}\|_{\sL(L^2(G))}
\leq C(G, \cR_1,\cR_2, a).
$$
We can be even more precise by fixing an adapted basis $(X_1,\ldots, X_n)$ and writing  
$$
\cR_i=\sum_{[\alpha_i]=\nu_i}c_{\alpha_i,i} X^{\alpha_i},
$$
$i=1,2$.
The constant may be bounded by 
$$
C(G, \cR_1,\cR_2, a) \leq C\left (\max \left( [|a|]+1,(c_{\alpha_i,i})_{[\alpha_i]=\nu_i, i=1,2}, \dim G, \upsilon_{\dim G}, \|[\cdot,\cdot]\|_{\sL(\fg\times \fg, \fg)}\right)\right ),
$$
where $C:(0,\infty)\to (0,\infty)$ is an increasing function
and $ \|[\cdot,\cdot]\|_{\sL(\fg\times \fg, \fg)}$ denotes the operator norm  of the bilinear map $[\cdot,\cdot]:\fg\times \fg \to \fg$ with $\fg$ equipped with the norm that made $(X_1,\ldots, X_n)$ orthonormal.
However, the function $C$ does not depend on the group $G$ or the specific choice of $(X_1,\ldots, X_n)$.
\end{remark}

In order to distinguish the Sobolev spaces $L^2_s(G)$ on the graded group $G$ from the usual Sobolev spaces on the underlying $\bR^n$, 
we  denote by $H^s=H^s(\bR^n)$ the Eulidean Sobolev spaces on $\bR^n$.
The spaces $H^s$ and $L^2_s(G)$ are not comparable globally
(we assume that $G$ is not abelian), but they are locally.
Let us recall the definition of local Sobolev space:
\begin{definition}
	\begin{enumerate}
		\item On $\bR^n$, for any $s\in \bR$, the local Sobolev space $H^s_{loc}=H^s_{loc}(\bR^n)$  is the Fr\'echet space of distributions $f\in \cD'(\bR^n)$ that are locally in $H^s$, that is, such that for any $\chi\in \cD(\bR^n)$, we have $f\chi \in H^s$ . 
		\item Similarly, on a graded Lie group $G$, for any $s\in \bR$, the local Sobolev space $L^2_{s,loc}(G)$  is the Fr\'echet space of distributions $f\in \cD'(G)$ that are locally in $L^2_s(G)$, that is, such that for any $\chi\in \cD(G)$, we have $f\chi \in L^2_s(G)$. 
	\end{enumerate}
\end{definition}

We have the continuous inclusions
$$
H^s_{loc} \subset L^2_{s\upsilon_1,loc} 
\qquad\mbox{and}\qquad
L^2_{s,loc} \subset H^{s/\upsilon_n}_{loc};
$$
recall that $\upsilon_1\leq \ldots\leq \upsilon_n$
are the dilation's weights in increasing order.

\smallskip

Let us define the Sobolev spaces adapted to a compact nilmanifold:
\begin{definition}
Let $s\in \bR$. The Sobolev space $L^2_s(M)$ on the compact nilmanifold $M = \Gamma \backslash G$ is the space of distributions $f\in \cD'(M)$ whose corresponding $\Gamma$-periodic distribution $f_G\in \cS'(G)$ is in $L^2_{s,loc}(G)$.
\end{definition}

Routine checks and  Theorem \ref{thm_sobolev_spacesG} imply the following properties of the Sobolev spaces on $M$. Recall that if a differential $T$ is left invariant (for instance if $T=\cR_G$ a positive Rockland operator on $G$), then we associate the corresponding differential operator $T_M$ on $M$,  see Section \ref{subsubsec_opGM}.
Recall also that  fundamental domains for $\Gamma \backslash G$ are described in \cite[Section 5.3]{corwingreenleaf} or \cite[Proposition 2.4]{nilmanifold}.
\begin{proposition}
\label{prop_sobolev_spacesM}
Let $M=\Gamma\backslash G$ be a compact nilmanifold. 
	\begin{enumerate}
\item Let $\chi\in \cD(G)$ such that $\chi\equiv 1$ on a fundamental domain of $M$ in $G$. 
For any $s\in \bR$ and choice of norm $\|\cdot\|_{L^2_s(M)}$ on $L^2_s(G)$, the quantity
$$
\|f\|_{L^2_{s},\chi}:= \| f_G \chi\|_{L^2_s(G)}
$$
defines a Sobolev norm on $L^2_s(M)$. 
Different choices of $\chi$ and $\|\cdot\|_{L^2_s(M)}$ yield equivalent  norms on $L^2_s(M)$, and these equip 
the  spaces $L^2_s(M)$, $s\in \bR$, of a structure of Hilbert spaces.
These Hilbert spaces  satisfy the  properties of duality  and interpolation in the same sense as in Theorem \ref{thm_sobolev_spacesG}.
 \item
 \label{item_SobMcompres} If $\cR_G$ is a positive Rockland operator on $G$ of homogeneous degree $\nu$, then 
the operator $(\id+\cR_M)^{\frac a \nu}$ maps continuously 
$L^2_s(M)$ to $L^2_{s-a}(M)$ 
for any $s,a\in \bR$. 
Moreover, 
$$
\|f\|_{L^2_{s},\cR_M}:= \| (\id+\cR_M)^{s/\nu}f\|_{L^2(M)}
$$
defines an equivalent Sobolev norm on $L^2_s(M)$.

\item 
For any $\alpha\in \bN_0^n$, $X_M^\alpha$ maps continuously 
$L^2_s(M)$ to $L^2_{s-[\alpha]}(M)$
 for any $s\in \bR$.
Moreover, if $s\in \bN$ is a  common multiple of the weights $\upsilon_1,\ldots,\upsilon_n$ of the dilations, 
then  
$f\mapsto \sum_{ [\alpha]\leq s}
\|X_M^\alpha f\|_{L^2(G)}$ is an equivalent norm on $L^2_s(M)$.

 \item
 We have the continuous inclusions
 $$
s_1 \geq s_2 \ \Longrightarrow \ 
L^2_{s_1} (M)\subset L^2_{s_2} (M) ,
  $$
  and
  $$
 L^2_s(M)\subset C(M), \ s>Q/2,\qquad \mbox{(Sobolev embeddings) }
$$
where $C(M)$ denotes the Banach space of continuous functions on $M$.
Moreover, if $s_1>s_2$, the above embedding is compact. 
\end{enumerate}
\end{proposition}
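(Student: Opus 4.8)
The plan is to transfer every statement from $G$ to $M$ through the periodisation dictionary (Proposition \ref{prop_nilmanifold}), Theorem \ref{thm_sobolev_spacesG}, and a fixed cutoff $\chi\in\cD(G)$ equal to $1$ on a fundamental domain $\cF$ for $\Gamma\backslash G$, using throughout two facts: the left-invariant Sobolev norm on $G$ is unchanged by the left translations $L_\gamma$, $\gamma\in\Gamma$, and multiplication by any fixed $\cD(G)$-function is bounded on each $L^2_s(G)$ (Leibniz applied to Theorem \ref{thm_sobolev_spacesG}(2) when $s$ is a nonnegative multiple of the weights, then interpolation and duality for arbitrary $s$). For part (1): if $f\in L^2_s(M)$ then $f_G\in L^2_{s,loc}(G)$, so $f_G\chi\in L^2_s(G)$ and $\|f\|_{L^2_s,\chi}$ is finite, and it vanishes only when $f_G$ vanishes on $\{\chi=1\}$, hence everywhere by periodicity. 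The one computation I would really carry out is $\chi$-independence up to equivalence: cover $\supp\chi_1$ by finitely many translates $\gamma_j\,\mathring{\cF}$, pick $\eta_j\in\cD(G)$ supported where $L_{\gamma_j}\chi_2\equiv1$ with $\sum_j\eta_j\equiv1$ on $\supp\chi_1$, and write $f_G\chi_1=\sum_j(f_G\,L_{\gamma_j}\chi_2)(\chi_1\eta_j)$; since $\|f_G\,L_{\gamma_j}\chi_2\|_{L^2_s(G)}=\|f_G\chi_2\|_{L^2_s(G)}$ by left-invariance and multiplication by $\chi_1\eta_j$ is bounded, this gives $\|f\|_{L^2_s,\chi_1}\lesssim\|f\|_{L^2_s,\chi_2}$ and, symmetrically, equivalence. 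Completeness follows because a Cauchy sequence is, by the same estimate applied to every cutoff, Cauchy in $L^2_{s,loc}(G)$ after periodisation, with a $\Gamma$-periodic limit of the form $f_G$ by Proposition \ref{prop_nilmanifold}(2); the Hilbert structure is a pullback of a Hilbert norm on $L^2_s(G)$, and duality and interpolation on $M$ descend from Theorem \ref{thm_sobolev_spacesG}(1) by the same localisation.

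Part (3) is quickest: left-invariance gives $(X_M^\alpha f)_G=X^\alpha f_G$, so $X_M^\alpha\colon L^2_s(M)\to L^2_{s-[\alpha]}(M)$ is continuous by Theorem \ref{thm_sobolev_spacesG}(2) localised, and for $s$ a common multiple of the weights I would sum the equivalent-norm estimate of Theorem \ref{thm_sobolev_spacesG}(2) over the $\Gamma$-translates of $\chi$, apply Leibniz, and reassemble using \eqref{eq_dxddotx} and the finite overlap of the translates of $\supp\chi$. For part (2), $\cR_M$ coincides locally with the hypoelliptic operator $\cR_G$, hence is hypoelliptic on the compact manifold $M$, positive, and essentially self-adjoint on $L^2(M)$; by the spectral theorem $(\id+\cR_M)^{a/\nu}$ is an isometric bijection from $\dom((\id+\cR_M)^{s/\nu})$ onto $\dom((\id+\cR_M)^{(s-a)/\nu})$ (with the obvious $L^2(M)$-duals for negative exponents), so it suffices to identify this abstract scale with $L^2_s(M)$ up to equivalence. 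For $s=m\nu$, $m\in\bN$, one inclusion is part (3), since $(\id+\cR_M)^m$ is a differential operator of homogeneous degree $m\nu$; the reverse bound $\|f\|_{L^2_{m\nu}(M)}\lesssim\|(\id+\cR_M)^m f\|_{L^2(M)}$ is the subelliptic estimate for $\cR_M$, which I would deduce from the Rockland estimate $\|g\|_{L^2_{m\nu}(G)}\lesssim\|\cR_G^m g\|_{L^2(G)}+\|g\|_{L^2(G)}$ of \cite{R+F_monograph} applied to $g=f_G\chi$, commuting $\cR_G^m$ past $\chi$ and absorbing the lower-order commutator terms by induction on $m$. Duality gives $s=-m\nu$ and interpolation (part (1)) then gives all $s$; this simultaneously shows that $\|(\id+\cR_M)^{s/\nu}f\|_{L^2(M)}$ is an equivalent Sobolev norm.

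Finally, part (4): the inclusions $L^2_{s_1}(M)\subset L^2_{s_2}(M)$ for $s_1\ge s_2$ and the Sobolev embedding $L^2_s(M)\subset C(M)$ for $s>Q/2$ are the local statements on $G$ (Theorem \ref{thm_sobolev_spacesG}(3)) read through $\chi$; for compactness when $s_1>s_2$, I would factor the inclusion as an isometry of $L^2_{s_1}(M)$ onto $L^2(M)$, then the spectral multiplier $(\id+\cR_M)^{-(s_1-s_2)/\nu}$, then an isometry of $L^2(M)$ onto $L^2_{s_2}(M)$, the middle factor being compact because a sufficiently high negative power of $\id+\cR_M$ has a continuous kernel on $M\times M$ — the periodisation of a rapidly decreasing Bessel potential kernel on $G$, in the spirit of Lemma \ref{lem_IntKernel_general} — hence is Hilbert--Schmidt. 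The main obstacle is the subelliptic estimate for $\cR_M$ used in part (2): the rest is a localisation of Theorem \ref{thm_sobolev_spacesG}, but transferring the hypoelliptic \emph{gain} of regularity, not merely boundedness, through the cutoff and the $\Gamma$-periodisation is where the genuine work lies.
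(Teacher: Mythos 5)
Your proof is correct and supplies exactly the ``routine checks'' the paper delegates to the reader: it localises through a cutoff $\chi$ equal to $1$ on a fundamental domain, exploits left-invariance of the $L^2_s(G)$-norms under $\Gamma$, and pulls every assertion of Theorem~\ref{thm_sobolev_spacesG} down to $M$, which is precisely the route the paper's one-line remark (``Routine checks and Theorem~\ref{thm_sobolev_spacesG} imply\ldots'') indicates. The one genuinely non-routine point you identify — the compactness in Part~(4) — you settle by making a sufficiently high negative power of $\id+\cR_M$ Hilbert--Schmidt via the periodised Bessel-type kernel of $(\id+\cR_G)^{-N}$, then passing to arbitrary positive powers by spectral calculus; this is a self-contained variant of the heat-kernel argument the paper uses later in Proposition~\ref{prop_I+cR_comptrHS}, and the small step of identifying the spectrally defined $(\id+\cR_M)^{-N}$ with the operator induced by $(\id+\cR_G)^{-N}$ (right-inverse of $(\id+\cR_M)^N$, hence the inverse by self-adjointness) is worth recording explicitly, since Lemma~\ref{lem_IntKernel_general} as stated asks for a Schwartz kernel.
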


We also have the following properties for negative powers of $\id+\cR_M$:
 \begin{proposition}
 \label{prop_I+cR_comptrHS}
 	Let $\cR_G$ be a positive Rockland operator on $G$ of homogeneous degree $\nu$.
 	Let $\cR_M$ be the corresponding operator on $M$. 
 	Then  the operator $(\id+\cR_M)^{-1}$ is compact $ L^2_{s}(M)\to L^2(M)$ for any $s<\nu.$
 	Moreover, $(\id+\cR_M)^{-s/\nu}$ is trace-class for any $s>Q$ and Hilbert-Schmidt class for $s>Q/2$.  
 \end{proposition}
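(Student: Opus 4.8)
The plan is to deduce everything from the mapping properties of powers of $\id+\cR_M$ already recorded in Proposition \ref{prop_sobolev_spacesM}, together with the compact Sobolev embeddings on $M$ and the Hilbert--Schmidt criterion on $L^2(M)$. First I would treat the compactness claim: by Proposition \ref{prop_sobolev_spacesM}(\ref{item_SobMcompres}), the operator $(\id+\cR_M)^{-1}$ maps $L^2(M)$ continuously into $L^2_\nu(M)$, hence it maps $L^2_s(M)$ continuously into $L^2_{s+\nu}(M)$ for any $s$; composing with the embedding $L^2_{s+\nu}(M)\hookrightarrow L^2(M)$, which is compact whenever $s+\nu>0$, i.e. $s>-\nu$, shows $(\id+\cR_M)^{-1}:L^2_s(M)\to L^2(M)$ is compact. (The statement asks for $s<\nu$, which certainly includes the interesting range; in any case $s>-\nu$ is what the argument gives, and for $0\le s<\nu$ one simply precomposes with the continuous inclusion $L^2(M)\subset L^2_s(M)$.)

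For the trace-class and Hilbert--Schmidt assertions I would argue via the integral kernel. Write $s=k\nu+r$ with $k\in\bN_0$ and $0\le r<\nu$, or more simply work with $(\id+\cR_M)^{-s/\nu}$ directly: by Hulanicki's theorem (Theorem \ref{thm_hula}), for large enough exponent the convolution kernel of $(\id+\cR_G)^{-t}$ on $G$ is a Schwartz function — indeed $(1+\lambda)^{-t}$ is not Schwartz on $\bR$, but its restriction to $[0,\infty)$ can be multiplied by a cutoff and the difference handled by the spectral gap at $0$; alternatively one uses the subordination formula $(\id+\cR_G)^{-t}=\frac1{\Gamma(t)}\int_0^\infty e^{-u}e^{-u\cR_G}u^{t-1}\,du$ and the Schwartz heat kernels $p_u$ to see that the kernel of $(\id+\cR_G)^{-t}$ decays rapidly, so in particular lies in $\cS(G)$ once $\RE t>0$ (its behaviour near $0$ being controlled by Theorem \ref{thm_christ}, which requires $2t>Q/\nu$ for $L^2$ and $t>Q/\nu$ for the kernel to be continuous at $0$). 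Once the $G$-kernel $\kappa_t$ of $(\id+\cR_G)^{-t}$ is known to be Schwartz, Lemma \ref{lem_IntKernel_general} gives that the operator $(\id+\cR_M)^{-t}$ on $M$ has the smooth integral kernel $K_t(\dot x,\dot y)=\sum_{\gamma\in\Gamma}\kappa_t(y^{-1}\gamma x)$ and is Hilbert--Schmidt with $\|(\id+\cR_M)^{-t}\|_{HS}=\|K_t\|_{L^2(M\times M)}$. Taking $t=s/(2\nu)$ with $s>Q/2$ — so $2t=s/\nu>Q/(2\nu)$, enough for Theorem \ref{thm_christ} to put $\kappa_t\in L^2(G)$ and in fact the full strength of Hulanicki puts it in $\cS(G)$ — yields the Hilbert--Schmidt statement. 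For trace-class with $s>Q$, I would write $(\id+\cR_M)^{-s/\nu}=(\id+\cR_M)^{-s/(2\nu)}\,(\id+\cR_M)^{-s/(2\nu)}$ as a product of two Hilbert--Schmidt operators (each valid since $s/2>Q/2$), hence trace-class.

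The main obstacle I anticipate is the careful treatment of the kernel of $(\id+\cR_G)^{-t}$: the function $\lambda\mapsto(1+\lambda)^{-t}$ is smooth on $[0,\infty)$ but not Schwartz, so Hulanicki's theorem does not apply verbatim, and one must either invoke the subordination/heat-kernel representation to get decay at infinity together with Theorem \ref{thm_christ} (and the fact, recalled after \eqref{eq_spectralmeas}, that $0$ may be neglected in the spectrum of $\cR_G$) to control regularity near the origin, or split $(1+\lambda)^{-t}=\psi_1(\lambda)+\psi_2(\lambda)$ with $\psi_1\in\cD(\bR)$ supported near $0$ and $\psi_2$ vanishing near $0$ so that $\psi_2(\lambda)=\psi_2(\lambda)\tilde\psi(\lambda)$ can be written with a Schwartz factor — this makes $\psi_2(\cR_G)\delta_0\in\cS(G)$ by Hulanicki and $\psi_1(\cR_G)\delta_0\in L^2(G)$ (indeed smooth, by elliptic/hypoelliptic regularity) by Theorem \ref{thm_christ}. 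Modulo this technical point, which is essentially standard, the rest is the bookkeeping above.
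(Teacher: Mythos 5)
Your treatment of the compactness claim is correct and differs from the paper only in route: the paper simply cites the discreteness of the spectrum of $\cR_M$ with finite-dimensional eigenspaces (from the reference \cite{nilmanifold}), while you compose the bounded map $(\id+\cR_M)^{-1}:L^2_s(M)\to L^2_{s+\nu}(M)$ with the compact embedding $L^2_{s+\nu}(M)\hookrightarrow L^2(M)$. Both are legitimate, and your remark that the range of $s$ in the printed statement does not quite match what the mapping properties give is fair; the proposition is in fact used later in the form ``$(\id+\cR_M)^{-1}$ is compact $L^2(M)\to L^2_{s}(M)$ for $s<\nu$'', which is exactly what the compact embedding yields.

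The trace-class and Hilbert--Schmidt part, however, has a genuine gap. The convolution kernel $\cB_{t\nu}$ of $(\id+\cR_G)^{-t}$ is \emph{never} a Schwartz function for finite $t$: the multiplier $\lambda\mapsto(1+\lambda)^{-t}$ decays only polynomially, and correspondingly $\cB_{t\nu}$ has only finite regularity at the origin (it is the analogue of a Euclidean Bessel potential kernel: integrable for $t>0$, square-integrable for $t\nu>Q/2$, continuous for $t\nu>Q$, but not $C^\infty$ at $0$). Hulanicki's theorem therefore does not apply, and neither of your proposed repairs closes the gap: $\cR_G$ on the noncompact group has no spectral gap at $0$ (its spectrum is all of $[0,\infty)$; only the point $0$ is negligible for the spectral measure), and in the splitting $(1+\lambda)^{-t}=\psi_1+\psi_2$ it is the compactly supported piece $\psi_1$ that Hulanicki handles, whereas the high-frequency piece $\psi_2$ --- precisely the one responsible for the singularity of the kernel at the origin --- still decays only polynomially and cannot be ``written with a Schwartz factor'' in any way that makes Hulanicki's theorem applicable. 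Since $\kappa_t\notin\cS(G)$, Lemma \ref{lem_IntKernel_general} does not apply as stated, and passing from $\kappa_t\in L^2(G)$ to $K_t\in L^2(M\times M)$ for the periodised kernel needs extra decay input (rapid decay away from the origin, as in Theorem \ref{thm_kernelG}(1)) that your sketch does not supply. The paper avoids all of this by subordinating to the heat semigroup \emph{at the level of traces on $M$}: it bounds $\tr\,(\id+\cR_M)^{-s/\nu}$ by $\Gamma(s/\nu)^{-1}\int_0^\infty t^{s/\nu-1}e^{-t}\tr(e^{-t\cR_M})\,dt$ and uses the short-time heat-trace bound on the nilmanifold, which makes the integral converge exactly for $s>Q$; the Hilbert--Schmidt claim then follows from $\|(\id+\cR_M)^{-s/\nu}\|_{HS}^2=\tr\,(\id+\cR_M)^{-2s/\nu}$. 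Your final step (trace class as a product of two Hilbert--Schmidt operators) is this identity read in the other direction and would be fine once the Hilbert--Schmidt statement is actually established.
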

\begin{proof}
The compactness of $(\id+\cR_M)^{-1}$   follows from \cite[Section 3]{nilmanifold}  where it is proved that the spectrum of $\cR_M$ is discrete and its eigenspaces are finite dimensional. 

By functional calculus, the operator $e^{-t\cR_M}$ are non-negative, and we have:
$$
0\leq \tr (\id+\cR_M)^{-\frac s \nu}
\leq
\frac 1{\Gamma (s/\nu)}\int_0^\infty t^{\frac s \nu -1}
e^{-t}\tr ( e^{-t\cR_M}) dt.
$$
By \cite[Section 4]{nilmanifold}, 
there exists $C>0$ such that  (2 here is arbitrary)
$$
\forall t\in (0,2]\qquad
0\leq \tr (e^{-t\cR_M})\leq C t^{Q/\nu}.
$$
As $\|e^{-t\cR_M}\|_{\sL(L^2(M))}\leq 1$ for any $t>0$ by functional calculus, we have for any $t\geq 1$
$$
\tr (e^{-t\cR_M})
\leq 
\|e^{-(t-1)\cR_M}\|_{\sL(L^2(M))}
\tr (e^{-\cR_M})
\lesssim 1.
$$
The integral formula above allows us to conclude that $\tr (\id+\cR_M)^{-\frac s \nu}$ is finite for $s>Q$.
As $\|(\id+\cR_M)^{-\frac s \nu}\|_{HS(L^2(M))}^2 = \tr (\id+\cR_M)^{-\frac {2s} \nu}$, we obtain the result regarding the Hilbert-Schmidt classes. 
\end{proof}

\subsection{Further examples of positive Rockland operators}
\label{subsec_furtherexR}
We can generalise Example \ref{ex_Rockland} in the following way:

\begin{proposition}
\label{prop_exRgeneral}	
\begin{enumerate}
	\item Let $G$ be a stratified Lie group and $X_1,\ldots,X_{n_1}$ a basis of the first stratum. 
	Fix $\nu_1\in \bN$ and set $\nu'_1=\#\{\alpha\in \bN_0^{n_1} \ : \ |\alpha|=\nu_1\}$.
	Let $a_{\alpha,\beta}\in \bR$ with $\alpha,\beta\in \bN_0^{n_1}$, $|\alpha|=|\beta|=\nu_1$. 
If the matrix $A:=(a_{\alpha,\beta})_{\alpha,\beta}$ is non-negative, then 
the differential operator 
$$
\cR_A:=\sum_{|\alpha|=|\beta|=\nu_1} a_{\alpha,\beta} (X^\beta)^t X^\alpha 
$$
 is symmetric, non-negative and homogeneous of degree $2\nu_1$.
If furthermore  $A$ is positive definite, 
	then 
	$\cR_A$ is a  positive Rockland operator.
	\item Let $G$ be a graded Lie group, and $X_1,\ldots, X_n$ an adapted basis to its graded Lie algebra.
	Fix two common multiple $\nu_0,\nu_1$  of the weights $\upsilon_j$, $j=1,\ldots,n$ of the dilations.
	Set $Y_j:= X_j^{ \nu_0 /\upsilon_j}$, $j=1,\ldots, n$ and set $\nu'_1=\#\{\alpha\in \bN_0^{n} \ : \ [\alpha]=\nu_1\}$.
	Let $a_{\alpha,\beta}\in \bR$ with $\alpha,\beta\in \bN_0^{n}$, $[\alpha]=[\beta]=\nu_1$. 
	If the matrix $A:=(a_{\alpha,\beta})_{\alpha,\beta}$ is non-negative, then 
the differential operator 
$$
\cR_A:=
\sum_{[\alpha]=[\beta]=\nu_1}
a_{\alpha,\beta} (X^\beta)^t X^\alpha 
$$
 is symmetric, non-negative and homogeneous of degree $\nu_1 \nu_0$.
If furthermore  $A$ is positive definite, 
	then 
	$\cR_A$ is a  positive Rockland operator.
\end{enumerate}	
\end{proposition}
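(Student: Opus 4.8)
The plan is to rewrite $\cR_A$ as a ``sum of squares'' of left-invariant homogeneous operators — which yields symmetry, non-negativity and homogeneity at once — and then, in the case $A>0$, to verify the Rockland condition directly by an injectivity argument on smooth vectors.

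First, since $A=(a_{\alpha,\beta})$ is a real symmetric non-negative matrix, I would factor it as $A=B^tB$ for some real matrix $B=(B_{\gamma,\alpha})$ (for instance $B=A^{1/2}$, or $B$ coming from an orthogonal diagonalisation of $A$), and set
$$
L_\gamma:=\sum_\alpha B_{\gamma,\alpha}X^\alpha ,
$$
the index $\alpha$ running over the relevant set ($|\alpha|=\nu_1$ in~(1), $[\alpha]=\nu_1$ in~(2)). Using $(PQ)^t=Q^tP^t$ and $\sum_\gamma B_{\gamma,\alpha}B_{\gamma,\beta}=a_{\alpha,\beta}$, a direct computation gives
$$
\cR_A=\sum_\gamma L_\gamma^t L_\gamma .
$$
The first three assertions then follow immediately: $\cR_A^t=\cR_A$ because the $L_\gamma$ have real coefficients; $\cR_A$ is homogeneous of the stated degree because each $X_j$ (resp.\ $Y_j$) is homogeneous of degree $\upsilon_j$ (resp.\ $\nu_0$) while transposition and composition add homogeneous degrees; and, integrating by parts, for $f\in\cD(G)$
$$
\int_G\cR_A f\,\overline f\,dx=\sum_\gamma\|L_\gamma f\|_{L^2(G)}^2\ge 0 ,
$$
so $\cR_A$ is non-negative. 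It is moreover left-invariant and of positive homogeneous degree.

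Next, assuming $A>0$ (so that $B$ may be taken invertible), I would check the Rockland condition. Fix a non-trivial $\pi\in\Gh$ and $v\in\cH_\pi^\infty$ with $\pi(\cR_A)v=0$; the goal is $v=0$. Since $\pi$ is unitary, $\pi(X)^*=-\pi(X)=\pi(X^t)$ on $\cH_\pi^\infty$, hence $\pi(L_\gamma^t)=\pi(L_\gamma)^*$ there, so that
$$
0=\langle\pi(\cR_A)v,v\rangle_{\cH_\pi}=\sum_\gamma\|\pi(L_\gamma)v\|_{\cH_\pi}^2 ,
$$
forcing $\pi(L_\gamma)v=0$ for all $\gamma$; since $B$ is invertible this gives $\pi(X^\alpha)v=0$ for every $\alpha$ in the index set. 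Choosing $\alpha$ supported at a single coordinate $j$ yields $\pi(X_j)^{m_j}v=0$ with $m_j\ge1$ ($m_j=\nu_1$ in~(1), $m_j=\nu_1/\upsilon_j$ in~(2)).

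The heart of the argument — and the step I expect to be the main obstacle — is to propagate this to $v=0$. By Stone's theorem the skew-symmetric operator $\pi(X_j)$ on $\cH_\pi^\infty$ extends to the skew-adjoint generator of the unitary group $t\mapsto\pi(e^{tX_j})$, and the spectral theorem applied to $i\pi(X_j)$ turns $\pi(X_j)^{m_j}v=0$ into $\pi(X_j)v=0$; thus $\pi(X_j)v=0$ for every $j$. In case~(2) the $X_j$ form a basis of $\fg$, so $\pi(X)v=0$ for all $X\in\fg$. In case~(1) the first-stratum vectors $X_1,\ldots,X_{n_1}$ generate $\fg$, and $\{X\in\fg:\pi(X)v=0\}$ is a Lie subalgebra since $\pi([X,Y])v=\pi(X)\pi(Y)v-\pi(Y)\pi(X)v$ vanishes whenever $\pi(X)v=\pi(Y)v=0$; hence again $\pi(X)v=0$ for all $X\in\fg$. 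It follows that $\pi(e^{X})v=v$ for all $X\in\fg$, therefore $\pi(g)v=v$ for all $g\in G$, and irreducibility of the non-trivial representation $\pi$ forces $v=0$. So $\pi(\cR_A)$ is injective on $\cH_\pi^\infty$ for every non-trivial $\pi$, i.e.\ $\cR_A$ satisfies the Rockland condition; combined with the previous paragraph, $\cR_A$ is a positive Rockland operator. The points requiring care are that all these manipulations take place on the space $\cH_\pi^\infty$ of smooth vectors, and that in case~(1) it is precisely the stratification hypothesis (first stratum generating $\fg$) that makes the Lie-subalgebra/generation step go through.
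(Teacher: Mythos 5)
Your proof is correct and follows essentially the same route as the paper: factor the non-negative matrix to write $\cR_A$ as a (weighted) sum of squares of left-invariant homogeneous operators, deduce symmetry, homogeneity and non-negativity, then for $A>0$ invert the linear change of basis to get $\pi(X_j)^{m_j}v=0$ and conclude $v=0$. The only difference is cosmetic: where the paper invokes \cite[Lemmas 2.17 and 2.18]{R+F_monograph} for the final Rockland step, you spell out the same argument explicitly (spectral theorem for the essentially skew-adjoint $\pi(X_j)$ to drop the power, closure of $\{X:\pi(X)v=0\}$ under brackets plus generation by the first stratum, and irreducibility of $\pi$ to force $v=0$), which makes the proof self-contained.
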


\begin{proof}
Let us prove Part (1). Clearly, $\cR_A$ is a homogeneous differential operator of homogeneous degree $2\nu_1$.
We compute the formal transpose of $\cR_A$:
$$
\cR_A^t
=
\sum_{|\alpha|=|\beta|=\nu_1} a_{\alpha,\beta} (X^\alpha)^t X^\beta 
=\cR_A,
$$
so  $\cR_A$ is symmetric. 
As $A$ is non-negative, there exists an orthogonal $\nu'_1\times\nu'_1$-matrix $P$ such that $PAP^{-1} = \diag(\lambda_1,\ldots,\lambda_{\nu_1})$ with $\lambda_j\geq 0$, 
and we have for any $v=(v_\alpha)\in \bR^{\nu'_1}$
$$
\sum_{|\alpha|=|\beta|=\nu_1} a_{\alpha,\beta} v_\alpha v_\beta =
\sum_{j=1}^{\nu_1} \lambda_j [Pv]_{\nu_1 e_j}^t [Pv]_{\nu_1 e_j},
$$
where $e_j =(0,\ldots, 0,1,0,\ldots ,0) \in \bN_0^{\nu_1}$ is the index with 0 everywhere except at the $j$-th place where it is equal to 1; 
note that $|\nu_1 e_j|=\nu_1$.
We denote by $Z$ the $\nu'_1$-tuple of left-invariant differential operators given as the $P$-linear combination of the $X^\alpha$, $|\alpha|=\nu_1$, i.e. $Z:= P(X^\alpha)_\alpha$.
We may re-write $\cR_A$ as 
$$
\cR_A = \sum_{j=1}^{\nu_1} \lambda_j [Z]_{\nu_1 e_j}^t [Z]_{\nu_1 e_j},
$$
where  $[Z]_{\nu_1 e_j}$ the invariant differential operator corresponding to the index $\nu_1 e_j$ in $Z$.

We can now readily check that $\cR_A$ is non-negative:
$$
\int_G \cR_A f (x) \ \overline{f(x)} dx
=
\sum_{j=1}^{\nu_1} \lambda_j \|
[Z]_{\nu_1 e_j}f \|_{L^2(G)}^2.
\geq 0
$$
Moreover, for any $\pi\in \Gh$, we have
$$
\pi(\cR_A) = \sum_{j=1}^{\nu_1} \lambda_j \pi([Z]_{\nu_1 e_j})^* \pi( [Z]_{\nu_1 e_j})
$$
so if $v\in \cH_\pi^\infty$
with $\pi(\cR_A)v=0$, 
then 
$$
0=(\pi(\cR_A)v,v) _{\cH_\pi}
=\sum_{j=1}^{\nu_1} \lambda_j \|\pi([Z]_{\nu_1 e_j}) v\|^2.
$$
We now assume  $A>0$, which means that every $\lambda_j$ is positive.
For any $\pi\in \Gh\setminus\{1\}$ and $v$ as above, we have $\pi([Z]_{\nu_1 e_j}) v=0$ for $j=1,\ldots,\nu_1$.
As we may write matricially
$$
(X^\alpha)_{|\alpha|=\nu_1} = A^{-1} P^{-1} \diag (\lambda_1,\ldots,\lambda_{\nu_1}) Z
= A^{-1} P^{-1} \diag ([Z]_{\nu_1 e_1},\ldots, [Z]_{\nu_1 e_{\nu_1}})
$$
and similarly for $\pi(X)^\alpha$ and the entries of $\pi(Z)$, this implies that $\pi(X^\alpha)v=0$ for any $\alpha$ with $|\alpha|=\nu_1$, in particular for  $\pi(X_j)^{\nu_1} v=0$, $j=1,\ldots n_1$. Proceeding as in \cite[Lemma 2.18]{R+F_monograph}, this implies recursively $\pi(X_j)^{\nu_1} v=0$, and therefore $v=0$ by proceeding as in \cite[Lemma 2.17]{R+F_monograph}. This shows Part (1). Part (2) is proved in a similar manner. 
	\end{proof}

The canonical examples will correspond to the case where $\nu_1=1$ and the coefficients $a_{\alpha,\beta}$ form the identity matrix $\id$.
Indeed, we then recognise the sub-Laplacian 
$$
\cR_\id=-\sum_{j=1}^{n_1} X_j^2
$$
 in the stratified case, 
 and  
 $$
 \cR_\id=\sum_{j=1}^{n} (-1)^{\nu_0 / \upsilon_j} X_j^{2 \nu_0 /\upsilon_j}
 $$
in the graded case 
as in Example \ref{ex_Rockland} (3).
We can  be more precise in the $L^2$-boundedness between two positive Rockland operators described in Lemma \ref{lem_R1R2bdd} (see also Remark \ref{rem_lem_R1R2bdd}) with respect to the case of the identity matrix: 

\begin{corollary}
\label{cor_exRgeneral}
We continue with the setting of Proposition \ref{prop_exRgeneral} with $A>0$.
\begin{enumerate}
	\item 	We consider the case of 
	a stratified Lie group $G$. 
For any $a\in \bR$, the maximum of the operator norms 
$$ \|(\id+ \cR_A)^{a/\nu_1} (\id+\cR_\id)^{-a/\nu_1}\|_{\sL(L^2(G))},\  \| (\id+\cR_\id)^{a/\nu_1} (\id+\cR_A)^{-a/\nu_1}\|_{\sL(L^2(G))}
 $$
 is bounded 
 by $
 C(\max ([|a|]+1,\lambda_{A,1},\lambda_{A,n_1}))$
  where $\lambda_{A,1}$ and $\lambda_{A,n_1}$ are the lowest and highest eigenvalue of $A$, and 
 $C:(0,\infty)\to (0,\infty)$ is an increasing function that depend on the structure of the graded Lie group $G$ and on the scalar product  that makes $(X_1,\ldots, X_{n_1})$ orthonormal. 

Above, $\cR_\id= \sum_{|\alpha|=\nu_1} (X^\alpha)^t X^\alpha $ corresponds to the case of 	$A=\id$ being   the identity matrix.
	\item We consider the case of 
	a graded Lie group $G$. For any $a\in \bR$, the maximum of the operator norms 
$$ \|(\id+ \cR_A)^{a/\nu_1} (\id+\cR_\id)^{-a/\nu_1}\|_{\sL(L^2(G))},\  \| (\id+\cR_\id)^{a/\nu_1} (\id+\cR_A)^{-a/\nu_1}\|_{\sL(L^2(G))}
 $$
 is bounded 
 by $
 C(\max ([|a|]+1,\lambda_{A,1},\lambda_{A,n})),
 $
where $\lambda_{A,1}$ and $\lambda_{A,n}$ are the lowest and highest eigenvalue of $A$, and 
 $C:(0,\infty)\to (0,\infty)$ is an increasing function depending on the structure of the graded Lie group $G$ and the scalar product that makes $(X_1,\ldots, X_{n})$ orthonormal. 
 Above, $\cR_\id= \sum_{[\alpha]=\nu_1} (Y^\alpha)^t Y^\alpha $ corresponds to the case of 	$A=\id$ being   the identity matrix.
\end{enumerate}	
\end{corollary}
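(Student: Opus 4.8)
The plan is to extract from the proof of Lemma \ref{lem_R1R2bdd} the precise dependence of its constant on the structural data, exactly as outlined in Remark \ref{rem_lem_R1R2bdd}, and then to check that when we compare $\cR_A$ with $\cR_\id$ on a fixed stratified (resp. graded) group with a fixed adapted basis, the only structural quantities that change with $A$ are the coefficients $a_{\alpha,\beta}$, and these are controlled by the extreme eigenvalues $\lambda_{A,1}$ and $\lambda_{A,n_1}$ (resp. $\lambda_{A,n}$). Concretely, I would first recall that $\cR_\id=\sum_{[\alpha]=\nu_1}(X^\alpha)^tX^\alpha$ (or the $Y^\alpha$ version in the graded case) is itself of the form $\cR_{A}$ with $A=\id$, hence by Proposition \ref{prop_exRgeneral} it is a positive Rockland operator of the same homogeneous degree $\nu:=2\nu_1$ (resp. $\nu_1\nu_0$) as $\cR_A$. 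Thus Lemma \ref{lem_R1R2bdd} applies to the pair $(\cR_A,\cR_\id)$ and gives that $(\id+\cR_A)^{a/\nu}(\id+\cR_\id)^{-a/\nu}$ and $(\id+\cR_\id)^{a/\nu}(\id+\cR_A)^{-a/\nu}$ are bounded on $L^2(G)$; it remains only to track the constant.

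Next I would invoke the quantitative form of Remark \ref{rem_lem_R1R2bdd}: writing $\cR_A=\sum_{[\beta]=[\alpha]=\nu_1}a_{\alpha,\beta}(X^\beta)^tX^\alpha$ (and $\cR_\id$ with coefficient matrix $\id$), the $L^2$-operator norm of either comparison operator is at most $C\big(\max([|a|]+1,\ (a_{\alpha,\beta}),\ 1,\ \dim G,\ \upsilon_{\dim G},\ \|[\cdot,\cdot]\|\big)$ for an increasing function $C$ that does \emph{not} depend on $G$ or the chosen basis. Since $G$ and the adapted basis $(X_1,\ldots,X_n)$ are fixed throughout the corollary, the quantities $\dim G$, $\upsilon_{\dim G}$, $\|[\cdot,\cdot]\|_{\sL(\fg\times\fg,\fg)}$ (computed with the scalar product making $(X_1,\ldots,X_n)$ orthonormal) and the coefficient matrix of $\cR_\id$ are all constants absorbed into the structural dependence of $C$; the only remaining variable input is the matrix $A=(a_{\alpha,\beta})$. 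Finally, because $A$ is symmetric, each entry satisfies $|a_{\alpha,\beta}|\le\|A\|_{op}=\lambda_{A,n_1}$ (resp. $\lambda_{A,n}$), so the collection $(a_{\alpha,\beta})$ is dominated by $\lambda_{A,n_1}$; one also records $\lambda_{A,1}$ since the comparison in the reverse direction — bounding $(\id+\cR_\id)^{a/\nu}(\id+\cR_A)^{-a/\nu}$ — uses the lower bound $A\ge\lambda_{A,1}\id>0$ (equivalently, $\cR_A\ge\lambda_{A,1}\cR_\id$ via the spectral/representation-theoretic comparison in the proof of Proposition \ref{prop_exRgeneral}), which enters $C$ through $\lambda_{A,1}$. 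Monotonicity of $C$ then yields the stated bound $C(\max([|a|]+1,\lambda_{A,1},\lambda_{A,n_1}))$, and the graded case (Part (2)) is identical with $Y_j=X_j^{\nu_0/\upsilon_j}$, $\nu=\nu_1\nu_0$, and $\lambda_{A,n}$ in place of $\lambda_{A,n_1}$.

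The main obstacle is the bookkeeping in the second step: one must be sure that the proof of Lemma \ref{lem_R1R2bdd} (as refined in Remark \ref{rem_lem_R1R2bdd}) really does isolate the coefficient matrices of $\cR_A$ and $\cR_\id$ as the only data through which $A$ enters, and that no hidden dependence on, say, the spectral gap of $\pi(\cR_A)$ or on quantities that degenerate as $\lambda_{A,1}\to0$ creeps in except through $\lambda_{A,1}$ itself. This is where the positivity hypothesis $A>0$ is essential — it guarantees $\cR_A$ is Rockland with a lower bound comparable to $\lambda_{A,1}\cR_\id$, uniformly once $\lambda_{A,1}$ is bounded below — and it is the point that requires the most care to state cleanly; everything else is a direct specialisation of results already proved.
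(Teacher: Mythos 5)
Your proposal is correct and follows the same route the paper intends: Corollary \ref{cor_exRgeneral} is stated there without a separate proof, as a direct specialisation of Lemma \ref{lem_R1R2bdd} and the quantitative Remark \ref{rem_lem_R1R2bdd} to the pair $(\cR_A,\cR_\id)$, which is exactly what you carry out — identifying the coefficient matrix of $\cR_A$ as the only $A$-dependent input, bounding its entries by $\lambda_{A,n_1}$ (resp.\ $\lambda_{A,n}$), and using $A>0$ only to guarantee that $\cR_A$ is a positive Rockland operator so the lemma applies. Your closing caveat about making sure no dependence that degenerates as $\lambda_{A,1}\to 0$ hides inside the constant of Remark \ref{rem_lem_R1R2bdd} is the one genuinely delicate point, and you treat it in the same spirit as the paper.
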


\section{Pseudo-differential calculi on $G$ and $M$}\label{Sec:calculi_symbols}

In this section, we recall the construction of the pseudo-differential calculus on a graded nilpotent Lie group $G$ as presented in \cite{R+F_monograph}.
We also explain how it induces a pseudo-differential calculus on its compact nilmanifold $M$.

\subsection{Symbol classes on $G\times \Gh$}

\subsubsection{Invariant symbols and $L^\infty_{a,b}(\Gh)$}
\begin{definition}
An invariant symbol $\sigma$ on $G$ is a  measurable field of operators $\sigma= \{\sigma (\pi):\cH_\pi^{+\infty} \to \cH_\pi^{+\infty} : \pi\in \widehat G\}$ over $\widehat G$. 
Here, $\cH_\pi^{+\infty}$ denotes the spaces of smooth vectors in $\cH_\pi$.
We denote by $\sF_{G}$ the set of all invariant symbols on $G$.
\end{definition}

We set
$$
\cK_{a,b}(G) := \{\kappa\in \cS'(G), (f\mapsto f\star_G\kappa) \in \sL(L^2_a(G),L^2_b(G))\},
$$
Fixing a positive Rockland operator $\cR$ of homogenenous degree $\nu$, if $\sigma$ is an invariant symbol, then  so is the symbol
$$
(\id+\widehat \cR)^{b/\nu}  \sigma (\id+\widehat \cR)^{-a/\nu} 
=\{(\id+\pi (\cR))^{b/\nu}  \sigma(\pi) (\id+\pi (\cR))^{-a/\nu} : \pi\in \widehat G\}.
$$
We then define the following subspace of invariant symbols
$$
L^\infty_{a,b}(\widehat G)
:= \big\{ \sigma \in \sF_G \, \big| \, (\id+\widehat \cR)^{b/\nu}  \sigma\, (\id+\widehat \cR)^{-a/\nu} \in L^\infty(\widehat G) \big\}.
$$
Equipped with the norm 
$$
\|\sigma\|_{L^\infty_{a,b}(\widehat G),\cR}
:= \|(\id+\widehat \cR)^{b/\nu}  \sigma\, (\id+\widehat \cR)^{-a/\nu}\|_{L^\infty(\widehat G)}, 
$$
it is a Banach space, isomorphic and isometric to the subspace of left-invariant operators in $\sL(L^2_a(G),L^2_b(G))$. 
The properties    of the Rockland operators \cite[Section 5.1]{R+F_monograph} imply that $L^\infty_{a,b}(\widehat G)$ is independent of the choice of a positive Rockland operator $\cR$. 
Moreover, 
the Schwartz kernel theorem allows us to extend the group Fourier transform into a bijection
$\cF_G :\cK_{a,b}(G) \to L^\infty_{a,b}(G)$. 

\subsubsection{Difference operator}

\begin{definition}
	Let $q\in C^\infty(G)$ and let $\sigma$ be an invariant symbol on $\widehat G$. 
	We say that $\sigma$ is $\Delta_q$-differentiable when $\sigma\in L^\infty_{a,b}(\widehat G)$ and $q \cF_G^{-1}\sigma\in \cK_{c,d}(G)$ for some $a,b,c,d\in \bR$, and we set
	$$
	\Delta_q \sigma = \cF_G (q \cF_G^{-1}\sigma).
	$$
\end{definition}

If $\kappa\in \cS'(G)$ is in some classes of functions where the Fourier transform makes sense, e.g. in $L^1(G)$ or $L^2(G)$ or $\cK_{a,b}(G)$, then we say that the symbol $\sigma:=\widehat \kappa$  admits $\kappa$ as convolution kernel. 
If $q$ is a smooth function of polynomial growth, then the distribution $q\kappa\in \cS'(G)$ makes sense; if the Fourier transform of $q\kappa$ makes sense, then we write
$$
\Delta_q \sigma = \cF_G (q\kappa).
$$
We assume that a basis $X_1,\ldots, X_n$ of $\fg$ adapted to the gradation has been fixed. 
We then denote by $x_1,\ldots,x_n$ the corresponding coordinate functions on $G$, that is, 
$\exp \sum_{j=1}^n x_j X_j \mapsto x_j$.
We also set
$$
x^\alpha=x_1^{\alpha_1}\ldots x_n^{\alpha_n}, \quad\mbox{and}\quad
\Delta_\alpha=\Delta_{x^\alpha},\qquad 
\alpha=(\alpha_1,\ldots,\alpha_n)\in \bN_0^n.
$$

\subsubsection{The class $S^m(G\times \Gh)$}
We can now define our classes of symbols.
\begin{definition}
\label{def_SmGGh}
Let $m\in \bR$.
	A symbol $\sigma$ is in $S^m(G\times \Gh)$ when 
	\begin{enumerate}
		\item for every $x\in G$, 
	$\sigma(x,\cdot)$ is an invariant symbol 
	such that $ \sigma (x,\cdot) \in L^\infty_{0,-m} (\Gh)$, 
	\item the map $x\mapsto \cF_G^{-1} \sigma(x,\cdot)$ is  a smooth map from $G$ to the topological vector space $\cS'(G)$ of tempered distributions on $G$,
	\item for any $\alpha,\beta\in \bN_0^n$ and $\gamma\in \bR$, we have $\Delta_{\alpha}X_x^\beta \sigma (x,\cdot) \in L^\infty_{\gamma,[\alpha]+\gamma-m} (\Gh)$ for any $x\in G$, 
	with 
	$$
	\sup_{x\in G} \|\Delta_{\alpha}X_x^\beta \sigma (x,\cdot)\|_{L^\infty_{\gamma,[\alpha]+\gamma-m} (\Gh)}<\infty.
	$$
	\end{enumerate}
\end{definition}
In other words, if a positive Rockland operator $\cR$ is fixed on $G$,
 $\sigma$ belongs to $S^m(G\times \Gh)$ if and only if the following quantities  
\begin{align*}
\|\sigma\|_{S^m, a,b,c}
&:=
\max_{[\alpha]\leq a, [\beta]\leq b, |\gamma|\leq c }
\sup_{x\in G} \|X^\beta_x \Delta_{\alpha} \sigma(x,\cdot )\|_{L^\infty _{\gamma,[\alpha]+\gamma-m}(\Gh)}
\\&=\max_{[\alpha]\leq a, [\beta]\leq b, |\gamma|\leq c }
\sup_{x\in G,\pi\in \Gh}
\|\pi(\id+\cR)^{\frac{[\alpha]+\gamma-m}\nu}
X_x^\beta \Delta_{\alpha} \sigma(x,\pi)
\pi(\id+\cR)^{-\frac{\gamma}\nu}\|_{\sL(\cH_\pi)}	
\end{align*}
 are finite for any $a,b,c\in \bN_0$.

 In Definition \ref{def_SmGGh}, 
 Part (1) ensures that $\cF_G^{-1}\sigma(x,\cdot)=\kappa_{\sigma,x}$ has meaning as a tempered  distribution over $G$, 
 and Part (2), that $\kappa_\sigma:x\mapsto \kappa_{\sigma,x}$ is a smooth map $G\to \cS'(G)$.
 We call it \emph{the convolution kernel} of $\sigma$. 
  
The topology on the space  $S^m(G\times \Gh)$ is given by the semi-norms $\|\cdot \|_{S^m, a,b,c}$, $a,b\in \bN_0$. We may  even take `$c=0$'
\cite[Sections 5.2 and 5.5]{R+F_monograph}:
  \begin{theorem}
\label{thm_toposymbolG}
Let $m\in \bR$.
		The space $S^m(G\times \Gh)$ equipped with the semi-norms $\|\cdot\|_{S^m, a,b,c}$, $a,b,c\in \bN_0$, is a Fr\'echet space. 
			An equivalent topology is given by the semi-norms 
		$\|\cdot\|_{S^m , a,b,0}$, $a,b\in \bN_0$.
	We have the continuous inclusions
$$
m_1 \leq m_2 \ \Longrightarrow \ 
S^{m_1}(G\times \Gh)
\subset 
S^{m_2}(G\times \Gh).
$$
\end{theorem}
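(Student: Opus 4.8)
The plan is to treat the three assertions separately: the Fr\'echet property, the reduction to the seminorms with $c=0$, and the continuous inclusions. The Fr\'echet part and the $c=0$ reduction carry the bulk of the work; the inclusions are soft.

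\textbf{Fr\'echet structure.} The family $\{\|\cdot\|_{S^m,a,b,c}\}_{a,b,c\in\bN_0}$ is countable and separating — if $\|\sigma\|_{S^m,0,0,0}=\sup_{x,\pi}\|\pi(\id+\cR)^{-m/\nu}\sigma(x,\pi)\|_{\sL(\cH_\pi)}=0$ then $\sigma(x,\pi)=0$ for almost every $\pi$ and every $x$ — so the topology is Hausdorff and metrizable, and only completeness needs proof. I would exploit the identification, recalled before Definition \ref{def_SmGGh}, of $L^\infty_{a,b}(\Gh)$ with the space of left-invariant operators in $\sL(L^2_a(G),L^2_b(G))$, which is a \emph{Banach} space, and with $\cK_{a,b}(G)$ via $\cF_G$. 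Given a Cauchy sequence $(\sigma_j)$ in $S^m(G\times\Gh)$, for each triple $(\alpha,\beta,\gamma)$ the fields $\pi(\id+\cR)^{([\alpha]+\gamma-m)/\nu}X_x^\beta\Delta_\alpha\sigma_j(x,\pi)\pi(\id+\cR)^{-\gamma/\nu}$ are Cauchy in $L^\infty$ over $G\times\Gh$, hence converge there. Taking $\alpha=\beta=\gamma=0$ gives a bounded field $\rho$ with $\pi(\id+\cR)^{-m/\nu}\sigma_j(x,\pi)\to\rho(x,\pi)$; I would set $\sigma(x,\pi):=\pi(\id+\cR)^{m/\nu}\rho(x,\pi)$, so $\sigma(x,\cdot)\in L^\infty_{0,-m}(\Gh)$ (Definition \ref{def_SmGGh}(1)). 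To identify the remaining limits with the weighted versions of $X_x^\beta\Delta_\alpha\sigma$ and to get Definition \ref{def_SmGGh}(2)--(3), I would pass to convolution kernels: convergence in $\cK_{0,m}(G)$ dominates convergence in $\cS'(G)$, so $\kappa_{\sigma_j,x}\to\kappa_{\sigma,x}$ in $\cS'(G)$ uniformly in $x$; continuity of multiplication by polynomials and of differentiation on $\cS'(G)$ then gives $x^\alpha X_x^\beta\kappa_{\sigma_j,x}\to x^\alpha X_x^\beta\kappa_{\sigma,x}$ in $\cS'(G)$, and comparing with the $\cK_{\gamma,[\alpha]+\gamma-m}(G)$-limits identifies everything and furnishes the uniform bounds of Definition \ref{def_SmGGh}(3); uniform convergence on $G$ of $\kappa_{\sigma_j,\cdot}$ together with all its $X_x$-derivatives, in the complete space $\cS'(G)$, yields $\kappa_\sigma\in C^\infty(G;\cS'(G))$. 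Hence $\sigma\in S^m(G\times\Gh)$ and $\sigma_j\to\sigma$ for all the seminorms.

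\textbf{Equivalence of the topology for $c=0$.} Since $\|\cdot\|_{S^m,a,b,0}\le\|\cdot\|_{S^m,a,b,c}$ trivially, the issue is the converse bound $\|\sigma\|_{S^m,a,b,c}\le C\,\|\sigma\|_{S^m,a+N,b,0}$ for a suitable $N=N(c)$. I would first use interpolation between the spaces $L^\infty_{\gamma,\mu+\gamma}(\Gh)$ (via the interpolation of the underlying Sobolev scale, cf.\ Theorem \ref{thm_sobolev_spacesG}) to reduce the supremum over real $|\gamma|\le c$ to the finitely many integer values of $\gamma$. For integer $\gamma$, conjugation of an invariant symbol by $\pi(\id+\cR)^{\gamma/\nu}$ is expanded, through a Leibniz/commutator-type formula, into a finite sum of terms of the form $(\Delta_{\alpha'}\sigma)(x,\pi)$ composed with invariant symbols built from $\pi(\id+\cR)^{\bullet/\nu}$ of the appropriate orders, plus a remainder of the same structure with $[\alpha']\le N(c)$; each term is then dominated by a $c=0$ seminorm of $\sigma$ whose $\alpha$-order is raised by at most $N(c)$. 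This commutator expansion is the main technical obstacle, and it is precisely the content of \cite[Sections 5.2 and 5.5]{R+F_monograph}, which I would invoke rather than redo.

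\textbf{Continuous inclusions.} For any first index $\gamma$ and $b'\le b$ one has $\|\tau\|_{L^\infty_{\gamma,b'}(\Gh)}\le\|\tau\|_{L^\infty_{\gamma,b}(\Gh)}$, because $\pi(\id+\cR)^{b'/\nu}\tau\,\pi(\id+\cR)^{-\gamma/\nu}=\pi(\id+\cR)^{(b'-b)/\nu}\big(\pi(\id+\cR)^{b/\nu}\tau\,\pi(\id+\cR)^{-\gamma/\nu}\big)$ and $\pi(\id+\cR)^{(b'-b)/\nu}$ has operator norm at most $1$ on $\cH_\pi$, as $\widehat\cR\ge 0$ forces $\id+\pi(\cR)\ge\id$. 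If $m_1\le m_2$, applying this termwise with $b=[\alpha]+\gamma-m_1\ge[\alpha]+\gamma-m_2=b'$ in the definition of the seminorms gives $\|\sigma\|_{S^{m_2},a,b,c}\le\|\sigma\|_{S^{m_1},a,b,c}$ for all $a,b,c$, whence $S^{m_1}(G\times\Gh)\subset S^{m_2}(G\times\Gh)$ continuously.
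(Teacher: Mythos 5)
Your proposal is correct in outline, but note that the paper itself offers no proof of this theorem: it is recalled verbatim from \cite[Sections 5.2 and 5.5]{R+F_monograph}, so there is no in-paper argument to compare against. What you have written is a reasonable reconstruction of the monograph's proof. The continuous-inclusion step is complete and correct as stated: $\pi(\id+\cR)^{(b'-b)/\nu}$ is indeed a contraction on $\cH_\pi$ for $b'\le b$ since the spectrum of $\pi(\cR)$ lies in $[0,\infty)$, and the termwise comparison of seminorms follows. The completeness argument is also sound, with one point you leave implicit: having defined $\sigma(x,\pi):=\pi(\id+\cR)^{m/\nu}\rho(x,\pi)$, you must check that $\sigma(x,\pi)$ maps $\cH_\pi^\infty$ to $\cH_\pi^\infty$ (this is what makes it an invariant symbol in the sense of the paper); this does follow from the limits of the seminorms with all values of $\gamma$, which give boundedness between the whole Sobolev scale attached to $\pi(\cR)$, but it deserves a sentence. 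For the reduction to $c=0$, you correctly identify the two ingredients (interpolation in $\gamma$ to reduce to integers, then a commutator/Leibniz expansion of the conjugation by $\pi(\id+\cR)^{\gamma/\nu}$ in terms of difference operators applied to $\sigma$), and you invoke the monograph for the commutator expansion rather than reproving it. Since that expansion is precisely the nontrivial content of \cite[Section 5.5]{R+F_monograph} and the paper itself does no more than cite it, this is an acceptable level of detail; a self-contained proof would require substantially more work at exactly that point.
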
 

It will be handy to set a notation for the class of symbols of any order:
$$
S^{\infty}(G\times \Gh):= \cup_{m\in \bR} S^m(G\times \Gh).
$$

\subsubsection{Smoothing symbols}
\label{subsubsec_smoothingsymbol}
 The class of {\it smoothing symbols}
$$
S^{-\infty}(G\times \Gh):= \cap_{m\in \bR} S^m(G\times \Gh).
$$
is equipped  with the induced topology of projective limit.

The smoothing symbols are characterised by their convolution kernels
\cite[Section 5.4]{R+F_monograph}:

\begin{proposition}
\label{prop_smoothing}
The map 
$\sigma \mapsto \kappa_\sigma$ is a continuous injective morphism
from $S^{-\infty}(G\times \Gh)$ to $C^\infty(G:\cS(G))$.
Its image is  the vector space of  maps $ x\mapsto \kappa_x $ in $C^\infty(G:\cS(G))$ such that $\kappa_x$ and all its left invariant derivatives $X^\beta_x \kappa_x$ in $x$ are 
Schwartz functions and form a bounded subset of the Fr\'echet space $\cS(G)$ as $x$ runs over $G$.
\end{proposition}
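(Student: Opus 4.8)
The plan is to reduce every assertion to two ``dictionaries'': the bijection $\cF_G:\cK_{a,b}(G)\to L^\infty_{a,b}(\Gh)$ recalled above, and the characterisation of $\cS(G)$ via the Sobolev spaces of Theorem \ref{thm_sobolev_spacesG}, namely that a tempered distribution $\eta$ on $G$ is Schwartz if and only if $x^\alpha\eta\in L^2_M(G)$ for every $\alpha\in\bN_0^n$ and $M\in\bN$ (a consequence of Theorem \ref{thm_sobolev_spacesG} together with the Leibniz-type commutation of multiplication by $x^\alpha$ with the left-invariant derivatives $X^\beta$, which only produces polynomial-times-derivative terms of the same kind). Two auxiliary facts will be used repeatedly: (i) for $\gamma<-Q/2$ one has $\delta_0\in L^2_\gamma(G)$, by the Sobolev embedding $L^2_s(G)\subset C_b(G)$ and duality, so that $\eta=\eta*\delta_0\in L^2_b(G)$ whenever $\eta\in\cK_{\gamma,b}(G)$; and (ii) convolution by a Schwartz function $g$ is bounded $L^2_a(G)\to L^2_b(G)$ for every $a,b\in\bR$, with operator norm controlled by a Schwartz seminorm of $g$ --- a standard consequence of the Sobolev calculus of Section \ref{sec:Graded_Lie_group} (reduce to integer $a,b$, transfer powers of $\id+\cR$ across the convolution at the cost of replacing left-invariant by right-invariant derivatives, and apply Young's inequality). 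Linearity of $\sigma\mapsto\kappa_\sigma$ is clear, and injectivity is immediate: if $\sigma\in S^{-\infty}(G\times\Gh)$ then for each fixed $x$ the invariant symbol $\sigma(x,\cdot)$ lies in $L^\infty_{0,N}(\Gh)$ for all $N$, hence equals $\cF_G\kappa_{\sigma,x}$, so $\sigma$ is recovered from $\kappa_\sigma$.

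\emph{The image is contained in the stated space, and $\sigma\mapsto\kappa_\sigma$ is continuous.} Fix $\sigma\in S^{-\infty}(G\times\Gh)$ and $\alpha,\beta\in\bN_0^n$. For every $N\in\bN$ and $\gamma\in\bR$, Part (3) of Definition \ref{def_SmGGh} with $m=-N$ gives $\Delta_\alpha X_x^\beta\sigma(x,\cdot)\in L^\infty_{\gamma,[\alpha]+\gamma+N}(\Gh)$, with supremum over $x$ bounded by finitely many seminorms of $\sigma$ in $S^{-\infty}(G\times\Gh)$. Since $X_x^\beta$ differentiates in the parameter $x$ while $\Delta_\alpha$ is multiplication by $y^\alpha$ in the group variable $y$, this symbol is the group Fourier transform of $y^\alpha X_x^\beta\kappa_{\sigma,x}$; by the dictionary, $y^\alpha X_x^\beta\kappa_{\sigma,x}\in\cK_{\gamma,[\alpha]+\gamma+N}(G)$, uniformly in $x$. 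Taking $\gamma<-Q/2$ and using (i), we get $y^\alpha X_x^\beta\kappa_{\sigma,x}\in L^2_M(G)$ for every $M$ (as $N$ is arbitrary), uniformly in $x$, with Sobolev norms dominated by finitely many $S^{-\infty}$-seminorms of $\sigma$. By the Schwartz characterisation above, each $X_x^\beta\kappa_{\sigma,x}$ is a Schwartz function and $\{X_x^\beta\kappa_{\sigma,x}:x\in G\}$ is a bounded subset of $\cS(G)$. That $x\mapsto\kappa_{\sigma,x}$ is smooth into $\cS(G)$ --- and not merely into $\cS'(G)$ as in Part (2) of the definition --- follows from the standard fact that a map which is smooth into $\cS'(G)$ and all of whose left-invariant derivatives take values in $\cS(G)$ with locally bounded image is smooth into $\cS(G)$. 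Reading off the seminorm estimates yields continuity of $\sigma\mapsto\kappa_\sigma$ into $C^\infty(G:\cS(G))$.

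\emph{The image contains the stated space.} Let $x\mapsto\kappa_x$ lie in $C^\infty(G:\cS(G))$ with $\{X_x^\beta\kappa_x:x\in G\}$ bounded in $\cS(G)$ for every $\beta$, and set $\sigma(x,\cdot):=\cF_G\kappa_x$. We verify Parts (1)--(3) of Definition \ref{def_SmGGh} with $m=-N$, for every $N$. Part (1): by (ii), $\kappa_x\in\cK_{0,N}(G)$, i.e. $\sigma(x,\cdot)\in L^\infty_{0,N}(\Gh)$. Part (2) is the smoothness of $x\mapsto\kappa_x$ composed with $\cS(G)\hookrightarrow\cS'(G)$. Part (3): for $\alpha,\beta\in\bN_0^n$, the symbol $\Delta_\alpha X_x^\beta\sigma(x,\cdot)$ is the group Fourier transform of $y^\alpha X_x^\beta\kappa_x$, which by hypothesis is a Schwartz function whose Schwartz seminorms are bounded uniformly in $x$ (being controlled by a Schwartz seminorm of $X_x^\beta\kappa_x$); by (ii), $y^\alpha X_x^\beta\kappa_x\in\cK_{\gamma,[\alpha]+\gamma+N}(G)$ with norm uniform in $x$, that is, $\Delta_\alpha X_x^\beta\sigma(x,\cdot)\in L^\infty_{\gamma,[\alpha]+\gamma+N}(\Gh)$ with finite $x$-supremum, for all $\gamma$ and $N$. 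Hence $\sigma\in S^{-\infty}(G\times\Gh)$ and $\kappa_\sigma=\kappa$.

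\emph{Anticipated main obstacle.} The substantive work is the index bookkeeping in the middle step: one must drop to $\gamma<-Q/2$ in order to pass from the convolution-operator bound $y^\alpha X_x^\beta\kappa_{\sigma,x}\in\cK_{\gamma,\cdot}(G)$ to genuine Sobolev membership of this kernel via $\eta=\eta*\delta_0$; one must commute the polynomial weights past the left-invariant vector fields without leaving the relevant class; and one must upgrade $\cS'(G)$-valued smoothness in $x$ to $\cS(G)$-valued smoothness. On the reverse side, the quantitative smoothing estimate (ii) --- convolution by a Schwartz function is bounded between all Sobolev spaces, with norm controlled by a Schwartz seminorm --- must be made precise enough to yield the exact description of the image. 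Each point is routine given the Sobolev calculus of Section \ref{sec:Graded_Lie_group}, but the seminorm estimates must be organised carefully so that both continuity of $\sigma\mapsto\kappa_\sigma$ and the precise characterisation of its image fall out simultaneously.
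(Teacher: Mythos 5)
Your argument is correct and follows essentially the same route as the source the paper cites for this proposition ([R+F\_monograph, Section 5.4]): one translates the $L^\infty_{\gamma,[\alpha]+\gamma+N}(\Gh)$ conditions on $\Delta_\alpha X_x^\beta\sigma$ into $\cK_{\gamma,\cdot}$-membership of the weighted kernels $y^\alpha X_x^\beta\kappa_x$, converts that into weighted Sobolev membership via $\delta_0\in L^2_\gamma(G)$ for $\gamma<-Q/2$, and uses the description of $\cS(G)$ by the norms $\|x^\alpha\cdot\|_{L^2_M}$ together with the fact that $\cS(G)\subset\cK_{a,b}(G)$ for all $a,b$ with seminorm control for the converse inclusion. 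The only places where real (if routine) work is compressed are exactly the two "standard" ingredients you name — the triangular Leibniz inversion behind the Schwartz characterisation and the $\cS(G)\subset\cK_{a,b}(G)$ estimate — and the upgrade from $\cS'(G)$-valued to $\cS(G)$-valued smoothness in $x$, which uses the boundedness of the derivative families in $\cS(G)$ (e.g. via the Montel property or a direct seminorm estimate); all three are established in the cited reference, so no gap remains.
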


We denote the space of smooth scalar-valued functions which are bounded as well as all their left-derivatives by 
$$
C^\infty_{l,b}(G):= \{f \in C^\infty (G) \ : \ \sup_{x\in G} |X^\beta f(x)|<\infty \ \mbox{for any}\ \beta\in \bN_0^n\}.
$$
This extends to functions $f$ valued in a topological vector space. 
Proposition \ref{prop_smoothing} may be rephrased as saying that the map 
$\sigma \mapsto \kappa_\sigma$ is an isomorphism of topological vector spaces from $S^{-\infty}(G\times \Gh)$ onto 
$C^\infty_{l,b}(G : \cS(G))$.

In the following sense, smoothing symbols are dense in any $S^m(G\times \Gh)$:
\begin{lemma}
\label{lem_densitysmoothing}
Let $\sigma\in S^m(G\times \Gh)$ with $m\in \bR$. 
Then we can construct a sequence $\sigma_\ell \in S^{-\infty}(G\times \Gh)$, $\ell\in \bN$, that converges to $\sigma$ in $S^{m_1}(G\times \Gh)$ as $\ell\to \infty$ for any $m_1>m$, 
and satisfies for any semi-norm $\|\cdot \|_{S^m,a,b,c}$
$$
\limsup_{\ell \to \infty} \|\sigma_\ell\|_{S^m,a,b,c}
\geq \|\sigma\|_{S^m,a,b,c}.
$$
\end{lemma}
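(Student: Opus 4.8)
The plan is to regularise $\sigma$ in the group variable and in the representation variable simultaneously, using the fixed positive Rockland operator $\cR$ as a cutoff generator. First I would choose a function $\chi\in C_c^\infty(\bR)$ with $0\le \chi\le 1$ and $\chi\equiv 1$ near $0$, and define $\chi_\ell(\lambda)=\chi(\lambda/\ell^\nu)$, so that $\chi_\ell(\cR)$ has a Schwartz convolution kernel by Hulanicki's theorem (Theorem \ref{thm_hula}) and $\widehat{\chi_\ell(\cR)}(\pi)=\chi_\ell(\pi(\cR))$ is a family of operators commuting with $\pi(\id+\cR)^{s/\nu}$ for all $s$. I then set, as a first attempt,
$$
\sigma_\ell(x,\pi) := \chi_\ell(\pi(\cR))\,\sigma(x,\pi)\,\chi_\ell(\pi(\cR)),
$$
possibly also composing on the $x$-side with a mollification, e.g. replacing $\sigma(x,\cdot)$ by $\eta_\ell * \sigma(\cdot,\cdot)(x)$ for an approximate identity $\eta_\ell$ on $G$ built from dilations of a fixed $\eta\in\cD(G)$; whether the $x$-mollification is needed depends on what hypotheses on smoothness of $\sigma(\cdot,\cdot)$ in $x$ are already built into $S^m(G\times\Gh)$ — by Definition \ref{def_SmGGh}(2)–(3), $\sigma$ and all its $X_x^\beta$-derivatives are already controlled, so I expect the two-sided spectral cutoff alone to suffice to land in $S^{-\infty}$.

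The second step is to verify $\sigma_\ell\in S^{-\infty}(G\times\Gh)$. For this I would show that for each fixed $\ell$, the cutoff $\chi_\ell(\pi(\cR))$ maps into the Sobolev scale with a gain of any order: concretely, $\pi(\id+\cR)^{N/\nu}\chi_\ell(\pi(\cR))$ is bounded on $\cH_\pi$ uniformly in $\pi$, for every $N$, since $\lambda\mapsto (1+\lambda)^{N/\nu}\chi(\lambda/\ell^\nu)$ is a bounded function of $\lambda\in[0,\infty)$. Combining this with $\sigma(x,\cdot)\in L^\infty_{0,-m}(\Gh)$ and the difference-operator/derivative estimates from Definition \ref{def_SmGGh}, together with the Leibniz-type rules for $\Delta_\alpha$ and $X_x^\beta$ acting on a product of symbols (from \cite[Section 5.2]{R+F_monograph}) and the fact that $\chi_\ell(\pi(\cR))$ is itself an invariant smoothing symbol (its kernel is Schwartz by Hulanicki), gives that $\sigma_\ell\in S^{m'}(G\times\Gh)$ for every $m'\in\bR$, i.e. $\sigma_\ell\in S^{-\infty}(G\times\Gh)$.

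The third step is convergence: $\sigma_\ell\to\sigma$ in $S^{m_1}(G\times\Gh)$ for every $m_1>m$. Writing $\sigma-\sigma_\ell = (\id-\chi_\ell(\pi(\cR)))\sigma\chi_\ell(\pi(\cR)) + \sigma(\id-\chi_\ell(\pi(\cR)))$, the key estimate is that $\id-\chi_\ell(\pi(\cR))$ is supported on $\{\pi(\cR)\ge c\,\ell^\nu\}$, so composing with $\pi(\id+\cR)^{-\epsilon/\nu}$ gains a factor $\lesssim \ell^{-\epsilon}$ uniformly in $\pi$; hence the $S^{m+\epsilon}$-seminorms of $\sigma-\sigma_\ell$ are $O(\ell^{-\epsilon})\,\|\sigma\|_{S^m,\,\cdot}\to 0$ — and the same argument applies after hitting with $\Delta_\alpha X_x^\beta$, using again the product rules and that $\Delta_\alpha$ applied to $\chi_\ell(\cR)$-type kernels is controlled (its kernel is $x^\alpha$ times a Schwartz function, still Schwartz). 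Finally, the lower bound on seminorms: since $\chi_\ell(\pi(\cR))\to\id$ strongly as $\ell\to\infty$ on each $\cH_\pi$ (indeed $\chi_\ell(\lambda)\to 1$ pointwise, boundedly), for each fixed $x,\pi$ and each test vector the relevant operator norms of $X_x^\beta\Delta_\alpha\sigma_\ell(x,\pi)$ converge to those of $X_x^\beta\Delta_\alpha\sigma(x,\pi)$, giving $\liminf$ (hence $\limsup$) of the seminorms at least $\|\sigma\|_{S^m,a,b,c}$; one must be slightly careful because the seminorm is a supremum over $(x,\pi)$ and a maximum over multi-indices, but lower semicontinuity of $\sup$ under pointwise (strong-operator) convergence handles this.

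The main obstacle I anticipate is the bookkeeping for the difference operators $\Delta_\alpha$ acting on the product $\chi_\ell(\pi(\cR))\sigma(x,\pi)\chi_\ell(\pi(\cR))$: one needs that $\Delta_\alpha$ distributes over the product via a Leibniz formula whose terms mix $\Delta_{\alpha'}$ on the cutoff with $\Delta_{\alpha''}$ on $\sigma$, and that each $\Delta_{\alpha'}\chi_\ell(\pi(\cR))$ is again a nice smoothing invariant symbol with norms controlled in $\ell$ — this is exactly the kind of computation done in \cite[Section 5.2]{R+F_monograph} and should go through, but it is where the real work lies. The uniformity in $\ell$ of the constants (so that the convergence rate $\ell^{-\epsilon}$ is genuinely uniform over all seminorms and all $(x,\pi)$) is the point to be most careful about.
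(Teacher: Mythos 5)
Your construction is correct and is essentially the paper's own: the proof in the text simply cites \cite[Section 5.4.3]{R+F_monograph}, where exactly this spectral truncation $\sigma_\ell=\sigma\,\chi_\ell(\widehat\cR)$ (one-sided there, but your two-sided version changes nothing) is used, with the same $\ell^{-\epsilon}$ gain from $(\id-\chi_\ell(\pi(\cR)))\pi(\id+\cR)^{-\epsilon/\nu}$ for convergence in $S^{m+\epsilon}$, and the $\limsup$ lower bound following from strong convergence and lower semicontinuity of the operator norm as you say. The $x$-mollification you hedge about is indeed unnecessary, since Definition \ref{def_SmGGh} already controls all $X_x^\beta$-derivatives.
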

\begin{proof}
The first property is proved in 
	\cite[Section 5.4.3]{R+F_monograph}.
	The second follows with the same construction. 
\end{proof}

\subsubsection{Main properties of the symbol classes on $G\times \Gh$}  
\label{subsec_propSm}
The symbol classes $S^m(G\times \Gh)$, $m\in \bR$,
form an $*$-algebra \cite[Sections 5.2]{R+F_monograph} in the sense that 
the composition map 
$$
\left\{\begin{array}{rcl}
S^{m_1}(G\times \Gh) \times S^{m_2}(G\times \Gh)
&\longrightarrow & S^{m_1+m_2}(G\times \Gh)
\\
(\sigma_1,\sigma_2)& 	\longmapsto& \sigma_1\sigma_2
\end{array}
\right., \quad m_1,m_2\in \bR,
$$
and the adjoint map
$$
\left\{\begin{array}{rcl}
S^{m}(G\times \Gh)
&\longrightarrow & S^{m}(G\times \Gh)
\\
\sigma& 	\longmapsto& \sigma^*
\end{array}
\right. ,\quad m\in \bR, 
$$
are continuous.

 \begin{ex}
The invariant symbol 
	$\widehat X^\alpha = \{\pi(X)^\alpha : \pi\in \Gh\}$ 
	is in $S^m(G\times \Gh)$ with $m=[\alpha]$.	
\end{ex}

Another important example of classes of symbols are given by the multipliers in the symbols of a positive Rockland operator.
This often requires  the multiplier has enough regularity and decay. For this we define, the class of functions of growth at most $m$ in the following sense:
\begin{definition}
\label{def_cGm}
Let $\cG^m(\bR)$ be the space of smooth functions $\phi:\bR \to \bC$ growing at rate $m\in \bR$ in the sense that   
$$
\forall k\in \bN_0,\qquad \exists C=C_{k,\phi},\qquad
\forall \lambda\in \bR, \qquad |\partial_\lambda^k \phi(\lambda)|\leq C (1+|\lambda|)^{m-k}.
$$	
\end{definition}
This is a Fr\'echet space when equipped with the semi-norms given by
$$
\|\phi\|_{\cG^m, N} := \max_{k=0,\ldots, N} \sup_{\lambda\in \bR} 
(1+|\lambda|)^{-m+k}|\partial_\lambda^k \phi(\lambda)|, 
\qquad N\in \bN_0. 
$$
Many spectral multipliers in positive Rockland operators are in our symbol classes
\cite[Proposition 5.3.4]{R+F_monograph}:
\begin{theorem}
\label{thm_phi(R)}
Let $\phi\in \cG^m (\bR)$ and let $\cR$ be a positive Rockland operator on $G$ of homogeneous degree $\nu$.
	Then $\phi(\widehat \cR)\in S^{m\nu}(G\times \widehat G)$. 
	Moreover, the map 
	$$ \cG^m (\bR)\ni \phi\mapsto \phi(\widehat \cR)\in S^{m\nu}(G\times\widehat G)$$
	is continuous. 
\end{theorem}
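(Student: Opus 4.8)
The plan is to reduce the statement to a verification on the symbol seminorms $\|\cdot\|_{S^{m\nu},a,b,c}$, exploiting that $\phi(\widehat\cR)$ is a left-invariant symbol, so that $X_x^\beta \phi(\widehat\cR) = 0$ for $|\beta|>0$ and only the difference operators $\Delta_\alpha$ and the weights $\gamma$ play a role. First I would record that, since $\phi\in\cG^m(\bR)$, the function $\lambda\mapsto \phi(\lambda)(1+\lambda)^{-m}$ is bounded on $[0,\infty)$, hence by the spectral theorem $\phi(\widehat\cR)(\id+\widehat\cR)^{-m} = \psi(\widehat\cR)$ with $\psi(\lambda)=\phi(\lambda)(1+\lambda)^{-m}$ bounded, so $\phi(\widehat\cR)\in L^\infty_{0,-m\nu}(\Gh)$ with norm controlled by $\sup|\psi|\le \|\phi\|_{\cG^m,0}$; this is Part (1) of Definition \ref{def_SmGGh} and gives the $c=0$, $\alpha=\beta=0$ seminorm. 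The smoothness of $x\mapsto \cF_G^{-1}\phi(\widehat\cR)$ in Part (2) is immediate: the kernel is the fixed distribution $\phi(\cR)\delta_0$ independent of $x$.

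The heart of the argument is controlling $\Delta_\alpha \phi(\widehat\cR)$ in $L^\infty_{\gamma,[\alpha]+\gamma-m\nu}(\Gh)$. The key mechanism, which is standard in this calculus (cf.\ \cite[Section 5.3]{R+F_monograph}), is that applying a difference operator $\Delta_\alpha$ to a spectral multiplier of $\cR$ produces a finite linear combination of terms of the form
$$
q_1(\widehat\cR)\,\pi(Y_1)\,q_2(\widehat\cR)\,\pi(Y_2)\cdots \pi(Y_k)\,q_{k+1}(\widehat\cR),
$$
where the $Y_i$ are homogeneous elements of $\fU(\fg)$ with total homogeneous degree $[\alpha]$, and each $q_i$ is a spectral multiplier of $\cR$ that "loses" homogeneous degree: more precisely one can arrange $q_i(\lambda) = \phi^{(r_i)}(\lambda)$ type expressions (derivatives of $\phi$), or ratios built from divided differences of $\phi$, so that the net effect is that $\Delta_\alpha\phi(\widehat\cR)$ behaves like a multiplier of order $m\nu - [\alpha]$. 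Concretely, I would use the identity $\Delta_{x_j}$ acting on $\widehat\cR$ is expressible through $\pi(X_j)$ and $\ad$-type commutators, iterate it, and invoke the fact that $\pi(X_j)(\id+\pi(\cR))^{-\upsilon_j/\nu}$ and its adjoint are bounded uniformly in $\pi$ (a consequence of the Sobolev theory, Theorem \ref{thm_sobolev_spacesG} and Lemma \ref{lem_R1R2bdd}). Combining this with the spectral-calculus bound $\|q(\widehat\cR)(\id+\widehat\cR)^{-t}\|_{L^\infty(\Gh)}\le \sup_{\lambda\ge0}|q(\lambda)|(1+\lambda)^{-t}$ and the fact that each derivative $\phi^{(r)}$ lies in $\cG^{m-r}(\bR)$ with seminorm bounded by $\|\phi\|_{\cG^m,r}$, one gets that every seminorm $\|\phi(\widehat\cR)\|_{S^{m\nu},a,b,c}$ is bounded by a constant (depending on $\cR$, $G$, $a,b,c$) times some seminorm $\|\phi\|_{\cG^m,N}$ with $N=N(a,b,c)$. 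This simultaneously proves membership in $S^{m\nu}(G\times\Gh)$ and continuity of the map $\phi\mapsto\phi(\widehat\cR)$, since the estimates are linear in the $\cG^m$-seminorms.

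The main obstacle I anticipate is making the "difference operators applied to $\phi(\widehat\cR)$ = sum of derivative-multipliers sandwiched with vector fields" heuristic fully rigorous with uniform-in-$\pi$ control. The cleanest route is probably via the Helffer--Sj\"ostrand formula or a Mellin/functional-calculus representation $\phi(\cR)=\int \tilde\phi\, (\zeta-\cR)^{-1}$, commute $\Delta_\alpha$ past the resolvent using $\Delta_{x_j}(\zeta-\widehat\cR)^{-1} = (\zeta-\widehat\cR)^{-1}[\Delta_{x_j}\widehat\cR](\zeta-\widehat\cR)^{-1}$ plus lower-order corrections (here $\Delta_{x_j}\widehat\cR$ is a symbol of order $\nu-\upsilon_j$, in fact a polynomial in the $\pi(X_i)$), iterate, and then estimate the resulting resolvent products in $L^\infty(\Gh)$ with the correct powers of $(\id+\widehat\cR)$, integrating against the almost-analytic extension. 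Since $\cR$ is positive its resolvent is well-behaved off $[0,\infty)$, and the requisite resolvent bounds $\|(\id+\widehat\cR)^{a}(\zeta-\widehat\cR)^{-1}(\id+\widehat\cR)^{b}\|_{L^\infty(\Gh)}$ follow from the spectral theorem with polynomial-in-$\langle\zeta\rangle$ loss, which the almost-analytic extension of a $\cG^m$ function absorbs. Alternatively one cites \cite[Proposition 5.3.4]{R+F_monograph} directly and only reproves the continuity claim; but for a self-contained presentation the resolvent argument above is the natural one to spell out.
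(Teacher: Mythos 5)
Your proposal is essentially correct and viable, but it follows a genuinely different route from the paper, which does not prove Theorem~\ref{thm_phi(R)} itself and simply cites \cite[Proposition~5.3.4]{R+F_monograph}. The argument there is kernel-theoretic, built on Hulanicki's theorem (Theorem~\ref{thm_hula}), kernel and Plancherel estimates of the kind in Theorems~\ref{thm_christ} and~\ref{thm_kernelG}, and a dyadic decomposition of $\phi$; it does not invoke resolvent integrals. Your Helffer--Sj\"ostrand/resolvent-commutation route is instead precisely the scheme the present paper deploys \emph{later} for $x$-dependent non-negative symbols: in effect you are spelling out the special case $\sigma_0=\widehat\cR$ of Proposition~\ref{prop_sigma0Res} and Theorem~\ref{thm_psi(sigma0)}, a legitimate specialisation since $\widehat\cR\in S^\nu(G\times\Gh)$ is non-negative and $\id+\widehat\cR$ is invertible for all frequencies. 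Each route has virtues: the monograph's kernel argument uses only tools the calculus needs anyway and is developed alongside the basic fact $(\id+\widehat\cR)^s\in S^{s\nu}(G\times\Gh)$ on which the very definition of the symbol seminorms rests, while your resolvent route is self-contained, makes the continuity of $\phi\mapsto\phi(\widehat\cR)$ visible at a glance (every estimate is linear in a $\cG^m$-seminorm), and transfers directly to $x$-dependent symbols --- which is exactly why the paper invests in it for Theorem~\ref{thm_psi(sigma0)}. One step you should make explicit: for $m\geq -1$ the Helffer--Sj\"ostrand integral for $\phi$ is not absolutely convergent, so one must first write $\phi(\lambda)=(1+\lambda)^N\psi(\lambda)$ with $N\in\bN$ so large that $m-N<-1$, run the resolvent argument on $\psi\in\cG^{m-N}(\bR)$, and only then multiply back by the differential symbol $(\id+\widehat\cR)^N\in S^{N\nu}(G\times\Gh)$. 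In your left-invariant setting this reduction is clean because $(\id+\widehat\cR)^N$ is a polynomial in $\widehat\cR$ commuting with $\phi(\widehat\cR)$; in the $x$-dependent analogue this very step is what costs the dyadic/Cotlar--Stein machinery of Lemma~\ref{lem_psitsigma0} used in the proof of Theorem~\ref{thm_psi(sigma0)}(2).
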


\subsection{The quantization and the pseudo-differential calculus on $G$}
\label{subsec_PsiG}

By \cite[Section 5.1]{R+F_monograph},
for any $\sigma\in S^\infty(G\times \Gh)$, $f\in \cS(G)$ and $x\in G$, the formula 
$$
\Op_G (\sigma)f(x) = \int_{\Gh} \tr\left( \pi(x)\sigma(x,\pi) \widehat f(\pi)\right)d\mu(\pi)
$$
defines a smooth function of $x$; it is equal to $f*\kappa_x(x)$ where $\kappa$ is the convolution kernel of $\sigma$. 

We denote by $\Psi^m(G)=\Op_G(S^m(G\times \Gh))$, $m\in \bR$, the spaces of operators $\Op_G(\sigma)$, $\sigma\in S^m(G\times \Gh)$.
It inherits naturally a Fr\'echet structure.

\begin{ex}
For any $\alpha\in \bN_0$,  $\Op_G (\widehat X^\alpha)=X^\alpha \in \Psi^{[\alpha]}(G)$. 	
More generally, for any $N$, given $c_\alpha\in C_{l,b}^\infty(G)$, $\alpha\in \bN_0^n$, $[\alpha]\leq N$, the symbol 
$\sigma=\sum_{[\alpha]\leq N} c_\alpha(x)\widehat X^\alpha$ is in $S^N(G\times \Gh)$, 
therefore $\Op_G(\sigma)=\sum_{[\alpha]\leq N} c_\alpha(x) X^\alpha$ is in $\Psi^N(G)$. 

If $\widehat \cR$ is the symbol of a positive Rockland operator, as it does not depend on $x$, we have $\cR=\Op_G (\widehat \cR)$. 
More generally, this is true for the spectral multipliers:
$$
\forall \phi\in \cG^m (\bR) \qquad
\Op_G (\phi (\widehat \cR)) = \phi(\cR).
$$
\end{ex}
This example implies that the resulting class of operators
$$
\Psi^\infty(G):=
\cup_{m\in \bR}\Psi^m(G)
$$ contains the left-invariant differential calculus on $G$ and the spectral multipliers in positive Rockland operators.  
It forms a pseudo-differential calculus \cite[Chapter 5]{R+F_monograph} in the following sense:
\begin{theorem}
\label{thm_PDOGcomp+adj}
\begin{enumerate}
	\item If $T\in \Psi^m(G)$ with $m\in \bR$ then $T$ is continuous $L^2_s(G)\to L^2_{s - m}(G)$ for any $s\in \bR$, $\cS(G)\to \cS(G)$ and $\cS'(G)\to \cS'(G)$.
	Moreover, $T\mapsto T$ is continuous $\Psi^m(G)\to \sL(L^2_s(G), L^2_{s - m}(G))$.
	\item If $T_1\in \Psi^{m_1}(G)$ and $T_2\in \Psi^{m_2}(G)$ with $m_1,m_2\in \bR$, then the composition $T_1T_2$ is in $\Psi^{m_1+m_2}(G)$.	Moreover, the map $(T_1,T_2)\mapsto T_1 T_2$ is continuous $\Psi^{m_1}(G) \times \Psi^{m_2}(G)\to \Psi^{m_1+m_2}(G)$.
	\item  If $T\in \Psi^{m}(G)$ with $m\in \bR$, then its formal adjoint $T^*$ is in $\Psi^{m}(G)$.
	Moreover, the map $T\mapsto T^*$ is continuous $\Psi^{m}(G)\to \Psi^{m}(G)$.
\end{enumerate}	
\end{theorem}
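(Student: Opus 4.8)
This is a standard ``transfer of calculus from $G$ to $M$'' type statement (though the precise claim being proved at the end of the excerpt is Theorem~\ref{thm_PDOGcomp+adj} on $G$ itself). The strategy is to prove the three parts in the order (2)--(3)--(1), or more precisely to establish the symbolic statements first and deduce the analytic ones, since the continuity statements on Sobolev spaces follow most cleanly once one knows the symbol classes form a $*$-algebra and once one controls the $L^2 \to L^2$ norm of $\Op_G(\sigma)$ for $\sigma \in S^0(G\times\Gh)$.

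\textbf{Step 1: symbolic composition and adjoint.} Since these are already asserted in Section~\ref{subsec_propSm} (the symbol classes $S^m(G\times\Gh)$ form a $*$-algebra with continuous composition and adjoint maps), the content here is only to check that $\Op_G$ intertwines operator composition with symbol composition and operator adjoint with symbol adjoint. I would first reduce to smoothing symbols using the density Lemma~\ref{lem_densitysmoothing}: for $\sigma_1 \in S^{m_1}$, $\sigma_2 \in S^{m_2}$, pick smoothing approximants $\sigma_{i,\ell}$, use the explicit kernel description from Proposition~\ref{prop_smoothing} ($\kappa_{\sigma} \in C^\infty_{l,b}(G:\cS(G))$), and for smoothing symbols verify directly from the formula $\Op_G(\sigma)f(x) = (f * \kappa_x)(x)$ that $\Op_G(\sigma_1)\Op_G(\sigma_2)$ has a symbol whose kernel is a suitable (twisted) convolution of the two kernels --- this is where the non-commutativity of $G$ and the Taylor expansion in the group variable enter, and it gives the asymptotic expansion $\sigma_1 \sharp \sigma_2 \sim \sum_\alpha \frac{1}{\alpha!}(\Delta_\alpha \sigma_1)(X_x^\alpha \sigma_2)$; similarly $\Op_G(\sigma)^* = \Op_G(\sigma^*)$ with $\sigma^* \sim \sum_\alpha \frac{1}{\alpha!}\Delta_\alpha X_x^\alpha \sigma^{(\cdot)*}$. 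Then pass to the limit, using that $\Op_G$ is continuous on each $S^m$ and that composition/adjoint are continuous on symbols, to get the general case together with the continuity of $(T_1,T_2)\mapsto T_1T_2$ and $T \mapsto T^*$. In fact, since the excerpt explicitly says the symbol-level composition and adjoint continuity are already established in \cite{R+F_monograph}, the honest proof here is: these operator statements are \emph{equivalent} to the symbol statements via the bijectivity of $\Op_G$ on $S^\infty(G\times\Gh)$, so one simply invokes Section~\ref{subsec_propSm}.

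\textbf{Step 2: $L^2$-boundedness of order-zero operators.} The key analytic input for Part (1) is that $T = \Op_G(\sigma)$ with $\sigma \in S^0(G\times\Gh)$ is bounded on $L^2(G)$, with operator norm controlled by finitely many seminorms $\|\sigma\|_{S^0,a,b,0}$. This is the group analogue of the Calder\'on--Vaillancourt theorem and is proved in \cite[Chapter 5]{R+F_monograph}; I would cite it. (A self-contained route would use a Cotlar--Stein almost-orthogonality argument after decomposing the operator, but that is exactly the content of the cited chapter and need not be reproduced.)

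\textbf{Step 3: Sobolev boundedness in general.} Given $T = \Op_G(\sigma) \in \Psi^m(G)$ and $s\in\bR$, write, with a fixed positive Rockland operator $\cR$ of degree $\nu$,
\[
(\id+\cR)^{(s-m)/\nu}\, T\, (\id+\cR)^{-s/\nu} = \Op_G\!\big(\tau\big),
\]
where $\tau$ is the symbol obtained by composing $\sigma$ with $(\id+\widehat\cR)^{(s-m)/\nu}$ on the left and $(\id+\widehat\cR)^{-s/\nu}$ on the right; since $(\id+\widehat\cR)^{a/\nu} \in S^a(G\times\Gh)$ (a consequence of Theorem~\ref{thm_phi(R)} applied to $\phi(\lambda) = (1+\lambda)^{a/\nu}$, noting $\phi \in \cG^{a/\nu}(\bR)$), Step~1's composition statement gives $\tau \in S^{(s-m) + m - s}(G\times\Gh) = S^0(G\times\Gh)$, with seminorms controlled continuously by those of $\sigma$. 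By Step~2, $\Op_G(\tau)$ is bounded on $L^2(G)$, hence $T$ is bounded $L^2_s(G) \to L^2_{s-m}(G)$ with the stated continuity in $T$. The mapping properties $\cS(G)\to\cS(G)$ and $\cS'(G)\to\cS'(G)$ then follow: $\cS(G) = \cap_s L^2_s(G)$ up to the usual identifications (more precisely $\cS(G) = \cap_{s}\, \cap_{\beta}\, \{f : X^\beta f \in L^2_s\}$, and $X^\beta \Op_G(\sigma) = \Op_G(\widehat X^\beta \sharp \sigma)$ is again in $\Psi^{\infty}(G)$ by Step~1), and $\cS'(G)$ by duality using Part~(3).

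\textbf{Expected main obstacle.} The genuine technical heart --- were one to prove this from scratch rather than citing \cite{R+F_monograph} --- is Step~2, the Calder\'on--Vaillancourt-type $L^2$-bound, because the operator-valued nature of the symbols and the non-commutative Fourier analysis make the almost-orthogonality estimates delicate; controlling the norm by $\|\cdot\|_{S^0,a,b,0}$ (with ``$c=0$'', as permitted by Theorem~\ref{thm_toposymbolG}) requires the careful kernel estimates on $\kappa_{\sigma,x}$ away from the origin. Given the excerpt's framing (``we have included ... many proofs of expected properties that may be known to experts''), I expect the paper to cite \cite[Chapter 5]{R+F_monograph} for all three parts and present this theorem essentially as a recollection, so the ``proof'' is really the bookkeeping of Steps~1 and~3 built on that citation.
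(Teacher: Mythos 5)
Your proposal is correct and matches the paper's treatment: the paper presents Theorem \ref{thm_PDOGcomp+adj} as a recollection from \cite[Chapter 5]{R+F_monograph}, and Remark \ref{rem_thm_PDOGcomp+adj_L2bddness} carries out exactly your reduction for Part (1) — conjugate by powers of $\id+\cR$ to reduce to order $m=0$ and $L^2$-boundedness, then invoke the Calder\'on--Vaillancourt-type estimate of \cite[Theorem 5.4.17]{R+F_monograph}. The only cosmetic discrepancy is your writing the asymptotic expansions with $1/\alpha!$ factors, whereas the paper's $\Delta^\alpha$ are built from the dual basis $q_\alpha$ so no factorials appear; this does not affect the argument.
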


\begin{remark}
\label{rem_thm_PDOGcomp+adj_L2bddness}
Regarding the proof of Part (1) in 	Theorem \ref{thm_PDOGcomp+adj}, it suffices to show the case of an operator of order $m=0$ and its boundedness on $L^2(G)$.
The other orders and actions on Sobolev spaces then follow from the properties of composition (Part  (2)) and of spectral multipliers in a positive Rockland operator or symbol. 

For the case of $m=0$, the proof given for   \cite[Theorem 5.4.17]{R+F_monograph} shows that 
there exists $C>0$ so that for any $T\in \Psi^0(G)$, we have
$$
\|T\|_{\sL(L^2(G))} \leq C\left( \max_{[\beta]\leq 1+Q/2} \sup_{(x,\pi)\in G\times \Gh}\|X^\beta_x \sigma(x,\pi)\|_{\sL(\cH_\pi)} \ + \ \sup_{x\in G} \||\cdot|_p^{pr}\kappa_x \|_{L^2(G)}\right).
$$
Above, $\kappa_x$ denotes the convolution kernel of $T$ and $|\cdot|_p$ is the quasinorm defined in \eqref{eq_qnormp}
with $p\in \bN$ such that $p/2$ is the smallest common multiple of the dilations' weights $\upsilon_1,\ldots,\upsilon_n$. The integer $r$ is chosen so that $pr>Q/2$.
By the kernel estimates (see Theorem \ref{thm_kernelG} below), this implies that the quantity
$\sup_{x\in G} \||\cdot|_p^{pr}\kappa_x \|_{L^2(G)}$ is indeed finite. Moreover, it defines a continuous semi-norm on $\Psi^0(G)$.  
\end{remark}

In fact, the proofs of the properties of composition and adjoint in Theorem \ref{thm_PDOGcomp+adj}
 also give asymptotic expansions in the following sense:
\begin{definition}
\label{def_asymp}
A symbol $\sigma\in S^m(G\times \Gh)$ admits an asymptotic expansion in $S^m(G\times \Gh)$ when there exists a sequence of symbol $\tau_j$ with 
$$
\tau_j\in S^{m-j}(G\times \Gh)\ \mbox{for any} \, j\in \bN_0,\
\quad\mbox{and for all}\ N\in \bN_0,\quad
\sigma-\sum_{j\leq N} \tau_j \in S^{m-(N+1)}. 
$$
We then write
$$
\sigma \sim \sum_{j\in \bN_0} \tau_j \quad\mbox{in}\ S^m(G\times \Gh).
$$
\end{definition} 

Given an asymptotic expansion $\sum_{j\in \bN_0} \tau_j$, with $\tau_j\in S^{m-j}(G\times \Gh)$, $j\in \bN_0$, then there exists a symbol $\sigma\in S^m(G\times \Gh)$ admitting this asymptotic expansion; $\sigma$ is unique modulo $S^{-\infty}(G\times \Gh)$ \cite[Theorem 5.5.1]{R+F_monograph}.

To describe the asymptotic expansions for composition and adjoint, 
it is handy to adopt the notation 
$$
\Op_G(\sigma_1\diamond\sigma_2) = \Op_G(\sigma_1)\Op_G(\sigma_2)
\quad\mbox{and}\quad 
\Op_G(\sigma^{(*)}) = (\Op_G(\sigma))^* .
$$
Moreover, we denote by  $(q_\alpha)$  the basis of homogeneous polynomials dual to $(X^\alpha)$:
$$
X^{\alpha'}q_\alpha (0) = \delta_{\alpha=\alpha'},
\alpha,\alpha'\in \bN_0^n.
$$
We also denote by  $\Delta^\alpha$  the corresponding difference operators for $q_\alpha(\cdot^{-1}):x\mapsto q_\alpha (x^{-1})$:
$$
\Delta^\alpha := \Delta_{q_\alpha(\cdot^{-1})},
$$

\begin{theorem}
\label{thm_asymp}
For $\sigma_1 \in S^{m_1}(G\times \Gh)$,  $\sigma_2 \in S^{m_2}(G\times \Gh)$, and 
$\sigma\in S^m(G\times \Gh)$, the asymptotic expansions of the symbols for composition and adjoint are given by: 
$$
\sigma_1\diamond\sigma_2 \sim \sum_\alpha \Delta^\alpha \sigma_1 \, X^\alpha \sigma_2
\ \mbox{in}\ S^{m_1+m_2}(G\times \Gh),
\quad 
\sigma^{(*)} \sim \sum_\alpha \Delta^\alpha  X^\alpha \sigma^*
\ \mbox{in}\ S^m(G\times \Gh).
$$
Moreover, in this case, the  maps are continuous: 
\begin{align*}
	(\sigma_1,\sigma_2)&\longmapsto 
\sigma_1\diamond\sigma_2 - \sum_{[\alpha]\leq N} \Delta^\alpha \sigma_1 \, X^\alpha \sigma_2,
\ \ 
	S^{m_1}(G\times \Gh)\times S^{m_2}(G\times \Gh) \longrightarrow S^{m_1+m_2- N}(G\times \Gh),\\
	\sigma &\longmapsto \sigma^{(*)} - \sum_{[\alpha]\leq N}  \Delta^\alpha  X^\alpha \sigma^*, \ \ 
S^{m}(G\times \Gh)\longrightarrow S^{m- N}(G\times \Gh),
\end{align*}
for any $N\in \bN_0$.
\end{theorem}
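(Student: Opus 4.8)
The plan is to work in the ``kernel picture'' of the quantization $\Op_G$ described in Section~\ref{subsec_PsiG}, where $\Op_G(\sigma)f(x)=(f*\kappa_{\sigma,x})(x)$ with $\kappa_{\sigma,x}=\cF_G^{-1}\sigma(x,\cdot)$, and to exploit the Taylor expansion on $G$ adapted to the homogeneous structure, written in terms of the polynomials $q_\alpha$ dual to $X^\alpha$. The well-definedness of $\sigma_1\diamond\sigma_2$ and $\sigma^{(*)}$ is already guaranteed by Theorem~\ref{thm_PDOGcomp+adj}, so only the two asymptotic identities and the continuity of the remainder maps need to be proved. It is convenient to do the computations first for \emph{smoothing} symbols, where $\kappa_{\sigma,x}$, $\kappa_{\sigma_i,x}$ and all their $X_x^\beta$-derivatives are Schwartz functions, bounded and smooth in $x$ (Proposition~\ref{prop_smoothing}), so that every integral below is absolutely convergent; the seminorm estimates obtained in this way involve only finitely many symbol seminorms and then extend to general symbols by density (Lemma~\ref{lem_densitysmoothing}). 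The one genuinely analytic ingredient, needed only for the remainder estimates, is the kernel estimates of Theorem~\ref{thm_kernelG}, which control the decay of $\kappa_{\sigma,x}(z)$ in a homogeneous quasi-norm $\|z\|$ by symbol seminorms of $\sigma$.

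\textbf{Composition.} For $f\in\cS(G)$ I would write
$$
\Op_G(\sigma_1)\Op_G(\sigma_2)f(x)=\bigl(\,(\Op_G(\sigma_2)f)*\kappa_{\sigma_1,x}\,\bigr)(x),
\qquad
\Op_G(\sigma_2)f(x')=(f*\kappa_{\sigma_2,x'})(x'),
$$
and substitute, producing a double convolution in which the inner kernel is $\kappa_{\sigma_2,\,x'}$ with \emph{base point} $x'=xy^{-1}$ (the variable $y$ being integrated against $\kappa_{\sigma_1,x}$), while the outer kernel has base point $x$. The key step is the left-invariant Taylor expansion of $x'\mapsto\kappa_{\sigma_2,x'}$ around $x'=x$,
$$
\kappa_{\sigma_2,\,xw}(z)=\sum_{[\alpha]<N}q_\alpha(w)\,(X_x^\alpha\kappa_{\sigma_2,x})(z)\ +\ \text{(remainder of order $\ge N$ in $w$)},
$$
taken at $w=y^{-1}$, so that $q_\alpha(w)=q_\alpha(y^{-1})$. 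Multiplying the outer kernel $\kappa_{\sigma_1,x}(y)$ by the polynomial $q_\alpha(y^{-1})$ is, on the group-Fourier side, exactly the action of $\Delta^\alpha=\Delta_{q_\alpha(\cdot^{-1})}$; collecting the terms $[\alpha]<N$ yields $\sum_{[\alpha]<N}\Op_G(\Delta^\alpha\sigma_1\,X^\alpha\sigma_2)f(x)$, which is the claimed expansion. Since $q_\alpha$ is homogeneous of degree $[\alpha]$, one has $\Delta^\alpha\sigma_1\in S^{m_1-[\alpha]}(G\times\Gh)$, hence $\Delta^\alpha\sigma_1\,X^\alpha\sigma_2\in S^{m_1+m_2-[\alpha]}(G\times\Gh)$ by the algebra property recalled in Section~\ref{subsec_propSm}, so the orders do decrease.

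\textbf{Adjoint.} Here I would pass to Schwartz kernels: if $T=\Op_G(\sigma)$ has kernel $K_T$ on $G\times G$, then $T^*$ has kernel $K_{T^*}(x,y)=\overline{K_T(y,x)}$. Writing $K_T$ through the convolution kernel $\kappa_{\sigma,x}$ and demanding that $K_{T^*}$ have the same form forces the base point to be shifted from $x$ to $y$; a left-invariant Taylor expansion of the shifted kernel around the diagonal, combined with the same dictionary (multiplication by $q_\alpha(\cdot^{-1})$ $\leftrightarrow$ $\Delta^\alpha$, shift of the base point $\leftrightarrow$ $X^\alpha$), produces $\sigma^{(*)}\sim\sum_\alpha\Delta^\alpha X^\alpha\sigma^*$.

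\textbf{Main obstacle: the remainders.} The substantial work is to show that the order-$N$ Taylor remainder, after the group-Fourier transforms and the composition are carried out, is a symbol in $S^{m_1+m_2-N}(G\times\Gh)$ (resp.\ $S^{m-N}$) whose seminorms are bounded by products of finitely many seminorms of $\sigma_1,\sigma_2$ (resp.\ of $\sigma$). This needs: (i) the integral form of the group Taylor remainder, where $X_x^\alpha\kappa$ with $[\alpha]=N$ is evaluated at an intermediate point and paired with $q_\alpha$ of the increment; (ii) the kernel estimates of Theorem~\ref{thm_kernelG} to absorb the growth $|q_\alpha(y^{-1})|\lesssim\|y\|^{N}$ against the decay of $\kappa_{\sigma_1,x}(y)$ (this is where the net gain of $N$ orders is produced); and (iii) Leibniz-type identities for $X_x^\beta$ and for the difference operators $\Delta_\gamma$ applied to the relevant products, the bookkeeping being made delicate by the fact that the $q_\alpha$ are the polynomials dual to $X^\alpha$ rather than coordinate monomials $x^\alpha$, so that the non-commutativity of $G$ enters at each step (and one should keep track of the fact that $\|xw\|$ and $\|w\|$ differ by quasi-norm ``triangle'' factors). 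Once these estimates are in hand, the continuity of the remainder maps stated in the theorem is obtained simply by reading off the dependence on the symbol seminorms, and the existence of a symbol realising the full asymptotic series is then the general fact \cite[Theorem 5.5.1]{R+F_monograph} cited after Definition~\ref{def_asymp}.
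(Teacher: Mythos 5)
Your proposal is correct and matches the paper's own strategy: the paper states that Theorem~\ref{thm_asymp} is proved by analysing the convolution kernels in~\eqref{eq_kappa12} and~\eqref{eq_kappaadj} via the adapted Taylor expansion, the kernel estimates of Theorem~\ref{thm_kernelG}, and the Leibniz rules for $X^\beta$ and $\Delta_\alpha$, and the detailed version of this argument (with the extra semiclassical parameter $\eps$) is carried out in Appendix~\ref{secA_pfthm_sclexp_prod+adj}. The only step you do not make explicit --- but which is absorbed in your ``delicate bookkeeping'' remark --- is the extra manoeuvre needed when $m_2>0$ (respectively when $\sigma$ has positive order), where the Taylor remainder is not directly integrable against the outer kernel and one inserts $\id=(\id+\widehat\cR)(\id+\widehat\cR)^{-1}$ and integrates by parts in $z$ to trade powers of $\widehat\cR$ for extra decay, exactly as done in the appendix.
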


Theorem \ref{thm_asymp} is proved by studying the convolution kernels of $\sigma_1\diamond\sigma_2 $ and $\sigma^{(*)}$ which are formally given by 
\begin{align}
	\kappa_{\sigma_1\diamond\sigma_2,x}(y)
	&=\int_G \kappa_{\sigma_2, xz^{-1}}(yz^{-1})\kappa_{1,x}(z) dz \label{eq_kappa12}\\
	\kappa_{\sigma^{(*)},x}(y)
	&=\bar\kappa_{\sigma,xy^{-1}}(y^{-1})\label{eq_kappaadj}
\end{align}
We will generalise the proof of Theorem \ref{thm_asymp} by adding a semiclassical parameter $\eps$ in Theorem \ref{thm_sclexp_prod+adj}.

\subsection{Kernel estimates}

The  convolution kernel associated with a symbol in some $S^m(G\times \widehat G)$ will be Schwartz away from the origin 0, but may have a singularity at 0 \cite[Theorem 5.4.1]{R+F_monograph}:
\begin{theorem}
\label{thm_kernelG}
	Let $\sigma\in S^m(G\times \widehat G)$ and denote its convolution kernel by $\kappa_x =\cF_G^{-1} \sigma(x,\cdot)$.
	Then $\kappa_x(y)$ is smooth away from the origin $y=0$. Moreover,  fixing a quasi-norm $|\cdot|$ on $G$, we have the following kernel estimates:
	\begin{enumerate}
		\item The convolution kernel $\kappa$ decays faster than any polynomial away from the origin:
	\begin{align*}
	    	\forall N\in \bN_0, \quad 
	\exists C=C_{\sigma, N}>0:\quad \forall x,y\in G, \\ |y|\geq 1\Longrightarrow
	|\kappa_x(y)|\leq C |y|^{-N}.
	\end{align*}
	\item 
	If $Q+m<0$ then $\kappa$ is continuous and bounded on $G\times G$:
 $$
 \exists C=C_\sigma>0, \qquad 
 \sup_{x,y\in G} |\kappa_x(y)| \leq C.
 $$

 \item If $Q+m>0$, then 
	\begin{align*}
	\exists C=C_\sigma>0:
        \quad \forall x,y\in G , \qquad 0<|y|\leq 1\Longrightarrow
	|\kappa_x(y)|\leq  C|y|^{-(Q+m) }.
	\end{align*}

 \item If $Q+m=0$, then 
	\begin{align*}
	\exists C=C_\sigma>0:
        \quad \forall x,y\in G , \qquad 0<|y|\leq 1/2\Longrightarrow
	|\kappa_x(y)|\leq -  C \ln |y|.
	\end{align*}
	\end{enumerate}

In all the estimates above, 
 the constant $C$ may be chosen of the form 
$C = C_1 \|\sigma\|_{S^m, a,b,c}$
with $C_1>0$, $a,b,c\in \bN_0$ independent of $\sigma$. 
\end{theorem}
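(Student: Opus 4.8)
The plan is to reduce everything to the invariant case (fixed $x$) plus uniform control in $x$, and then to exploit the homogeneity of $G$ together with the mapping properties of $\Op_G$. First I would fix $x\in G$ and study $\kappa_x=\cF_G^{-1}\sigma(x,\cdot)$. Smoothness of $\kappa_x$ away from $0$ follows from the fact that, for any $\alpha,\beta$, the symbol $\Delta_\alpha X_y^\beta$ applied to $\sigma(x,\cdot)$ still lies in a class $L^\infty_{\gamma,[\alpha]+\gamma-m}(\Gh)$; choosing $\gamma$ so that $[\alpha]+\gamma-m<-Q/2$ makes the corresponding kernel $y^\alpha X^\beta\kappa_x(y)$ an $L^2$-function, and since this holds for all $\alpha$ one upgrades (by Sobolev embedding on $G$, Theorem \ref{thm_sobolev_spacesG}) to the statement that $|y|^N X^\beta\kappa_x(y)$ is bounded for every $N$ and $\beta$. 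This simultaneously yields the rapid decay in part (1): away from the origin, multiplying by a large power of a quasi-norm $|y|$ and using that $|y|^N\lesssim\sum_{[\alpha]=N}|y^\alpha|$ (all quasi-norms being equivalent) gives $|\kappa_x(y)|\le C|y|^{-N}$ for $|y|\ge 1$, with $C$ controlled by finitely many symbol seminorms.

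For the behaviour near $0$ — parts (2)--(4) — I would use a dyadic decomposition adapted to the dilations of $G$. Write $\sigma(x,\cdot)=\sum_{j\ge 0}\sigma_j(x,\cdot)$ where $\sigma_j$ is a Littlewood--Paley piece localised (via a spectral multiplier $\chi(2^{-j\nu}\widehat\cR)$ in a fixed positive Rockland operator $\cR$) at ``frequency $2^j$''; by Theorem \ref{thm_phi(R)} each $\sigma_j\in S^m(G\times\Gh)$ with seminorms growing like $2^{jm}$ in the relevant ranges, while $\sigma_j$ is actually smoothing so its kernel $\kappa_{j,x}$ is Schwartz. The homogeneity relation \eqref{eq_homogeneitypsiR} (applied to the pieces) shows $\kappa_{j,x}(y)=2^{jQ}\,\widetilde\kappa_{j,x}(2^j y)$ with $\widetilde\kappa_{j,x}$ bounded in $\cS(G)$ uniformly in $j$ and $x$ up to the factor $2^{jm}$; hence $\sup_y|\kappa_{j,x}(y)|\lesssim 2^{j(Q+m)}$ and, for each $N$, $|\kappa_{j,x}(y)|\lesssim 2^{j(Q+m)}(1+2^j|y|)^{-N}$. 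Summing over $j$: if $Q+m<0$ the series converges uniformly, giving the bounded-kernel statement (2); if $Q+m>0$ one splits the sum at $2^j\sim|y|^{-1}$, the low-frequency part contributing $\sum_{2^j\le|y|^{-1}}2^{j(Q+m)}\sim|y|^{-(Q+m)}$ and the high-frequency tail (using the $(1+2^j|y|)^{-N}$ factor with $N>Q+m$) contributing the same order, which is (3); the borderline $Q+m=0$ produces the logarithm in (4) by the familiar counting $\#\{j:1\le 2^j\le|y|^{-1}\}\sim-\ln|y|$.

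Finally, for the uniformity-in-$x$ claim and the last sentence of the theorem, I would track constants throughout: every estimate above came from applying $\sup_{x\in G}\|\cdot\|_{L^\infty_{\gamma,\cdot}(\Gh)}$ to finitely many difference/derivative transforms of $\sigma$, i.e. from finitely many of the seminorms $\|\sigma\|_{S^m,a,b,c}$, and the indices $a,b,c$ needed are determined only by $Q$ (through the Sobolev embedding threshold $Q/2$) and by the order $N$ in part (1), not by $\sigma$ itself; so $C=C_1\|\sigma\|_{S^m,a,b,c}$ as claimed. I expect the main obstacle to be bookkeeping rather than conceptual: making the dyadic decomposition genuinely rest on the already-established Theorem \ref{thm_phi(R)} and on the exact-homogeneity formula \eqref{eq_homogeneitypsiR} without circularity, and keeping the dependence of the Schwartz seminorms of $\widetilde\kappa_{j,x}$ on $j$ explicitly of the form $2^{jm}$ uniformly in $x$, which is what makes the geometric-series summation legitimate. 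An alternative that avoids some of this, and which I would fall back on if the Littlewood--Paley argument gets heavy, is to use the parametrix/composition calculus (Theorems \ref{thm_PDOGcomp+adj} and \ref{thm_asymp}) to trade powers of $|y|$ near $0$ against powers of $\id+\cR$ acting on the symbol, but the dilation-scaling argument is cleaner for pinning down the precise $|y|^{-(Q+m)}$ and $\ln|y|$ exponents.
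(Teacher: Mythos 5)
The paper does not prove this theorem itself (it is imported from \cite[Theorem 5.4.1]{R+F_monograph}), and your plan reproduces the strategy of that proof: $L^2$-bounds on $q_\alpha X^\beta\kappa_x$ obtained by trading powers of the quasi-norm against negative powers of $\id+\cR$ (whose kernels are square-integrable past the threshold $Q/2$) plus Sobolev embedding for the smoothness and rapid decay in part (1), and a dyadic spectral decomposition in a positive Rockland operator combined with the homogeneity \eqref{eq_homogeneitypsiR} for the singularity at the origin. The scaling bookkeeping $|\kappa_{j,x}(y)|\lesssim 2^{j(Q+m)}(1+2^j|y|)^{-N}$ and its summation in the three regimes $Q+m<0$, $>0$, $=0$ is correct, so this is essentially the same approach as the cited proof.
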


\begin{corollary}
	\label{cor_sigmacompop}
If $\sigma\in S^m(G\times \Gh)$ with $m<-Q$, 
then we may realise
		$\sigma(x,\pi)$ as a compact operator for each $(x,\pi)\in G\times \Gh$.
\end{corollary}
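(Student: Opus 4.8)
The plan is to read off a factorisation of $\sigma(x,\pi)$ directly from Definition~\ref{def_SmGGh}(1) and then reduce compactness to a spectral property of $\pi(\cR)$ recalled in Section~\ref{subsec_cR}. Fix once and for all a positive Rockland operator $\cR$ of homogeneous degree $\nu$. For $\sigma\in S^m(G\times\Gh)$ and each $x\in G$, Definition~\ref{def_SmGGh}(1) says that $\sigma(x,\cdot)\in L^\infty_{0,-m}(\Gh)$, that is, $(\id+\widehat\cR)^{-m/\nu}\sigma(x,\cdot)\in L^\infty(\Gh)$; hence for every $\pi\in\Gh$ one may write
\[
\sigma(x,\pi)=(\id+\pi(\cR))^{m/\nu}B(x,\pi),\qquad \sup_{\pi\in\Gh}\|B(x,\pi)\|_{\sL(\cH_\pi)}<\infty.
\]
Since the composition of a compact operator with a bounded one is compact, it suffices to show that $(\id+\pi(\cR))^{m/\nu}$ is compact on $\cH_\pi$ for every $\pi\in\Gh$; the hypothesis $m<-Q<0$ is used here, as it makes the exponent $m/\nu$ negative.

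For $\pi=1_{\Gh}$ there is nothing to prove, as $\cH_\pi$ is one-dimensional. For $\pi\neq1_{\Gh}$, I would invoke the properties of $\pi(\cR)$ recalled in Section~\ref{subsec_cR}: $\pi(\cR)$ is positive and self-adjoint with discrete spectrum contained in $(0,\infty)$ and finite-dimensional eigenspaces, and, $\cR$ being homogeneous of positive degree, $\pi(\cR)$ is unbounded. Thus its eigenvalues form a sequence $0<\lambda_1\le\lambda_2\le\cdots\to+\infty$ of finite multiplicities, so $(\id+\pi(\cR))^{m/\nu}$ has eigenvalues $(1+\lambda_k)^{m/\nu}\to0$ of finite multiplicity and is the operator-norm limit of its finite-rank spectral truncations; in particular it is compact. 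This closes the argument.

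Finally, it is worth recording why this is a corollary of the kernel estimates: when $Q+m<0$, parts~(1) and~(2) of Theorem~\ref{thm_kernelG} show that the convolution kernel $\kappa_x=\cF_G^{-1}\sigma(x,\cdot)$ is continuous, bounded on $G$ and decays faster than any polynomial at infinity, so $\kappa_x\in L^1(G)\cap L^2(G)$, and $\sigma(x,\pi)=\pi(\kappa_x)$ is bounded for every $\pi$ by \eqref{eq_pifnormL1normf} (and Hilbert--Schmidt for $\mu$-almost every $\pi$ by the Plancherel formula). The one genuinely delicate point — and where I expect the main obstacle — is upgrading this \emph{almost every} $\pi$ information to \emph{every} $(x,\pi)$: this forces one to use the spectral-theoretic fact that $\pi(\cR)$ has compact resolvent for $\pi\neq1_{\Gh}$ (equivalently, that connected nilpotent Lie groups are liminal), which is exactly what the factorisation argument above exploits.
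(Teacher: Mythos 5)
The paper's own proof is exactly the two-line argument you relegate to your final paragraph: for $m<-Q$, Theorem~\ref{thm_kernelG} (1)--(2) gives $\kappa_x=\cF_G^{-1}\sigma(x,\cdot)\in L^1(G)$, and then one invokes Dixmier's theorem that $\pi(\kappa)$ is compact for every $\kappa\in L^1(G)$ and every irreducible unitary $\pi$ (nilpotent Lie groups being liminal). That is the whole proof, and the $L^1$ realisation is also what makes $\sigma(x,\pi)=\pi(\kappa_x)$ canonically defined at \emph{every} $\pi$ rather than $\mu$-almost every $\pi$.

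Your leading factorisation argument is a genuinely different route, but it has two soft spots. First, membership in $L^\infty_{0,-m}(\Gh)$ is an \emph{essential} supremum with respect to the Plancherel measure (the space is defined via the von Neumann algebra $L^\infty(\Gh)$), so the step ``hence for every $\pi\in\Gh$ one may write $\sigma(x,\pi)=(\id+\pi(\cR))^{m/\nu}B(x,\pi)$ with $\sup_{\pi}\|B(x,\pi)\|<\infty$'' only follows for $\mu$-a.e.\ $\pi$; pinning down a representative at every single $\pi$ is precisely the content of the corollary, and it is supplied by the $L^1$ kernel, not by the symbol-class definition. Second, Section~\ref{subsec_cR} only records that $\sp(\pi(\cR))$ is a discrete subset of $(0,\infty)$ for $\pi\neq 1_{\Gh}$; it does not assert finite-dimensional eigenspaces, and topological discreteness of the spectrum alone does not imply compactness of $(\id+\pi(\cR))^{m/\nu}$ (the identity operator has discrete spectrum). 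The finite-multiplicity statement is true, but its standard proof applies the very same Dixmier/liminality fact to the heat kernel $p_t\in\cS(G)\subset L^1(G)$ to get $e^{-t\pi(\cR)}$ compact. So your factorisation buys nothing over the direct argument: both routes bottom out in ``$\pi$ of an $L^1$ function is compact,'' and the paper simply applies that fact to $\kappa_x$ itself. I would restructure the write-up so that the kernel-estimate/$L^1$/Dixmier argument is the proof, and drop or clearly subordinate the factorisation.
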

\begin{proof}
For each $x\in G$, by Theorem \ref{thm_kernelG}, $\cF_G^{-1} \sigma(x,\cdot)$ defines an integrable function on $G$. Recall that if $\kappa\in L^1(G)$ and $\pi$ is a unitary irreducible representation, then $\pi( \kappa)$  is a compact operator on $\cH_\pi$ \cite{Dixmier}. Consequently, $\sigma(x,\pi)$ is a well defined compact operator for each $(x,\pi)\in G\times \Gh$.
\end{proof}

\begin{corollary}
\label{cor_trHSG}
Let $\sigma\in S^m(G\times \Gh)$.
We assume that $\sigma$ is compactly supported in $x\in G$,
		that is, if there exists a compact subset $\cC \subset G $ such that $\sigma(x,\pi)=0$ for all $(x,\pi)\in G\times \Gh$ with $x\not \in \cC$.
\begin{enumerate}
		\item If $m<-Q$, then 
		  we have
		$$
		\int_G \int_{\Gh} \tr |\sigma(x,\pi)| 
		d\mu(\pi)dx \leq C |\cC| \|\sigma\|_{S^m,0,0,0} ,
		$$
		where $C$ is a constant depending on $m$ and the Rockland operator $\cR$ fixed to consider the associated  $S^m(G\times \Gh)$ semi-norms, and  $|\cC|$ denotes the volume of $\cC$ for the Haar measure. Furthermore,  $\Op_G(\sigma)$ is trace-class with
		$$
		\tr (\Op_G(\sigma))  = \int_G \int_{\Gh} \tr (\sigma(x,\pi)) d\mu(\pi)dx.
		$$
		\item If $m<-Q/2$, then $\Op_G(\sigma)$ is Hilbert Schmidt with Hilbert Schmidt norm satisfying
		$$
		\|\Op_G(\sigma)) \|_{HS}^2 = \int_G \int_{\Gh} \|\sigma(x,\pi)\|_{HS(\cH_\pi)}^2 d\mu(\pi)dx
		\leq C' |\cC| \|\sigma\|_{S^m,0,0,0}^2,	
		$$
		with $C'>0$ a constant depending on $\cR$ and $m$. 
	\end{enumerate}
\end{corollary}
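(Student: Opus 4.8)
The plan is to run everything through the Schwartz kernel of $T:=\Op_G(\sigma)$, namely $K(x,y)=\kappa_{\sigma,x}(y^{-1}x)$ with $\kappa_{\sigma,x}=\cF_G^{-1}\sigma(x,\cdot)$, which vanishes for $x\notin\cC$ by the support hypothesis. Fix a positive Rockland operator $\cR$ of degree $\nu$. The recurring estimate is that $\sigma(x,\pi)=(\id+\pi(\cR))^{m/\nu}\big((\id+\pi(\cR))^{-m/\nu}\sigma(x,\pi)\big)$ with $\|(\id+\pi(\cR))^{-m/\nu}\sigma(x,\pi)\|_{\sL(\cH_\pi)}\le\|\sigma\|_{S^m,0,0,0}$, whence $\|\sigma(x,\pi)\|_{HS(\cH_\pi)}\le\|(\id+\pi(\cR))^{m/\nu}\|_{HS(\cH_\pi)}\|\sigma\|_{S^m,0,0,0}$ and $\tr|\sigma(x,\pi)|\le\tr\big((\id+\pi(\cR))^{m/\nu}\big)\,\|\sigma\|_{S^m,0,0,0}$. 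Using the Plancherel formula and Theorem~\ref{thm_christ} (applied to $\psi(\lambda)=(1+\lambda)^{m/\nu}$, resp.\ $\psi(\lambda)=(1+\lambda)^{m/(2\nu)}$) one records $\int_{\Gh}\|(\id+\pi(\cR))^{m/\nu}\|_{HS}^2\,d\mu=\|(\id+\cR)^{m/\nu}\delta_0\|_{L^2(G)}^2=:C'_{m,\cR}<\infty$ whenever $m<-Q/2$, and $\int_{\Gh}\tr\big((\id+\pi(\cR))^{m/\nu}\big)\,d\mu=\|(\id+\cR)^{m/(2\nu)}\delta_0\|_{L^2(G)}^2=:C_{m,\cR}<\infty$ whenever $m<-Q$, both constants depending only on $m$ and $\cR$.

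For Part~(2) I would argue directly. Since $m<-Q/2$, the above shows $\sigma(x,\cdot)\in L^2(\Gh)$ for each $x$, hence $\kappa_{\sigma,x}\in L^2(G)$ with $\|\kappa_{\sigma,x}\|_{L^2(G)}^2=\int_{\Gh}\|\sigma(x,\pi)\|_{HS}^2\,d\mu(\pi)\le C'_{m,\cR}\|\sigma\|_{S^m,0,0,0}^2$, and by invariance of the Haar measure $\int_G\int_G|K(x,y)|^2\,dy\,dx=\int_\cC\|\kappa_{\sigma,x}\|_{L^2(G)}^2\,dx=\int_\cC\int_{\Gh}\|\sigma(x,\pi)\|_{HS(\cH_\pi)}^2\,d\mu(\pi)\,dx\le C'_{m,\cR}|\cC|\,\|\sigma\|_{S^m,0,0,0}^2$; thus $T$ is Hilbert--Schmidt and $\|T\|_{HS}^2=\|K\|_{L^2(G\times G)}^2$ equals this integral. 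For the trace-class bound of Part~(1), with $m<-Q$, integrating $\tr|\sigma(x,\pi)|$ over $\cC\times\Gh$ with the estimate above and $\|(\id+\pi(\cR))^{m/\nu}\|_{S_1}=\tr\big((\id+\pi(\cR))^{m/\nu}\big)$ gives $\int_G\int_{\Gh}\tr|\sigma(x,\pi)|\,d\mu(\pi)\,dx\le C_{m,\cR}|\cC|\,\|\sigma\|_{S^m,0,0,0}$.

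To show $T$ is actually trace-class and to compute its trace, I would factor it. Pick $\chi\in\cD(G)$ with $\chi\equiv1$ on $\cC$, so $M_\chi T=T$, and (using $m<-Q$) pick $a>0$ with $Q/2<a\nu<-m-Q/2$; then $T=\big(M_\chi(\id+\cR)^{-a}\big)\big((\id+\cR)^{a}T\big)=:C_1C_2$, since $(\id+\cR)^{-a}(\id+\cR)^{a}$ is the identity on the range of $T\subset L^2_{-m}(G)$. Here $C_1=\Op_G\big(\chi(x)(\id+\widehat\cR)^{-a}\big)\in\Psi^{-a\nu}(G)$ is compactly supported in $x$ (inside $\supp\chi$) with $-a\nu<-Q/2$, hence Hilbert--Schmidt by Part~(2). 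The operator $C_2=(\id+\cR)^{a}T=\Op_G\big((\id+\widehat\cR)^{a}\diamond\sigma\big)\in\Psi^{a\nu+m}(G)$ has order $<-Q/2$, so $\kappa_{C_2,x}\in L^2(G)$ with norm bounded uniformly in $x$ by the kernel estimates of Theorem~\ref{thm_kernelG}; moreover, since $\sigma$ is supported in $x\in\cC$ while the convolution kernel of $(\id+\cR)^{a}$ is rapidly decaying away from the origin, formula~\eqref{eq_kappa12} (localised, for $|x|$ large, to the region where that kernel is smooth) shows $\|\kappa_{C_2,x}\|_{L^2(G)}$ decays faster than any power of a quasi-norm $|x|$ as $x\to\infty$; hence $\|C_2\|_{HS}^2=\int_G\|\kappa_{C_2,x}\|_{L^2(G)}^2\,dx<\infty$, i.e.\ $C_2$ is Hilbert--Schmidt too, and $T=C_1C_2$ is trace-class. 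Finally, as $C_1,C_2$ have $L^2$ Schwartz kernels, Fubini gives $\tr T=\tr(C_1C_2)=\int_G K(x,x)\,dx=\int_G\kappa_{\sigma,x}(0)\,dx$; the kernel $\kappa_{\sigma,x}$ is continuous by Theorem~\ref{thm_kernelG}(2) (as $Q+m<0$) and $\int_{\Gh}\tr|\sigma(x,\pi)|\,d\mu<\infty$, so the inversion formula~\eqref{eq_FI} and $\pi(0)=\id_{\cH_\pi}$ yield $\kappa_{\sigma,x}(0)=\int_{\Gh}\tr\big(\sigma(x,\pi)\big)\,d\mu(\pi)$, whence $\tr T=\int_G\int_{\Gh}\tr\big(\sigma(x,\pi)\big)\,d\mu(\pi)\,dx$.

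The main obstacle is precisely this last Hilbert--Schmidt claim for $C_2=(\id+\cR)^{a}\Op_G(\sigma)$: its symbol is no longer compactly supported in $x$, so Part~(2) does not apply, and one must combine the order bound $<-Q/2$ (which, via Theorem~\ref{thm_kernelG}, controls $\kappa_{C_2,x}$ uniformly near the origin of $G$) with the rapid decay in the base variable coming from the compact $x$-support of $\sigma$ and the fast off-diagonal decay of the convolution kernel of $(\id+\cR)^{a}$; a small amount of care is needed because that convolution kernel is singular at the origin, but for $x$ far from $\cC$ the relevant part of the integral \eqref{eq_kappa12} only sees the region where it is smooth and Schwartz. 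Everything else is routine bookkeeping with Plancherel and the kernel estimates.
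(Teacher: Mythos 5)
Your proposal is correct, and for Part (2) and the integral bound in Part (1) it coincides with the paper's proof: the same factorization of $\sigma(x,\pi)$ through $(\id+\pi(\cR))^{\mp m/\nu}$, the same Plancherel computation identifying $\|\Op_G(\sigma)\|_{HS}^2$ with $\int_{G\times\Gh}\|\sigma(x,\pi)\|_{HS}^2\,d\mu\,dx$, and the same use of the square-integrability of the convolution kernel of $(\id+\cR)^{-a/\nu}$ for $a>Q/2$ (the paper cites this directly for the Bessel kernel $\cB_a$ rather than going through Theorem \ref{thm_christ}, but the content is identical). Where you genuinely diverge is the trace-class assertion: the paper simply observes that the integral kernel $(x,y)\mapsto\kappa_x(y^{-1}x)$ is continuous and compactly supported in $x$ and then declares $\Op_G(\sigma)$ trace-class with trace $\int_G\kappa_x(0)\,dx$, whereas you factor $T=\bigl(M_\chi(\id+\cR)^{-a}\bigr)\bigl((\id+\cR)^{a}T\bigr)$ into two Hilbert--Schmidt operators. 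Your route is the more defensible one — continuity of the kernel plus integrability of its diagonal does not by itself yield trace-class — and the price you pay is exactly the point you flag: the second factor no longer has $x$-compact symbol, so you must combine the order bound $a\nu+m<-Q/2$ (giving $\sup_x\|\kappa_{C_2,x}\|_{L^2(G)}<\infty$ via the kernel estimates) with the rapid off-diagonal decay of the convolution kernel of $(\id+\cR)^{a}$ to get $x$-decay of $\|\kappa_{C_2,x}\|_{L^2(G)}$. That step is correct as sketched, and the final identification $\kappa_x(0)=\int_{\Gh}\tr(\sigma(x,\pi))\,d\mu(\pi)$ via \eqref{eq_FI} is exactly the paper's. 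In short: same skeleton, but you supply the two-Hilbert--Schmidt-factor argument that the paper leaves implicit.
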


\begin{proof}
Denoting by $\nu$ the homogeneous degree of $\cR$, let us recall that the convolution kernel $\cB_a$ of $(\id+\cR)^{-a/\nu}$ is integrable for $a>0$ and square integrable for $a>Q/2$
\cite[Corollary 4.3.11 (ii)]{R+F_monograph}. Consequently, by the Plancherel formula, we have for any $a>Q/2$:
$$
\int_{\Gh} \|(\id+\pi(\cR))^{-a/\nu}\|^2_{HS(\cH_\pi)}d\mu(\pi)
=
\|\cB_a\|_{L^2(G)} ^2
<\infty.
$$

Assume  $m<-Q$. We have
$$
\tr |\sigma(x,\pi)| 
\leq 
\|(\id +\cR)^{m/\nu}\sigma(x,\pi)\|_{\sL(\cH_\pi)} \tr |(\id +\cR)^{-m/\nu}|
\leq 
\|\sigma\|_{S^m,0,0,0} \|(\id+\pi(\cR))^{-m/2\nu}\|^2_{HS(\cH_\pi)},
$$
and 
$$
		\int_G \int_{\Gh} \tr |\sigma(x,\pi)| 
		d\mu(\pi)dx \leq  |\cC| \|\sigma\|_{S^m,0,0,0} \int_\Gh \|(\id+\pi(\cR))^{-m/2\nu}\|^2_{HS(\cH_\pi)} d\mu(\pi).
		$$
This shows the first part of Part (1) with $C_\cR=\|\cB_{m/2}\|_{L^2(G)}^2$.
	Denoting by $\kappa$ the  convolution kernel of $\sigma\in S^m(G\times \Gh)$, the Fourier inversion formula yields
	$$
	\int_G \int_{\Gh} \tr (\sigma(x,\pi)) d\mu(\pi)dx = \int_G \kappa_x(0) dx.
	$$
	We observe that  $(x,y)\mapsto \kappa_x(y)$ is continuous on $G\times G$ and compactly supported in $x$ and that the integral kernel of $\Op_G(\sigma)$ is $(x,y)\mapsto \kappa_x(y^{-1}x)$.
	Hence $\Op_G(\sigma)$ is trace-class with trace given by $\int_G \kappa_x(0) dx$.
	This concludes the proof of Part (1).

As $(x,y)\mapsto \kappa_x(y^{-1}x)$ is the integral kernel of $\Op_G(\sigma)$, we have by the Plancherel formula:
$$
		\|\Op_G(\sigma)) \|_{HS}^2 = 
		\int_{G\times G} |\kappa_x(y^{-1}x)|^2 dx dy
		=
		\int_G \int_{\Gh} \|\sigma(x,\pi)\|_{HS(\cH_\pi)}^2 d\mu(\pi)dx
		\leq |\cC| \|\sigma\|	.	
		$$
		Since we have for any $(x,\pi)\in G\times \Gh$
	\begin{align*}
	\|\sigma(x,\pi)\|_{HS(\cH_\pi)} 
	&\leq \|(\id +\cR)^{m/\nu}\sigma(x,\pi)\|_{\sL(\cH_\pi)} \|(\id +\cR)^{-m/\nu}\|_{HS(\cH_\pi)}
	\\&\leq \|\sigma\|_{S^m,0,0,0} \|(\id+\pi(\cR))^{-m/\nu}\|_{HS(\cH_\pi)},	
	\end{align*}
we obtain the estimate
$$
\int_G \int_{\Gh} \|\sigma(x,\pi)\|_{HS(\cH_\pi)}^2 d\mu(\pi)dx
		\leq |\cC| \|\sigma\|_{S^m,0,0,0} 
		\int_{\Gh} \|(\id+\pi(\cR))^{-m/\nu}\|_{HS(\cH_\pi)}d\mu(\pi).
	$$	
	If $m<-Q/2$, this shows Part (2) with $C' = \|\cB_{m}\|_{L^2(G)}$.
\end{proof}

\subsection{Symbols and quantization  on $M$}

We now consider a compact nilmanifold $M=\Gamma \backslash G$.

\subsubsection{Symbol classes on $M\times \Gh$}

\begin{definition}
\label{def_SmMGh}
Let $m\in \bR\cup {-\infty}$.
\begin{enumerate}
	\item A symbol $\sigma$ is in $S^m(M\times \Gh)$ when 
	\begin{itemize}
		\item for every $\dot x\in M$, 
	$\sigma(\dot x,\cdot)$ is an invariant symbol 
	 in $L^\infty_{m,0} (\Gh)$, and 
	\item the symbol $\sigma_G $ given by $\sigma_G(x,\pi)= \sigma(\dot x,\pi)$ is in $S^m(G\times \Gh)$. 
	\end{itemize}
\item A symbol $\sigma\in S^m(G\times \Gh)$ is $\Gamma$-\emph{periodic} when $\sigma(\gamma x,\pi) =\sigma(x,\pi)$ for any $\gamma\in \Gamma$, $x\in G$ and $\pi\in \Gh$.
\end{enumerate}
	\end{definition}

If $\sigma\in S^m(G\times \Gh)$ is $\Gamma$-periodic, 
we denote by $\sigma_M$ the corresponding symbol on $M\times \Gh$.
Clearly, $\sigma\to \sigma_G$ is a bijection from $S^m(M\times \Gh)$ to 
the space $S^m(G\times \Gh)^\Gamma$ of  $\Gamma$-periodic symbols in $S^m(G\times \Gh)$, with inverse given by $\tau \to \tau_M$.
As the space of $\Gamma$-periodic symbols in $S^m(G\times \Gh)$ is closed, $S^m(M\times \Gh)$ inherits a Fr\'echet structure. 
For $m\in \bR$, it is given by the semi-norms:
$$
\|\sigma\|_{S^m, a,b,c}
:=
\|\sigma_G\|_{S^m, a,b,c}
=
\max_{[\alpha]\leq a, [\beta]\leq b, |\gamma|\leq c }
\sup_{\dot x\in M} \|X_M^\beta \Delta_{\alpha} \sigma(\dot x,\pi)\|_{L^\infty _{\gamma, m -[\alpha]-\gamma}(\Gh)},\quad 
a,b,c\in \bN_0.
$$
By Theorem \ref{thm_toposymbolG}, 
an equivalent topology is given by the semi-norms 
		$\|\cdot\|_{S^m , a,b,0}$, $a,b\in \bN_0$, 
and we have the continuous inclusions
$$
m_1 \leq m_2 \ \Longrightarrow \ 
S^{m_1}(M\times \Gh)
\subset 
S^{m_2}(M\times \Gh).
$$
Moreover, the smoothing symbols are dense in $S^m(M\times \Gh)$ in the same sense as in Lemma \ref{lem_densitysmoothing}.

As a consequence of Proposition \ref{prop_smoothing}, 
the smoothing symbols on $M\times \Gh$ are  described by their convolution kernels:
\begin{corollary}
	The map 
$\sigma \mapsto \kappa_\sigma$ is an isomorphism of topological vector spaces 
 $S^{-\infty}(M\times \Gh)\to C^\infty(M:\cS(G))$.
\end{corollary}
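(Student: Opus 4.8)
The plan is to transport the corresponding statement on $G$, namely Proposition~\ref{prop_smoothing} (rephrased as the isomorphism $S^{-\infty}(G\times\Gh)\xrightarrow{\sim} C^\infty_{l,b}(G:\cS(G))$), across the identification $\sigma\mapsto\sigma_G$ between $S^{-\infty}(M\times\Gh)$ and the $\Gamma$-periodic smoothing symbols $S^{-\infty}(G\times\Gh)^\Gamma$. First I would observe that for $\sigma\in S^{-\infty}(M\times\Gh)$ the convolution kernel $\kappa_\sigma$ is by definition the convolution kernel $\kappa_{\sigma_G}$ of the $\Gamma$-periodic symbol $\sigma_G$, viewed as a map on $M$; so the point is to identify the image of the restriction of $\sigma\mapsto\kappa_\sigma$ to $\Gamma$-periodic symbols. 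Since $\sigma_G$ is $\Gamma$-periodic, its kernel satisfies $\kappa_{\sigma_G,\gamma x}=\kappa_{\sigma_G,x}$ for all $\gamma\in\Gamma$ (because the quantization, and hence the assignment $x\mapsto\cF_G^{-1}\sigma_G(x,\cdot)$, commutes with left translation applied to the $x$-variable), i.e. $x\mapsto\kappa_{\sigma_G,x}$ is a $\Gamma$-periodic element of $C^\infty_{l,b}(G:\cS(G))$, so it descends to a well-defined smooth map $\dot x\mapsto\kappa_{\sigma,\dot x}$ from $M$ to $\cS(G)$.

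The key steps, in order, are then: (i) Invoke Proposition~\ref{prop_smoothing} to get that $\sigma\mapsto\kappa_\sigma$ is injective and continuous $S^{-\infty}(M\times\Gh)\to C^\infty(M:\cS(G))$ — injectivity and continuity are inherited directly from the $G$-level statement since the Fr\'echet structure on $S^{-\infty}(M\times\Gh)$ is by definition that of $S^{-\infty}(G\times\Gh)^\Gamma$, and since the quotient map $C^\infty_{l,b}(G:\cS(G))^\Gamma\to C^\infty(M:\cS(G))$ is a topological isomorphism (here one uses that $M$ is compact, so bounded-with-bounded-left-derivatives on $G$ is automatic on the $\Gamma$-periodic side, exactly as in Proposition~\ref{prop_nilmanifold}). (ii) Surjectivity: given any $h\in C^\infty(M:\cS(G))$, lift it to the $\Gamma$-periodic map $h_G:x\mapsto h(\dot x)$ in $C^\infty(G:\cS(G))$; because $M$ is compact and $h$ is continuous with values in the Fr\'echet space $\cS(G)$, the image $h(M)$ is a compact hence bounded subset of $\cS(G)$, and the same applies to all left-derivatives $X^\beta_x h_G$ (which are again of the form $(\text{something})_G$ for a smooth $\cS(G)$-valued map on $M$), so $h_G\in C^\infty_{l,b}(G:\cS(G))$. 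By Proposition~\ref{prop_smoothing} there is a unique $\tau\in S^{-\infty}(G\times\Gh)$ with $\kappa_\tau=h_G$, and the $\Gamma$-periodicity of $h_G$ forces $\tau$ to be $\Gamma$-periodic (apply the uniqueness in Proposition~\ref{prop_smoothing} to $\tau$ and its translates, which have the same kernel). Thus $\tau=\sigma_G$ for a unique $\sigma\in S^{-\infty}(M\times\Gh)$ with $\kappa_\sigma=h$. (iii) Continuity of the inverse: this follows from the open mapping theorem for Fr\'echet spaces, or more concretely from the continuity of $h\mapsto h_G$ together with the continuity of the inverse at the $G$-level in Proposition~\ref{prop_smoothing}.

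The only genuine subtlety — the step I expect to require the most care — is the claim in (ii) that a smooth $\cS(G)$-valued map on the \emph{compact} manifold $M$ automatically lifts to an element of $C^\infty_{l,b}(G:\cS(G))$, i.e. that periodicity plus compactness of $M$ upgrades "smooth" to "smooth with all left-invariant derivatives forming a bounded family in $\cS(G)$." This is where one must be slightly careful about what "smooth map into a Fr\'echet space" means and about the fact that left-invariant vector fields on $G$ descend to (globally defined) vector fields on $M$, so that each $X^\beta_x h_G$ is the lift of a continuous $\cS(G)$-valued map on the compact $M$ and hence has bounded (indeed relatively compact) image in $\cS(G)$; once this is pinned down, the rest is bookkeeping through Definition~\ref{def_SmMGh}. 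Everything else is a formal transcription of Proposition~\ref{prop_smoothing} through the periodic/quotient dictionary of Section~\ref{subsubsec_periodicfcn}.
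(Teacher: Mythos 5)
Your proposal is correct and follows exactly the route the paper intends: the corollary is stated there as an immediate consequence of Proposition \ref{prop_smoothing}, obtained by restricting to $\Gamma$-periodic symbols and identifying $C^\infty_{l,b}(G:\cS(G))^\Gamma$ with $C^\infty(M:\cS(G))$ via compactness of $M$, which is precisely your argument. The one point you flag as subtle (boundedness of all left-derivatives being automatic for lifts from the compact $M$) is handled correctly by your compactness argument, so nothing further is needed.
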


The group case implies that 
the symbol classes $S^m(M\times \Gh)$, $m\in \bR$,
form a $*$-algebra in the sense that 
the composition map 
$$
\left\{\begin{array}{rcl}
S^{m_1}(M\times \Gh) \times S^{m_2}(M\times \Gh)
&\longrightarrow & S^{m_1+m_2}(M\times \Gh)
\\
(\sigma_1,\sigma_2)& 	\longmapsto& \sigma_1\sigma_2
\end{array}
\right., \quad m_1,m_2\in \bR,
$$
and the adjoint map
$$
\left\{\begin{array}{rcl}
S^{m}(M\times \Gh)
&\longrightarrow & S^{m}(M\times \Gh)
\\
\sigma& 	\longmapsto& \sigma^*
\end{array}
\right. ,\quad m\in \bR, 
$$
are continuous.

The space $S^\infty (M\times \Gh):=\cup_{m\in \bR}S^m(M\times \Gh)$ contains the symbols $\sigma$ which are invariant such as $\pi(X)^\alpha$ or the spectral multiplier in $\widehat \cR$ in Theorem \ref{thm_phi(R)}, and also $a(\dot x)\sigma$ with $a\in C^\infty(M)$. 

\subsubsection{Quantization on $M$}

The quantization on $M$ will follow from the quantization on $G$ and the following observation:
\begin{lemma}
\label{lem_obsQuantizationM}
	If $\sigma\in S^m(G\times \Gh)$ then for any $f\in \cS'(G)$ and $\gamma\in G$, we have:
	$$
	\left (\Op_G (\sigma) f\right )\, (\gamma\,  \cdot ) 
	= \Op_G \left ( \sigma(\gamma\,  \cdot ,\pi)\right ) \left (f(\gamma\,  \cdot )\right ) .
	$$
	\end{lemma}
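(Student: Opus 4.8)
The plan is to verify the identity by unwinding the definition of $\Op_G$ in terms of the group Fourier transform and the convolution kernel, and then tracking how left-translation by $\gamma$ interacts with each ingredient. Recall from Section~\ref{subsec_PsiG} that if $\kappa$ denotes the convolution kernel of $\sigma$, then $\Op_G(\sigma)f(x) = (f * \kappa_x)(x)$, where $\kappa_x = \cF_G^{-1}\sigma(x,\cdot)$. So the cleanest route is to work entirely at the level of convolution kernels rather than with the oscillatory-integral expression involving $\tr(\pi(x)\sigma(x,\pi)\widehat f(\pi))$, although the latter would also work since $\widehat{f(\gamma\cdot)}(\pi) = \pi(\gamma)\widehat f(\pi)$ for the left-translate and $\pi(\gamma x) = \pi(\gamma)\pi(x)$.

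First I would fix notation: let $\kappa_x$ be the convolution kernel of $\sigma$, so that the symbol $\sigma(\gamma\cdot,\pi)$ — which lies in $S^m(G\times\Gh)$ because left-translation is a smooth diffeomorphism of $G$ preserving the symbol estimates (the $\Delta_\alpha$ and $X_x^\beta$ act compatibly) — has convolution kernel $x\mapsto \kappa_{\gamma x}$. Then I would compute, for $f\in\cS(G)$ (the general case $f\in\cS'(G)$ following by density and continuity of both sides as maps $\cS'(G)\to\cD'(G)$ or $\cS'(G)$, using Theorem~\ref{thm_PDOGcomp+adj}(1)):
\begin{align*}
\left(\Op_G(\sigma)f\right)(\gamma x)
&= (f * \kappa_{\gamma x})(\gamma x)
= \int_G f(y)\,\kappa_{\gamma x}(y^{-1}\gamma x)\,dy.
\end{align*}
Substituting $y = \gamma z$ (which preserves Haar measure, as $G$ is unimodular) gives $\int_G f(\gamma z)\,\kappa_{\gamma x}(z^{-1}x)\,dz = \bigl(f(\gamma\cdot) * \kappa_{\gamma x}\bigr)(x)$, and since $x\mapsto \kappa_{\gamma x}$ is precisely the convolution kernel of the symbol $\sigma(\gamma\cdot,\pi)$, this last expression equals $\Op_G\!\bigl(\sigma(\gamma\cdot,\pi)\bigr)\bigl(f(\gamma\cdot)\bigr)(x)$, which is the claim.

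There is no serious obstacle here; the statement is essentially a bookkeeping identity. The only points requiring a word of care are: (i) checking that $\sigma(\gamma\cdot,\pi)$ is again a legitimate symbol in $S^m(G\times\Gh)$ with convolution kernel $x\mapsto\kappa_{\gamma x}$ — this is immediate from the fact that left-translation by a fixed group element is a smooth automorphism of $G$ under which the defining seminorms of Definition~\ref{def_SmGGh} are invariant (it only affects the $x$-dependence, leaving the fiberwise structure over $\Gh$ untouched); and (ii) justifying the change of variables and the reduction from $\cS(G)$ to $\cS'(G)$, for which one invokes unimodularity of $G$ and the continuity statements of Theorem~\ref{thm_PDOGcomp+adj}(1) together with the density of $\cS(G)$ in $\cS'(G)$.
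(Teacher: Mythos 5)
Your proof is correct and follows essentially the same route as the paper: reduce to $f\in\cS(G)$ by density and continuity, express $\Op_G(\sigma)f(x)=(f*\kappa_x)(x)$ via the convolution kernel, and perform the change of variables $y=\gamma z$. The only addition you make is an explicit remark that $\sigma(\gamma\cdot,\pi)$ remains in $S^m(G\times\Gh)$, which the paper leaves implicit; this is a reasonable small refinement but not a different argument.
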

	\begin{proof}
	By the density of $\cS(G)$ in $\cS'(G)$ and the continuity of $\Op_G (\sigma)$ on $\cS'(G)$
	(see Theorem \ref{thm_PDOGcomp+adj} (1)), 
	it suffices to prove the property for $f\in \cS(G)$.
	Denoting by $\kappa$ the convolution kernel of $\sigma$, we have for any $x\in G$ and $\gamma\in G$,
$$
	(\Op_G (\sigma) f) (\gamma x ) 
	=
	f*\kappa_{\gamma x}(\gamma x) 
	=\int_G f(y) \kappa_{\gamma x}(y^{-1}\gamma x) dy
	=\int_G f(\gamma z) \kappa_{\gamma x}(z^{-1}x) dz
	$$
after the change of variable $y=\gamma z$.	
We recognise $f(\gamma\,  \cdot ) *\kappa_{\gamma x} (x)$, and the statement follows. 
	\end{proof}

If $f\in \cD'(M)$ and $\sigma\in S^m(M\times \Gh)$ with $m\in \bR$, then 
$\Op_G (\sigma_G) f_G$ is $\Gamma$-periodic by Lemma \ref{lem_obsQuantizationM} and we may  set
$$
\Op_M (\sigma) f := \left( \Op_G (\sigma_G) f_G \right)_M.
$$
This formula defines an operator $\Op_M (\sigma)$ acting continuously on $\cD'(M)$; this gives a quantization on $M$.

\begin{lemma}
\label{lem_IntKernel}
Let $\sigma\in S^m(M\times \Gh)$. 
Then its convolution kernel  	$\kappa$ viewed as a distribution on $G\times G$ that is $\Gamma$-invariant satisfies the kernel estimates in Theorem \ref{thm_kernelG}.
The integral integral of $\Op_M (\sigma)$  is the distribution $K$ on $M\times M$ given by
$$
K(\dot x,\dot y) = 
\sum_{\gamma\in \Gamma}  \kappa_{\dot  x}(y^{-1}\gamma  x) = (\kappa (\cdot^{-1} x))_M^\Gamma(\dot y),
$$
with the notation of Section \ref{subsubsec_periodicfcn}.
If $m=-\infty$, then $K$ is smooth. 
\end{lemma}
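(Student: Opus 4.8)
The plan is to reduce each assertion to a fact already established on $G$. By Definition~\ref{def_SmMGh} a symbol $\sigma\in S^m(M\times\Gh)$ is the same data as a $\Gamma$-periodic symbol $\sigma_G\in S^m(G\times\Gh)$, and the convolution kernel of $\sigma$ on $G$ is by definition that of $\sigma_G$; hence Theorem~\ref{thm_kernelG} applies verbatim and yields that $(x,y)\mapsto\kappa_x(y)$ is smooth for $y\neq0$ and obeys the four kernel estimates. Moreover the $\Gamma$-periodicity of $\sigma_G$ in $x$ forces $\kappa_{\gamma x}=\kappa_x$ for every $\gamma\in\Gamma$, so $x\mapsto\kappa_x$ descends to $\dot x\mapsto\kappa_{\dot x}$. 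This settles the first claim.

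For the integral kernel, recall from Section~\ref{subsec_PsiG} that $\Op_G(\sigma_G)g(x)=g*\kappa_x(x)=\int_G g(y)\,\kappa_x(y^{-1}x)\,dy$, so the integral kernel of $\Op_G(\sigma_G)$ on $G\times G$ is $(x,y)\mapsto\kappa_x(y^{-1}x)$. Given $f\in C^\infty(M)$, I would compute $\Op_M(\sigma)f=(\Op_G(\sigma_G)f_G)_M$ by unfolding the Haar integral with \eqref{eq_dxddotx}: writing the $y$-integral over $M$ and its $\Gamma$-translates and using that $f_G$ is $\Gamma$-periodic gives
$$
\Op_G(\sigma_G)f_G(x)=\int_M f_M(\dot y)\sum_{\gamma\in\Gamma}\kappa_x(y^{-1}\gamma x)\,d\dot y
$$
(after replacing $\gamma$ by $\gamma^{-1}$ in the sum). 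By Lemma~\ref{lem_obsQuantizationM} the left-hand side is $\Gamma$-periodic in $x$, so this reads $\Op_M(\sigma)f(\dot x)=\int_M K(\dot x,\dot y)\,f_M(\dot y)\,d\dot y$ with $K$ as claimed, the second form of $K$ being just the periodisation $(\kappa_{\dot x}(\cdot^{-1}x))^\Gamma$ read on $M$ in the notation of Section~\ref{subsubsec_periodicfcn}. Since $C^\infty(M)$ is dense in $\cD'(M)$ and $\Op_M(\sigma)$ is continuous on $\cD'(M)$, this identifies the Schwartz kernel of $\Op_M(\sigma)$ with $K$.

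The delicate point, and the one I expect to be the main obstacle, is to make sense of the periodised sum $\sum_{\gamma\in\Gamma}\kappa_{\dot x}(y^{-1}\gamma x)$ as a distribution on $M\times M$ and to justify the exchange of $\sum_\gamma$ with the $\dot y$-integration. Observe that $y^{-1}\gamma x=0$ happens exactly when $\dot y=\dot x$, so off the diagonal of $M\times M$ every term is a value of the smooth function $\kappa_{\dot x}$ away from the origin. Fixing a quasi-norm on $G$, for $(\dot x,\dot y)$ in a compact set disjoint from the diagonal only finitely many $\gamma$ satisfy $|y^{-1}\gamma x|\le1$, while for the rest Theorem~\ref{thm_kernelG}(1) gives $|\kappa_{\dot x}(y^{-1}\gamma x)|\le C_N|y^{-1}\gamma x|^{-N}$ for every $N$; since $\Gamma$ is a cocompact lattice the number of $\gamma$ with $|y^{-1}\gamma x|\le R$ grows at most polynomially in $R$, so the sum converges absolutely and locally uniformly along with all derivatives, and $K$ is smooth off the diagonal. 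Near the diagonal $K$ differs from the single term $\kappa_{\dot x}(y^{-1}\gamma_0 x)$ (with $\gamma_0 x=y$) by such a smooth function, whence $K\in\cD'(M\times M)$ carries precisely the diagonal singularity of $\kappa$, and the exchange above is licit. Alternatively, one may first prove the identity for $\sigma\in S^{-\infty}(M\times\Gh)$ --- where it is Lemma~\ref{lem_IntKernel_general} carried with a smooth parameter, equivalently Proposition~\ref{prop_nilmanifold}(4) --- and then pass to general $\sigma$ using the density of smoothing symbols (Lemma~\ref{lem_densitysmoothing}) together with the kernel estimates applied to the differences $\sigma-\sigma_\ell$.

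Finally, when $m=-\infty$ the isomorphism $S^{-\infty}(M\times\Gh)\cong C^\infty(M:\cS(G))$ shows that $\dot x\mapsto\kappa_{\dot x}$ is smooth with values in $\cS(G)$; the convergence of $\sum_{\gamma\in\Gamma}\kappa_{\dot x}(y^{-1}\gamma x)$ in $C^\infty(M\times M)$ is then Proposition~\ref{prop_nilmanifold}(4) carried along with the smooth parameter $\dot x$ (or Lemma~\ref{lem_IntKernel_general} likewise), so $K$ is smooth. This completes the plan.
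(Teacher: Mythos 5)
Your proof is correct and follows the same essential reduction as the paper: reduce the kernel estimates to Theorem~\ref{thm_kernelG} via the $\Gamma$-periodic symbol $\sigma_G$, establish the formula for $K$ first in the smooth/smoothing case, and pass to general $m$ by a density/limiting argument. The paper's own proof is a two-sentence sketch (``the case $\sigma\in S^{-\infty}(M\times\Gh)$ follows from the results in Section~\ref{subsubsec_periodicfcn}; the density of $\cD(M)$ in $\cD'(M)$ implies the result for any $\sigma$''), so what you have done is essentially to fill in the details it leaves implicit: the unfolding computation via \eqref{eq_dxddotx} for test functions, the verification that the periodised sum $\sum_\gamma\kappa_{\dot x}(y^{-1}\gamma x)$ converges absolutely off the diagonal by combining the decay estimate of Theorem~\ref{thm_kernelG}(1) with the polynomial growth of the lattice $\Gamma$, and the approximation of $\sigma$ by smoothing symbols via Lemma~\ref{lem_densitysmoothing} together with the kernel estimates applied to the differences. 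The only minor divergence is in the phrasing of the final limiting step: the paper invokes density of $\cD(M)$ in $\cD'(M)$ (i.e.\ continuity of $\Op_M(\sigma)$ on distributions determines the Schwartz kernel once it is known on test functions), whereas you instead approximate the \emph{symbol} by smoothing symbols and pass to the limit in the kernel formula. Both are legitimate ways of reducing to the regular case, and your version makes explicit where the kernel estimates are actually needed to control the periodised sum. No gaps.
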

\begin{proof}
The case of $\sigma\in S^{-\infty}(M\times \Gh)$ follows from the results in  Section \ref{subsubsec_periodicfcn}.	
The density of $\cD(M)$ in $\cD'(M)$ implies the result for any $\sigma \in S^m(M\times \Gh)$.
\end{proof}

We denote by $\Psi^m(M)=\Op_M(S^m(M\times \Gh))$, $m\in \bR$, the spaces of operators $\Op_G(\sigma)$, $\sigma\in S^m(G\times \Gh)$.
It inherits naturally a Fr\'echet structure.

\begin{ex}
For any $\alpha\in \bN_0$,  $\Op_M (\widehat X^\alpha)=X_M^\alpha \in \Psi^{[\alpha]}(M)$. 	
More generally, for any $N$, given $c_\alpha\in C^\infty(M)$, $\alpha\in \bN_0^n$, $[\alpha]\leq N$, the symbol 
$\sigma=\sum_{[\alpha]\leq N} c_\alpha(\dot x)\widehat X^\alpha$ is in $S^N(M\times \Gh)$, 
therefore $\Op_M(\sigma)=\sum_{[\alpha]\leq N} c_\alpha(\dot x) X^\alpha$ is in $\Psi^N(M)$. 
\end{ex}
This example implies that $\Psi^\infty(M)=\cup_{m\in \bR}\Psi^m(M)$ contains the left-invariant differential calculus on $M$, that is, the $C^\infty(M)$-module generated by $X^\alpha$, $\alpha\in \bN_0^n$.

\subsection{The pseudo-differential calculus on $M$}

The spaces $\Psi^m(M)$, $m\in \bR$, form a calculus in the following sense:
\begin{theorem}
\label{thm_PDOGcomp+adjM}
\begin{enumerate}
	\item If $T\in \Psi^m(M)$ with $m\in \bR$ then $T$ is continuous $L^2_s(M)\to L^2_{s - m}(M)$ for any $s\in \bR$, $C^\infty(M)\to C^\infty(M)$ and $\cD'(M)\to \cD'(M)$.
	Moreover, $T\mapsto T$ is continuous $\Psi^m(M)\to \sL(L^2_s(M), L^2_{s - m}(M))$.
	\item If $T_1\in \Psi^{m_1}(M)$ and $T_2\in \Psi^{m_2}(M)$ with $m_1,m_2\in \bR$, then the composition $T_1T_2$ is in $\Psi^{m_1+m_2}(M)$.	Moreover, the map $(T_1,T_2)\mapsto T_1 T_2$ is continuous $\Psi^{m_1}(M) \times \Psi^{m_2}(M)\to \Psi^{m_1+m_2}(M)$.
	\item  If $T\in \Psi^{m}(M)$ with $m\in \bR$, then its formal adjoint $T^*$ is in $\Psi^{m}(M)$.
	Moreover, the map $T\mapsto T^*$ is continuous $\Psi^{m}(M)\to \Psi^{m}(M)$.
\end{enumerate}	
\end{theorem}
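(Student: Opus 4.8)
The plan is to deduce every assertion from the corresponding statement on the group $G$, transported through the correspondence between $\Gamma$-periodic objects on $G$ and objects on $M$ (Proposition~\ref{prop_nilmanifold} and the identifications following it). Recall that $\sigma\mapsto\sigma_G$ is a topological isomorphism of $S^m(M\times\Gh)$ onto the closed subspace $S^m(G\times\Gh)^\Gamma$ of $\Gamma$-periodic symbols, that $\Op_M(\sigma)f=(\Op_G(\sigma_G)f_G)_M$, and that a symbol $\tau\in S^m(G\times\Gh)$ is $\Gamma$-periodic if and only if its convolution kernel $x\mapsto\kappa_{\tau,x}$ is, since the Fourier inversion is applied pointwise in $x$. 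I will use repeatedly that, for $\Gamma$-periodic $\sigma_G$, the function $\Op_G(\sigma_G)f_G$ is again $\Gamma$-periodic by Lemma~\ref{lem_obsQuantizationM}, so that $\bigl((\Op_G(\sigma_G)f_G)_M\bigr)_G=\Op_G(\sigma_G)f_G$.

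\textbf{Parts (2) and (3).} Let $\sigma_i\in S^{m_i}(M\times\Gh)$ and $\sigma\in S^m(M\times\Gh)$. By (the proof of) Theorem~\ref{thm_asymp} the convolution kernels of $\sigma_{1,G}\diamond\sigma_{2,G}$ and of $\sigma_G^{(*)}$ are given by \eqref{eq_kappa12} and \eqref{eq_kappaadj}; substituting $x\mapsto\gamma x$, $\gamma\in\Gamma$, using associativity $\gamma x w^{-1}=\gamma\cdot(xw^{-1})$ and the $\Gamma$-periodicity of the kernels of $\sigma_{i,G}$ and $\sigma_G$, one reads off at once that these kernels are again $\Gamma$-periodic in $x$, hence $\sigma_{1,G}\diamond\sigma_{2,G}\in S^{m_1+m_2}(G\times\Gh)^\Gamma$ and $\sigma_G^{(*)}\in S^m(G\times\Gh)^\Gamma$. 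Together with the relation of the first paragraph this gives $\Op_M(\sigma_1)\Op_M(\sigma_2)=\Op_M\bigl((\sigma_{1,G}\diamond\sigma_{2,G})_M\bigr)\in\Psi^{m_1+m_2}(M)$. For the adjoint, when $\sigma\in S^{-\infty}(M\times\Gh)$ both $\Op_M(\sigma)$ and $\Op_M\bigl((\sigma_G^{(*)})_M\bigr)$ have smooth integral kernels on $M\times M$ given by Lemma~\ref{lem_IntKernel}, and a direct computation (inserting $\kappa_{\sigma_G}^{(*)}$ into that formula, substituting $\gamma\mapsto\gamma^{-1}$, using $\Gamma$-periodicity) identifies the kernel of $\Op_M\bigl((\sigma_G^{(*)})_M\bigr)$ with $\overline{K(\dot y,\dot x)}$, i.e.\ with the Schwartz kernel of $\Op_M(\sigma)^*$; the general case then follows by density of smoothing symbols (Lemma~\ref{lem_densitysmoothing}) together with Part~(1) and the continuity of the adjoint on $G$. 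In both cases continuity of $(\sigma_1,\sigma_2)\mapsto(\sigma_{1,G}\diamond\sigma_{2,G})_M$ and of $\sigma\mapsto(\sigma_G^{(*)})_M$ is inherited from Theorem~\ref{thm_PDOGcomp+adj}(2)--(3) via the topological embedding $S^m(M\times\Gh)\hookrightarrow S^m(G\times\Gh)$.

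\textbf{Part (1).} Continuity $\cD'(M)\to\cD'(M)$ is already built into the definition of $\Op_M$. Since $(1+\lambda)^{t/\nu}(\widehat\cR)\in S^t(G\times\Gh)$ (Theorem~\ref{thm_phi(R)}) is $x$-independent, hence $\Gamma$-periodic, we get $(\id+\cR_M)^{t/\nu}\in\Psi^t(M)$ for all $t$; combining this with Part~(2) and Proposition~\ref{prop_sobolev_spacesM}(\ref{item_SobMcompres}), it suffices to bound $T=\Op_M(\sigma)$ with $\sigma\in S^0(M\times\Gh)$ on $L^2(M)$, with operator norm controlled by finitely many seminorms of $\sigma$. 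Fix $\chi,\tilde\chi\in\cD(G)$ with $\chi\equiv1$ on a fixed fundamental domain $\cF$ of $\Gamma\backslash G$ and $\tilde\chi\equiv1$ on a neighbourhood of $\{y\in G:\ |y^{-1}x|\le1\ \text{for some}\ x\in\supp\chi\}$. For $f\in C^\infty(M)$ one has $\|Tf\|_{L^2(M)}=\|\Op_G(\sigma_G)f_G\|_{L^2(\cF)}\le\|\chi\,\Op_G(\sigma_G)f_G\|_{L^2(G)}$; split $f_G=\tilde\chi f_G+(1-\tilde\chi)f_G$. The first term is bounded using the $L^2(G)$-boundedness of $\Op_G(\sigma_G)\in\Psi^0(G)$ (Theorem~\ref{thm_PDOGcomp+adj}(1) and Remark~\ref{rem_thm_PDOGcomp+adj_L2bddness}) together with $\|\tilde\chi f_G\|_{L^2(G)}\lesssim\|f\|_{L^2(M)}$, which holds because $\supp\tilde\chi$ meets only finitely many $\Gamma$-translates of $\cF$ while $f_G$ is $\Gamma$-periodic. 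The second term has integral kernel $\chi(x)\,\kappa_{\sigma_G,x}(y^{-1}x)\,(1-\tilde\chi(y))$, supported where $|y^{-1}x|\ge1$, so by the off-diagonal estimate of Theorem~\ref{thm_kernelG}(1) it is $\le C_N|y^{-1}x|^{-N}$ there with $C_N$ controlled by an $S^0$-seminorm of $\sigma_G$; decomposing the $y$-integral into dyadic shells $\{|y|\sim2^kR\}$, on each of which $f_G$ carries $L^2$-mass $\lesssim(2^kR)^Q\|f\|_{L^2(M)}^2$ (polynomial volume growth of the $\Gamma$-orbit, the Haar measure being $Q$-homogeneous), a Cauchy--Schwarz estimate summed over $k$ gives the bound $\lesssim\|f\|_{L^2(M)}$ as soon as $N>Q$. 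A completion argument then yields $T\in\sL(L^2(M))$ with $\|T\|\lesssim\|\sigma\|_{S^0,a,b,c}$ for fixed $a,b,c$, hence (with Part~(2) and Proposition~\ref{prop_sobolev_spacesM}(\ref{item_SobMcompres})) continuity $\Psi^m(M)\to\sL(L^2_s(M),L^2_{s-m}(M))$ for all $m,s$; since $C^\infty(M)=\bigcap_s L^2_s(M)$ topologically, $T$ also maps $C^\infty(M)\to C^\infty(M)$ continuously.

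\textbf{Expected main obstacle.} All of Parts (2) and (3) are essentially bookkeeping transported from $G$; the one genuinely analytic point is the $L^2(M)$-bound in Part~(1). Unlike on $G$, the periodic lift $f_G$ of an $L^2(M)$-function is not in $L^2(G)$, so the ``tail'' operator $\chi\,\Op_G(\sigma_G)\,(1-\tilde\chi)$ cannot be treated by an $L^2(G)$-operator-norm estimate; one must balance the rapid off-diagonal decay of the pseudo-differential kernel against the polynomial growth of the lattice orbit, which is the delicate step sketched above.
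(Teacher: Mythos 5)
Your proposal is correct. Parts (2) and (3) follow exactly the paper's route: the kernels \eqref{eq_kappa12} and \eqref{eq_kappaadj} are visibly $\Gamma$-periodic in $x$ when the input kernels are, so composition and adjoint transfer from Theorem \ref{thm_PDOGcomp+adj}; your extra verification that the resulting $M\times M$ kernel is the Schwartz kernel of the formal adjoint is harmless bookkeeping the paper leaves implicit.

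Part (1) is where you genuinely diverge. You prove the $L^2(M)$ bound by a physical-space near/far decomposition on the universal cover: cut off to a fundamental domain, absorb the near part into the known $\sL(L^2(G))$ norm of $\Op_G(\sigma_G)$, and control the far part by balancing the rapid off-diagonal decay of Theorem \ref{thm_kernelG}(1) against the $O(R^Q)$ count of lattice translates meeting a ball of radius $R$ (your dyadic-shell Cauchy--Schwarz with $N>Q$ is sound, since each shell of radius $\sim 2^k$ meets $O(2^{kQ})$ translates of $\cF$, on each of which $f_G$ has $L^2$-mass $\|f\|_{L^2(M)}$). The paper instead stays on $M$: it freezes the base point, views $f_G*\kappa_{\dot x_1}$ as $R(\kappa_{\dot x_1})f$ for the regular representation $R$, invokes the decomposition $L^2(M)=\oplus^\perp_{\pi\in\Gamma\backslash\Gh}L^2_\pi(M)$ through Lemma \ref{lem_RegRep} to get $\|R(\kappa_{\dot x_1})\|\le\sup_\pi\|\sigma(\dot x_1,\pi)\|$, and recombines via a Sobolev embedding in the frozen variable $\dot x_1$, yielding the explicit bound \eqref{eq_OpMsigmaL2bdd} by $\|\sigma\|_{S^0,0,\nu_0,0}$ (after first reducing to smoothing symbols by density). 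Your route needs no representation-theoretic input beyond the group-level calculus and is closer to the standard localization arguments for pseudo-differential operators on manifolds, at the price of a constant involving both the $\sL(L^2(G))$ seminorm of Remark \ref{rem_thm_PDOGcomp+adj_L2bddness} and the kernel-decay constants; the paper's route exploits the harmonic analysis of $M$ and produces a cleaner seminorm depending only on $x$-derivatives of the symbol. Both are complete proofs.
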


In fact, Parts (2) and (3) follow from Theorem \ref{thm_PDOGcomp+adj} applied to operators with symbols that are $\Gamma$-periodic. Indeed, it implies that 
if $\sigma_1\in S^{m_i}(M\times \Gh)$, $i=1,2$,
then $\Op_M(\sigma_1)\Op_M(\sigma_2) \in \Psi^{m_1+m_2}(M)$
with 
$$
\Op_M(\sigma_1)\Op_M(\sigma_2)=
\Op_M(\sigma_1\diamond\sigma_2) 
\quad\mbox{where}\quad 
\sigma_1\diamond\sigma_2 := 
((\sigma_1)_G \diamond(\sigma_2)_G)_M,
$$
since we check readily from \eqref{eq_kappa12}
that the convolution kernel of $(\sigma_1)_G \diamond(\sigma_2)_G$ is $\Gamma$-periodic in $x$.
Similarly, if $\sigma\in S^m(M\times \Gh)$ then 
$(\Op_M(\sigma))^* \in \Psi^m(M)$
with 
 convolution kernel that is given by \eqref{eq_kappaadj} and clearly  $\Gamma$-periodic in $x$, and we have:
$$
\Op_M(\sigma^{(*)}) = (\Op_M(\sigma))^* 
\quad\mbox{where}\quad 
\sigma^{(*)} := (\sigma_G^{(*)})_M.
$$
We define a notion of asymptotic expansion in $S^m(M\times \Gh)$ in a similar way as in Definition \ref{def_asymp}, 
and we obtain similar asymptotic expansions for composition and adjoint as described on $G$ in Section \ref{subsec_PsiG}.

\begin{proof}[Proof of Theorem \ref{thm_PDOGcomp+adjM}]
	Parts (2) and (3) follow from the observations above together with Theorem \ref{thm_PDOGcomp+adj}.
	By construction, $T\in \Psi^m(M)$ acts continuously on $\cD'(M)$. It remains to show the continuity on the Sobolev spaces as this will imply the continuity on $C^\infty(M)$.
	Since $(\id +\widehat \cR)^{s/\nu}$ is in $S^s(M\times \Gh)$ for any $s\in \bR$ and positive Rockland operator $\cR$ (with homogeneous degree $\nu$), it suffices to show the case of symbols of order $m=0$ and Sobolev order 0.
	That is, it remains to show that if $\sigma\in S^0(M\times \Gh)$ then $\Op_M(\sigma)$ is bounded on $L^2(M)$, with operator norm bounded up to a constant of $M$ by a semi-norm of $\sigma\in S^0(M\times \Gh)$.
	We may assume $\sigma$ smoothing, 
	see Section \ref{subsubsec_smoothingsymbol}.  
	
Let $\sigma\in S^{-\infty}(M\times \Gh)$.
Its convolution kernel $\kappa$ satisfies  $\kappa_{\dot x} \in \cS(G)$. We have for any $f\in C^\infty(M)$ and $\dot x\in M$ (since $f_G\in C_b^\infty (G)$)
\begin{align*}
\Op_M(\sigma) f(\dot x)
&=f_G *\kappa_{\dot x}(x),	
\\
|\Op_M(\sigma) f(\dot x)|
&\leq \sup_{\dot x_1\in M}|f_G *\kappa_{\dot x_1}(x)|
\lesssim_M \sum_{[\beta] \leq \nu_0}
\|X^\beta_{M,\dot x_1} f_G *\kappa_{\dot x_1}(x)\|_{L^2(M,d\dot x_1)},
\end{align*}
by the Sobolev embedding (see Proposition \ref{prop_sobolev_spacesM}), where $\nu_0$ is the smallest common multiple of the dilation weights $\upsilon_1,\ldots,\upsilon_n$ satisfying $\nu_0>Q/2$. 
We observe that 
$$
X^\beta_{M,\dot x_1} f_G *\kappa_{\dot x_1}(x)
=
f_G *X^\beta_{M,\dot x_1}\kappa_{\dot x_1}(x)
=
R(X^\beta_{M,\dot x_1}\kappa_{\dot x_1}) f(\dot x), 
$$
with the notation of Example \ref{ex_Regrep}.
Therefore, we have
$$
\int_M |\Op_M(\sigma) f(\dot x)|^2 d\dot x
\lesssim_M \sum_{[\beta] \leq \nu_0}
\int_M \int_M 
| R(X^\beta_{M,\dot x_1}\kappa_{\dot x_1}) f(\dot x)|^2 d\dot x_1 d\dot x,
$$
and the right-hand side is equal to 
$$
 \sum_{[\beta] \leq \nu_0}
\int_M 
\| R(X^\beta_{M,\dot x_1}\kappa_{\dot x_1}) f\|_{L^2(M)}^2 d\dot x_1 \\
 \leq \sum_{[\beta] \leq \nu_0}
\int_M 
\sup_{\pi\in \Gh} \|\pi(X^\beta_{M,\dot x_1}\kappa_{\dot x_1})\|_{\sL(\cH_\pi)}^2 \|f\|_{L^2(M)}^2 d\dot x_1 ,
$$
by Lemma 	\ref{lem_RegRep}.
Hence, 
we have obtained:
\begin{align*}
\|\Op_M(\sigma) f\|^2_{L^2(M)} 
&\lesssim_M
\sum_{[\beta] \leq \nu_0}\int_M \|X^\beta_{M,\dot x_1} \sigma(\dot x_1, \cdot)\|_{L^\infty(\Gh)}^2 d\dot x_1\
\|f\|_{L^2(M)}^2\\
&\quad \leq |\vol (M)|
\sum_{[\beta] \leq \nu_0}\sup_{\dot x_1\in M}\|X^\beta_{M,\dot x_1} \sigma(\dot x_1, \cdot)\|_{L^\infty(\Gh)}^2 \
\|f\|_{L^2(M)}^2.
\end{align*}
This implies
\begin{equation}
\label{eq_OpMsigmaL2bdd}
	\|\Op_M(\sigma)\|_{\sL(L^2(M))}
\lesssim_M
\max_{[\beta] \leq \nu_0}\sup_{(\dot x_1,\pi)\in M\times \Gh }\|X^\beta_{M,\dot x_1} \sigma(\dot x_1, \pi)\|_{\sL(\cH_\pi)} =
 \|\sigma\|_{S^0,0,\nu_0,0},
\end{equation}
and concludes the proof. 
\end{proof}

In this paper, we will not discuss the characterisation of pseudo-differential operators by commutators with certain multipication and differentiation (also called Beals' characterisation). However, we can describe smoothing operators via their kernels or their action on Sobolev spaces on $M$:
\begin{proposition}
\label{prop_char_smoothingM}
	\begin{enumerate}
		\item 
		Given a distribution $K\in \cD'(M\times M)$, the operator  operator $T_K$ with integral kernel $K$ is a smoothing pseudo-differential operator on $M$  if and only if $K$ is smooth. Moreover, $K\mapsto T_K$ is an isomorphism of topological vector spaces $C^\infty (M\times M) \to \Psi^{-\infty}(M)$.
		\item 	If $T\in \sL(L^2(M))$ maps continuously $L^2_s(M)\to L^2_{s+N}(M)$ for any $s\in \bR$ and $N\in \bN$, then $T\in \Psi^{-\infty}(M)$ is a smoothing pseudo-differential operator. 
		Moreover, the converse is true, and the topology induced by $\cap_{s\in \bR, N\in \bN} \sL(L^2_s(M),L^2_{s+N}(M))$ coincides with the topology of $\Psi^{-\infty}(M)$.
	\end{enumerate}
\end{proposition}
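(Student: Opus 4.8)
The whole statement will be reduced to two facts, both of which feed off Lemma~\ref{lem_IntKernel}, the description of smoothing symbols on $M$ by their convolution kernels ($S^{-\infty}(M\times\Gh)\cong C^\infty(M:\cS(G))$), and the Hilbert--Schmidt/mapping properties in Theorem~\ref{thm_PDOGcomp+adjM}(1) and Proposition~\ref{prop_I+cR_comptrHS}. Two directions are immediate. If $T_K=\Op_M(\sigma)$ with $\sigma\in S^{-\infty}(M\times\Gh)$, then Lemma~\ref{lem_IntKernel} (case $m=-\infty$) says its integral kernel is smooth, and the Schwartz kernel is unique, so $K$ is smooth. And if $T=\Op_M(\sigma)$ with $\sigma\in\bigcap_m S^m(M\times\Gh)$, then Theorem~\ref{thm_PDOGcomp+adjM}(1) gives that $T$ is continuous $L^2_s(M)\to L^2_{s-m}(M)$ for all $m$, hence $T\in\sL(L^2(M))$ maps $L^2_s(M)\to L^2_{s+N}(M)$ for all $s,N$. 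So it remains to prove: (a) if $T\in\sL(L^2(M))$ maps $L^2_s(M)\to L^2_{s+N}(M)$ continuously for all $s\in\bR$, $N\in\bN$, then its Schwartz kernel $K_T$ is smooth; and (b) if $K\in C^\infty(M\times M)$ then $T_K\in\Psi^{-\infty}(M)$.

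\textbf{Step (a): regularity of the kernel.} Fix a positive Rockland operator $\cR$ on $G$ of homogeneous degree $\nu$. For $\beta,\beta'\in\bN_0^n$ set $S:=X_M^\beta\,T\,(X_M^{\beta'})^t$; since $G$ is unimodular, $(X^{\beta'})^t$ is left-invariant of homogeneous degree $[\beta']$, and a routine integration by parts shows that the integral kernel of $S$ is $X^\beta_{M,x}X^{\beta'}_{M,y}K_T$. Composing the hypothesised mapping properties of $T$ with Proposition~\ref{prop_sobolev_spacesM}(3) shows that $S$ maps $L^2(M)\to L^2_K(M)$ continuously for every $K$. Taking $K>Q/2$ and writing $S=(\id+\cR_M)^{-K/\nu}\bigl[(\id+\cR_M)^{K/\nu}S\bigr]$, the bracket is bounded on $L^2(M)$ while $(\id+\cR_M)^{-K/\nu}$ is Hilbert--Schmidt by Proposition~\ref{prop_I+cR_comptrHS}; hence $S$ is Hilbert--Schmidt and $X^\beta_{M,x}X^{\beta'}_{M,y}K_T\in L^2(M\times M)$. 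As the $X_j$ form a basis of $\fg$, these iterated left-invariant derivatives control all Sobolev norms on the compact nilmanifold $M\times M=(\Gamma\times\Gamma)\backslash(G\times G)$ by Proposition~\ref{prop_sobolev_spacesM}(3), so $K_T\in\bigcap_s L^2_s(M\times M)=C^\infty(M\times M)$ by the Sobolev embedding of Proposition~\ref{prop_sobolev_spacesM}(4). Combined with Part~(1) (once (b) is proven) this gives Part~(2).

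\textbf{Step (b): un-periodising a smooth kernel.} The point is to turn a smooth kernel on $M\times M$ into a \emph{Schwartz}-valued convolution kernel; because $G$ is non-commutative the ``increment'' $y^{-1}x$ does not descend to $M$, so there is no global formula, and this is the main obstacle. I resolve it by localising. Since $G\to M$ is a covering map, fix a finite cover $M=\bigcup_iU_i$ by evenly covered open sets, a subordinate partition of unity $1=\sum_i\phi_i$ with $\supp\phi_i$ a compact subset of $U_i$, and for each $i$ a lift $\tilde U_i\subset G$ of $U_i$, so that $\dot x\mapsto x_i$ (the representative of $\dot x$ in $\tilde U_i$) is smooth on $U_i$. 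Fix also $\chi\in\cD(G)$ with $\chi^\Gamma\equiv 1$ (normalise a nonnegative bump whose periodisation is positive, using Proposition~\ref{prop_nilmanifold}(1)). Writing $K_G$ for the lift of $K$ to $G\times G$, set
$$
\kappa^{(i)}_{\dot x}(z):=\phi_i(\dot x)\,\chi(x_i z^{-1})\,K_G(x_i,\,x_i z^{-1})\quad(\dot x\in U_i),
$$
extended by $0$ for $\dot x\notin U_i$, and $\kappa:=\sum_i\kappa^{(i)}$. Each $\kappa^{(i)}_{\dot x}$ is smooth in $(\dot x,z)$ and compactly supported in $z$ uniformly over $\dot x$, so $\dot x\mapsto\kappa_{\dot x}$ lies in $C^\infty(M:\cD(G))\subset C^\infty(M:\cS(G))$, whence $\sigma:=\cF_G\kappa\in S^{-\infty}(M\times\Gh)$. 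Finally $\Op_M(\sigma)=T_K$: by Lemma~\ref{lem_IntKernel} the kernel of $\Op_M(\sigma^{(i)})$ is $\dot y\mapsto\sum_{\gamma\in\Gamma}\kappa^{(i)}_{\dot x}(y^{-1}\gamma x_i)$, and since $x_i(y^{-1}\gamma x_i)^{-1}=\gamma^{-1}y$ and $K_G$ is $\Gamma$-periodic in its second variable, this equals $\phi_i(\dot x)K_G(x_i,y)\sum_{\gamma}\chi(\gamma^{-1}y)=\phi_i(\dot x)K(\dot x,\dot y)$; summing over $i$ gives $K$.

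\textbf{Topologies.} All maps above are continuous: $K\mapsto\kappa$ is manifestly continuous $C^\infty(M\times M)\to C^\infty(M:\cS(G))$; its inverse $\kappa\mapsto\bigl[(\dot x,\dot y)\mapsto\sum_{\gamma\in\Gamma}\kappa_{\dot x}(y^{-1}\gamma x)\bigr]$ is continuous by the uniform Schwartz decay; and $\Op_M$ is continuous $S^{-\infty}(M\times\Gh)\to\sL(L^2_s(M),L^2_{s+N}(M))$ by Theorem~\ref{thm_PDOGcomp+adjM}(1). Conversely the Hilbert--Schmidt estimates of Step~(a) bound the $L^2(M\times M)$-norms of the derivatives of $K_T$ — hence, via the continuous reconstruction, the symbol seminorms of $\sigma$ — in terms of the operator norms $\|T\|_{\sL(L^2_s,L^2_{s+N})}$. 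Therefore $K\mapsto T_K$ is an isomorphism of Fréchet spaces $C^\infty(M\times M)\to\Psi^{-\infty}(M)$, and the topology of $\Psi^{-\infty}(M)$ agrees with that induced by $\bigcap_{s\in\bR,\,N\in\bN}\sL(L^2_s(M),L^2_{s+N}(M))$ (alternatively, both being Fréchet, invoke the closed graph theorem).
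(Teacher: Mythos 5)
Your proof is correct. Step~(a) is essentially the paper's own argument: both reduce smoothness of the kernel to Hilbert--Schmidt bounds on $X^\alpha_M T X^\beta_M$ obtained by factoring through $(\id+\cR_M)^{-K/\nu}$ via Proposition~\ref{prop_I+cR_comptrHS}, and the topology claim is handled in the same spirit (open mapping/closed graph between Fr\'echet spaces). The genuine difference is in Step~(b), the passage from a smooth kernel $K\in C^\infty(M\times M)$ to a symbol $\sigma\in S^{-\infty}(M\times\Gh)$. The paper constructs a \emph{single global} convolution kernel by the ratio trick: it periodises a bump $\chi$ to get $K_\chi(\dot x,\dot y)=\sum_\gamma\chi(y^{-1}\gamma x)\geq 1$ smooth, then sets $\kappa_{\dot x}(y):=\dfrac{K_{G\times G}}{K_{\chi,G\times G}}(x,xy^{-1})\,\chi(y)$; periodising this $\kappa$ reproduces $K$ because the denominator cancels exactly against $\sum_\gamma\chi(y^{-1}\gamma x)$. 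You instead work \emph{locally}: you cover $M$ by evenly covered charts, use a partition of unity $\phi_i$ and chart lifts $x_i$, and normalise a bump so that $\chi^\Gamma\equiv 1$, which makes the periodised cut-off disappear from the final expression. Both constructions produce a $\Gamma$-periodic, compactly $z$-supported, jointly smooth $\kappa_{\dot x}(z)$ whose periodisation is $K$, so both are correct. The paper's version is coordinate-free and avoids the covering-space bookkeeping; yours is more elementary (no division by a periodised function, only a multiplicative normalisation) and arguably more transparent about where the local diffeomorphism $G\to M$ enters. One minor stylistic point: the normalisation $\chi^\Gamma\equiv 1$ deserves a line of justification — take $\chi_0\geq 0$ compactly supported with $\chi_0\equiv 1$ on a fundamental domain so that $\chi_0^\Gamma\geq 1$ everywhere, and set $\chi:=\chi_0/(\chi_0^\Gamma)$; then $\chi^\Gamma=1$ by $\Gamma$-periodicity of $\chi_0^\Gamma$, and $\chi$ is still smooth and compactly supported.
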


We could obtain similar results for 
localised operators, i.e. $\chi_1 T \chi_2$
with cut-off functions $\chi_1,\chi_2$, characterised by their actions on local Sobolev spaces. However, as we will not use these results in the paper, we have not included them here.

\begin{proof}[Beginning of the proof of Proposition \ref{prop_char_smoothingM} (1)]
Here, we prove the implication  $K\in C^\infty (M\times M)\Rightarrow T_K \in \Psi^{-\infty}(M)$.
We fix $\chi\in \cD(G)$ such that 
$0\leq \chi \leq 1$ and $\chi=1$ on a fundamental domain $F_0$ of $M=\Gamma \backslash G$.
Then the function $K_\chi$ defined via
$$
K_\chi (\dot x,\dot y) = \sum_{\gamma\in \Gamma} \chi (y^{-1} \gamma x), \qquad \dot x, \, \dot y\in M,
$$
is smooth on $M\times M$ by the proof of Lemma \ref{lem_IntKernel}, with  $K_\chi \geq 1$.
We denote by $K_{\chi,G\times G}$ the corresponding function on $G\times G$.

Let $K\in C^\infty(M\times M)$. 
Denoting by $K_{G\times G} \in C^\infty (G\times G)^{\Gamma\times \Gamma}$ the corresponding function on $G\times G$, 
we set
$$
\kappa_{\dot x}(y) := \frac{K_{G\times G} }{K_{\chi,G\times G} }(x, xy^{-1}) \, \chi(y), 
\qquad x,y\in G.
$$
We check readily that this defines a smooth function in $\dot x,y$, supported in the  compact support of $\chi$  in $y$. Moreover, it satisfies for any $x,y\in G$
$$
\sum_{\gamma\in \Gamma} \kappa_{\dot x}(y^{-1}\gamma x) = \sum_{\gamma\in \Gamma} 
\frac{K_{G\times G} }{K_{\chi,G\times G} }(x, y)\, 
\chi(y^{-1}\gamma x) = K(\dot x,\dot y).
$$
This implies readily that $T_K=\Op_M(\sigma)$ 
is a smoothing pseudo-differential with  symbol $\sigma\in S^{-\infty}(M\times \Gh)$ given by $\sigma(\dot x,\pi)=\pi(\kappa_{\dot x})$.
\end{proof}

\begin{proof}[End of the proof of Proposition \ref{prop_char_smoothingM}]
Let $T\in \sL(L^2(M))$ be a continuous map $L^2_s(M)\to L^2_{s+N}(M)$ for any $s\in \bR$ and $N\in \bN$.
Let $K\in \cD'(M\times M)$ be its integral kernel. Then $K\in L^2(M\times M)$ with 
	$$
	\|K\|_{L^2(M\times M)} = \|T\|_{HS(L^2(M))}
	\leq C_s\|(\id+\cR_M)^{\frac s \nu}T\|_{\sL(L^2(M))}
	$$
	where $\cR$ is a positive Rockland operator on $G$ of homogeneous degree $\nu$, 
	and the constant
$C_s:=	\|(\id+\cR_M)^{-\frac s \nu} \|_{HS(L^2(M))}$
 is  finite by Proposition \ref{prop_I+cR_comptrHS} for $s>Q/2$.
We may apply the same reasoning to the integral kernel $X_M^\alpha (X_M^\beta)^t K\in L^2(M\times M)$ of $X_M^\alpha T X_M^\beta$.
This implies that $K\in C^\infty(M\times M)$ and therefore $T_K$ is smoothing by the beginning of Proposition \ref{prop_char_smoothingM} (1) already proven. 

The converse is true by the properties of the pseudo-differential calculus.
These properties also imply that the 
bijective map $T\mapsto T$ is continuous $\Psi^{-\infty}(M) \to \cap_{s\in \bR, N\in \bN} \sL(L^2_s(M),L^2_{s-N}(M))$;
it is therefore an isomorphism of topological vector spaces. This shows Part (2).

The arguments above show that 
if $K\in \cD'(M\times M)$ is the integral kernel of a smoothing operator $T_K$, then 
$K$ is smooth, 
and that the bijective map $K\mapsto T_K$ is an isomorphism of topological vector spaces $C^\infty (M\times M) \to \Psi^{-\infty}(M)$.
The end of Part (1) follows. 
\end{proof}

\subsection{Parametrices}
\label{subsec_param}

First let us recall the notion of left parametrix:
\begin{definition}
	An operator $A$ admits a \emph{left parametrix} in $\Psi^m(G)$ 
	when $A\in \Psi^m(G)$ and there exists $B\in \Psi^{-m}(G)$ such that 
	$BA - \id \in \Psi^{-\infty}(G)$.
	
	We have a similar definition on $\Psi^m(M)$.
\end{definition}

The existence of a left parametrix is an important quality as it implies the following regularity and spectral properties: 
\begin{proposition}
\label{prop_cqLpar}
If $A\in \Psi^m(G)$ admits a left-parametrix, then $A$ is hypoelliptic and satisfies sub-elliptic estimates 
$$
\forall s\in \bR, \ \forall N\in \bN_0, \ \forall f\in \cS(G)\quad
\|f\|_{L^2_s(G)}\lesssim \|Af\|_{L^2_{s-m}(G)} +\|f\|_{L^2_{-N}(G)}.
$$
The implicit constant above depends on $s,N,A$ and the realisations of the Sobolev norms, but not on $f$.

 On $M$, we have a similar property.
\end{proposition}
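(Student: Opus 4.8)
The plan is to read off the sub-elliptic estimates immediately from the parametrix together with the mapping properties of the calculus, and then to deduce hypoellipticity by localising the parametrix identity, using that the operators in $\Psi^\infty(G)$ are pseudolocal.

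\textbf{Sub-elliptic estimates.} Fix $B\in\Psi^{-m}(G)$ with $R:=BA-\id\in\Psi^{-\infty}(G)$. For $f\in\cS(G)$ I would simply write $f=BAf-Rf$. By Theorem~\ref{thm_PDOGcomp+adj}(1), $B$ is continuous $L^2_{s-m}(G)\to L^2_s(G)$, so $\|BAf\|_{L^2_s(G)}\lesssim\|Af\|_{L^2_{s-m}(G)}$; and since $R\in\Psi^{-\infty}(G)\subset\Psi^{-N-s}(G)$, the same theorem gives $\|Rf\|_{L^2_s(G)}\lesssim\|f\|_{L^2_{-N}(G)}$. Adding the two bounds yields the stated inequality, with implicit constant controlled by $s$, $N$, suitable operator seminorms of $B$ and $R$ (hence of $A$ and the chosen parametrix), and the chosen realisations of the Sobolev norms, but not by $f$. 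On $M$ the argument is verbatim the same, using Theorem~\ref{thm_PDOGcomp+adjM}(1) in place of Theorem~\ref{thm_PDOGcomp+adj}(1).

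\textbf{Hypoellipticity.} Let $u\in\cS'(G)$ and suppose $Au$ is smooth on an open set $\Omega$; I want to show $u$ is smooth near an arbitrary point $x_0\in\Omega$. Choose $\chi_1,\chi_2\in\cD(G)$ supported in $\Omega$ with $\chi_1\equiv1$ near $x_0$ and $\chi_2\equiv1$ on a neighbourhood of $\supp\chi_1$. From $BA=\id+R$ I get $\chi_1u=\chi_1B(Au)-\chi_1Ru$, and I would treat the resulting pieces after splitting $Au=\chi_2\,Au+(1-\chi_2)\,Au$: (i) $\chi_2\,Au\in\cD(G)\subset\cS(G)$, hence $B(\chi_2\,Au)\in\cS(G)$ by Theorem~\ref{thm_PDOGcomp+adj}(1); (ii) the operator $\phi\mapsto\chi_1B((1-\chi_2)\phi)$ has kernel $\chi_1(x)\,\kappa^B_x(y^{-1}x)\,(1-\chi_2(y))$, which is smooth with Schwartz decay because $\supp\chi_1$ and $\{\chi_2\neq1\}$ lie at positive distance from one another and $\kappa^B_x(z)$ is smooth for $z\neq0$ by Theorem~\ref{thm_kernelG}, so this operator is smoothing and maps $\cS'(G)$ into $C^\infty(G)$; (iii) $Ru\in C^\infty(G)$ because the convolution kernel $\kappa^R_x$ of $R$ lies in $C^\infty_{l,b}(G:\cS(G))$ by Proposition~\ref{prop_smoothing}, so $x\mapsto Ru(x)=u*\kappa^R_x(x)$ is smooth. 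Hence $\chi_1u\in C^\infty(G)$, i.e.\ $u$ is smooth near $x_0$; as $x_0\in\Omega$ was arbitrary, the singular support of $u$ is contained in that of $Au$. On $M$ the same localisation argument applies, using Lemma~\ref{lem_IntKernel} for the kernel estimates and Proposition~\ref{prop_char_smoothingM} to identify the smoothing operators with those of smooth integral kernel.

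The sub-elliptic estimate is routine given the earlier theorems. The only step that genuinely requires care is the localisation in the hypoellipticity part, namely checking that $\chi_1B(1-\chi_2)$ and $R$ are smoothing as maps of distribution spaces into $C^\infty$; this is precisely where pseudolocality — the fact that the convolution kernels are smooth away from the diagonal with Schwartz tails (Theorem~\ref{thm_kernelG}, Lemma~\ref{lem_IntKernel}) — and the kernel description of $S^{-\infty}$ (Proposition~\ref{prop_smoothing}) enter, together with the mild point that all operators involved do act on $\cS'(G)$ (respectively $\cD'(M)$).
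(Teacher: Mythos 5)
Your proof is correct and follows the standard parametrix argument. The paper itself does not write out a proof but simply cites~\cite[Section~5.8.3]{R+F_monograph} for $G$ (with $M$ then obtained via periodic symbols), and the argument there is exactly the route you take: $f = B(Af) - Rf$ combined with the Sobolev continuity of the calculus for the subelliptic estimate, and pseudolocality (off-diagonal kernel decay from Theorem~\ref{thm_kernelG} together with the $\cS$-valued-kernel description of $S^{-\infty}$ from Proposition~\ref{prop_smoothing}) for hypoellipticity.
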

\begin{proof}
In the group setting, the hypoellipticity and subelliptic estimates were proved in \cite[Section 5.8.3]{R+F_monograph}.
By considering periodic symbols, similar properties hold on $M$. 
\end{proof}

We are able to construct left parametrices  when  symbols are invertible for high frequencies in the following sense:
\begin{definition}
\label{def_symbinv}
A symbol $\sigma$ 
is \emph{invertible}  in $S^m(G\times \Gh)$  for the  frequencies of a Rockland symbol $\widehat \cR$ higher than $\Lambda\in\bR$
(or just invertible for high frequencies)
when  for any $\gamma\in \bR$,  $x\in G$, $\mu$-almost every $\pi\in \Gh$, and $v\in \cH_{\pi,\cR,\Lambda}$,  we have 
$$
\|(\id+\pi(\cR))^{\frac{\gamma}\nu} \sigma(x,\pi)v_\pi\|_{\cH_\pi}
\geq C_\gamma \|(\id+\pi(\cR))^{\frac{\gamma+m}\nu} v_\pi\|_{\cH_\pi}
	$$
	with $C_{\gamma}=C_{\gamma,\sigma,G}>0$ a constant independent of $x$ and $\pi$.
	Above, $\nu$ is the homogeneous degree of $\cR$ and  $\cH_{\pi,\cR,\Lambda}$ is the subspace $\cH_{\pi,\cR,\Lambda}:= E_\pi[\Lambda ,\infty)\cH_\pi$ of $\cH_\pi$ where $E_\pi$ is the  spectral decomposition of $
\pi(\cR_G) = \int_\bR \lambda dE_\pi(\lambda)$ as in \eqref{eq_spectralmeas}.

We have a similar definition on $S^m(M\times \Gh)$.
\end{definition}

In the group case, 
we recognise the property defined in \cite[Definition 5.8.1]{R+F_monograph}, that was called ellipticity there. Using the vocabulary of ellipticity  is  cumbersome for this as it does not coincide with the usual notion of ellipticity considered in the abelian case $G=(\bR^n,+)$. Indeed, ellipticity in the abelian sense is defined for symbols with homogeneous asymptotic expansions. 
In any case, the properties explained in \cite[Section 8.1]{R+F_monograph} hold.
For instance, in Definition \ref{def_symbinv}, it suffices to prove the estimate for a sequence of $\gamma=\gamma_\ell$, $\ell\in \bZ$,   with $\lim_{\ell\to \pm \infty} \gamma_\ell =\pm \infty$. 
From the examples given in \cite[Section 5.8]{R+F_monograph}, if $\cR$ is a positive Rockland operator of  homogeneous degree $\nu$, then $\cR\in \Psi^\nu(G)$, $(\id+\cR)^{m/\nu}\in \Psi^m(G)$, 
$\cR_M\in \Psi^\nu(M)$, $(\id+\cR_M)^{m/\nu}\in \Psi^m(M)$ are operators with invertible symbols for the high frequencies of $\widehat \cR$.

As mentioned above, 
the invertibility of a symbol (even modulo lower order terms) allows us to construct a left parametrix for the corresponding operator:
\begin{theorem}
\label{thm_Lparexists}
\begin{itemize}
	\item Let $A\in \Psi^m(G)$. Assume that the symbol of $A$ may be written as 
	$$
	(\Op_G)^{-1}(A)=\sigma_0+\sigma_1, 
	\quad\mbox{with} \quad  \sigma_0\in S^m (G\times \Gh), \quad  \sigma_1\in S^{m_1} (G\times \Gh), \quad  m_1<m,
	$$
	 and that $\sigma_0$ is invertible in $S^m (G\times \Gh)$ for high frequencies. 
Then $A$ admits a left parametrix; therefore the properties in Proposition \ref{prop_cqLpar} hold.
\item We have a similar property on $M$. Moreover,  if $m>0$, then $A$ has compact resolvent on $L^2(M)$. Consequently, it has discrete spectrum and its eigenspaces are finite dimensional. 
\end{itemize}
\end{theorem}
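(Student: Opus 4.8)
The plan is to construct the left parametrix explicitly inside the nilpotent calculus, following \cite[Section 5.8]{R+F_monograph}, and then to transfer the construction to $M$ and read off the spectral consequences. \emph{Group case.} Recall that $X^\beta_x$ preserves the order of a symbol while each difference operator $\Delta_\alpha$ lowers it by $[\alpha]$; the invertibility of $\sigma_0$ for high frequencies (Definition \ref{def_symbinv}) is precisely what is needed to produce an ``inverse symbol''. First I would fix a positive Rockland operator $\cR$ and a cut-off $\chi\in C^\infty(\bR)$ with $\chi\equiv 1$ on $[\Lambda_1,\infty)$ and $\chi\equiv 0$ on $(-\infty,\Lambda_0]$, where $\Lambda<\Lambda_0<\Lambda_1$, and set $\tau_0(x,\pi):=\chi(\pi(\cR))\,L(x,\pi)$ where $L(x,\pi)$ is the left inverse of $\sigma_0(x,\pi)$ on $\cH_{\pi,\cR,\Lambda_1}$ (say $L=(\sigma_0^*\sigma_0)^{-1}\sigma_0^*$ there, and $0$ on the complement). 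The content of \cite[Section 5.8]{R+F_monograph} is that the full family of weighted lower bounds in Definition \ref{def_symbinv}, combined with Leibniz-type formulas for $\Delta_\alpha$ and $X^\beta_x$ acting on inverses and products of operator-valued symbols, forces $\tau_0\in S^{-m}(G\times\Gh)$. One then checks that $\tau_0\diamond(\sigma_0+\sigma_1)=\id+r$ with $r$ of strictly negative order: the leading term $\tau_0\sigma_0$ coincides with $\chi(\widehat\cR)$ on high frequencies, $\chi(\widehat\cR)-\id$ has a smoothing symbol by Hulanicki's theorem (Theorem \ref{thm_hula}, using $\mathrm{sp}(\widehat\cR)\subset[0,\infty)$), the lower-order terms of the expansion in Theorem \ref{thm_asymp} strictly lower the order, and $\tau_0\diamond\sigma_1\in S^{m_1-m}(G\times\Gh)$ with $m_1-m<0$. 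Summing the Neumann series $\sum_{k\ge0}(-r)^{\diamond k}$ asymptotically (the calculus admits asymptotic sums of symbols of orders tending to $-\infty$, as recalled after Definition \ref{def_asymp}) yields $s\in S^0(G\times\Gh)$ with $s\diamond(\id+r)\equiv\id$ modulo $S^{-\infty}(G\times\Gh)$; hence $B:=\Op_G(s\diamond\tau_0)\in\Psi^{-m}(G)$ satisfies $BA-\id\in\Psi^{-\infty}(G)$, and Proposition \ref{prop_cqLpar} gives the hypoellipticity and the subelliptic estimates.

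\emph{Nilmanifold case.} The construction above uses only composition, adjoints and asymptotic summation of symbols together with the $x$-independent multiplier $\chi(\widehat\cR)$; since the calculus on $M$ enjoys exactly these properties (Theorem \ref{thm_PDOGcomp+adjM} and its asymptotic-expansion analogue) and every step preserves $\Gamma$-periodicity in $x$, the same argument produces a left parametrix $B\in\Psi^{-m}(M)$ -- equivalently, one runs the construction on $G$ for the periodic symbol $\sigma_G=(\sigma_0)_G+(\sigma_1)_G$ and observes that the resulting symbol $s\diamond\tau_0$ is $\Gamma$-periodic, hence descends to $M$. Proposition \ref{prop_cqLpar} again supplies hypoellipticity and subelliptic estimates on $M$.

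\emph{Compact resolvent when $m>0$.} Write $BA=\id+R$ with $B\in\Psi^{-m}(M)$, $R\in\Psi^{-\infty}(M)$. Since $m>0$, the Sobolev embedding $L^2_m(M)\hookrightarrow L^2(M)$ is compact (Proposition \ref{prop_sobolev_spacesM}), so $B$ and $R$ are compact on $L^2(M)$, and the subelliptic estimate identifies the maximal domain of $A$ in $L^2(M)$ with $L^2_m(M)$, on which $A$ is closed. The analogous construction for the adjoint symbol $\sigma^{(*)}$ provides a right parametrix, so $A\colon L^2_m(M)\to L^2(M)$ is Fredholm; the same holds for $A-\lambda$, $\lambda\in\bC$, since $-\lambda\,\id\in\Psi^0(M)$ changes neither $\sigma_0$ nor its invertibility for high frequencies, and the Fredholm index is constant in $\lambda$ because $A-\lambda$ differs from $A$ by a multiple of the compact embedding $L^2_m(M)\hookrightarrow L^2(M)$. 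The analytic Fredholm theorem applied to the holomorphic family $\lambda\mapsto A-\lambda$ then gives that $\sigma(A)$ is discrete, that $(A-\lambda)^{-1}$ is compact for $\lambda\in\rho(A)$ since it factors through $L^2_m(M)$, and that each eigenspace $\ker(A-\lambda)$ is finite-dimensional (on it the identity equals the compact operator $-R_\lambda$, where $B_\lambda(A-\lambda)=\id+R_\lambda$ with $R_\lambda$ smoothing).

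\emph{Main difficulty.} The technical heart is the symbol estimate $\tau_0\in S^{-m}(G\times\Gh)$, i.e. controlling $X^\beta_x\Delta_\alpha\tau_0$ in the appropriate $L^\infty_{\gamma,\,\cdot}(\Gh)$-spaces from the weighted lower bounds of Definition \ref{def_symbinv}; this is carried out in \cite[Section 5.8]{R+F_monograph}, and the genuinely new points are the (routine) compatibility of the construction with periodisation and the deduction of the compact-resolvent statement on $M$.
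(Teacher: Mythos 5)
Your parametrix construction on $G$ and its transfer to $M$ follow essentially the same route as the paper: both take a high-frequency cutoff $\psi(\widehat\cR)$ (your $\chi(\widehat\cR)$) times an inverse of $\sigma_0$ as the first approximation, use the composition expansion to see it equals $\id$ plus lower order, and Neumann-sum asymptotically. The minor difference that you take the explicit pseudo-inverse $(\sigma_0^*\sigma_0)^{-1}\sigma_0^*$ where the paper invokes \cite[Proposition 5.8.5]{R+F_monograph} for the existence of $\sigma_0^{-1}$ is just a presentational choice, as is your explicit handling of $\chi(\widehat\cR)-\id$ via Hulanicki's theorem; the paper absorbs this into the ``$\bmod\ \Psi^{-1}(G)$'' assertion.

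Where you genuinely diverge is the compact-resolvent part on $M$. The paper's argument is a short direct factoring: given $\lambda$ in the resolvent set, $B(A-\lambda)=\id+R$ yields $(A-\lambda)^{-1}=B-R(A-\lambda)^{-1}$, so $(A-\lambda)^{-1}$ is bounded $L^2(M)\to L^2_m(M)$ and therefore compact because $L^2_m(M)\hookrightarrow L^2(M)$ is compact (Proposition \ref{prop_I+cR_comptrHS}). Your argument routes through Fredholm theory: you also build a right parametrix (via the adjoint symbol), deduce that $A\colon L^2_m(M)\to L^2(M)$ is Fredholm with constant index, and apply the analytic Fredholm theorem. That is correct but noticeably longer, and it imports one unstated hypothesis: the analytic Fredholm theorem only yields discreteness of $\sigma(A)$ if $A-\lambda$ is invertible for \emph{some} $\lambda$; if the operator were never invertible the spectrum would be all of $\bC$. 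The paper sidesteps this by conditioning explicitly on $\lambda$ being in the (assumed non-empty) resolvent set; you should make the same assumption explicit before invoking analytic Fredholm. On the other hand, your Fredholm-theoretic argument does produce additional information (closedness of $A$ on $L^2_m(M)$, stability of the index, the identification of the maximal domain), so it is not without value if one wants those facts — it is simply more than Theorem \ref{thm_Lparexists} asks for.
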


\begin{proof}
For the  case of $\sigma_1=0$, this is showed in the proof of \cite[Theorem 5.8.7]{R+F_monograph}. 
Let us recall its main lines. Let $\sigma_0$ 
be invertible  in $S^m(G\times \Gh)$  for the  frequencies of a Rockland symbol $\widehat \cR$ higher than $\Lambda\in\bR$. 
We fix $\Lambda_1,\Lambda_2\geq 0$ with $\Lambda <\Lambda_1<\Lambda_2$, and  $\psi\in C^\infty (\bR)$ with $\psi =0$ on $(-\infty, \Lambda_1)$ and $\psi=1$ on $(\Lambda_2,\infty)$.
Then $\psi (\widehat \cR)\sigma_0^{-1}$ is a well defined symbol in $S^{-m}(G\times \Gh)$ \cite[Proposition 5.8.5]{R+F_monograph}
and $\Op_G(\psi (\widehat \cR)\sigma_0^{-1}) \Op_G(\sigma_0) = \id $ mod $\Psi^{-1}(G)$ 
by the symbolic  properties of the calculus. 
We then conclude in the usual way  
as in the proof of \cite[Theorem 5.8.7]{R+F_monograph}.

When $\sigma_1$ is not necessarily 0, we observe that 
$$
\Op_G(\psi (\widehat \cR)\sigma_0^{-1}) A =
\Op_G(\psi (\widehat \cR)\sigma_0^{-1}) \Op_G(\sigma_0)
+
\Op_G(\psi (\widehat \cR)\sigma_0^{-1}) \Op_G(\sigma_1)
$$
is still equal to $\id $ mod $\Psi^{-\min(m_1,1)}(G)$, and we can proceed as above.  
This shows the existence of a left parametrix for $A\in \Psi^m(G)$. 

If $m>0$ and $\lambda\in \bC$ is in the resolvent set of $A\in \Psi^m(M)$, then applying the above properties to $A-\lambda$, 
we have
$B(A-\lambda)=\id +R$ for some $B\in \Psi^{-m}(M)$ and $R\in \Psi^{-\infty}(M)$ (depending on $\lambda$), so that  $(A-\lambda)^{-1} = B-R	(A-\lambda)^{-1}$ is continuous $L^2(M)\to L^2_m(M)$.
 By Proposition \ref{prop_I+cR_comptrHS}, 
 the operator $(A-\lambda)^{-1}$  is therefore also compact $L^2(M)\to L^2_{m_1}(M)$ for $m_1< m$, in particular for $m_1=0$. 
\end{proof}

Recall that if $T\in \Psi^\infty(G)$ is a pseudo-differential operator, then $T^*$ denotes its formal adjoint, that is, the operator acting continuously on $\cS(G)$ and on $\cS'(G)$ given by 
$$
\int_G T^*f(x) \, \bar g(x) \, dx =\int_G f(x)\, \overline {Tg(x)}\, dx
\qquad f,g\in\cS'(G), \ \mbox{with a least one of them in}\ \cS(G).
$$
This may be different from the $L^2$-adjoint operator $T^\dagger$ of the (possibly unbounded) operator $T$ densely defined on $\cS(G)\subset L^2(G)$.
 A pseudo-differential operator $T\in \Psi^\infty(G)$  is symmetric when $T$ coincides with its formal adjoint $T^*$, or equivalently when
$$
(Tf,f)_{L^2}=(f,Tf)_{L^2}\quad 
\forall f\in \cS(G)\ (\mbox{or more generally} \ \cap_{s\in \bR} L^2_s(G)).
$$
We have similar properties and vocabulary on $M$. 

\begin{corollary}
\label{cor_Lparexists}	
We continue with the setting of Theorem \ref{thm_Lparexists}. We assume furthermore that  $m>0$	 and that $A$ is symmetric with formally self-adjoint symbol $\sigma=\sigma_0^*$. 
Then $A$ is essentially self-adjoint on $\cS(G)\subset L^2(G)$. 

We have a similar property on $M$. 
\end{corollary}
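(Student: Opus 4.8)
The plan is to prove that $A$, viewed as a densely defined symmetric operator on $\cS(G)\subset L^2(G)$, has vanishing deficiency indices, which is the classical criterion for essential self-adjointness. Concretely, I would fix $\lambda=\pm i$ and take $u\in L^2(G)$ in the domain of the $L^2$-adjoint $A^\dagger$ of $A|_{\cS(G)}$ with $A^\dagger u=\lambda u$; the goal is to show $u=0$. Since $A$ is symmetric, its formal adjoint satisfies $A^*=A$, and $A^*$ acts continuously on $\cS'(G)$ by Theorem \ref{thm_PDOGcomp+adj}; unravelling the definition of $A^\dagger$ against test functions $f\in\cS(G)$, the relation $(Af,u)_{L^2}=(f,\lambda u)_{L^2}$ translates into the distributional identity $Au=\lambda u$ in $\cS'(G)$.

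Next I would bootstrap regularity with the left parametrix supplied by Theorem \ref{thm_Lparexists} (applied with $\sigma_1=0$): there exist $B\in\Psi^{-m}(G)$ and $R\in\Psi^{-\infty}(G)$ with $BA=\id+R$. Applying $B$ to $Au=\lambda u$ gives $u=BAu-Ru=\lambda Bu-Ru$. By Theorem \ref{thm_PDOGcomp+adj}, $B$ maps $L^2_s(G)$ into $L^2_{s+m}(G)$ and $R$ maps $L^2_s(G)$ into $L^2_{s'}(G)$ for all $s,s'\in\bR$; since $m>0$, feeding $u\in L^2_0(G)$ repeatedly through the identity $u=\lambda Bu-Ru$ yields $u\in L^2_{km}(G)$ for every $k\in\bN_0$, hence $u\in\cap_{s\in\bR}L^2_s(G)$.

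Finally, since $A$ is symmetric on $\cap_{s\in\bR}L^2_s(G)$, we have $(Au,u)_{L^2}=(u,Au)_{L^2}$, and with $Au=\lambda u$ this reads $\lambda\|u\|_{L^2}^2=\bar\lambda\|u\|_{L^2}^2$; as $\lambda=\pm i$ is purely imaginary and nonzero, $\|u\|_{L^2}=0$, so $u=0$. Thus both deficiency indices vanish and $A$ is essentially self-adjoint on $\cS(G)$. On $M$ the argument is identical: Theorem \ref{thm_Lparexists} provides a left parametrix on $M$, the same bootstrap places any $u$ with $A^\dagger u=\pm i u$ in $\cap_{s\in\bR}L^2_s(M)=C^\infty(M)$, and the symmetry of $A$ there forces $u=0$. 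The only point needing a little care is the identification of $A^\dagger u=\lambda u$ with the distributional equation $Au=\lambda u$; this is exactly where the symmetry hypothesis $\sigma=\sigma_0^*$ (so $A^*=A$) is used, and beyond it the proof is a routine parametrix bootstrap together with the symmetry identity, so I do not expect a genuine obstacle.
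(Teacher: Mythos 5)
Your proof is correct and follows essentially the same route as the paper: the deficiency-index/dense-range criterion, regularity of the would-be deficiency vector via the left parametrix from Theorem \ref{thm_Lparexists} (your explicit bootstrap $u=\lambda Bu-Ru$ is exactly the mechanism behind the hypoellipticity/subelliptic estimates the paper invokes for $(A\pm i)^*$), and then the symmetry identity on $\cap_{s\in\bR}L^2_s$ to force $u=0$. No gaps.
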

\begin{proof}
We will start the proof with the easier case of the compact nilmanifold. 
Let $g\in L^2(M)$ such that $((A+i)f , g)_{L^2}=0$ for any $f\in \cD(M)$. This implies that $(A+i)^* g=0$.
By the properties of the symbolic calculus, since $\sigma_0=\sigma^*$ and $m>0$, $(A+i)^*$ satisfies the hypotheses of Theorem \ref{thm_Lparexists} and is therefore hypoelliptic. Consequently, $g\in \cD(M)$. We have $0=((A+i)g,g)_{L^2} = (Ag,g)_{L^2} + i\|g\|_{L^2}^2$, implying $g=0$ by symmetry of $A$. The same property with $A-i$ holds, and this implies that the symmetric operator $A$ is essentially self-adjoint. 

The proof on $G$ is similar up to a few modifications. 
Let $g\in L^2(G)$ such that $((A+i)f , g)_{L^2}=0$ for any $f\in \cS(G)$, and more generally for any 
$f\in \cap_{s\in \bR} L^2_s(G)$. 
This implies that $(A+i)^* g=0$ in the sense of distributions. 
 By the symbolic properties of the  calculus, since $\sigma_0=\sigma^*$ and $m>0$, the pseudo-differential operator $(A+i)^*$ satisfies the hypotheses of Theorem \ref{thm_Lparexists}; 
 consequently, it satisfies subelliptic estimates which imply  that $g\in \cap_{s\in \bR} L^2_s(G)$. 
 The equality above and the symmetry of $A$ implies  $g=0$. The same property with $A-i$ holds, and  $A$ is essentially self-adjoint. 
	\end{proof}

We will discuss examples of invertible symbols and operators admitting left parametrices, starting with sub-Laplacians in horizontal divergences in Section \ref{subsec_cL_a} below
and their generalisations in Section \ref{subsec_generalisationLA}.

\subsection{Case of  symbols $\sigma_0\geq 0$ with $\id+\sigma_0$ invertible}

In many applications in this paper, we will consider a symbol $\sigma_0$ that is non-negative in the sense explained below, and such that $\id+\sigma_0$ is invertible in the sense of Definition \ref{def_symbinv}.
This is the case for the sub-Laplacians in horizontal divergences in Section \ref{subsec_cL_a} below
and their generalisations in Section \ref{subsec_generalisationLA}.

\smallskip

For a differential or pseudo-differential operator $T$ on $G$ or $M$, 
being a non-negative operator has an unambiguous definition:
	\begin{equation}
		\label{eq_nonnegT}
		\forall f\in \cD(G)\ \mbox{or} \ \cD(M)\qquad (Tf,f)_{L^2}\geq 0.
	\end{equation}
As our symbols act on the space of smooth vectors, we have a similar notion:
\begin{definition}
\label{def_nonnegsymb}
Let $\sigma$ be a symbol in $S^\infty(G\times \Gh)$. 
	It is \emph{non-negative} when for almost all $(x,\pi)\in G\times \Gh$, we have
	$$
	\forall v\in \cH_\pi^{\infty} \qquad (\sigma(x,\pi)v_\pi ,v_\pi)_{\cH_\pi}\geq 0.
	$$
\end{definition}

Our first observation in the following statement is that for 
a non-negative symbol $\sigma_0$  such that $\id+\sigma_0$ is invertible, 
 $\sigma_0(x,\pi)$ is a well defined operator for each $(x,\pi)\in G\times \Gh$.
In particular, we avoid the complication of viewing $\sigma_0$ as a measurable field of operators modulo a null set for the Plancherel measure:
\begin{lemma}
\label{lem_sigma01prop}
Let $\sigma_0\in S^m(G\times \Gh)$. 
We assume that $m\neq 0$, that $\sigma$ is a non-negative symbol and that $\id +\sigma_0$ is invertible for all  frequencies.
Then $(\id+\sigma_0)^{-1}\in S^{-m}(G\times \Gh)$ and,
for each  $(x,\pi)\in G\times \Gh$, $\sigma_0(x,\pi)$ may be realised as an essentially self-adjoint on $\cH_\pi^\infty \subset \cH_\pi$, with a discrete spectrum in $[0,\infty)$ and finite dimensional eigenspaces. 

Similar properties hold true on $M$.
\end{lemma}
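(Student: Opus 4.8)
The plan is to read off $(\id+\sigma_0)^{-1}\in S^{-m}(G\times\Gh)$ from the invertibility hypothesis via the parametrix machinery of Section~\ref{subsec_param}, and then to analyse the individual fibres $\sigma_0(x,\pi)$ by freezing $x$, exploiting that the composition $\diamond$ reduces to the pointwise product of operator fields on invariant ($x$-independent) symbols, and reducing to convolution kernels in $L^1(G)$ where Corollary~\ref{cor_sigmacompop} applies. First I would record that a non-negative symbol is automatically self-adjoint: since $(\sigma_0(x,\pi)v,v)_{\cH_\pi}$ is real and $\geq 0$ for all $v\in\cH_\pi^\infty$, polarisation shows $\sigma_0(x,\pi)$ is symmetric on $\cH_\pi^\infty$, hence $\sigma_0^*=\sigma_0$ as symbols and $\id+\sigma_0(x,\pi)\geq\id$ on $\cH_\pi^\infty$.

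Next I would treat the claim $(\id+\sigma_0)^{-1}\in S^{-m}(G\times\Gh)$. Fix a positive Rockland operator $\cR$ of degree $\nu$ used to define the classes. When $m>0$ we have $\id+\sigma_0\in S^m(G\times\Gh)$, and because $\id+\sigma_0$ is invertible for \emph{all} frequencies the construction recalled in the proof of Theorem~\ref{thm_Lparexists} applies with the trivial cut-off $\psi\equiv 1$ (this is \cite[Proposition~5.8.5]{R+F_monograph}), producing the pointwise inverse $(\id+\sigma_0)^{-1}\in S^{-m}(G\times\Gh)$ together with the identities $(\id+\sigma_0)^{-1}\diamond(\id+\sigma_0)=(\id+\sigma_0)\diamond(\id+\sigma_0)^{-1}=\id$. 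When $m<0$ one has instead $\id+\sigma_0\in S^0$ and $\sigma_0\in S^m$; the same argument carried out in $S^0$, combined with $(\id+\sigma_0)^{-1}=\id-\sigma_0\diamond(\id+\sigma_0)^{-1}$, gives $(\id+\sigma_0)^{-1}\in\id+S^m\subset S^{-m}$.

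For the fibre statements I would fix $x\in G$ and set $\tau:=(\id+\sigma_0)^{-1}$. Then $\tau(x,\cdot)$ and $\sigma_0(x,\cdot)$ are invariant symbols, so $\diamond$ among them is the pointwise product; hence, for $m>0$, $\tau(x,\cdot)^{k}\in S^{-km}$ and $\tau(x,\cdot)^{k}=\bigl(\tau(x,\cdot)^{k/2}\bigr)^{2}$ with $\tau(x,\cdot)^{k/2}$ a self-adjoint invariant symbol. Choosing $k$ even with $\tfrac k2 m>Q$, Theorem~\ref{thm_kernelG} shows that $\tau(x,\cdot)^{k/2}$ has $L^1(G)$ convolution kernel, so by Corollary~\ref{cor_sigmacompop} the operators $\tau(x,\pi)^{k/2}$, and therefore $\tau(x,\pi)^{k}=\bigl(\tau(x,\pi)^{k/2}\bigr)^{2}\geq 0$, are genuine compact operators on $\cH_\pi$ for \emph{every} $\pi$; their positive $k$-th root realises a non-negative compact operator $\tau(x,\pi)$ agreeing $\mu$-a.e.\ with the symbol $\tau(x,\cdot)$. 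Using the invertibility hypothesis I would check $\tau(x,\pi)$ is injective, so that $\id+\sigma_0(x,\pi):=\tau(x,\pi)^{-1}$ is a self-adjoint operator $\geq\id$, realising $\sigma_0(x,\pi):=\tau(x,\pi)^{-1}-\id$ as a symmetric operator $\geq 0$ on $\cH_\pi^\infty$. Since $\tau(x,\pi)$ is a bounded two-sided inverse of $\id+\sigma_0(x,\pi)$ on $\cH_\pi^\infty$ (by the pointwise identities above) and $\tau\in S^{-m}$ maps $\cH_\pi$ into $\dom(\pi(\cR)^{m/\nu})\subset\dom(\overline{\id+\sigma_0(x,\pi)})$, the closure $\overline{\id+\sigma_0(x,\pi)}$ is bijective with bounded inverse $\tau(x,\pi)$; being symmetric it is self-adjoint, i.e.\ $\sigma_0(x,\pi)$ is essentially self-adjoint on $\cH_\pi^\infty$, with spectrum in $[0,\infty)$ by the first paragraph, and compactness of the resolvent $\tau(x,\pi)$ gives the discreteness of the spectrum and the finite-dimensionality of the eigenspaces. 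For $m<0$ this is more direct, since $\sigma_0(x,\cdot)$ itself has $L^1(G)$ convolution kernel by Theorem~\ref{thm_kernelG}, so $\sigma_0(x,\pi)$ is already a non-negative compact operator for every $\pi$. All of the above then transfers verbatim to $M\times\Gh$ by working with $\Gamma$-periodic symbols and the $M$-versions of the calculus.

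The hard part will be obtaining the realisation and the essential self-adjointness for \emph{every} $(x,\pi)$ rather than for $\mu$-almost every $\pi$ — a simple direct-integral argument applied to $\Op_G(\sigma_0(x,\cdot))$ (cf.\ Corollary~\ref{cor_Lparexists}) would only give this on a conull set. This is precisely what forces the passage through high powers of $\tau(x,\cdot)$ having $L^1$ convolution kernels, and it is also why the hypothesis has to be invertibility for \emph{all} frequencies: high-frequency invertibility alone yields a parametrix only modulo $S^{-\infty}$ and does not pin down the resolvent fibre by fibre.
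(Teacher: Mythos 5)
Your proof is correct and takes essentially the same route as the paper: cite \cite[Proposition~5.8.5]{R+F_monograph} to get $(\id+\sigma_0)^{-1}\in S^{-m}(G\times\Gh)$, then take a power $N$ with $Nm>Q$ so that the relevant symbol lands in $S^{<-Q}$ and Corollary~\ref{cor_sigmacompop} yields genuine compact fibre operators for every $(x,\pi)$, from which the spectral claims follow. The paper's own proof stops once it has the compact operator $(\id+\sigma_0(x,\pi))^{-N}$ and leaves the passage to essential self-adjointness, discreteness of the spectrum and finite-dimensionality of eigenspaces implicit; you have simply filled in those routine steps (together with the observation that non-negativity gives symmetry of $\sigma_0$), which is sound.
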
 
\begin{proof}
	By \cite[Proposition 5.8.5]{R+F_monograph}, 
	$\id +\sigma_0$ being invertible for all the frequencies of $\widehat \cR$
	 implies that $(\id+\sigma_0)^{-1}\in S^{-m}(G)$.
Consequently, $(\id+\sigma_0)^{-N}\in S^{-mN}(G)$ for any $N\in \bN$.
If $m>0$ and $-Nm<-Q$, then by Corollary \ref{cor_sigmacompop}
we may view $(\id+\sigma_0(x,\pi))^{-N}$ as a well defined compact operator on $\cH_\pi$ for every point $(x,\pi)$ in $G\times \Gh$.
If $m<0$, $(\id+\sigma_0)^{N}\in S^{-mN}(G)$ and we conclude the same way when $Nm<-Q$. 
\end{proof}

As each $\sigma_0(x,\pi)$ is essentially self-adjoint 
on $\cH_\pi^\infty \subset \cH_\pi$, 
we can  define its functional calculus, at least in an  abstract way. Let us show that it is in fact given by symbols in some classes in $S^{\infty}(G\times \Gh)$ or 
$S^{\infty}(M\times \Gh)$:

\begin{theorem}
\label{thm_psi(sigma0)}
Let $\sigma_0\in S^m(G\times \Gh)$. 
We assume that $m> 0$, that $\sigma$ is a non-negative symbol and that $\id +\sigma_0$ is invertible for all  frequencies.
\begin{enumerate}
	\item If $\psi\in \cS(\bR^n)$ then $\psi(\sigma_0)\in S^{-\infty}(G\times \Gh)$. 
	Furthermore, the map $\psi\mapsto \psi(\sigma_0)$ is continuous $\cS(\bR^n)\to S^{-\infty}(G\times \Gh)$.
	\item If $\psi\in \cG^{m'}(\bR)$ with $m'\in \bR$ then $\psi(\sigma_0)\in S^{mm'}(G\times \Gh)$. 
	Furthermore, the map $\psi\mapsto \psi(\sigma_0)$ is continuous $\cG^{m'}(\bR)\to S^{mm'}(G\times \Gh)$.
\end{enumerate}	
Similar properties hold true on $M$.
\end{theorem}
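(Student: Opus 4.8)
The plan is to reduce everything to the functional calculus of a single positive Rockland operator via a comparison argument, using the Helffer--Sjöstrand formula to build the symbol $\psi(\sigma_0)$ out of resolvents $(\sigma_0-z)^{-1}$, and then to show the requisite symbol estimates by an induction on difference/derivative orders. First I would fix a positive Rockland operator $\cR$ of homogeneous degree $\nu$ on $G$, normalised so that $\sigma_0$ has order $m=\nu$ after composing with a power of $(\id+\widehat\cR)$ if necessary; more precisely, since $\sigma_0\in S^m(G\times\Gh)$ is non-negative with $\id+\sigma_0$ invertible for high frequencies, Lemma~\ref{lem_sigma01prop} gives $(\id+\sigma_0)^{-1}\in S^{-m}(G\times\Gh)$, hence $(\id+\sigma_0)^{-1}$ is a genuine (pointwise) bounded symbol and $(\sigma_0-z)$ is invertible for $z\notin[0,\infty)$ with $(\sigma_0-z)^{-1}\in S^{-m}(G\times\Gh)$ and symbol seminorms controlled by $\langle z\rangle/|\IM z|^{?}$ — the key preliminary estimate I need is
$$
\|(\sigma_0-z)^{-1}\|_{S^{-m},a,b,c}\ \lesssim_{a,b,c}\ \frac{\langle z\rangle^{N(a,b,c)}}{|\IM z|^{M(a,b,c)}}
$$
for suitable integers $N,M$, obtained by differentiating the resolvent identity $(\sigma_0-z)^{-1}=-z^{-1}-z^{-1}\sigma_0(\sigma_0-z)^{-1}$ (or rather applying $X_x^\beta$ and $\Delta_\alpha$ to $(\sigma_0-z)^{-1}$ via Leibniz rules, using that each $\Delta_\alpha,X_x^\beta$ hitting $(\sigma_0-z)^{-1}$ produces two factors of the resolvent and one factor from $S^m(G\times\Gh)$, as in the proof of Theorem~\ref{thm_phi(R)}/\cite[Prop.~5.3.4, 5.8.5]{R+F_monograph}).

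Next, for $\psi\in\cG^{m'}(\bR)$ I take an almost analytic extension $\tilde\psi$ of $\psi$ (whose properties are recalled in Appendix~\ref{Appendix_almost_analytic}) supported in a neighbourhood of $\bR$ with $|\bar\partial\tilde\psi(z)|\lesssim_K\langle z\rangle^{m'-1-K}|\IM z|^{K}$ for every $K$, and write the Helffer--Sjöstrand formula at the level of symbols,
$$
\psi(\sigma_0)=\frac{1}{\pi}\int_{\bC}\bar\partial\tilde\psi(z)\,(\sigma_0-z)^{-1}\,dL(z),
$$
which makes sense as a Bochner integral in $S^{mm'+\eps}(G\times\Gh)$ for any $\eps>0$ by the resolvent seminorm bound and the decay of $\bar\partial\tilde\psi$ (choosing $K$ large enough to beat the negative power of $|\IM z|$ and the growth in $\langle z\rangle$). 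One then checks that this symbol-valued integral agrees pointwise in $(x,\pi)$ with the operator $\psi(\sigma_0(x,\pi))$ defined by the spectral theorem — this is the standard Helffer--Sjöstrand identity applied to each essentially self-adjoint $\sigma_0(x,\pi)$ (Lemma~\ref{lem_sigma01prop}), together with the fact that the integral converges in operator norm on each $\cH_\pi$. To upgrade from $S^{mm'+\eps}$ to the sharp $S^{mm'}$ one uses the refined bound on $\bar\partial\tilde\psi$ near the real axis more carefully: writing $(\sigma_0-z)^{-1}=(\sigma_0-z)^{-1}(\id+\sigma_0)(\id+\sigma_0)^{-1}$ and expanding, one gains that the top-order part of $(\sigma_0-z)^{-1}$ in $S^{-m}$ has seminorms $\lesssim|\IM z|^{-1}$ only (not a higher power), so pairing against $|\bar\partial\tilde\psi(z)|\lesssim\langle z\rangle^{m'-2}|\IM z|$ already gives an integrable bound producing an element of $S^{mm'}$; the lower-order corrections land in better classes. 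The continuity of $\psi\mapsto\psi(\sigma_0)$ is immediate from this representation since every seminorm of the output is estimated by an integral linear in seminorms of $\bar\partial\tilde\psi$, which in turn are controlled by the Fréchet seminorms $\|\psi\|_{\cG^{m'},N}$.

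For Part~(1), if $\psi\in\cS(\bR)$ then $\psi\in\cG^{-k}(\bR)$ for every $k\in\bN$ (indeed $\|\psi\|_{\cG^{-k},N}<\infty$ for all $k,N$), so Part~(2) already gives $\psi(\sigma_0)\in\bigcap_{k}S^{-mk}(G\times\Gh)=S^{-\infty}(G\times\Gh)$ with continuity $\cS(\bR)\to S^{-mk}$ for each $k$, hence $\cS(\bR)\to S^{-\infty}(G\times\Gh)$; the extension from $\psi\in\cS(\bR)$ to $\psi\in\cS(\bR^n)$ (as literally stated) is handled by restriction $\cS(\bR^n)\to\cS(\bR)$, which is continuous, composed with the functional calculus in the single self-adjoint symbol $\sigma_0$. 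Finally, the statements on $M$ follow verbatim by working with $\Gamma$-periodic symbols: since $\sigma_0\in S^m(M\times\Gh)$ corresponds to a $\Gamma$-periodic $(\sigma_0)_G\in S^m(G\times\Gh)^\Gamma$, each step above preserves $\Gamma$-periodicity in $x$ (the resolvent, the Helffer--Sjöstrand integrand, the limit), so $\psi(\sigma_0)\in S^{mm'}(G\times\Gh)^\Gamma=S^{mm'}(M\times\Gh)$, and likewise for $S^{-\infty}$; continuity transfers because $S^\bullet(M\times\Gh)$ carries the subspace Fréchet topology.

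The main obstacle I expect is the sharp symbol estimate for the resolvent $(\sigma_0-z)^{-1}$ — specifically, tracking exactly how many powers of $|\IM z|^{-1}$ are produced when the difference operators $\Delta_\alpha$ and the vector fields $X_x^\beta$ act on $(\sigma_0-z)^{-1}$, and matching this against the vanishing order of $\bar\partial\tilde\psi$ so that the Helffer--Sjöstrand integral lands in the sharp class $S^{mm'}$ rather than $S^{mm'+\eps}$. This requires a careful Leibniz-type bookkeeping (each $\Delta_\alpha$ on a product of symbols distributes, and $\Delta_\alpha(\sigma_0-z)^{-1}=-(\sigma_0-z)^{-1}(\Delta_\alpha\sigma_0)(\sigma_0-z)^{-1}+\ldots$ with commutator corrections from the non-commutativity of $\Delta$ with multiplication), but it is entirely parallel to the analysis already carried out for spectral multipliers of Rockland operators in \cite[Section~5.3]{R+F_monograph} and for the invertibility results in \cite[Section~5.8]{R+F_monograph}, so no genuinely new phenomenon arises.
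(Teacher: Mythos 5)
Your Part (1) and the $m'<-1$ case of Part (2) essentially reproduce the paper's argument (resolvent seminorm bounds for $(z-\sigma_0)^{-1}$ fed into the Helffer--Sj\"ostrand formula, as in Proposition \ref{prop_sigma0Res} and Corollary \ref{cor_prop_sigma0Res}). The gap is in Part (2) for $m'\geq -1$, which is the range containing all the interesting cases (e.g.\ $\psi(\lambda)=(1+\lambda)^{1/2}$). Your plan rests on the claim that ``the top-order part of $(z-\sigma_0)^{-1}$ in $S^{-m}$ has seminorms $\lesssim |\IM z|^{-1}$ only.'' This is false: the $S^{-m}$-seminorm requires comparing $(z-\sigma_0)^{-1}$ against $(\id+\sigma_0)$, and
$$
\sup_{\lambda\geq 0}\Big|\frac{1+\lambda}{z-\lambda}\Big| \;\sim\; \frac{1+|z|}{|\IM z|},
$$
so the numerator $1+|z|$ is unavoidable (this is exactly \eqref{eq1pf_prop_sigma0Res}). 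Pairing $\big(\tfrac{1+|z|}{|\IM z|}\big)^{p}$ against $|\bar\partial\tilde\psi(z)|\lesssim \langle z\rangle^{m'-1-K}|\IM z|^{K}$ and integrating over the support region $|\IM z|\lesssim \langle\RE z\rangle$ yields $\int \langle x\rangle^{m'}dx$ at best, which converges only for $m'<-1$. So your Bochner integral simply diverges in every relevant seminorm once $m'\geq -1$, and there is no ``upgrade from $S^{mm'+\eps}$ to $S^{mm'}$'' to perform: the object is not defined by that formula in the first place. (A further sanity check: if the integral did converge with values in $S^{-m}$, it would put $(\id+\sigma_0)^{-1/2}$ into $S^{-m}$ rather than $S^{-m/2}$, which is wrong.)

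The missing idea is an almost-orthogonality argument. The paper writes $\psi(\lambda)=\sum_j 2^{jm'}\psi_j(2^{-j}\lambda)$ with $\psi_j\in C_c^\infty(\tfrac12,2)$ uniformly bounded in every $C^k$-norm, proves the scaling estimate $\|\psi_j(t\sigma_0)\|_{S^{m_1m},a,b,c}\lesssim t^{m_1}$ (Lemma \ref{lem_psitsigma0}, itself obtained from the $m'<-1$ case), and then sums the conjugated pieces $T_j=2^{jm'}\pi(\id+\cR)^{\frac{-mm'+[\alpha]+\gamma}{\nu}}\Delta_\alpha X_x^\beta\,\psi_j(2^{-j}\sigma_0)\,\pi(\id+\cR)^{-\gamma/\nu}$ via the Cotlar--Stein lemma, using $\max(\|T_i^*T_j\|,\|T_iT_j^*\|)\lesssim 2^{-|i-j||m'|}\|\psi\|_{\cG^{m'},k}^2$. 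The triangle inequality alone gives only $\|2^{jm'}\psi_j(2^{-j}\sigma_0)\|_{S^{mm'}}\lesssim 1$ uniformly in $j$, which is not summable, so Cotlar--Stein (or some equivalent gain from the disjointness of the dyadic spectral supports) is genuinely needed. Finally, the passage from $m'<0$ to arbitrary $m'$ is done by factoring out $(\id+\sigma_0)^{N}\in S^{mN}(G\times\Gh)$; you would need this reduction too, but it is routine once the negative range is secured.
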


The proof relies	 on the following resolvent bounds:
\begin{proposition}
	\label{prop_sigma0Res}
	Let $\sigma_0\in S^m(G\times \Gh) $.
We assume that $m > 0$, that $\sigma$ is a non-negative symbol and that $\id +\sigma_0$ is invertible for all  frequencies.
For any $(x,\pi)\in G\times \Gh$ and $z\in \bC\setminus\bR$, the operator 
$(z-\sigma_0(x,\pi))^{-1}$ is  bounded  on $\cH_\pi$.
 The resulting field  of operators 
$(z-\sigma_0)^{-1}$ is in $S^{-m}(G)$. 
Moreover,
	for any semi-norm $\|\cdot\|_{S^{-m},a,b,c}$, there exist constant $C>0$ and powers $p\in \bN$ such that we have
$$
\forall z\in \bC\setminus\bR \qquad 
    \| (z-\sigma_0)^{-1}\|_{S^{-m},a,b,c}
	\leq C \left(1+\frac {1+|z|}{|\IM \, z|}\right)^p.
$$
Similar properties hold true on $M$.
\end{proposition}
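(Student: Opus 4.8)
The plan is as follows. The pointwise statement is immediate from self-adjointness: by Lemma~\ref{lem_sigma01prop}, for each $(x,\pi)$ the operator $\sigma_0(x,\pi)$ is (essentially) self-adjoint on $\cH_\pi^\infty\subset\cH_\pi$ with spectrum contained in $[0,\infty)$, so for $z\in\bC\setminus\bR$ the resolvent $(z-\sigma_0(x,\pi))^{-1}$ is bounded on $\cH_\pi$ with $\|(z-\sigma_0(x,\pi))^{-1}\|_{\sL(\cH_\pi)}\le|\IM z|^{-1}$; moreover, the spectral theorem gives the uniform bound
$$
\|(\id+\sigma_0(x,\pi))(z-\sigma_0(x,\pi))^{-1}\|_{\sL(\cH_\pi)}\le\sup_{\lambda\ge0}\frac{1+\lambda}{|z-\lambda|}\le C\Big(1+\frac{1+|z|}{|\IM z|}\Big),
$$
with $C$ universal.

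Next I would reduce the symbol statement to a bound on $R_z:=(\id+\sigma_0)(z-\sigma_0)^{-1}$. Setting $\tau_1:=(\id+\sigma_0)^{-1}$, which belongs to $S^{-m}(G\times\Gh)$ by Lemma~\ref{lem_sigma01prop}, one has $(z-\sigma_0)^{-1}=\tau_1 R_z=R_z\tau_1$; by the continuity of the composition of symbols $S^{-m}(G\times\Gh)\times S^{0}(G\times\Gh)\to S^{-m}(G\times\Gh)$ it then suffices to show $R_z\in S^{0}(G\times\Gh)$ with $\|R_z\|_{S^{0},a,b,c}\le C(1+\tfrac{1+|z|}{|\IM z|})^{p}$ for suitable $C,p$ depending only on $a,b,c$ and the structural data; by Theorem~\ref{thm_toposymbolG} one may even take $c=0$. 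The smoothness of $x\mapsto\cF_G^{-1}(z-\sigma_0(x,\cdot))^{-1}$ into $\cS'(G)$ comes out of the same estimates applied to difference quotients.

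For the seminorm bounds one argues by induction on $|\alpha|+|\beta|$. Using the Leibniz rules for $X_x$ and for the difference operators (as in the proof of Theorem~\ref{thm_asymp}) together with $X_{x,i}(z-\sigma_0)^{-1}=(z-\sigma_0)^{-1}(X_{x,i}\sigma_0)(z-\sigma_0)^{-1}$ and $\Delta_{x_i}(z-\sigma_0)^{-1}=(z-\sigma_0)^{-1}(\Delta_{x_i}\sigma_0)(z-\sigma_0)^{-1}+(\text{lower order terms})$, one writes $X_x^\beta\Delta_\alpha(z-\sigma_0)^{-1}$ as a finite linear combination of products of at most $|\alpha|+|\beta|+1$ resolvent factors $(z-\sigma_0)^{-1}$ interlaced with factors of the form $X_x^{\beta_j}\Delta_{\alpha_j}\sigma_0$ (whose $x$-derivatives and difference orders add up, componentwise, to at most those of the original term; the correction terms from the difference-operator Leibniz rule carry extra difference operators, hence only improve the order, and are absorbed). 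Each such product is then estimated in $L^\infty_{0,[\alpha]+m}(\Gh)$, uniformly in $x$, by inserting powers $\pi(\id+\cR)^{\pm s/\nu}$ between the factors and using: (i) the $S^m$-bounds on $X_x^{\beta_j}\Delta_{\alpha_j}\sigma_0$, which absorb an $\widehat\cR$-weight shift of exactly $[\alpha_j]-m$; (ii) the two a priori operator bounds above — after writing $(z-\sigma_0)^{-1}=\tau_1 R_z$ and exploiting $\tau_1\in S^{-m}$, each resolvent factor absorbs a weight shift of $m$ at the cost of a multiplicative $C(1+\tfrac{1+|z|}{|\IM z|})$; and (iii) the operator inequality $\pi(\id+\cR)^{m/\nu}\lesssim\id+\sigma_0(x,\pi)$, which follows from the invertibility of $\id+\sigma_0$ and operator monotonicity of $t\mapsto\sqrt t$, used to bound $\widehat\cR$-weights directly by $1+|z|$ on the spectral part of $\sigma_0(x,\pi)$ where $\sigma_0(x,\pi)\lesssim|z|$ (on which the resolvent has size $|\IM z|^{-1}$). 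The weight budget telescopes exactly, so the power of $1+\tfrac{1+|z|}{|\IM z|}$ that appears is $p\le|\alpha|+|\beta|+1$.

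The main obstacle is the interaction of the steps (i)--(iii): in contrast with the abelian case, $\pi(\id+\cR)^{s/\nu}$ does not commute with $(z-\sigma_0(x,\pi))^{-1}$, so one cannot simply push all the $\widehat\cR$-weights onto the $\tau_1$-factors. One has to commute the $\widehat\cR$-powers through the resolvent factors, and the resulting commutators must be controlled by a bootstrap that keeps the dependence on $z$ polynomial in $1+\tfrac{1+|z|}{|\IM z|}$ and uniform in $(x,\pi)$; making this bookkeeping work uniformly across all difference and derivative orders is the heart of the proof. Finally, the statement on $M$ follows from the same computations carried out with $\Gamma$-periodic symbols, using Definition~\ref{def_SmMGh} and the identification of $S^m(M\times\Gh)$ with the $\Gamma$-periodic symbols in $S^m(G\times\Gh)$.
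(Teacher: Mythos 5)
Your pointwise resolvent bound, the functional calculus estimate $\|(\id+\sigma_0(x,\pi))(z-\sigma_0(x,\pi))^{-1}\|\le 1+\frac{1+|z|}{|\IM z|}$, the resolvent identities $X(z-\sigma_0)^{-1}=(z-\sigma_0)^{-1}(X\sigma_0)(z-\sigma_0)^{-1}$ and $\Delta_\alpha(z-\sigma_0)^{-1}=(z-\sigma_0)^{-1}(\Delta_\alpha\sigma_0)(z-\sigma_0)^{-1}$, and the Leibniz-type bookkeeping are all the right ingredients and match the paper's proof. The genuine gap is in the ``main obstacle'' paragraph: you identify the non-commutation of $\pi(\id+\cR)^{s/\nu}$ with $(z-\sigma_0(x,\pi))^{-1}$ as the heart of the difficulty, propose to control the resulting commutators ``by a bootstrap,'' and introduce the operator inequality $\pi(\id+\cR)^{m/\nu}\lesssim\id+\sigma_0(x,\pi)$ with a spectral splitting to handle $\widehat\cR$-weights. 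None of this appears in, or is needed for, the paper's argument, and the bootstrap is left unspecified; as stated it is not a proof.

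The paper resolves the weight-shift issue without any commutators, by conjugating with powers of $(\id+\sigma_0(x,\pi))$ rather than of $\pi(\id+\cR)$. Concretely, for $\gamma\in m\bZ$ one writes
$$
\pi(\id+\cR)^{\frac{m+\gamma}{\nu}}(z-\sigma_0)^{-1}\pi(\id+\cR)^{-\frac{\gamma}{\nu}}
=\Big[\pi(\id+\cR)^{\frac{m+\gamma}{\nu}}(\id+\sigma_0)^{-\frac{m+\gamma}{m}}\Big]\cdot\Big[(z-\sigma_0)^{-1}(\id+\sigma_0)\Big]\cdot\Big[(\id+\sigma_0)^{\frac{\gamma}{m}}\pi(\id+\cR)^{-\frac{\gamma}{\nu}}\Big],
$$
which is an \emph{exact} identity because $(\id+\sigma_0(x,\pi))$ and $(z-\sigma_0(x,\pi))^{-1}$ commute at a fixed $(x,\pi)$. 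The outer brackets are compositions of $S^{m+\gamma}\cdot S^{-(m+\gamma)}$ and $S^{\gamma}\cdot S^{-\gamma}$ symbols, hence uniformly bounded by the $S^{\cdot}$-calculus applied to integer powers of $\id+\sigma_0$; the middle bracket is your functional calculus bound. Interpolation then gives all real $\gamma$; the higher-order seminorms follow by inserting the same three-factor sandwich around each resolvent appearing in your derivative/difference formulas, each contributing one more factor of $1+\frac{1+|z|}{|\IM z|}$. So the commutator problem you flag simply never arises, and no spectral decomposition of $\sigma_0(x,\pi)$ is required; the missing ingredient in your proposal is precisely the observation that $(\id+\sigma_0)$-conjugation is free, which is what carries the whole argument.
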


We make the following two observations. 
The first one concerns the dependence of the constants in $\sigma_0$:
\begin{remark}
\label{remprop_sigma0Res} 
\begin{enumerate}
	\item  The proof below shows that the constant $C$ in Proposition \ref{prop_sigma0Res} may be chosen to depend on $\sigma_0$ in the following way:  
		$$
		C = C' \max_{|\gamma_0|\leq k} \| (\id+ \sigma_0)^{\gamma_0}\|_{ S^{m\gamma_0},a',b',c'}
		$$
		for some constant $C'>0$ and some integers $k,a',b',c'\in \bN_0$ depending on the seminorm $\| \cdot  \|_{S^{-m},a,b,c}$ but not on $\sigma_0$.
		\item We can in fact improve the bounds given  in Proposition \ref{prop_sigma0Res}:
For any semi-norm $\|\cdot\|_{S^{-m},a,b,c}$, there exist constant $C>0$ and powers $p\in \bN$ such that we have
$$
\forall z\in \bC\setminus\bR \qquad 
    \| (z-\sigma_0)^{-1}\|_{S^{-m},a,b,c}
	\leq C \left(\frac {1+|z|}{|\IM \, z|}\right)^{p+1}.
$$
However, as we will not use these more precise bounds, we do not present the proof. 
\end{enumerate}
\end{remark}

The proofs of Proposition \ref{prop_sigma0Res} and Theorem \ref{thm_psi(sigma0)} will be given below in the next section.

\subsection{Proofs of Proposition \ref{prop_sigma0Res} and Theorem \ref{thm_psi(sigma0)}} 
\label{subsec_pfprop+thm_sigma0}

\subsubsection{Proof of Proposition \ref{prop_sigma0Res}}
We assume that $m > 0$, that $\sigma$ is a non-negative symbol and that $\id +\sigma_0$ is invertible for all the frequencies of a positive Rockland symbol $\widehat \cR$ with homogeneous degree $\nu_\cR=\nu$.
Let us first establish some estimates. 
We have 
by functional analysis:
\begin{equation}
\label{eq1pf_prop_sigma0Res}
    \|(z-\sigma_0(x,\pi))^{-1}(\id+ \sigma_0(x,\pi)) \|_{\sL(\cH_\pi)} \leq \sup_{\lambda \geq0} \Big| \frac{1+\lambda}{z-\lambda} \Big| 
    =\sup_{\lambda \geq0} \Big| -1+ \frac{1+z}{z-\lambda} \Big| 
    \leq 1+ \frac{1+|z|}{|\IM z|}. 
\end{equation}
By the properties of the pseudo-differential calculus,  for any $\gamma_0\in \bZ$, the following quantity is finite:
$$
\sup_{(x,\pi)\in G\times \Gh}\left(
\|\pi(\id+\cR)^{-\frac {\gamma_0m} \nu} (\id+ \sigma_0(x,\pi))^{\gamma_0} \|_{\sL(\cH_\pi)}, 
\|(\id+ \sigma_0(x,\pi))^{\gamma_0} \pi(\id+\cR)^{-\frac {\gamma_0m} \nu} \|_{\sL(\cH_\pi)}\right)
 \leq C(\gamma_0),
$$
since $(\id+ \sigma_0)^{\gamma_0} \in S^{m\gamma_0}(G\times \Gh)$ while 
$\pi(\id+\cR)^{-\frac {\gamma_0m} \nu}
\in S^{-m\gamma_0}(G\times \Gh)$; 
the constant $C(\gamma_0)$ may be described as a seminorm in $(\id+\sigma_0)^{\gamma_0}$ up to a constant of $\gamma_0$.

To estimate the seminorm $\|(z-\sigma_0)^{-1}\|_{S^{-m},0,0,c}$, we consider
\begin{align*}
	&\|\pi(\id+\cR)^{\frac {m +\gamma} \nu} 
(z-\sigma_0(x,\pi))^{-1}\pi(\id+\cR)^{-\frac {\gamma} \nu} \|_{\sL(\cH_\pi)}\\
&\quad= \|\pi(\id+\cR)^{\frac {m +\gamma} \nu} (\id+ \sigma_0(x,\pi))^{-\frac{m+\gamma}{m}}
(z-\sigma_0(x,\pi))^{-1}(\id+ \sigma_0(x,\pi))\\
&\phantom{\quad= \|\pi(\id+\cR)}{} \times (\id+ \sigma_0(x,\pi))^{\frac{\gamma}{m}}  \pi(\id+\cR)^{-\frac {\gamma} \nu} \|_{\sL(\cH_\pi)} 
\\
&\quad\leq  \|\pi(\id+\cR)^{\frac {m +\gamma} \nu} (\id+ \sigma_0(x,\pi))^{-\frac{m+\gamma}{m}} \|_{\sL(\cH_\pi)} 
\|(z-\sigma_0(x,\pi))^{-1}(\id+ \sigma_0(x,\pi)) \|_{\sL(\cH_\pi)} 
\\
&\phantom{\quad= \|\pi(\id+\cR)}{} \times \|(\id+ \sigma_0(x,\pi))^{\frac{\gamma}{m}}  \pi(\id+\cR)^{-\frac {\gamma} \nu} \|_{\sL(\cH_\pi)} \\
&\quad\leq C(-1 -\frac{\gamma}m) C(\frac \gamma m)  \left(1+ \frac{1+|z|}{|\IM z|}\right),
\end{align*}
when  $\gamma\in m\bZ$, by \eqref{eq1pf_prop_sigma0Res}.
By interpolation we obtain for any $\gamma\in \bR$ that
\begin{equation}
\label{eq2pf_prop_sigma0Res}
	\sup_{(x,\pi)\in G\times \Gh}\|\pi(\id+\cR)^{\frac {m +\gamma} \nu} 
(z-\sigma_0(x,\pi))^{-1}\pi(\id+\cR)^{-\frac {\gamma} \nu} \|_{\sL(\cH_\pi)}
\leq 
C'_\gamma \left(1+ \frac{1+|z|}{|\IM z|}\right).
\end{equation}
Consequently we have the estimate for the seminorms $\|(z-\sigma_0)^{-1}\|_{S^{-m},0,0,c}$, $c\in \bR$. 

Now we turn our attention to the seminorm $\|(z-\sigma_0)^{-1}\|_{S^{-m},a,b,c}$  where $a$ and/or $b$ are different from zero. We start with the case where one of them is one and the other zero. 
If $X$ is a left-invariant vector field on $G$ or $|\alpha|=1$, 
then we compute
$$
X(z-\sigma_0)^{-1} = (z-\sigma_0)^{-1} X\sigma_0\ (z-\sigma_0)^{-1},
\quad\mbox{and}\quad
\Delta_\alpha(z-\sigma_0)^{-1}=(z-\sigma_0)^{-1} \Delta_\alpha\sigma_0 \ (z-\sigma_0)^{-1},
$$
so, 
\begin{align*}
&	\|\pi(\id+\cR)^{\frac {m +\gamma} \nu} X_x(z-\sigma_0(x,\pi))^{-1}\pi(\id+\cR)^{-\frac {\gamma} \nu}\|_{\sL(\cH_\pi)} 
\\&\quad = 
\|\pi(\id+\cR)^{\frac {m +\gamma} \nu} 
(z-\sigma_0(x,\pi))^{-1} X\sigma_0(x,\pi)\ (z-\sigma_0(x,\pi))^{-1}
\pi(\id+\cR)^{-\frac {\gamma} \nu}\|_{\sL(\cH_\pi)} 
\\&\quad \leq 
\|\pi(\id+\cR)^{\frac {m +\gamma} \nu} 
(z-\sigma_0(x,\pi))^{-1}\pi(\id+\cR)^{-\frac {\gamma} \nu} \|_{\sL(\cH_\pi)} 
\\
&\qquad\qquad\times 
\| \pi(\id+\cR)^{\frac {\gamma} \nu}  X\sigma_0(x,\pi)   \pi(\id+\cR)^{-\frac {m+\gamma} \nu}\|_{\sL(\cH_\pi)} 	
\\
&\qquad\qquad\times 
\| \pi(\id+\cR)^{\frac {m+\gamma} \nu}   (z-\sigma_0(x,\pi))^{-1} \pi(\id+\cR)^{-\frac {\gamma} \nu}\|_{\sL(\cH_\pi)} 	.
\end{align*}
For the first and third term above, we use \eqref{eq2pf_prop_sigma0Res}, while the second term is bounded by a seminorm in $\sigma_0\in S^{m}(G\times \Gh)$.
We therefore obtain:
$$
\sup_{(x,\pi)\in G\times \Gh}
\|\pi(\id+\cR)^{\frac {m +\gamma} \nu} X_x(z-\sigma_0(x,\pi))^{-1}\pi(\id+\cR)^{-\frac {\gamma} \nu}\|_{\sL(\cH_\pi)}
\lesssim_\gamma \left(1+ \frac{1+|z|}{|\IM z|}\right)^2.
$$
We obtain a similar bound for $\Delta _\alpha(z-\sigma_0(x,\pi))^{-1}$.
Modifying the proof of \cite[Proposition 5.8.5]{R+F_monograph}, 
we obtain recursively formulae for $X^\beta \Delta_\alpha (z-\sigma_0)^{-1}$. These expressions and the types of estimates used above lead to the estimates for any $S^{-m}$-semi-norms given in the statement.
This concludes the proof of Proposition \ref{prop_sigma0Res}.

\subsubsection{Proof of Theorem \ref{thm_psi(sigma0)} }
\label{subsubsec_pfthm_psi(sigma0)}
The proof of Theorem \ref{thm_psi(sigma0)} (1) relies on 
the  properties of the resolvent of $\sigma_0$ proved in Proposition \ref{prop_sigma0Res} and on
the Helffer-Sj\"ostrand formula we now recall \cite{zworski,davies_bk,dimassi+Sj}.

If   $T$ is a self-adjoint operator densely defined on a separable Hilbert space $\cH$,  then 
the spectrally defined operator $\psi(T)\in \sL(\cH)$    is given  at least formally by  the Helffer-Sj\"ostrand formula:
\begin{equation}
\label{eq_HS}
\psi(T) = \frac 1{\pi} \int_\bC \bar \partial \tilde \psi(z)\
(T-z)^{-1} L(dz).	
\end{equation}
Above, $\tilde \psi$ is a suitable almost analytic extension of $\psi$,
and  $L(dz)=dxdy$, $z=x+iy$, is the Lebesgue measure on $\bC$.
Recall that an almost analytic extension of a function 
 $\psi\in C^\infty(\bR)$ is any function  $\tilde\psi\in C^\infty(\bC)$ satisfying 
    $$
    \tilde \psi\big|_{\bR} = \psi \qquad\text{and}\qquad \bar{\partial} \tilde\psi\big|_{\bR}=0, 
    \qquad
    \mbox{where}\quad \bar \partial =\frac12( \partial_x +i\partial_y).
    $$
Such almost analytic extensions with compact support are constructed for $\psi\in C_c^\infty (\bR)$ in \cite{zworski,davies_bk,dimassi+Sj}; in addition, they satisfy  $\bar \partial \tilde \psi(z)= O(|\IM\, z|^N)$ for a given $N$. For these, the Helffer-Sj\"ostrand formula holds \cite{zworski,davies_bk,dimassi+Sj}:
\begin{lemma}
\label{lem_HSCcinfty}
	For any $\psi\in C_c^\infty (\bR)$, 
	if $\tilde \psi\in C_c^\infty(\bC)$ is an almost analytical extension  of $\psi$ satisfying 
	$\tilde \psi (z) = O(|\IM z|^2)$, 
	then the Helffer-Sj\"ostrand formula in \eqref{eq_HS} holds for any self-adjoint operator $T$ with 
\begin{equation}
\label{eq_HSbound}
\|\psi(T)\|_{\sL(\cH_\pi)} = \frac 1{\pi}
\left\|\int_\bC \bar \partial \tilde \psi(z)
(T-z)^{-1} L(dz)\right\|_{\sL(\cH_\pi)}\leq  
\frac 1{\pi}\int_{\bC} |\bar \partial \tilde \psi(z)| |\IM\, z|^{-1} L(dz).
\end{equation}
\end{lemma}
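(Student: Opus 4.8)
The plan is to reduce the operator identity \eqref{eq_HS} to a scalar Cauchy--Pompeiu identity via the spectral theorem, prove that scalar identity by a Stokes argument with an excised disc, and then reassemble by Fubini. Throughout, write $T=\int_\bR \lambda\, dE(\lambda)$ for the spectral resolution of the self-adjoint operator $T$ on the separable Hilbert space $\cH$, and recall $\|(T-z)^{-1}\|_{\sL(\cH)}\le |\IM z|^{-1}$ for $z\in\bC\setminus\bR$. I would first record that the operator-valued integral on the right-hand side of \eqref{eq_HS} is a well-defined Bochner integral in $\sL(\cH)$: the map $z\mapsto (T-z)^{-1}$ is norm-continuous (indeed analytic) on $\bC\setminus\bR$, $\bar\partial\tilde\psi$ is continuous with compact support, and the integrand has norm bounded by $|\bar\partial\tilde\psi(z)|\,|\IM z|^{-1}$, which using $\bar\partial\tilde\psi(z)=O(|\IM z|^2)$ (indeed $O(|\IM z|)$ would already do) is bounded near $\bR$ and compactly supported, hence integrable. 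This simultaneously yields the existence of the integral and, by pulling the norm inside, the bound \eqref{eq_HSbound}.

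Second, I would establish the scalar identity: for every $\lambda\in\bR$,
\[
\psi(\lambda)=\frac1\pi\int_\bC \bar\partial\tilde\psi(z)\,\frac1{\lambda-z}\,L(dz),
\]
the integral converging absolutely since $|\bar\partial\tilde\psi(z)|\le C|z-\lambda|^{2}$ near $z=\lambda$ while $\bar\partial\tilde\psi$ is compactly supported away from it. To prove it, fix $R$ with $\supp\tilde\psi\subset B(0,R)$ and apply Stokes' theorem to the $1$-form $\omega=\tilde\psi(z)(\lambda-z)^{-1}\,dz$ on $\Omega_\delta:=B(0,R)\setminus\overline{B(\lambda,\delta)}$. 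Since $(\lambda-z)^{-1}$ is holomorphic off $\lambda$, one has $d\omega=2i\,\bar\partial\tilde\psi(z)(\lambda-z)^{-1}\,L(dz)$, and $\tilde\psi$ vanishes near $\partial B(0,R)$, so
\[
2i\int_{\Omega_\delta}\bar\partial\tilde\psi(z)\,\frac{L(dz)}{\lambda-z}=-\oint_{\partial B(\lambda,\delta)}\tilde\psi(z)\,\frac{dz}{\lambda-z}.
\]
Letting $\delta\to0$, the left-hand side converges to $2i\pi$ times the claimed integral (by the absolute convergence just noted), while parametrising $z=\lambda+\delta e^{i\theta}$ shows the right-hand side tends to $2\pi i\,\tilde\psi(\lambda)=2\pi i\,\psi(\lambda)$; this gives the identity.

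Third, I would assemble the pieces. For $u,v\in\cH$, write $\langle\psi(T)u,v\rangle=\int_\bR\psi(\lambda)\,d\langle E(\lambda)u,v\rangle$ and substitute the scalar identity. Fubini's theorem applies because
\[
\int_\bC|\bar\partial\tilde\psi(z)|\Big(\int_\bR\frac{d|\langle E(\lambda)u,v\rangle|}{|\lambda-z|}\Big)L(dz)\le \|u\|\,\|v\|\int_\bC|\bar\partial\tilde\psi(z)|\,|\IM z|^{-1}L(dz)<\infty,
\]
using $|\lambda-z|\ge|\IM z|$ for $\lambda\in\bR$ and the scalar spectral calculus for the inner estimate. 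Interchanging the order of integration turns the inner integral into $\langle (T-z)^{-1}u,v\rangle$, so $\langle\psi(T)u,v\rangle=\frac1\pi\int_\bC\bar\partial\tilde\psi(z)\,\langle(T-z)^{-1}u,v\rangle\,L(dz)$ for all $u,v$; since both sides are the sesquilinear forms of bounded operators, this is the operator identity \eqref{eq_HS}.

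The only genuinely delicate point is the behaviour at the real axis: one must check that the scalar integral converges (handled by $\bar\partial\tilde\psi(z)=O(|\IM z|^2)$, really only $O(|\IM z|)$ is needed) and justify the interchange of integration against the spectral measure. The Stokes argument with the excised disc is precisely what avoids any limiting subtlety in the boundary term. Everything else is standard, as in \cite{zworski,davies_bk,dimassi+Sj}.
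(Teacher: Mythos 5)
Your proof is correct and is the standard Stokes/spectral-theorem argument from \cite{zworski,davies_bk,dimassi+Sj}, which the paper cites without supplying its own proof. One small remark: the lemma's hypothesis as printed, $\tilde\psi(z)=O(|\IM\, z|^2)$, is evidently a misprint for $\bar\partial\tilde\psi(z)=O(|\IM\, z|^2)$ (since $\tilde\psi|_\bR=\psi\ne 0$ in general), and you read it correctly; as you also observe, the weaker bound $\bar\partial\tilde\psi(z)=O(|\IM\, z|)$ -- automatic from $\bar\partial\tilde\psi|_\bR=0$ and smoothness -- already suffices both for the Bochner integrability and for the absolute convergence of the scalar Cauchy--Pompeiu integral in your Stokes step.
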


It is also possible to construct almost analytic extensions for larger classes of functions $\psi$ for which the Helffer-Sj\"ostrand formula holds together with further properties. 
This is the case for  functions $\psi$ in $\cG^{m'}(\bR)$, $m'<-1$:
\begin{lemma}
\label{lem_tildepsicGm'}
Let $\psi\in \cG^{m'}(\bR)$ with $m'<-1$. Then we can construct an almost analytic extension $\tilde {\psi} \in C^\infty(\bC)$ to $\psi$ 
for which the Helffer-Sj\"ostrand formula in \eqref{eq_HS} holds for any self-adjoint operator $T$ with \eqref{eq_HSbound}.
Moreover,   we have for all $N\in\bN_0$,
$$
\int_{\bC} \big| \bar\partial \tilde \psi( z) \big| \left(\frac{1+|z| }{|\IM \, z|}\right)^N L(dz) \leq C_{N}  \|\psi\|_{\cG^{m'},N+3},
$$
with  the constant  $C_{N}>0$ depending on $N$ (and on the construction and on $m'$), but not on $\psi$.
\end{lemma}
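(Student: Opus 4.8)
The strategy is the standard one for constructing almost analytic extensions adapted to symbol-type growth, following the approach in \cite{dimassi+Sj,zworski} but tracking the polynomial weight $(1+|z|)/|\IM z|$ carefully. Fix a cutoff $\chi\in C_c^\infty(\bR)$ with $\chi\equiv 1$ near $0$, and for $\psi\in\cG^{m'}(\bR)$ with $m'<-1$ set
$$
\tilde\psi(x+iy) := \sum_{k=0}^{N_0} \frac{(iy)^k}{k!}\,\psi^{(k)}(x)\ \chi\!\left(\frac{y}{1+|x|}\right),
$$
for a sufficiently large integer $N_0$ to be fixed in terms of the required number of derivatives $N+3$. First I would check that $\tilde\psi\in C^\infty(\bC)$, that $\tilde\psi|_\bR=\psi$, and that $\bar\partial\tilde\psi$ vanishes to order $N_0$ on $\bR$, so in particular $\bar\partial\tilde\psi=O(|\IM z|^2)$ once $N_0\geq 2$; this is the input needed to invoke Lemma \ref{lem_HSCcinfty}'s reasoning (convergence of the Helffer--Sj\"ostrand integral and the bound \eqref{eq_HSbound}) on a truncated region and then pass to the limit, using that $(T-z)^{-1}$ is bounded by $|\IM z|^{-1}$ uniformly. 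Here one must be a little careful that $\tilde\psi$ is not compactly supported: I would either cite the version of the Helffer--Sj\"ostrand formula valid for such non-compactly-supported almost analytic extensions (available in \cite{dimassi+Sj}) or re-derive it by an approximation argument cutting $\psi$ off at scale $R\to\infty$ and controlling the error via the decay of $\psi$ and its derivatives.

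The core computation is the estimate on $\bar\partial\tilde\psi$. Differentiating the defining series, the terms where $\bar\partial$ hits the polynomial-in-$(iy)$ prefactor telescope, leaving the ``boundary'' term $\frac{(iy)^{N_0}}{N_0!}\psi^{(N_0+1)}(x)\chi(y/(1+|x|))$, while the terms where $\bar\partial$ hits the cutoff $\chi(y/(1+|x|))$ are supported in the region $|y|\sim 1+|x|$ and carry a factor $|\partial\chi|/(1+|x|)$. On the first piece, using $|\psi^{(N_0+1)}(x)|\leq C\|\psi\|_{\cG^{m'},N_0+1}(1+|x|)^{m'-N_0-1}$ and $|y|\lesssim 1+|x|$ on the support, one gets a bound of the form $C\|\psi\|_{\cG^{m'},N_0+1}\,|y|^{N_0}(1+|x|)^{m'-N_0-1}$. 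On the second piece, one has $|y|\sim 1+|x|$, so all the weights $(1+|z|)/|\IM z|$ appearing later are bounded there, and the $x$-integral converges because $m'<-1$ forces $(1+|x|)^{m'}$ to be integrable up to the finitely many derivative losses. Plugging these into $\int_\bC |\bar\partial\tilde\psi(z)|\,((1+|z|)/|\IM z|)^N\,L(dz)$, one integrates in $y$ first over $|y|\lesssim 1+|x|$: the factor $|y|^{N_0}\cdot|y|^{-N}$ is integrable near $y=0$ as soon as $N_0\geq N$ (in fact $N_0> N-1$ suffices after accounting for the $|\IM z|^{-1}$ already present in \eqref{eq_HSbound}, giving the requirement $N_0\geq N+1$ or so), producing a factor $(1+|x|)^{N_0+1-N}$ times $(1+|x|)^{N}$ from $(1+|z|)^N\sim(1+|x|)^N$; combined with $(1+|x|)^{m'-N_0-1}$ this leaves $(1+|x|)^{m'}$, which is integrable in $x$ precisely because $m'<-1$. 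Choosing $N_0 = N+1$ (or whatever minimal value the bookkeeping dictates) and tracking which seminorm of $\psi$ appears, one arrives at the claimed bound with $C_N\|\psi\|_{\cG^{m'},N+3}$, the index $N+3$ absorbing the at most two or three extra derivatives lost in the cutoff and in passing from $N_0$ back to $N$.

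The main obstacle I anticipate is not any single inequality but the careful matching of exponents so that the final power of $(1+|x|)$ is exactly $m'<-1$: one needs $N_0$ large enough that the $y$-integral near the real axis converges against the weight $|\IM z|^{-N-1}$, yet the growth in $|x|$ picked up from the support condition $|y|\lesssim 1+|x|$ and from the factor $(1+|z|)^N$ must be exactly cancelled by the decay $(1+|x|)^{m'-N_0-1}$ of $\psi^{(N_0+1)}$ coming from $\psi\in\cG^{m'}$. This is where the hypothesis $m'<-1$ is used in an essential way — it is precisely the threshold making $\int_\bR(1+|x|)^{m'}\,dx<\infty$. A secondary technical point is justifying the Helffer--Sj\"ostrand formula \eqref{eq_HS} itself for this non-compactly-supported $\tilde\psi$ and a general (possibly unbounded) self-adjoint $T$; I would handle this by the truncation argument mentioned above, noting that the decay estimates just established guarantee the requisite dominated convergence. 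The construction of $\tilde\psi$ and these estimates are collected in Section \ref{Appendix_almost_analytic}, to which the detailed verification is deferred.
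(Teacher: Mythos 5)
Your plan has a genuine gap: the extension you build depends on $N$, but Lemma \ref{lem_tildepsicGm'} requires a \emph{single} $\tilde\psi$ satisfying the weighted integral bound for \emph{all} $N\in\bN_0$ simultaneously. With the Taylor sum truncated at a fixed finite order $N_0$, you only get $\bar\partial\tilde\psi=O(|y|^{N_0})$ near the real axis, so $\int_\bC|\bar\partial\tilde\psi(z)|\,((1+|z|)/|\IM z|)^NL(dz)$ diverges for $N>N_0$ (the $y$-integral near $y=0$ fails). This is not a cosmetic point: in the application (Lemma \ref{lem_taukpsi} and the proof of Theorem \ref{thm_sclFCG}) the same extension is paired with resolvent-type symbols whose seminorm bounds involve powers $((1+|z|)/|\IM z|)^p$ with $p$ depending on which seminorm one estimates, and the argument showing independence of the choice of extension Taylor-expands to order $N\geq p$ for arbitrary $p$; so ``all $N$'' is used in an essential way. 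The standard repairs are either to take the full Borel-type series $\sum_{k\geq 0}\frac{(iy)^k}{k!}\psi^{(k)}(x)\chi(\lambda_k y/(1+|x|))$ with convergence factors $\lambda_k\to\infty$ (at the price of making the dependence on the seminorm $\|\psi\|_{\cG^{m'},N+3}$ harder to extract, since the $\lambda_k$ depend on all derivatives of $\psi$), or to use a construction that vanishes to infinite order by design.

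The paper takes the second route, following Jensen--Nakamura: on each dyadic piece of $\psi=\sum_j 2^{jm'}\psi_j(2^{-j}\cdot)$ it sets $\tilde\psi_j(z)=\int_\bR e^{2\pi iz\xi}\chi(y\xi)\widehat{\psi_j}(\xi)\,d\xi$, for which $\bar\partial\tilde\psi_j$ involves $\chi'(y\xi)$, supported where $|y\xi|\sim 1$; this yields $|\bar\partial\tilde\psi_j(z)|\leq C_N|y|^N\max_{k\leq N+3}\sup|\psi_j^{(k)}|$ for \emph{every} $N$ from one single formula, with explicit control of the number of derivatives, and the dyadic summation converges precisely because $m'<-1$. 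That construction also delivers the extra $\partial_y^p$ estimates and the support property $|y|\leq 10(1+|x|)$ that are invoked later, which your plan does not address. Your exponent bookkeeping for a fixed $N$ (the cancellation leaving $(1+|x|)^{m'}$, integrable iff $m'<-1$) is correct, and your treatment of the Helffer--Sj\"ostrand formula for non-compactly-supported extensions by truncation is acceptable (the paper instead reduces to $C_c^\infty$ by density); but as written the construction itself does not prove the stated lemma.
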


 This is a standard construction, and its proof is  postponed  to Appendix~\ref{Appendix_almost_analytic}.

Proposition \ref{prop_sigma0Res} and the Helffer-Sj\"ostrand formula readily imply the following property:
\begin{corollary}
\label{cor_prop_sigma0Res}
We continue with the setting of Proposition \ref{prop_sigma0Res}.
	If $\psi\in \cG^{m'}(\bR)$ with $m'<-1$, then $\psi(\sigma_0)\in S^{-m}(G\times \Gh)$.
	Moreover, for any seminorm $\|\cdot\|_{S^{-m},a,b,c} $ and any $m'<-1$, there exists $C>0$ and 
	a seminorm $\|\cdot\|_{\cG^{m'},N}$ such that 
	$$
	\forall \psi\in \cG^{m'}(\bR), \ \forall t\in (0,1]\qquad 
	\|\psi(t\sigma_0)\|_{S^{-m},a,b,c} \leq C t^{-1} \|\psi\|_{\cG^{m'},N}.
	$$
	Similar properties hold true on $M$.	
\end{corollary}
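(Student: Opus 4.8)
The plan is to derive the statement from the Helffer--Sj\"ostrand formula \eqref{eq_HS} together with the resolvent bounds of Proposition~\ref{prop_sigma0Res}, exploiting the elementary scaling identity
$$
(t\sigma_0-z)^{-1} = t^{-1}\,(\sigma_0 - z/t)^{-1}, \qquad z\in \bC\setminus \bR,\ t\in(0,1],
$$
to produce exactly one factor $t^{-1}$: the point $z/t$ lies at the same or larger relative distance from $\bR$ as $z$, so evaluating the bound of Proposition~\ref{prop_sigma0Res} at $z/t$ costs nothing extra when $t\le 1$.

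First I would fix $\psi\in \cG^{m'}(\bR)$ with $m'<-1$ and take an almost analytic extension $\tilde\psi$ as furnished by Lemma~\ref{lem_tildepsicGm'}, so that \eqref{eq_HS} holds for every self-adjoint operator and, for all $N\in\bN_0$,
$$
\int_{\bC} |\bar\partial\tilde\psi(z)|\Big(\tfrac{1+|z|}{|\IM z|}\Big)^{N}\,L(dz)\ \le\ C_N\,\|\psi\|_{\cG^{m'},N+3}.
$$
By Lemma~\ref{lem_sigma01prop} each $\sigma_0(x,\pi)$ is essentially self-adjoint on $\cH_\pi^\infty\subset\cH_\pi$ with spectrum in $[0,\infty)$, hence so is $t\sigma_0(x,\pi)$ for every $t\in(0,1]$. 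Applying \eqref{eq_HS} to $T=t\sigma_0(x,\pi)$ and using the scaling identity gives, for each $(x,\pi)\in G\times\Gh$,
$$
\psi(t\sigma_0(x,\pi)) = \frac1\pi\int_{\bC}\bar\partial\tilde\psi(z)\,(t\sigma_0(x,\pi)-z)^{-1}\,L(dz) = \frac{t^{-1}}{\pi}\int_{\bC}\bar\partial\tilde\psi(z)\,(\sigma_0(x,\pi)-z/t)^{-1}\,L(dz).
$$

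Next I would read the right-hand side as a Bochner integral in the Fr\'echet space $S^{-m}(G\times\Gh)$. Using the resolvent identity and Proposition~\ref{prop_sigma0Res}, one checks that $z\mapsto (z-\sigma_0)^{-1}$ is continuous $\bC\setminus\bR\to S^{-m}(G\times\Gh)$ and that for each seminorm $\|\cdot\|_{S^{-m},a,b,c}$ there are $C>0$ and $p\in\bN$ with
$$
\|(\sigma_0-z/t)^{-1}\|_{S^{-m},a,b,c} \le C\Big(1+\tfrac{1+|z|/t}{|\IM z|/t}\Big)^{p} = C\Big(1+\tfrac{t+|z|}{|\IM z|}\Big)^{p} \le C\Big(1+\tfrac{1+|z|}{|\IM z|}\Big)^{p},
$$
the last step because $t\le 1$; together with the decay of $\bar\partial\tilde\psi$ this makes the $S^{-m}$-valued integrand absolutely integrable. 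Hence the integral defines an element of $S^{-m}(G\times\Gh)$ whose value at each $(x,\pi)$ is the operator above (evaluation at $(x,\pi)$ being continuous, and so commuting with the integral), i.e. $\psi(t\sigma_0)\in S^{-m}(G\times\Gh)$; taking $t=1$ gives $\psi(\sigma_0)\in S^{-m}(G\times\Gh)$. Finally, since $|\IM z|\le|z|\le 1+|z|$ yields $1+\tfrac{1+|z|}{|\IM z|}\le 2\,\tfrac{1+|z|}{|\IM z|}$,
$$
\|\psi(t\sigma_0)\|_{S^{-m},a,b,c} \le \frac{t^{-1}}{\pi}\int_{\bC}|\bar\partial\tilde\psi(z)|\,\|(\sigma_0-z/t)^{-1}\|_{S^{-m},a,b,c}\,L(dz) \le \frac{2^{p}C}{\pi}\,t^{-1}\int_{\bC}|\bar\partial\tilde\psi(z)|\Big(\tfrac{1+|z|}{|\IM z|}\Big)^{p}L(dz) \le C'\,t^{-1}\|\psi\|_{\cG^{m'},p+3},
$$
which is the claimed estimate with $N=p+3$. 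The argument on $M$ is identical, using the $M$-versions of Lemma~\ref{lem_sigma01prop} and Proposition~\ref{prop_sigma0Res}.

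The step I expect to be the main obstacle is the bookkeeping in the previous paragraph: one must verify that the spectrally defined field $(x,\pi)\mapsto\psi(t\sigma_0(x,\pi))$ really coincides with the $S^{-m}(G\times\Gh)$-valued integral — that $z\mapsto (z-\sigma_0)^{-1}$ is continuous into $S^{-m}(G\times\Gh)$, that the $S^{-m}$-valued integrand is strongly measurable and absolutely integrable, and that pointwise evaluation commutes with integration — before one may invoke $\|\int\cdot\|\le\int\|\cdot\|$. This is routine given Proposition~\ref{prop_sigma0Res}, but it is where the care lies.
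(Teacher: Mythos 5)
Your argument coincides with the paper's proof: both apply the Helffer--Sj\"ostrand formula, pull out $t^{-1}$ via $(t\sigma_0-z)^{-1}=t^{-1}(\sigma_0-t^{-1}z)^{-1}$, observe that $\frac{1+|t^{-1}z|}{|\IM\,t^{-1}z|}=\frac{t+|z|}{|\IM\,z|}\le\frac{1+|z|}{|\IM\,z|}$ for $t\le 1$, and then feed the resolvent bound of Proposition~\ref{prop_sigma0Res} into the integral estimate of Lemma~\ref{lem_tildepsicGm'}. The additional care you flag about reading the formula as a Bochner integral in $S^{-m}(G\times\Gh)$ is sound and is implicit in the paper's treatment; it does not change the route.
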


\begin{proof} [Proof of Corollary \ref{cor_prop_sigma0Res}]
By the Helffer-Sj\"ostrand formula, we have
$$
\psi(t\sigma_0) = \frac 1{\pi} \int_\bC \bar \partial \tilde \psi(z)\
(t\sigma_0-z)^{-1} L(dz)
=
\frac {t^{-1}}{\pi}\int_\bC \bar \partial \tilde \psi(z)\
(\sigma_0-t^{-1}z)^{-1} L(dz).
$$
We consider here an almost analytic extension $\tilde \psi$ of $\psi\in \cG^{m'} (\bR)$ with $m'<-1$ as in Lemma \ref{lem_tildepsicGm'}.
By  Proposition \ref{prop_sigma0Res}, we have for any   $t\in (0,1]$,
\begin{align*}
\|\psi(t\sigma_0)\|_{S^{-m},a,b,c}
&\leq \frac {t^{-1}}{\pi}	\int_\bC |\bar \partial \tilde \psi(z)|\
\|(t^{-1}z-\sigma_0)^{-1}\|_{S^{-m},a,b,c}L(dz)
 \\
&\lesssim t^{-1}	\int_\bC |\bar \partial \tilde \psi(z)|
 \Big(1+ \frac{1+|t^{-1}z|}{|\IM\, t^{-1} z|} \Big)^p L(dz)\\
 &\lesssim  t^{-1}	\int_\bC |\bar \partial \tilde \psi(z)|
 \Big(1+ \frac{1+|z|}{|\IM\,  z|} \Big)^p L(dz)\\ 
 &\lesssim  t^{-1} \|\psi\|_{\cG^{m'},N},
\end{align*}
for some $N\in \bN_0$.
\end{proof}

This allows us to show Part (1) of Theorem \ref{thm_psi(sigma0)}.
\begin{proof}[Proof of Theorem \ref{thm_psi(sigma0)} (1)]
Let $\psi\in \cS(\bR)$. 
As $\sigma_0\geq 0$, we may assume that $\psi$ is supported in $(-\frac 12, +\infty)$.
Then for any $N\in \bN_0$, $\psi_N (\lambda) := (1+\lambda)^N \psi(\lambda)$ defines a Schwartz function on $\bR$.
The properties of the pseudo-differential calculus implies that  $(\id+\sigma_0)^{-N} \in S^{-Nm}(G\times \Gh)$ and we have
$$
\|\psi(\sigma_0)\|_{S^{(-N-1)m},a,b,c}
=
\|(\id+\sigma_0)^{-N} \psi_N(\sigma_0)\|_{S^{(-N-1)m},a,b,c}
\lesssim \|\psi_N(\sigma_0)\|_{S^{-m},a',b',c'}
\lesssim \|\psi_N\|_{\cG^{-2},N'},
$$
for some $a',b',c',N'\in \bN$, by Corollary \ref{cor_prop_sigma0Res} applied to $\psi_N$, $t=1$ and (to fix the ideas) $m'=-2$.	
\end{proof}

The proof of Part (2) of Theorem \ref{thm_psi(sigma0)} will require the following consequence of Corollary \ref{cor_prop_sigma0Res}. The latter is shown  with similar ideas as in the proof of Part (1) of Theorem \ref{thm_psi(sigma0)} above:
\begin{lemma}
\label{lem_psitsigma0}
We continue with the setting of Proposition \ref{prop_sigma0Res}.
For any $m_1\in \bR$ and any seminorm $\|\cdot\|_{S^{m_1 m} ,a,b,c}$, 
there exist a constant $C>0$ and a number $k_0\in \bN_0$ such that 
$$
\forall \psi\in C_c^\infty(\frac 12,2),\quad\forall t\in (0,1]\qquad
	\|\psi(t\sigma_0)\|_{S^{m_1 m},a,b,c} \leq C t^{m_1} \max_{k=0,\ldots,k_0} 
	\sup_{\lambda \geq 0} |\psi^{(k)}(\lambda)|.
$$
\end{lemma}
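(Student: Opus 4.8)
The plan is to peel off a fixed symbol of order $(m_1+1)m$ and to reduce the remaining factor, after rescaling, to a function of $t\sigma_0$ of the type already covered by Corollary~\ref{cor_prop_sigma0Res}; the extra power $+1$ is chosen precisely so that the loss of one power of $t$ in that Corollary is cancelled.

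Since $\sigma_0\geq0$, each operator $\sigma_0(x,\pi)$ is essentially self-adjoint with spectrum in $[0,\infty)$ by Lemma~\ref{lem_sigma01prop}, so $\psi(t\sigma_0)(x,\pi):=\psi\big(t\sigma_0(x,\pi)\big)$ is an unambiguous bounded operator-valued symbol. As all the factors below are functions of the single self-adjoint operator $\sigma_0(x,\pi)$, they commute and we may factor, fibrewise hence as a product of symbols (in the sense of Section~\ref{subsec_propSm}),
\[
\psi(t\sigma_0)=(\id+\sigma_0)^{m_1+1}\, h_t(\sigma_0),\qquad h_t(\lambda):=(1+\lambda)^{-(m_1+1)}\psi(t\lambda).
\]
By the invertibility of $\id+\sigma_0$ for all frequencies together with the properties of the calculus (complex powers of an invertible symbol, cf. \cite[Section 5.8]{R+F_monograph} and Lemma~\ref{lem_sigma01prop}), the fixed symbol $(\id+\sigma_0)^{m_1+1}$ belongs to $S^{(m_1+1)m}(G\times\Gh)$, so all its seminorms are bounded by a constant $C_0=C_0(\sigma_0,m_1)$ that does not depend on $t$ or $\psi$.

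Next I would rescale: $h_t(\lambda)=\phi_t(t\lambda)$ with $\phi_t(\mu):=(1+\mu/t)^{-(m_1+1)}\psi(\mu)$, so that $\phi_t\in C_c^\infty(\tfrac12,2)$ for every $t\in(0,1]$. For $\mu\in(\tfrac12,2)$ and $t\in(0,1]$ one has $\tfrac1{2t}\leq 1+\mu/t\leq \tfrac3t$, so $(1+\mu/t)^{-(m_1+1)}\asymp_{m_1} t^{m_1+1}$ there; each $\mu$-derivative contributes a factor $t^{-1}$ together with a lowering of the exponent and is therefore again $O_{m_1}(t^{m_1+1})$ on $(\tfrac12,2)$, uniformly in $t$. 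By the Leibniz rule and the compact support of $\phi_t$ in $(\tfrac12,2)$ this gives, for every $N\in\bN_0$,
\[
\|\phi_t\|_{\cG^{-2},N}\ \lesssim_{m_1,N}\ t^{m_1+1}\max_{k\leq N}\sup_{\lambda\geq0}|\psi^{(k)}(\lambda)|,\qquad t\in(0,1].
\]
Applying Corollary~\ref{cor_prop_sigma0Res} with $m'=-2$ to $\phi_t$ then yields, for any prescribed seminorm indices $a',b',c'$,
\[
\|h_t(\sigma_0)\|_{S^{-m},a',b',c'}=\|\phi_t(t\sigma_0)\|_{S^{-m},a',b',c'}\leq C\,t^{-1}\|\phi_t\|_{\cG^{-2},N}\ \lesssim\ t^{m_1}\max_{k\leq N}\sup_{\lambda\geq0}|\psi^{(k)}(\lambda)|.
\]

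Finally, the continuity of the symbol product $S^{(m_1+1)m}(G\times\Gh)\times S^{-m}(G\times\Gh)\to S^{m_1m}(G\times\Gh)$ (Section~\ref{subsec_propSm}) gives, for suitable $a',b',c',a'',b'',c''$ depending only on $a,b,c$,
\[
\|\psi(t\sigma_0)\|_{S^{m_1m},a,b,c}\ \lesssim\ \|(\id+\sigma_0)^{m_1+1}\|_{S^{(m_1+1)m},a'',b'',c''}\,\|h_t(\sigma_0)\|_{S^{-m},a',b',c'}\ \lesssim\ C_0\,t^{m_1}\max_{k\leq k_0}\sup_{\lambda\geq0}|\psi^{(k)}(\lambda)|,
\]
with $k_0=N$, which is the claimed estimate; the argument on $M$ is identical, using the $M$-versions of Lemma~\ref{lem_sigma01prop}, Corollary~\ref{cor_prop_sigma0Res} and of the composition of symbols. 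The only genuinely delicate point is the bookkeeping of the powers of $t$: one must factor out $(\id+\sigma_0)^{m_1+1}$ and not $(\id+\sigma_0)^{m_1}$, since Corollary~\ref{cor_prop_sigma0Res} always lands in $S^{-m}$ at the cost of a factor $t^{-1}$, and it is the $t^{m_1+1}$-smallness of the rescaled cut-off $\phi_t$ (localised near the fixed interval $(\tfrac12,2)$, not far out) that absorbs this factor and leaves exactly $t^{m_1}$; a subsidiary point is to justify that the (in general non-integer) power $(\id+\sigma_0)^{m_1+1}$ is a bona fide element of $S^{(m_1+1)m}(G\times\Gh)$ with $t$-independent seminorms, which is where the invertibility hypothesis on $\id+\sigma_0$ is used.
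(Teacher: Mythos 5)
Your factorization
\[
\psi(t\sigma_0)\;=\;(\id+\sigma_0)^{m_1+1}\,\phi_t(t\sigma_0),\qquad \phi_t(\mu):=(1+\mu/t)^{-(m_1+1)}\psi(\mu),
\]
is a genuinely different route from the paper's. The paper proves the estimate first for $m_1=-1$ (directly from Corollary~\ref{cor_prop_sigma0Res}), then for $m_1=0,1,2,\dots$ by peeling off $\sigma_0^N$, then for $m_1=-2,-3,\dots$ by inserting a cut-off $\chi(t\sigma_0)$, and finally covers non-integer $m_1$ by interpolation. Your scaling computation $\|\phi_t\|_{\cG^{-2},N}\lesssim_{m_1,N} t^{m_1+1}\max_{k\le N}\sup|\psi^{(k)}|$ is correct (indeed, for $\mu\in(\tfrac12,2)$ and $t\in(0,1]$ one has $(1+\mu/t)^{-(m_1+1+j)}\asymp t^{m_1+1+j}$ and each derivative brings a factor $t^{-1}$, so every term in the Leibniz sum is $O(t^{m_1+1})$), and together with Corollary~\ref{cor_prop_sigma0Res} it neatly collapses all the integer cases into one step.

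The gap is in the claim that $(\id+\sigma_0)^{m_1+1}\in S^{(m_1+1)m}(G\times\Gh)$ with $t$-independent seminorms for \emph{non-integer} $m_1+1$. You cite Section~5.8 of \cite{R+F_monograph} and Lemma~\ref{lem_sigma01prop}, but those only give integer powers: positive integer powers of $\id+\sigma_0\in S^m$ by symbol multiplication, negative integer powers from $(\id+\sigma_0)^{-1}\in S^{-m}$. They do not cover the \emph{fibrewise} fractional power $(\id+\sigma_0(x,\pi))^{s}$ for $s\notin\bZ$ — note this is a spectral function of the self-adjoint operator $\sigma_0(x,\pi)$ at each fibre, not a complex power of the pseudo-differential operator $\Op_G(\id+\sigma_0)$, so the parametrix machinery of \cite[Section~5.8]{R+F_monograph} does not apply directly. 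The tools available before Theorem~\ref{thm_psi(sigma0)}(2) only give, via Corollary~\ref{cor_prop_sigma0Res}, that $(\id+\sigma_0)^s\in S^{-m}$ when $s<-1$, which is strictly weaker than $S^{sm}$ in that regime; and Theorem~\ref{thm_psi(sigma0)}(2), which would deliver the full claim, is itself proved using Lemma~\ref{lem_psitsigma0}, so invoking it here is circular. The repair is exactly the step you omitted and the paper uses: run your argument only for $m_1\in\bZ$, where $(\id+\sigma_0)^{m_1+1}$ is legitimately in $S^{(m_1+1)m}$, and then conclude for all real $m_1$ by interpolation between consecutive integers, using that the $S^{m_1 m}$-seminorms are given by operator norms across a scale of Sobolev spaces in the exponent $[\alpha]+\gamma-m_1 m$.
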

\begin{proof}
Let $\psi\in C_c^\infty(\frac 12,2)$.	
Corollary \ref{cor_prop_sigma0Res} gives the case of $m_1=-1$.
For any $N\in \bN$, consider $\psi_N(\lambda):=\lambda^{-N}\psi(\lambda)$.
We have
$$
\|\psi(t\sigma_0)\|_{S^{(N-1) m},a,b,c} 
=
t^N \|	\sigma_0^N \psi_N (t\sigma_0)\|_{S^{(N-1) m},a,b,c} 
\lesssim t^N \|	\psi_N (t\sigma_0)\|_{S^{- m},a',b',c'} ,
$$
by the properties of the pseudo-differential calculus. Applying Corollary \ref{cor_prop_sigma0Res} to $\psi_N$ gives the case of $m_1=N-1$. This is so for any $N\in \bN$.

Let $\chi\in C_c^\infty(\frac 14,4)$ be such that $\chi=1$ on $(1/2,2)$. Since $\psi=\chi\psi$, we have
$$
\|\psi(t\sigma_0)\|_{S^{-2 m},a,b,c}
=\|\chi(t\sigma_0)\psi(t\sigma_0)\|_{S^{-2 m},a,b,c}
\lesssim 
\|\chi(t\sigma_0)\|_{S^{-m},a_1,b_1,c_1}
\|\psi(t\sigma_0)\|_{S^{- m},a_2,b_2,c_2}, 
$$
by the properties of the pseudo-differential calculus. Applying Corollary \ref{cor_prop_sigma0Res} to $\psi$ and $\chi$ gives the case for $m_1=-2$. Recursively, we obtain the case of any $m_1=-1,-2,-3,\ldots.$.
Hence, the statement is proved for any $m_1\in \bZ$, and we conclude by interpolation. 
\end{proof}

We can now show Part (2) of Theorem \ref{thm_psi(sigma0)}. The proof will use the Cotlar-Stein Lemma.

\begin{proof}[Proof of Theorem \ref{thm_psi(sigma0)} (2)]
	Let $\psi\in \cG^{m'}(\bR^n)$. 
	Without loss of generality, 
	we may assume that $\psi$ is real-valued
	and that it is supported in $(2,\infty)$
	by Theorem \ref{thm_psi(sigma0)} (1).
	Let $(\eta_j)$ be a dyadic decomposition of $[0,+\infty)$, that is, $\eta_{-1}\in C_c^\infty(-1,1)$ and $\eta_0\in C_c^\infty (\frac 12,2) $ with 
	$$
	\sum_{j=-1}^\infty \eta_j(\lambda)=1 \ \mbox{for all}\ \lambda\geq 0, \quad\mbox{where}\quad \eta_j(\lambda):=\eta_0(2^{-j}\lambda).
	$$
	We may write for any $\lambda\geq 0$
	$$
	\psi(\lambda) = \sum_{j=0}^\infty 2^{jm'} \psi_j (2^{-j}\lambda), 
	\quad\mbox{where}\quad \psi_j (\mu):= 2^{-jm'} \psi(2^j \mu) \eta_0(\mu).
	$$
	We observe that 
	$$
	\psi_j \in C_c^\infty (\frac 12,2)
	\qquad \mbox{and} \qquad
	\sup_{\lambda\geq 0} |\psi_j ^{(k)}(\lambda)| \lesssim_k \|\psi\|_{\cG^{m'},k}
		$$
		for any $k\in \bN_0$
with an implicit constant independent of $j$.	
Let $\alpha,\beta\in \bN_0^n$ and $(x,\pi)\in G\times \Gh$.
For each $j\in \bN_0$, 
let us consider the operator 
$$
T_j (\alpha,\beta,\gamma; x,\pi):=
T_j := 2^{jm'} \pi(\id+\cR)^{\frac{-mm' +[\alpha]+\gamma}\nu}
\Delta_\alpha X^\beta_x
\psi_j (2^{-j}\sigma_0 (x,\pi ) )
\pi(\id+\cR)^{-\frac{\gamma}\nu}.
$$
We observe that since $\sigma_0$ is self-adjoint, so is $\psi_j(2^{-j}\sigma_0 (x,\pi ) )$ and $(\Delta_\alpha X_x^\beta\psi_j(2^{-j}\sigma_0 (x,\pi ) ))^*$ is equal to $\Delta_\alpha X_x^\beta \psi(2^{-j}\sigma_0 (x,\pi ) )$ up to a sign. 
In any case, we have
\begin{align*}
\|T_i^* T_j\|_{\sL(\cH_\pi)}
&=2^{(i+j)m'}\|\pi(\id+\cR)^{-\frac{\gamma}\nu}
\Delta_\alpha X^\beta_x
\psi_i (2^{-i}\sigma_0 (x,\pi ) )
\pi(\id+\cR)^{\frac{-2mm' +[\alpha]+\gamma}\nu}
\times \\
&\qquad \qquad \qquad\qquad\qquad \times 
\pi(\id+\cR)^{\frac{[\alpha]+\gamma }\nu}
\Delta_\alpha X^\beta_x
\psi_j (2^{-j}\sigma_0 (x,\pi ) )
\pi(\id+\cR)^{-\frac{\gamma}\nu}\|_{\sL(\cH_\pi)}	\\
&\leq 2^{(i+j)m'} \|\psi_i (2^{-i}\sigma_0 )\|_{S^{2mm'},[\alpha],[\beta],c_1}
\|\psi_j (2^{-j}\sigma_0 )\|_{S^{0},[\alpha],[\beta],c_2},
\end{align*}
for some computable indices $c_1,c_2\in \bN_0$.
Applying Lemma \ref{lem_psitsigma0}, we obtain 
$$
\|T_i^* T_j\|_{\sL(\cH_\pi)} \lesssim 2^{(i+j)m'} 2^{-i (2m')} \max_{k=0,\ldots,k_1} 
	\sup_{\lambda \geq 0} |\psi_i^{(k)}(\lambda)|
	\max_{k=0,\ldots,k_2} 
	\sup_{\lambda \geq 0} |\psi_j^{(k)}(\lambda)|,
$$
for some $k_1,k_2\in \bN_0$.
Therefore, with $k=\max (k_1,k_2)$,
$$
\|T_i^* T_j\|_{\sL(\cH_\pi)} \lesssim 2^{(j-i)m'} \|\psi\|_{\cG^{m'},k}^2.
$$
Similarly, we also have with other indices $c'_1,c'_2$ 
\begin{align*}
\|T_i^* T_j\|_{\sL(\cH_\pi)}
&\leq 2^{(i+j)m'} \|\psi_i (2^{-i}\sigma_0 )\|_{S^{0},[\alpha],[\beta],c'_1}
\|\psi_j (2^{-j}\sigma_0 )\|_{S^{2mm'},[\alpha],[\beta],c'_2}\\
& \lesssim 2^{(i-j)m'} \|\psi\|_{\cG^{m'},k}^2,
\end{align*}
after possibly modifying $k\in \bN_0$. 
We obtain similar operator bounds for $T_i T_j^*$, leading to
$$
\max(\|T_i^* T_j\|_{\sL(\cH_\pi)},\|T_i T_j^*\|_{\sL(\cH_\pi)})
\lesssim   
2^{-|i-j| |m'|} \|\psi\|_{\cG^{m'},k}^2.
$$
When $m'\neq 0$,
this allows us to apply the Cotlar-Stein Lemma \cite[\S VII.2]{Steinbk}: $\sum_j T_j$ converges in the strong operator topology to a bounded operator on $\cH_\pi$ with norm $\lesssim \|\psi\|_{\cG^{m'},k}$. The implicit constant is independent of $(x,\pi)\in G\times \Gh$.

With $x\in G$ fixed, this implies that the field of bounded operators 
$$
T(\alpha,\beta,\gamma;x,\pi):=
\sum_{j=0} T_j (\alpha,\beta,\gamma; x,\pi),
 \ \pi\in \Gh, 
 $$ is a well defined symbol in $L^\infty (\Gh)$.
 When $m'\leq 0$, $\psi$ is bounded and
 $\psi(\sigma_0(x,\pi))$ is a well defined bounded operator on $\cH_\pi$  by functional analysis. 
 Moreover, if $m'<0 $, the following operators coincide:
 $$
 T(0,0,0;x,\pi)=\pi(\id+\cR)^{\frac{-mm'}\nu}\psi(\sigma_0(x,\pi)).
 $$ 
 Considering the convolution kernels of 
 $T(\alpha,\beta,\gamma;x,\cdot )$
 and $T_j (\alpha,\beta,\gamma; x,\cdot)$, 
 routine checks 
imply readily that $\psi(\sigma_0)\in S^{mm'}(G\times \Gh)$. Furthermore, the estimate above implies the continuity $\cG^{m'}(\bR)\to S^{mm'}(G\times \Gh)$ of the map
$\psi\mapsto \psi(\sigma_0)$.

This proves the result for $m'< 0$. 
As  $(\id+\sigma_0)\in S^{m}(G\times \Gh)$, 
the properties of composition of the pseudo-differential calculus allow us to extend the result to any $m'\in \bR$.
\end{proof}

\subsection{Sub-Laplacians in horizontal divergence form}
\label{subsec_cL_a}
	Let $G$ be a stratified nilpotent Lie group. 
	We fix a basis $X_1,\ldots, X_{n_1}$ of the first stratum of its Lie algebra $\fg$, completed in a basis $X_1,\ldots,X_n$ of $\fg$ adapted to the stratification.
	We now identify elements of $\fg$ with left-invariant vector fields. 
	Let $a_{i,j}\in C^\infty_{l,b}(G)$, $1\leq i,j \leq n$. We consider	the differential operator  $\cL_A\in \Psi^2 (G)$.
	$$
	\cL_A:=-\sum_{1\leq i,j\leq n_1} X_i (a_{i,j}(x) X_j)
	= -\sum_{1\leq i,j\leq n_1} a_{i,j}(x) X_iX_j + (X_i a_{i,j}(x)) X_j
	$$
Its symbol may be written as 
$$
S^2(G\times \Gh)\ni \widehat \cL_A := -\sum_{1\leq i,j\leq n_1} a_{i,j} \widehat X_i \widehat X_j + (X_i a_{i,j}) \widehat X_j = \sigma_0 +\sigma_1
$$
where
$$
S^2(G\times\Gh)\ni  \sigma_0 := -\sum_{1\leq i,j\leq n_1} a_{i,j} \widehat X_i \widehat X_j
\quad\mbox{and}\quad 
\sigma_1  = -\sum_{1\leq i,j\leq n_1}  (X_i a_{i,j}) \widehat X_j\in S^{1}(G\times \Gh).
$$
Assuming that the matrix $A(x)=(a_{i,j}(x))$ is symmetric at every $x\in G$, 
	we say that  
	$\cL_A$
	is a sub-Laplacian in divergence form. 
	We observe that When  $A=(a_{i,j})$ is the identity matrix $\id$, then 
$\cL_\id:=-\sum_{j=1}^{n_1}  X_j^2$ is the canonical sub-Laplacian in this context, which is known to be a positive Rockland operator of homogeneous degree 2.

If the matrix $A(x)=(a_{i,j}(x))$ is non-negative at every point $x\in G$, then  the differential operator $\cL_A$ and the symbol $\sigma_0$ are  non-negative.
Under some additional natural condition on the matrices $A(x)=(a_{i,j}(x))$, the sub-Laplacian in divergence form $\cL_A$ admits a left parametrix. 
\begin{lemma}
\label{lem_LApar}
We continue with the above setting on $G$.	
We assume in addition the following hypothesis of uniform ellipticity: 
$$
c:=\inf_{x\in G} \lambda_{A(x),1}>0
\qquad\mbox{and}\qquad  
C:=\sup_{x\in G} \lambda_{A(x),n}<\infty,
$$
where $\lambda_{A(x),1}$ and $\lambda_{A(x),n}$ denote the  smallest and largest eigenvalues of the non-negative matrix $A(x)$.
Then $\id+\sigma_0$ is invertible for all the frequencies of $\widehat \cL_I$, and $\cL_A$ admits a left parametrix. 
Moreover, $\cL_A+A_1$ for any $A_1\in \Psi^1(G)$ admits a left parametrix, and
therefore the properties in Proposition \ref{prop_cqLpar} hold. 
If in addition $A_1$ is symmetric, then $\cL_A+A_1$ is essentially self-adjoint. 
\end{lemma}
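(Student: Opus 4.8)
The plan is to reduce everything to the invertibility of $\id+\sigma_0$ and then invoke the general machinery: Theorem~\ref{thm_Lparexists} for the left parametrices, Proposition~\ref{prop_cqLpar} for hypoellipticity and the subelliptic estimates, and Corollary~\ref{cor_Lparexists} for essential self-adjointness. First I would identify $\sigma_0$ as an $x$-parametrized family of Rockland symbols. For each fixed $x\in G$ the matrix $A(x)$ is symmetric and, by uniform ellipticity, positive definite with eigenvalues in $[c,C]$; hence by Example~\ref{ex_Rockland}(2) (equivalently Proposition~\ref{prop_exRgeneral}(1) with $\nu_1=1$) the constant-coefficient operator $\cR_{A(x)}:=-\sum_{1\le i,j\le n_1}a_{i,j}(x)X_iX_j$ is a positive Rockland operator of homogeneous degree $2$, and its ($x$-independent) symbol is exactly $\sigma_0(x,\cdot)=\widehat{\cR_{A(x)}}$, i.e. $\sigma_0(x,\pi)=\pi(\cR_{A(x)})=-\sum_{i,j}a_{i,j}(x)\pi(X_i)\pi(X_j)$. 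In particular each $\id+\sigma_0(x,\pi)\ge\id$ is an essentially self-adjoint, non-negative, boundedly invertible operator on $\cH_\pi$.

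Next I would prove that $\id+\sigma_0\in S^2(G\times\Gh)$ is invertible for all the frequencies of $\widehat{\cL_\id}$, where $\cL_\id=-\sum_jX_j^2$ is the operator denoted $\cL_I$ in the statement (and $\cR_\id$ in Corollary~\ref{cor_exRgeneral}). Applying Corollary~\ref{cor_exRgeneral}(1) for each fixed $x$ to the pair $\cR_{A(x)},\cL_\id$, and using that the function there is increasing while $c\le\lambda_{A(x),1}\le\lambda_{A(x),n_1}\le C$, one gets for every $a\in\bR$ a constant $C_a$ independent of $x$ with
\[
\sup_{x\in G}\ \max\Big(\big\|(\id+\widehat{\cR_{A(x)}})^{a}(\id+\widehat{\cL_\id})^{-a}\big\|_{L^\infty(\Gh)},\ \big\|(\id+\widehat{\cL_\id})^{a}(\id+\widehat{\cR_{A(x)}})^{-a}\big\|_{L^\infty(\Gh)}\Big)\le C_a,
\]
the $L^2(G)$-operator norm of a left-invariant operator being the $L^\infty(\Gh)$-norm of its symbol by \eqref{eq_LinftyGh_L2bdd}. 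Now for $a\in\bR$, $x\in G$, $\pi\in\Gh$ and $v\in\cH_\pi$, put $u=(\id+\sigma_0(x,\pi))v$ and factor
\[
(\id+\pi(\cL_\id))^{a+1}(\id+\sigma_0(x,\pi))^{-1}=\big[(\id+\pi(\cL_\id))^{a+1}(\id+\sigma_0(x,\pi))^{-(a+1)}\big](\id+\sigma_0(x,\pi))^{a};
\]
the two bounds then yield $\|(\id+\pi(\cL_\id))^{a+1}v\|\le C_{a+1}C_a\,\|(\id+\pi(\cL_\id))^{a}(\id+\sigma_0(x,\pi))v\|$ with constant independent of $x,\pi$, which is precisely the estimate in Definition~\ref{def_symbinv} with $\gamma=2a$. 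Hence $\id+\sigma_0$ is invertible for all (and in particular for high) frequencies of $\widehat{\cL_\id}$.

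With this in hand the rest is bookkeeping. The symbol of $\cL_A$ is $\sigma_0+\sigma_1=(\id+\sigma_0)+(\sigma_1-\id)$ with $\id+\sigma_0\in S^2$ invertible for high frequencies and $\sigma_1-\id\in S^1$, so Theorem~\ref{thm_Lparexists} gives a left parametrix for $\cL_A$ and hence Proposition~\ref{prop_cqLpar}. For $\cL_A+A_1$ with $A_1\in\Psi^1(G)$ of symbol $\tau_1\in S^1(G\times\Gh)$, the symbol is $(\id+\sigma_0)+(\sigma_1-\id+\tau_1)$ with the second summand in $S^1$, and the same applies. Finally $\cL_A$ is formally self-adjoint — from symmetry of $A(x)$ one has $\sigma_0^{*}=\sigma_0$ (using $(\widehat X_i\widehat X_j)^{*}=\widehat X_j\widehat X_i$), together with the direct check $\cL_A^{t}=\cL_A$ via $X_i^{t}=-X_i$ — so if $A_1$ is symmetric then $\cL_A+A_1$ is a symmetric operator of order $2>0$ with self-adjoint principal symbol $\id+\sigma_0$, and Corollary~\ref{cor_Lparexists} gives that it is essentially self-adjoint on $\cS(G)\subset L^2(G)$.

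The main obstacle is the invertibility step, and within it the \emph{uniformity in $x$} of the comparison between $\widehat{\cR_{A(x)}}$ and $\widehat{\cL_\id}$: this is where the uniform ellipticity hypothesis is used and where one relies on the structural, basis-independent nature of the constants in Corollary~\ref{cor_exRgeneral} (ultimately Lemma~\ref{lem_R1R2bdd} and Remark~\ref{rem_lem_R1R2bdd}). Everything else follows from the already-established pseudo-differential calculus.
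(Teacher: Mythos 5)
Your proof is correct and follows essentially the same route as the paper: freeze $x$ to recognise $\sigma_0(x,\cdot)$ as the symbol of the positive Rockland operator $\cR_{A(x)}$, use \eqref{eq_LinftyGh_L2bdd} together with the uniform-in-$x$ comparison of Rockland operators from Corollary \ref{cor_exRgeneral} to get invertibility of $\id+\sigma_0$ for all frequencies of $\widehat{\cL_\id}$, and then conclude via Theorem \ref{thm_Lparexists} and Corollary \ref{cor_Lparexists}. The only difference is that you spell out the factorization through powers of $(\id+\sigma_0)^{\pm}$ and the self-adjointness check $\sigma_0^*=\sigma_0$ explicitly, which the paper leaves implicit.
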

\begin{proof}
By \eqref{eq_LinftyGh_L2bdd}, we have:
	\begin{align*}
	&\sup_{(x,\pi)\in G\times \Gh}
	\|(\id +\pi(\cL_\id))^{\frac{\gamma+2}2} (\id+\sigma_0(x,\pi))^{-1}(\id +\pi(\cL_\id))^{-\frac{\gamma}2})\|_{\sL(\cH_\pi)}
	\\&\qquad=
	\sup_{x\in G} \|(\id +\cL_\id)^{\frac{\gamma+2}2} (\id-\sum_{1\leq i,j\leq n_1} a_{i,j}(x) X_iX_j )^{-1}(\id +\cL_\id)^{-\frac{\gamma}2})\|_{\sL(L^2(G))},
	\end{align*}
	so this quantity  is finite by comparison of Rockland operators, see Corollary \ref{cor_exRgeneral}). In other words, $\id+\sigma_0$ is invertible for all the frequencies of $\widehat \cL_I$.
	By Theorem \ref{thm_Lparexists}, 
	this implies that, for any $A'_1\in \Psi^1(G)$, the operator
$$
\Op_G(\id+\sigma_0)+A'_1 = \cL_A -\Op_G(\sigma_1)+\id +A'_1
$$    
admits a left parametrix. 
\end{proof}

In the case of a compact nilmanifold $M$, 
we consider smooth  functions $a_{i,j}$, $1\leq i,j \leq n$,  on $M$.
Their derivatives with respect to $X_M^\alpha$ will be automatically bounded. 
We assume that the matrix $A(\dot x)=(a_{i,j}(\dot x))$ is symmetric.  
The differential operator 
	$$
	\cL_A:=-\sum_{1\leq i,j\leq n_1} X_{M,i} (a_{i,j}(\dot x) X_{M,j})
	= -\sum_{1\leq i,j\leq n_1} a_{i,j}(x) X_{M,i}X_{M,j} + (X_{M,i} a_{i,j}(\dot x)) X_{M,j}
	$$
	is a sub-Laplacian in divergence form on $M$. 
	We  assume that the matrix $A(\dot x)=(a_{i,j}(\dot x))$ is non-negative at every point $x\in G$.
The differential operator $\cL_A$ and the symbol $\sigma_0$ are then non-negative. Moreover, 
under a further hypothesis of ellipticity, 
similar properties as above hold:
	\begin{corollary}
	\label{cor_lem_LApar}
		We continue with the above setting on $M$.	
We assume that the matrix $A(\dot x)=(a_{i,j}(\dot x))$ is positive at every point $x\in G$.
Then the following properties hold:
\begin{enumerate}
	\item The symbol $\id+\sigma_0$ is invertible for all the frequencies of $\widehat \cL_I$, and $\cL_A$ admits a left parametrix. 
	\item For any $A_1\in \Psi^1(M)$, the operator $\cL_A+A_1$ admits a left parametrix, and
therefore the properties in Proposition \ref{prop_cqLpar} hold. 
\item The operator $\cL_A$ is essentially self-adjoint and has discrete spectrum with finite dimensional eigenspaces. It is also the case for $\cL_A+A_1$
for any symmetric  $A_1\in \Psi^1(M)$. 
\end{enumerate}
	\end{corollary}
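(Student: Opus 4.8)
The plan is to transcribe the proof of Lemma~\ref{lem_LApar} to the periodic setting, using $\Gamma$-periodic symbols, and to exploit the compactness of $M$ so that the uniform ellipticity comes for free. As in Section~\ref{subsec_cL_a}, write the symbol of $\cL_A$ as $\widehat{\cL_A}=\sigma_0+\sigma_1$ with
$$
\sigma_0=-\sum_{1\leq i,j\leq n_1} a_{i,j}\,\widehat X_i\widehat X_j\in S^2(M\times\Gh),\qquad
\sigma_1=-\sum_{1\leq i,j\leq n_1}(X_{M,i}a_{i,j})\,\widehat X_j\in S^1(M\times\Gh);
$$
smoothness of the $a_{i,j}$ on the compact manifold $M$ guarantees that all $X_M^\alpha a_{i,j}$ are bounded, so these symbols indeed lie in the indicated classes. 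The eigenvalues $\lambda_{A(\dot x),1}\leq\dots\leq\lambda_{A(\dot x),n_1}$ of $A(\dot x)$ depend continuously on $\dot x\in M$ and are strictly positive by hypothesis; since $M$ is compact they take values in a compact subinterval of $(0,\infty)$, whence $c:=\inf_{\dot x\in M}\lambda_{A(\dot x),1}>0$ and $C:=\sup_{\dot x\in M}\lambda_{A(\dot x),n_1}<\infty$. In particular $\sigma_0$ and $\cL_A$ are non-negative in the sense of Definition~\ref{def_nonnegsymb} and \eqref{eq_nonnegT}.

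First I would establish that $\id+\sigma_0$ is invertible for all the frequencies of $\widehat{\cL_\id}$. Fix $\dot x\in M$: since $A(\dot x)$ is symmetric and positive definite, $\cL_{A(\dot x)}:=-\sum_{i,j}a_{i,j}(\dot x)X_iX_j$ is a positive Rockland operator of homogeneous degree $2$ by Example~\ref{ex_Rockland}(2), and $\id+\sigma_0(\dot x,\pi)=\pi(\id+\cL_{A(\dot x)})$ is boundedly invertible on $\cH_\pi$ with spectrum in $[1,\infty)$. By \eqref{eq_LinftyGh_L2bdd}, the estimate of Definition~\ref{def_symbinv} amounts, for $\gamma$ in a sequence tending to $\pm\infty$, to the uniform boundedness in $\dot x$ of
$$
\big\|(\id+\cL_\id)^{\frac{\gamma+2}2}(\id+\cL_{A(\dot x)})^{-1}(\id+\cL_\id)^{-\frac\gamma2}\big\|_{\sL(L^2(G))},
$$
and repeated use of the comparison of Rockland operators (Corollary~\ref{cor_exRgeneral}), together with the uniform bounds $c,C$ on the eigenvalues, bounds these by an increasing function of $|\gamma|$ independent of $\dot x$. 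Hence $\id+\sigma_0$ is invertible for high frequencies (in fact all frequencies). Writing the symbol of $\cL_A+A_1$, for $A_1=\Op_M(\tau_1)$ with $\tau_1\in S^1(M\times\Gh)$, as $(\id+\sigma_0)+(\sigma_1-\id+\tau_1)$ with $\id+\sigma_0\in S^2(M\times\Gh)$ invertible for high frequencies and $\sigma_1-\id+\tau_1\in S^1(M\times\Gh)$, $1<2$, the $M$-part of Theorem~\ref{thm_Lparexists} produces a left parametrix for $\cL_A+A_1$ (taking $A_1=0$ for $\cL_A$ itself), and Proposition~\ref{prop_cqLpar} then applies. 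Since $m=2>0$, Theorem~\ref{thm_Lparexists} moreover gives that $\cL_A$ and $\cL_A+A_1$ have compact resolvent on $L^2(M)$, hence discrete spectrum with finite-dimensional eigenspaces. This proves (1), (2) and the discreteness part of (3).

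For the essential self-adjointness, note that $\cL_A$ is symmetric because $A(\dot x)$ is symmetric (its formal adjoint coincides with it, equivalently $(\cL_A f,f)_{L^2}=\sum_{i,j}\int_M a_{i,j}\,X_{M,j}f\,\overline{X_{M,i}f}\,d\dot x$), and its principal symbol $\sigma_0$ is formally self-adjoint, $\sigma_0^*=\sigma_0$. Since $m=2>0$ and $\id+\sigma_0$ is invertible for high frequencies, Corollary~\ref{cor_Lparexists} applies and $\cL_A$ is essentially self-adjoint on $\cD(M)\subset L^2(M)$. For $\cL_A+A_1$ with $A_1\in\Psi^1(M)$ symmetric, the operator is again symmetric, its principal symbol is still $\sigma_0$ and its lower-order term $\sigma_1-\id+\tau_1$ lies in $S^1(M\times\Gh)$, so Corollary~\ref{cor_Lparexists} applies once more; combined with the compact resolvent already obtained, this completes (3).

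The only delicate point is the uniformity in $\dot x\in M$ of the constants coming from Corollary~\ref{cor_exRgeneral}; this is precisely where compactness of $M$ takes the place of the uniform ellipticity hypothesis imposed by hand on $G$ in Lemma~\ref{lem_LApar}, since the eigenvalues of $A(\dot x)$ then automatically stay in a fixed compact subset of $(0,\infty)$. Everything else is a routine adaptation, via $\Gamma$-periodic symbols, of the arguments proving Lemma~\ref{lem_LApar} and Corollary~\ref{cor_Lparexists}.
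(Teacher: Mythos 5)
Your proof is correct and follows essentially the same route the paper intends: the paper defers Corollary \ref{cor_lem_LApar} to the argument of Lemma \ref{lem_LApar} adapted via $\Gamma$-periodic symbols, combined with Theorem \ref{thm_Lparexists} and Corollary \ref{cor_Lparexists}, which is exactly what you carry out. Your explicit observation that compactness of $M$ supplies the uniform bounds on the eigenvalues of $A(\dot x)$ (replacing the uniform ellipticity hypothesis imposed by hand on $G$) is precisely the point the paper leaves implicit.
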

	
	The proof of Corollary \ref{cor_lem_LApar} will follow readily  from generalisations given in 
	Section
\ref{subsec_generalisationLA} below, especially
	 Theorem \ref{thm_Lparexists} and Corollary \ref{cor_Lparexists}.

\subsection{Generalisations of sub-Laplacians in divergence form}
\label{subsec_generalisationLA}

The same reasonings as for Lemma \ref{lem_LApar} and  Corollary \ref{cor_lem_LApar} above regarding  sub-Laplacians in divergence form give the following classes of examples of operators non-negative differential operators whose symbols are non-negative and invertible for high frequencies
(and therefore admit left parametrices).

\begin{ex}
\label{ex_RAGstrat}
	Let $G$ be a stratified Lie group and let $X_1,\ldots,X_{n_1}$ a basis of the first stratum. 
	Fix $\nu_1\in \bN$ and set $\nu'_1=\#\{\alpha\in \bN_0^{n_1} \ : \ |\alpha|=\nu_1\}$.
		Let $a_{\alpha,\beta}\in C_{l,b}^\infty(G)$ with $\alpha,\beta\in \bN_0^{n_1}$.
		We assume that for each $x\in G$,
		the $\nu'_1\times \nu'_1$-matrix $A(x)=(a_{\alpha,\beta}(x))_{|\alpha|=|\beta|=\nu_1}$ is non-negative. 
		Then the symbol $\sigma_0$ 		given by
	$$
	\sigma_0(x,\pi):=
	\sum_{|\alpha|=|\beta|=\nu_1} a_{\alpha,\beta} (x)(\pi(X)^\beta)^t \pi(X)^\alpha,
	$$ 
is in $S^{\nu_1}(G\times \Gh)$,
and it is non-negative.

Assume  that $A(x)$ is positive for each $x\in G$ and that furthermore
		$$
		\inf_{x\in G} \lambda_{A(x),1} >0 ,\qquad\mbox{while}\qquad
		\sup_{x\in G} \lambda_{A(x),n}<\infty, 
		$$
		where   $\lambda_{A(x),1}$ and $\lambda_{A(x),n}$ are the  lowest and highest eigenvalues of $A(x)$. 
	Then proceeding as in the proof of Lemma \ref{lem_LApar}, 
		$\id+\sigma_0$
	is invertible in $S^{\nu_1}(G\times \Gh)$ for all the frequencies of the positive Rockland symbol $\sum_{|\alpha|=|\beta|=\nu_1} (\pi(X)^\beta)^t \pi(X)^\alpha$. 	
	Moreover, the non-negative differential operator 
	$$\cR_A:= \sum_{|\alpha|=|\beta|=\nu_1} (X^\beta)^t\  a_{\alpha,\beta} (x) \ X^\alpha\\
$$
	may be written as 
	$\Op_G(\sigma_0+\sigma_1)$ with $\sigma_1 \in S^{2\nu_1-1}(G\times \Gh)$. Consequently, $\cR_A$
	is in $\Psi^{2\nu_1}(G)$
and 	
	admits a left parametrix. Moreover, $\cR_A+A_1$ for any $A_1\in \Psi^{2\nu_1-1}(G)$, also admits a left parametrix. 
\end{ex}

\begin{ex}
\label{ex_RAMstrat}
	Let $M=\Gamma \backslash G$ be a compact nilmanifold with $G$, $X_1,\ldots,X_{n_1}$, $\nu_1$ and $\nu'_1$ as in Example \ref{ex_RAGstrat}.
		Let $a_{\alpha,\beta}\in C^\infty(M)$ with $\alpha,\beta\in \bN_0^{n_1}$, $|\alpha|=|\beta|=\nu_1$. 
		We assume that for each $x\in G$,
		the $\nu'_1\times \nu'_1$-matrix $A(\dot x)=(a_{\alpha,\beta}(\dot x))_{|\alpha|=|\beta|=\nu_1}$ is non-negative. 
		Then the symbol $\sigma_0$ 		given by
	$$
	\sigma_0(\dot x,\pi):=
	\sum_{|\alpha|=|\beta|=\nu_1} a_{\alpha,\beta} (\dot x)(\pi(X)^\beta)^t \pi(X)^\alpha,
	$$ 
is in $S^{\nu_1}(M\times \Gh)$ 
and it is non-negative.

Assume  that $A(\dot x)$ is positive for each $\dot x\in M$. Proceeding as in the proof of Lemma \ref{lem_LApar}, 
		$\id+\sigma_0$
	is invertible in $S^{\nu_1}(M\times \Gh)$ for all the frequencies of the positive Rockland symbol $\widehat \cR_I:=\sum_{|\alpha|=|\beta|=\nu_1} (\pi(X)^\beta)^t \pi(X)^\alpha$. 	
	Moreover, the differential operator 
	$$
	\cR_A:= \sum_{|\alpha|=|\beta|=\nu_1} (X_M^\beta)^t a_{\alpha,\beta} (\dot x) \, X_M^\alpha ,
	$$
		may be written as 
	$\Op_M(\sigma_0+\sigma_1)$ with $\sigma_1 \in S^{2\nu_1-1}(M\times \Gh)$. Consequently, $\cR_A$
	is in $\Psi^{2\nu_1}(M)$
and 	
	admits a left parametrix. Moreover, $\cR_A+A_1$ for any $A_1\in \Psi^{2\nu_1-1}(M)$, also admits a left parametrix. 
	Again, $\cR_A$ is non-negative. 
\end{ex}

\begin{ex}
\label{ex_RAGgrad}
		Let $G$ be a graded Lie group, and let $X_1,\ldots, X_n$ an adapted basis to its graded Lie algebra.
	Fix two common multiple $\nu_0,\nu_1$  of the weights $\upsilon_j$, $j=1,\ldots,n$ of the dilations.
	Set $Y_j:= X_j^{ \nu_0 /\upsilon_j}$, $j=1,\ldots, n$ and set $\nu'_1:=\#\{\alpha\in \bN_0^{n} \ : \ [\alpha]=\nu_1\}$.
	Let $a_{\alpha,\beta}\in C_{l,b}^\infty(G)$ with $\alpha,\beta\in \bN_0^{n}$, $[\alpha]=[\beta]=\nu_1$.
		We assume that for each $x\in G$,
		the $\nu'_1\times \nu'_1$-matrix $A(x)=(a_{\alpha,\beta}(x))_{[\alpha]=[\beta]=\nu_1}$ is non-negative. 
		Then the symbol $\sigma_0$ 		given by
	$$
	\sigma_0(x,\pi):=\sum_{[\alpha]=[\beta]=\nu_1}a_{\alpha,\beta}(x) (\pi(Y)^\beta)^t \pi(Y)^\alpha,
	\quad \mbox{where} \quad  Y^\alpha :=Y_1^\alpha \ldots Y_n^{\alpha_n},	
	$$
	is in $S^{\nu_1}(G\times \Gh)$ 
and it is non-negative.	
		
	Assume  that $A(x)$ is positive for each $x\in G$ and that furthermore	
		$$
		\inf_{x\in G} \lambda_{A(x),1} >0 ,\qquad\mbox{while}\qquad
		\sup_{x\in G} \lambda_{A(x),n}<\infty, 
		$$
		where   $\lambda_{A(x),1}$ and $\lambda_{A(x),n}$ are the  lowest and highest eigenvalues of $A(x)$. 
Then $\id+\sigma_0$
	is invertible in $S^{\nu_1}(G\times \Gh)$ for all the frequencies of the positive Rockland operator $\sum_{[\alpha]=[\beta]=\nu_1} (\pi(Y)^\beta)^t \pi(Y)^\alpha$. 
		Moreover, the non-negative differential operator 
	$$
	\cR_A:= 
	\sum_{[\alpha]=[\beta]=\nu_1}(Y^\beta)^t \ a_{\alpha,\beta}(x)  \ Y ^\alpha,
	$$		may be written as 
	$\Op_G(\sigma_0+\sigma_1)$ with $\sigma_1 \in S^{2\nu_1-1}(G\times \Gh)$. Consequently, $\cR_A$
	is in $\Psi^{2\nu_1}(G)$
and 	
	admits a left parametrix. Moreover, $\cR_A+A_1$ for any $A_1\in \Psi^{\nu_0\nu_1-1}(G)$, also admits a left parametrix. 
\end{ex}

\begin{ex}\label{ex_RAMgrad}
	Let $M=\Gamma \backslash G$ be a compact nilmanifold with $G$, $X_1,\ldots,X_n$, $\nu_1$, $\nu'_1$, $\nu_0$, $Y_1,\ldots, Y_n$ as in Example \ref{ex_RAGgrad}.
	Let $a_{\alpha,\beta}\in C^\infty(M)$ with $\alpha,\beta\in \bN_0^{n}$, $[\alpha]=[\beta]=\nu_1$.
		We assume that for each $x\in M$,
		the $\nu'_1\times \nu'_1$-matrix $A(\dot x)=(a_{\alpha,\beta}(\dot x))_{[\alpha]=[\beta]=\nu_1}$ is non-negative. 
		Then the symbol $\sigma_0$ 		given by
	$$
	\sigma_0(\dot x,\pi):=\sum_{[\alpha]=[\beta]=\nu_1}a_{\alpha,\beta}(\dot x) (\pi(Y)^\beta)^t \pi(Y)^\alpha,	
	$$
	is in $S^{\nu_1}(M\times \Gh)$ 
and it is non-negative.	
		
	Assume  that $A(\dot x)$ is positive for each $\dot x\in M$.
Then $\id+\sigma_0$
	is invertible in $S^{\nu_1}(M\times \Gh)$ for all the frequencies of the positive Rockland operator $\sum_{[\alpha]=[\beta]=\nu_1} (\pi(Y)^\beta)^t \pi(Y)^\alpha$. 
		Moreover, the non-negative differential operator 
	$$
	\cR_A:= 
	\sum_{[\alpha]=[\beta]=\nu_1}(Y_M^\beta)^t \ a_{\alpha,\beta}(\dot x)  \ Y_M ^\alpha\ \in \Psi^{\nu_0\nu_1}(M),
	$$		
	may be written as 
	$\Op_M(\sigma_0+\sigma_1)$ with $\sigma_1 \in S^{2\nu_1-1}(M\times \Gh)$. Consequently, $\cR_A$
	is in $\Psi^{2\nu_1}(M)$
and 	
admits a left parametrix. Moreover, $\cR_A+A_1$ for any $A_1\in \Psi^{\nu_0\nu_1-1}(M)$, also admits a left parametrix. 
\end{ex}

\section{Semiclassical pseudo-differential calculi on $G$ and $M$}\label{sec:semiclassical_calculus_G_M}

This section is devoted to the semiclassical quantization and pseudo-differential calculi 
obtained on $G$ and $M$.

\subsection{Semiclassical quantizations}

In this section, we consider a small parameter $\eps\in (0,1]$ and the semiclassical quantizations $\Op_G^{(\eps)}$ and $\Op_M^{(\eps)}$ on $G$ and $M$ given by dilations of the Fourier variable, that is,
$$
\Op_G^{(\eps)}(\sigma) = 
\Op_G(\sigma^{(\eps)})
\quad\mbox{and}\quad
\Op_M^{(\eps)}(\sigma) = 
\Op_M(\sigma^{(\eps)})
$$
where
\begin{equation}
\label{eq_sigmaeps}
\sigma^{(\eps)} (x,\pi)= \sigma(x,\eps\cdot  \pi), 
\quad\mbox{for almost all} \ (x,\pi)\in G\times \Gh
\ \mbox{or} \ M\times \Gh. 
\end{equation}
Note that if $\kappa$ is the convolution kernel of $\sigma$, then $\kappa^{(\eps)}$ given by 
\begin{equation}
\label{eq_kappaeps}	
\kappa^{(\eps)}_{x}(z):= \eps^{-Q} \kappa_{x}(\eps^{-1}\cdot y), \quad y\in G,
\end{equation}
is  the convolution kernel of $\sigma^{(\eps)}$.

In order to motivate our semiclassical calculus, we will start with the study of the asymptotics obtained by composition and adjoint for these semiclassical quantizations. 
The proof can be found in Appendix  \ref{secA_pfthm_sclexp_prod+adj}.
\begin{theorem}
\label{thm_sclexp_prod+adj}
Let $m_1,m_2,m\in \bR$.
\begin{enumerate}
	\item If $\sigma_1 \in S^{m_1}(G\times \Gh)$ 
and $\sigma_2 \in S^{m_2}(G\times \Gh)$,  denoting by 
$$
\sigma := \sigma_1 \diamond_\eps \sigma_2
$$
 the ($\eps$-dependent) symbol such that  
$$
\Op_G^{(\eps)}(\sigma)=
\Op_G^{(\eps)}(\sigma_1)\Op_G^{(\eps)}(\sigma_2)
\in \Psi^{m_1+m_2}(G\times \Gh),
$$
then for any $N\in \bN_0$ and 
for any semi-norm $\|\cdot\|_{S^{m_1+m_2-(N+1)},a,b,c}$,
the following quantity is finite:
$$
\sup_{\eps\in (0,1]} \eps^{-(N+1)} \|\sigma - \sum_{[\alpha]\leq N } \eps^{[\alpha]} \Delta^\alpha \sigma_1 \, X^\alpha \sigma_2 \| _{S^{m_1+m_2-(N+1)},a,b,c}<\infty .
$$
In fact, this quantity is bounded, up to a constant of $G,N,m,a,b,c$ by 
semi-norms (depending only on $G,N,m,a,b,c$) in $\sigma_1 \in S^{m_1}(G\times \Gh)$ 
and $\sigma_2 \in S^{m_2}(G\times \Gh)$.

\item If $\sigma\in S^m(G\times \Gh)$ and denoting by $\sigma^{(\eps,*)}$ the ($\eps$-dependent) symbol such that  
$$
\Op_G^{(\eps)}(\sigma^{(\eps,*)})=(\Op_G^{(\eps)}(\sigma))^*\in \Psi^{m}(G\times \Gh),
$$
then for any $N\in \bN_0$ and 
for any semi-norm $\|\cdot\|_{S^{m-(N+1)},a,b,c}$,
the following quantity is finite:
$$
\sup_{ \eps\in (0,1]}\eps^{-(N+1)}
\|\sigma^{(\eps,*)}- \sum_{[\alpha]\leq N } \eps^{[\alpha]}
 \Delta^\alpha  X^\alpha \sigma^*
\|_{S^{m-(N+1)},a,b,c}<\infty .
$$
In fact, this quantity is bounded, up to a constant of $G,N,m,a,b,c$ by a
semi-norm (depending only on $G,N,m,a,b,c$) in $\sigma\in S^m(G\times \Gh)$.
\end{enumerate}
\end{theorem}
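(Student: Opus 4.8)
The plan is to reduce both expansions to the non-semiclassical ones of Theorem~\ref{thm_asymp} by a change of variables exploiting that each dilation $D_\eps:x\mapsto\eps\cdot x$ is an automorphism of $G$. First I would record that, since $\Op_G^{(\eps)}(\tau)=\Op_G(\tau^{(\eps)})$ and $\tau^{(\eps)}$ has convolution kernel $\kappa_{\tau^{(\eps)},x}(y)=\eps^{-Q}\kappa_{\tau,x}(\eps^{-1}\cdot y)$ by \eqref{eq_kappaeps}, the defining relations for $\diamond_\eps$ and $\sigma^{(\eps,*)}$ read $(\sigma_1\diamond_\eps\sigma_2)^{(\eps)}=\sigma_1^{(\eps)}\diamond\sigma_2^{(\eps)}$ and $(\sigma^{(\eps,*)})^{(\eps)}=(\sigma^{(\eps)})^{(*)}$. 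Substituting the $\eps$-rescaled kernels into the kernel formulas \eqref{eq_kappa12} and \eqref{eq_kappaadj}, performing the change of variables $z\mapsto\eps\cdot z$ (for composition) or $y\mapsto\eps\cdot y$ (for the adjoint), and using that $D_\eps$ is an automorphism (so $(\eps\cdot y)(\eps\cdot z)^{-1}=\eps\cdot(yz^{-1})$), one undoes the outer $\eps$-dilation and obtains the identities
\begin{align*}
\kappa_{\sigma_1\diamond_\eps\sigma_2,x}(y)&=\int_G \kappa_{\sigma_2,\,x(\eps\cdot z)^{-1}}(yz^{-1})\,\kappa_{\sigma_1,x}(z)\,dz, & \kappa_{\sigma^{(\eps,*)},x}(y)&=\overline{\kappa_{\sigma,\,x(\eps\cdot y)^{-1}}(y^{-1})}.
\end{align*}
These are exactly \eqref{eq_kappa12} and \eqref{eq_kappaadj} with the base-point increment $z^{-1}$, resp. $y^{-1}$, replaced by $(\eps\cdot z)^{-1}$, resp. $(\eps\cdot y)^{-1}$; the kernel-argument slots are untouched.

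Next I would re-run the proof of Theorem~\ref{thm_asymp} with these modified formulas. That proof applies the group Taylor expansion to the map $v\mapsto\kappa_{\sigma_2,v}$ (resp. $v\mapsto\kappa_{\sigma,v}$) at $v=x$, producing a sum of terms indexed by $\alpha$ carrying the dual homogeneous polynomial $q_\alpha$ and the left-invariant derivative $X^\alpha$, together with an order-$(N+1)$ Taylor remainder; the resulting convolution-type integrals are then identified with symbols via $\cF_G(q_\alpha(\cdot^{-1})\kappa_{\sigma_1,x})=\Delta^\alpha\sigma_1$ and $\cF_G(X^\alpha\kappa_{\sigma_2,x})=X^\alpha\sigma_2$ (and the analogues for the adjoint). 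The only change is that the Taylor increment is now $(\eps\cdot z)^{-1}$ instead of $z^{-1}$; since every group Taylor polynomial of homogeneous degree $d$ satisfies $q(\eps\cdot w)=\eps^{d}q(w)$, each $\alpha$-term picks up the factor $\eps^{[\alpha]}$ and the degree-$(N+1)$ Taylor remainder picks up $\eps^{N+1}$. Hence the main part of the expansion is $\sum_{[\alpha]\le N}\eps^{[\alpha]}\Delta^\alpha\sigma_1\,X^\alpha\sigma_2$, resp. $\sum_{[\alpha]\le N}\eps^{[\alpha]}\Delta^\alpha X^\alpha\sigma^*$, as claimed.

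It then remains to bound $\eps^{-(N+1)}$ times the relevant $S^{m_1+m_2-(N+1)}$- resp. $S^{m-(N+1)}$-seminorms of the remainder uniformly in $\eps\in(0,1]$. After the factor $\eps^{N+1}$ is extracted, the remaining object is structurally identical to the remainder in Theorem~\ref{thm_asymp}, except that the top-order derivatives $X^\gamma\kappa_{\sigma_2,v}$ with $[\gamma]=N+1$ are evaluated at base points $v=x(\eps\cdot z)^{-1}\cdots$ along the Taylor path rather than at $xz^{-1}\cdots$; those base points still range over all of $G$, and since every estimate in the proof of Theorem~\ref{thm_asymp} (the kernel bounds of Theorem~\ref{thm_kernelG}, the integrability of $q_\gamma(\cdot)\,\kappa_{\sigma_1,x}$, etc.) is uniform in the base point, the resulting bound is uniform in $\eps$ and controlled by products of seminorms of $\sigma_1,\sigma_2$, resp. a seminorm of $\sigma$. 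The statements on $M$ follow by restricting to $\Gamma$-periodic symbols. The \emph{hard part} is precisely this re-derivation: faithfully reproducing the technical argument of \cite[Chapter~5]{R+F_monograph} behind Theorem~\ref{thm_asymp} — the group Taylor formula, the simultaneous appearance of $z^{-1}$ in both the base-point and kernel-argument slots, and the passage from the convolution integrals to the products $\Delta^\alpha(\cdot)\,X^\alpha(\cdot)$ — while checking that the only footprint of the semiclassical parameter is the substitution $z^{-1}\rightsquigarrow(\eps\cdot z)^{-1}$ in the base point, which cleanly yields the powers $\eps^{[\alpha]}$ and $\eps^{N+1}$ and leaves all the uniform bounds intact.
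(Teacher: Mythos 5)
Your proposal is correct and takes essentially the same approach as the paper's proof in Appendix~\ref{secA_pfthm_sclexp_prod+adj}: deriving the $\eps$-modified kernel formulas (where the semiclassical parameter appears only in the base-point increment $(\eps\cdot z)^{-1}$ resp.\ $(\eps\cdot y)^{-1}$), applying the Folland--Stein group Taylor expansion in that slot, extracting the powers $\eps^{[\alpha]}$ from the homogeneity $q_\alpha(\eps\cdot w)=\eps^{[\alpha]}q_\alpha(w)$, and bounding the remainder uniformly in $\eps$ via the kernel estimates of Theorem~\ref{thm_kernelG} and the Leibniz rules of Section~\ref{subsubsec_Leibniz}. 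The one technicality you subsume under ``re-running the proof'' that the paper handles explicitly is the case $m_2>0$ in the composition argument, where a factor $(\id+\widehat\cR)(\id+\widehat\cR)^{-1}$ is inserted and an integration by parts is performed to keep the convolution integrals convergent; your sketch is otherwise faithful.
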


Naturally, we have a similar statement for symbols on $M$, that is, in $S^\infty(M\times \Gh)$.

We observe that Theorem \ref{thm_sclexp_prod+adj} would also 
hold under the weaker hypothesis that the symbols $\sigma_1,\sigma_2,\sigma$ depend on $\eps$ uniformly in the following sense:

\begin{definition}
\label{def_unif}
Let $\sigma(\eps)$, $\eps\in (0,1]$, be a family of symbols in $S^m(G\times \Gh)$. 
It  is  uniformly in $S^m(G\times \Gh)$ for 
$\eps\in (0,1]$	 when for any semi-norm $\|\cdot \|_{S^m,a,b,c}$, the following quantity 
$$
\sup_{\eps\in (0,1]} \|\sigma(\eps)\|_{S^m,a,b,c}<\infty, 
$$
 is finite.
We have a similar definition in $S^m(M\times \Gh)$, and we adopt the same vocabulary for a family of operators being uniformly in $\Psi^m(G)$ or $\Psi^m(M)$ for 
$\eps\in (0,1]$. 
\end{definition}

In the rest of the paper, when considering a family of symbols uniformly in some $S^m(M\times \Gh)$, we will often omit to indicate the dependence of the symbols in $\eps\in (0,1]$.

The semiclassical quantization $\Op_G^{(\eps)}$ of a family 
of symbols depending on $\eps\in (0,1]$ uniformly in $S^0(G\times \Gh)$ act uniformly on $L^2(G)$, and this generalises to Sobolev spaces and to a similar property on $M$:
\begin{theorem}
\label{thm_sclcontSob}
	\begin{enumerate}
		\item If $\sigma$ is a family of symbols depending on $\eps\in (0,1]$ uniformly in $S^0(G\times \Gh)$, then 
		the operators $\Op^{(\eps)}_G (\sigma)$, $\eps\in (0,1]$, are bounded on $L^2(G)$; furthermore, they are uniformly bounded in the sense that  
$$
\sup_{\eps\in (0,1]} \|\Op^{(\eps)}_G (\sigma)\|_{\sL(L^2(G))}<\infty,
$$
is finite. In fact, this quantity is bounded up to a structural constant, by $\sup_{\eps\in (0,1]} \|\sigma\|_{S^0,a,b,c}$ for some $a,b,c\in \bN_0$ independent of $\sigma$.  
\item If $\sigma$ is a family of symbols depending on $\eps\in (0,1]$ uniformly in $S^m(G\times \Gh)$, then 
		the operators $\Op^{(\eps)}_G (\sigma)$, $\eps\in (0,1]$, are bounded $L^2_s(G)\to L^2_{s-m}(G)$; furthermore, they are uniformly bounded and there exists a constant $C>0$ and a semi-norm $\|\cdot \|_{S^m,a,b,c}$
		such that  
$$
\forall \eps\in (0,1],\ 
\forall f\in \cS(G)\qquad 
\|\Op^{(\eps)}_G (\sigma)f\|_{L^2_{s-m}(G),\eps^\nu \cR} 
\leq 
C \ \left(\sup_{\eps'\in (0,1]} \|\sigma\|_{S^m,a,b,c}\right)
\|f\|_{L^2_{s}(G),\eps^\nu \cR} .
$$
\item We have similar properties on $M$.
	\end{enumerate}
\end{theorem}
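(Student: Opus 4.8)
The plan is to reduce everything to the $L^2(G)$-boundedness of order-zero semiclassical operators, which in turn is obtained by transferring the known non-semiclassical $L^2$-bound (Remark \ref{rem_thm_PDOGcomp+adj_L2bddness}) via the dilation that defines the semiclassical quantization. First I would prove Part (1). Write $\Op_G^{(\eps)}(\sigma) = \Op_G(\sigma^{(\eps)})$ with $\sigma^{(\eps)}(x,\pi) = \sigma(x,\eps\cdot\pi)$, whose convolution kernel is $\kappa_x^{(\eps)}(y) = \eps^{-Q}\kappa_x(\eps^{-1}\cdot y)$ by \eqref{eq_kappaeps}. By Remark \ref{rem_thm_PDOGcomp+adj_L2bddness}, $\|\Op_G(\sigma^{(\eps)})\|_{\sL(L^2(G))}$ is controlled by $\max_{[\beta]\le 1+Q/2}\sup_{x,\pi}\|X_x^\beta \sigma^{(\eps)}(x,\pi)\|_{\sL(\cH_\pi)}$ together with $\sup_x \||\cdot|_p^{pr}\kappa_x^{(\eps)}\|_{L^2(G)}$. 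The first term equals $\max_{[\beta]\le 1+Q/2}\sup_{x,\pi}\|X_x^\beta\sigma(x,\eps\cdot\pi)\|_{\sL(\cH_\pi)} \le \|\sigma\|_{S^0,0,1+Q/2,0}$, using that the norm on $\sL(\cH_\pi)$ is intertwiner-invariant so that replacing $\pi$ by $\eps\cdot\pi$ does not change the supremum. For the second term, the change of variables $y=\eps\cdot z$ together with $Q$-homogeneity of Haar measure gives $\||\cdot|_p^{pr}\kappa_x^{(\eps)}\|_{L^2(G)}^2 = \eps^{2pr}\,\||\cdot|_p^{pr}\kappa_x\|_{L^2(G)}^2 \le \||\cdot|_p^{pr}\kappa_x\|_{L^2(G)}^2$ for $\eps\in(0,1]$, and the latter is a continuous semi-norm on $S^0(G\times\Gh)$ by the kernel estimates of Theorem \ref{thm_kernelG}. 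Hence $\sup_{\eps\in(0,1]}\|\Op_G^{(\eps)}(\sigma)\|_{\sL(L^2(G))}$ is bounded by a finite collection of $S^0$-semi-norms of $\sigma$, uniformly over the family.

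For Part (2), fix a positive Rockland operator $\cR$ of homogeneous degree $\nu$ and recall from the dilation properties of $\widehat\cR$ that $\eps^\nu\widehat\cR$ corresponds under the semiclassical rescaling to $\widehat\cR$; more precisely, $(\id+\eps^\nu\pi(\cR))^{s/\nu}$ is the value at $\eps\cdot\pi$ of $(\id+\pi(\cR))^{s/\nu}$. The semiclassical Sobolev norm is $\|f\|_{L^2_s(G),\eps^\nu\cR} = \|(\id+\eps^\nu\cR)^{s/\nu}f\|_{L^2(G)}$. Then I would factor
\begin{align*}
\Op_G^{(\eps)}(\sigma)
&= (\id+\eps^\nu\cR)^{-(s-m)/\nu}\,\Big[(\id+\eps^\nu\cR)^{(s-m)/\nu}\Op_G^{(\eps)}(\sigma)(\id+\eps^\nu\cR)^{-s/\nu}\Big]\,(\id+\eps^\nu\cR)^{s/\nu},
\end{align*}
and observe that the bracketed operator equals $\Op_G^{(\eps)}(\tau)$ where $\tau = (\id+\widehat\cR)^{(s-m)/\nu}\diamond_\eps\sigma\diamond_\eps(\id+\widehat\cR)^{-s/\nu}$: indeed applying the rescaling $\pi\mapsto\eps\cdot\pi$ to the composition formula turns $(\id+\pi(\cR))^{\bullet/\nu}$ into $(\id+\eps^\nu\pi(\cR))^{\bullet/\nu}$. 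By the composition theorem (Theorem \ref{thm_sclexp_prod+adj}, or just the uniform version of Theorem \ref{thm_PDOGcomp+adj}(2)), $\tau$ is a family uniformly in $S^0(G\times\Gh)$, with semi-norms bounded by those of $\sigma$ in $S^m$ times structural constants. Applying Part (1) to $\tau$ gives $\|\Op_G^{(\eps)}(\tau)\|_{\sL(L^2(G))}\lesssim \sup_{\eps}\|\sigma\|_{S^m,a,b,c}$, which after undoing the factorization is exactly the claimed estimate on the semiclassical Sobolev norms.

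Part (3), the statement on $M$, follows the identical scheme: the $L^2(M)$-bound for order-zero symbols in the non-semiclassical case is \eqref{eq_OpMsigmaL2bdd}, and its right-hand side $\|\sigma\|_{S^0,0,\nu_0,0}$ is again unchanged under $\pi\mapsto\eps\cdot\pi$ since it involves only $\sup_{\dot x,\pi}$ of $\sL(\cH_\pi)$-norms; there is no kernel-integrability term to track on the compact manifold, so uniformity is immediate. The Sobolev-space reduction uses $(\id+\eps^\nu\cR_M)^{s/\nu}$ and the composition calculus on $M$ exactly as above. The main obstacle — and the only point requiring genuine care rather than bookkeeping — is verifying that the constants produced by Remark \ref{rem_thm_PDOGcomp+adj_L2bddness} and by Theorem \ref{thm_sclexp_prod+adj} are genuinely \emph{uniform} in $\eps$: this rests on the two facts that (i) $\sL(\cH_\pi)$-operator-norm semi-norms are invariant under the dilation $\pi\mapsto\eps\cdot\pi$, and (ii) the $L^2$-kernel semi-norm picks up only a factor $\eps^{pr}\le 1$. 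Once these two observations are in place, everything else is a routine transfer of the already-established non-semiclassical statements.
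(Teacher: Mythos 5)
Your proof follows essentially the same route as the paper for all three parts: transfer the non-semiclassical $L^2$ bound of Remark~\ref{rem_thm_PDOGcomp+adj_L2bddness} (resp.~\eqref{eq_OpMsigmaL2bdd} on $M$) via the dilation defining $\Op^{(\eps)}$, then reduce general $(s,m)$ to $(0,0)$ by conjugating with $(\id+\eps^\nu\cR)^{\bullet/\nu}$ and invoking the semiclassical composition theorem. One small arithmetic slip: the change of variables $y=\eps\cdot z$ actually gives $\||\cdot|_p^{pr}\kappa_x^{(\eps)}\|_{L^2(G)}^2 = \eps^{2pr-Q}\||\cdot|_p^{pr}\kappa_x\|_{L^2(G)}^2$ (you dropped the net $\eps^{-Q}$ from the two factors of $\eps^{-Q}$ in $|\kappa^{(\eps)}|^2$ against the Jacobian $\eps^Q$), i.e.\ the paper's $\eps^{pr-Q/2}$ on the norm itself; since $pr>Q/2$ the exponent is still positive, so the conclusion is unaffected.
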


\begin{proof}
Let us prove Part (1). The $L^2$-boundedness of each operator $\Op^{(\eps)}_G (\sigma(\eps))$, $\eps\in (0,1]$
 follows from Theorem \ref{thm_PDOGcomp+adj} (1).
 By Remark \ref{rem_thm_PDOGcomp+adj_L2bddness} (with its notation), the operator norms are estimated by 
 \begin{align*}
 	\|\Op^{(\eps)}_G (\sigma)\|_{\sL(L^2(G))} &\leq C\left( \max_{[\beta]\leq 1+Q/2} \sup_{(x,\pi)\in G\times \Gh}\|X^\beta_x \sigma(x,\eps \cdot \pi)\|_{\sL(\cH_\pi)} \ + \ \sup_{x\in G} \||\cdot|_p^{pr}\kappa_x^{(\eps)} \|_{L^2(G)}\right)
 	\\& =C \left( \max_{[\beta]\leq 1+Q/2} \sup_{(x,\pi)\in G\times \Gh}\|X^\beta_x \sigma(x, \pi)\|_{\sL(\cH_\pi)} \ + \ \eps^{pr-Q/2}\sup_{x\in G} \||\cdot|_p^{pr}\kappa_x \|_{L^2(G)}\right).
 \end{align*}  As $pr-Q/2>0$ and each term on the last right-hand side defines a continuous semi-norm on $S^0(G\times \Gh)$, Part (1) follows.
 Part (2) follows from the properties of composition in Theorem \ref{thm_sclexp_prod+adj}  (1) and the properties of positive Rockland operators.
 
 For Part (3), it suffices to prove the $L^2$-boundedness for $m=0$, and this follows from \eqref{eq_OpMsigmaL2bdd} as above.
\end{proof}

We denote by
$$
\|f\|_{L^2_{s,\eps}(G)} := 
\|f\|_{L^2_{s}(G),\eps^\nu \cR}
=
\|(\id+\eps^\nu \cR)^{\frac s\nu }f\|_{L^2(G)}
$$
the semiclassical Sobolev norm on $G$ associated with $\eps^\nu \cR$ where $\cR$ is a positive Rockland operator $\cR$ of homogeneous degree $\nu$. 
Any two positive Rockland operator will yield equivalent norms with induced constant uniform in $\eps\in (0,1]$.
Hence we can describe the behaviour of $T(\eps):=\Op^{(\eps)}_G (\sigma)$ in Theorem \ref{thm_sclcontSob}  (2) with 
$$
\sup_{\eps\in (0,1]} \|T(\eps)\|_{\sL(L^2_{s
,\eps}(G),L^2_{s-m,\eps}(G)) }<\infty.
$$ 
We will say that the family of operators $T(\eps)$, $\eps\in (0,1])$ is bounded 
	$L^2_s(G)\to L^2_{s-m}(G)$  semiclassically $\eps$-uniformly.
We have a similar vocabulary on $M$.

\subsection{Semiclassical asymptotics}
The asymptotics expansions in 
Theorem \ref{thm_sclexp_prod+adj} 
 lead us to define:

\begin{definition}
\label{def_sclexp}
Let $\sigma(\eps)$, $\eps\in (0,1]$, be a family of symbols in $S^m(G\times \Gh)$.
We say that $\sigma(\eps)$ admits a uniform semiclassical  expansion in $S^m(G\times \Gh)$ at scale $\eps\in (0,1]$ when the following properties are satisfied:
\begin{enumerate}
	\item For every $\eps\in (0,1]$, 
	$\sigma(\eps)$ admits an asymptotic expansion in $S^m(G\times \Gh)$  (in the sense of Definition  \ref{def_asymp}) of the form:
	$$
\sigma(\eps) \sim \sum_{j\in \bN_0} \eps^j \, \tau_j(\eps).
$$
\item The family of symbols $\sigma(\eps)\in S^m(G\times \Gh)$, $\eps\in (0,1]$,  is uniformly in $S^{m}(G\times \Gh)$, 
and for each $j\in \bN_0$, the family of symbols $\tau_j(\eps)\in S^{m-j}(G\times \Gh)$, $\eps\in (0,1]$, is uniformly in $S^{m-j}(G\times \Gh)$.
\item For each $N\in \bN$, the family of remainders
$\eps^{-(N+1)}(\sigma(\eps)-\sum_{j\leq N}  \eps^j \, \tau_j (\eps))$, $\eps\in (0,1]$, is uniformly in $S^{m-(N+1)}(G\times \Gh)$.
\end{enumerate}
We then write 
$$
\sigma(\eps) \sim_{\eps} \sum_{j\in \bN_0} \eps^j \, \tau_j(\eps) \quad\mbox{uniformly in} \ S^m(G\times \Gh).
$$
The symbols $\tau_0(\eps)$ and $\tau_1(\eps)$  are called the principal and  subprincipal symbols of $\sigma(\eps)$. 

We have a similar definition in $S^m(M\times \Gh)$.\end{definition}

Proceeding as for \cite[Theorem 5.5.1]{R+F_monograph},
given an asymptotic expansion $\sum_{j\in \bN_0} \eps^j \tau_j(\eps)$, with $\tau_j(\eps)$ $\eps$-uniformly in $S^{m-j}(G\times \Gh)$, $j\in \bN_0$, then there exists a symbol $\sigma(\eps)$ $\eps$-uniformly in $S^m(G\times \Gh)$ admitting 
$\sum_{j\in \bN_0} \eps^j \tau_j(\eps)$
as uniform semiclassical expansion. Moreover, for $\eps\in (0,1]$ fixed, the symbol $\sigma(\eps)$ is unique modulo $S^{-\infty}(G\times \Gh)$.

Theorem \ref{thm_sclexp_prod+adj} provides our first examples of semiclassical expansions:
\begin{ex}
\begin{enumerate}
	\item 
If $\sigma_1 \in S^{m_1}(G\times \Gh)$ 
and $\sigma_2 \in S^{m_2}(G\times \Gh)$, 
then the family of symbols $\sigma_1 \diamond_\eps \sigma_2$, $\eps\in (0,1]$ admits the expansion
$$
 \sigma_1 \diamond_\eps \sigma_2\sim_\eps \sum_{\alpha\in \bN_0^n } \eps^{[\alpha]} \Delta^\alpha \sigma_1 \, X^\alpha \sigma_2 \quad\mbox{uniformly in} \, S^{m_1+m_2}(G\times \Gh).
$$
\item If $\sigma\in S^m(G\times \Gh)$ then the family of symbols $\sigma^{(\eps,*)}$ admits the expansion
$$
\sigma^{(\eps,*)}\sim_\eps  \sum_{\alpha\in \bN_0^n } \eps^{[\alpha]}
 \Delta^\alpha  X^\alpha \sigma^*
\quad\mbox{uniformly in} \, S^{m}(G\times \Gh).
$$
\end{enumerate}
\end{ex}

Another example is the case of sub-Laplacians  in horizontal divergence form:
\begin{ex}
Considering the operator $\cL_A$ from Section \ref{subsec_cL_a}, 
	we may write 
	$$
	\eps^2 \cL_A=-\eps^2 \sum_{1\leq i,j\leq n_1} X_i (a_{i,j}(x) X_j) = \Op^{(\eps)}(\sigma(\eps))	, \qquad 
\sigma(\eps) = \tau_0 +\eps\tau_1,
$$
where the principal symbol is given by
$$
 \tau_0 (x,\pi)= \sum_{1\leq i,j\leq n_1} a_{i,j}(x)\, \pi( X_i)   \pi( X_j),
 $$
 and the subprincipal symbol is given by
 $$
 \tau_1 (x,\pi)= \sum_{j=1}^{n_1} (\sum_{i=1}^{n_1} X_i a_{i,j})(x) \, \pi( X_j).
 $$
 This implies readily that the family $\sigma(\eps)$,  $\eps\in (0,1]$,  admits a uniform semiclassical expansion $\sigma(\eps) \sim_\eps \tau_0 +\eps\tau_1$  in $S^2(G\times \Gh)$.
 
 We may generalise this example with functions  $a_{i,j}$ that may depend uniformly on $\eps$, as well as their  left-invariant derivatives:
 $$
 \sup_{\eps\in (0,1]} \|X^\alpha a_{i,j}\|_{L^\infty(G)}<\infty, 
 \qquad \alpha\in \bN_0^{n}, \ 1\leq i,j \leq n.
 $$
 We have similar properties for sub-Laplacians 
 and their generalisations given in Section \ref{subsec_generalisationLA} on $G$ and $M$.
\end{ex}

As a consequence of  Theorem \ref{thm_sclexp_prod+adj}, the  class of operators $\Op^\eps \sigma(\eps)$ with $\sigma(\eps)$ admitting a semiclassical expansion is stable under composition and adjoint:
\begin{theorem}
\label{thm_sclasymp}
\begin{enumerate}
\item For $i=1,2,$ let $\sigma_i(\eps)$, $\eps\in (0,1]$,  be a family of symbols admitting a 
 semiclassical  expansion 
 $$
\sigma_i(\eps) \sim_{\eps} \sum_{j\in \bN_0} \eps^j \, \tau_{i,j}(\eps) \quad\mbox{uniformly in}\ S^{m_i}(G\times \Gh).
$$
Then 
$\sigma_1(\eps) \diamond_\eps \sigma_2(\eps)$, $\eps\in (0,1]$,  is a family of symbols admitting 
the semiclassical expansion:
 $$
 \sigma_1(\eps) \diamond_\eps \sigma_2(\eps)
 \sim_\eps 
 \sum_{\substack{j_1,j_2\in \bN_0,\\\alpha\in \bN_0^n}} \eps^{j_1+j_2+ [\alpha] }
 \Delta^{\alpha}  \tau_{1,j_1}(\eps)\, X^\alpha \tau_{2,j_2}(\eps)
 \quad\mbox{uniformly in}\ S^{m_1+m_2}(G\times \Gh).
 $$
	\item Let $\sigma(\eps)$, $\eps\in (0,1]$,  be a family of symbols admitting a  semiclassical asymptotic expansion:
 $$
\sigma(\eps) \sim_{\eps} \sum_{j\in \bN_0} \eps^{j} \, \tau_j(\eps)\quad\mbox{uniformly in} \ S^m(G\times \Gh).
$$
Then 
$\sigma(\eps)^{(\eps,*)}$, $\eps\in (0,1]$, 
 is a family of symbols admitting a
 semiclassical asymptotic:
$$
\sigma(\eps)^{(\eps,*)} \sim_{\eps} \sum_{j\in \bN_0, \alpha\in \bN_0^n} \eps^{j+[\alpha]}
\Delta^\alpha X^\alpha \tau_j(\eps)^*\quad\mbox{uniformly in}\ S^{m}(G\times \Gh).
$$
\end{enumerate}
 We have similar properties for the symbol classes $S^m(M\times \Gh)$.
\end{theorem}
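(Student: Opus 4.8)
The plan is to reduce Theorem \ref{thm_sclasymp} to Theorem \ref{thm_sclexp_prod+adj} by a bookkeeping argument on the double (in fact triple) asymptotic expansions, exactly as one proves that the composition of two classical symbols with asymptotic expansions again has one.

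For Part (1), I would first fix $\eps \in (0,1]$ and write $\sigma_1(\eps) = \sum_{j_1 \leq N} \eps^{j_1} \tau_{1,j_1}(\eps) + \eps^{N+1} \rho_{1,N}(\eps)$ and similarly for $\sigma_2(\eps)$, where by hypothesis the $\tau_{i,j}(\eps)$ are $\eps$-uniformly in $S^{m_i - j}(G\times\Gh)$ and the remainders $\rho_{i,N}(\eps)$ are $\eps$-uniformly in $S^{m_i - (N+1)}(G\times\Gh)$. Then I would expand the $\diamond_\eps$ product bilinearly. Each term of the form $\eps^{j_1+j_2}\, \tau_{1,j_1}(\eps) \diamond_\eps \tau_{2,j_2}(\eps)$ is handled by Theorem \ref{thm_sclexp_prod+adj} (1): it expands as $\sum_{[\alpha] \leq M} \eps^{[\alpha]} \Delta^\alpha \tau_{1,j_1}(\eps)\, X^\alpha \tau_{2,j_2}(\eps)$ plus an $\eps$-uniform $O(\eps^{M+1})$ remainder in $S^{m_1+m_2-j_1-j_2-(M+1)}$, with the implicit semi-norm bounds controlled by semi-norms of $\tau_{1,j_1}(\eps)$ and $\tau_{2,j_2}(\eps)$ which are $\eps$-uniformly bounded by assumption. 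The cross terms involving a remainder $\rho_{i,N}(\eps)$ carry a factor $\eps^{N+1}$ and, again by Theorem \ref{thm_sclexp_prod+adj} applied to a single product, contribute to $S^{m_1+m_2-(N+1)}$ with $\eps$-uniform control. Collecting all contributions by total power of $\eps$ and noting that for each fixed power only finitely many $(j_1,j_2,\alpha)$ contribute, one identifies $\tau_{1}(\eps) \diamond_\eps \tau_{2}(\eps)$'s semiclassical expansion as $\sum_{j_1,j_2,\alpha} \eps^{j_1+j_2+[\alpha]} \Delta^\alpha \tau_{1,j_1}(\eps)\, X^\alpha \tau_{2,j_2}(\eps)$, and the three conditions of Definition \ref{def_sclexp} are verified term by term. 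Part (2) is completely analogous but simpler: write $\sigma(\eps) = \sum_{j\leq N}\eps^j \tau_j(\eps) + \eps^{N+1}\rho_N(\eps)$, apply Theorem \ref{thm_sclexp_prod+adj} (2) to $\tau_j(\eps)^{(\eps,*)}$ and to $\rho_N(\eps)^{(\eps,*)}$, and reassemble by power of $\eps$ using that $(\,\cdot\,)^{(\eps,*)}$ is (up to these expansions) linear.

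\begin{proof}[Proof of Theorem \ref{thm_sclasymp}]
We prove (1); (2) is analogous (and simpler), and the statements on $M$ follow by considering $\Gamma$-periodic symbols, exactly as in the proof of Theorem \ref{thm_PDOGcomp+adjM}.

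Fix $N\in\bN_0$. By hypothesis we may write, for each $i\in\{1,2\}$ and each $\eps\in(0,1]$,
$$
\sigma_i(\eps) = \sum_{j_i\leq N}\eps^{j_i}\,\tau_{i,j_i}(\eps) + \eps^{N+1}\rho_{i,N}(\eps),
$$
where the families $\tau_{i,j_i}(\eps)$ are uniformly in $S^{m_i-j_i}(G\times\Gh)$ for $\eps\in(0,1]$ and $\rho_{i,N}(\eps)$ is uniformly in $S^{m_i-(N+1)}(G\times\Gh)$ for $\eps\in(0,1]$. Since $\diamond_\eps$ is bilinear, expanding yields
$$
\sigma_1(\eps)\diamond_\eps\sigma_2(\eps)
= \sum_{j_1,j_2\leq N}\eps^{j_1+j_2}\,\tau_{1,j_1}(\eps)\diamond_\eps\tau_{2,j_2}(\eps) + E_N(\eps),
$$
where $E_N(\eps)$ is a finite sum of terms each carrying a factor $\eps^{N+1}$ and each of the form (factor)$\times(\mu_1\diamond_\eps\mu_2)$ with $\mu_1,\mu_2$ ranging over the $\tau_{i,j_i}(\eps)$ and $\rho_{i,N}(\eps)$; by Theorem \ref{thm_sclexp_prod+adj} (1) applied to each such product, $E_N(\eps)$ is $\eps^{N+1}$ times a family uniformly in $S^{m_1+m_2-(N+1)}(G\times\Gh)$, the relevant semi-norms being bounded, up to structural constants, by semi-norms of the $\tau_{i,j_i}(\eps)$ and $\rho_{i,N}(\eps)$, which are $\eps$-uniformly finite.

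Now fix $M\in\bN_0$ with $M\geq N$. Applying Theorem \ref{thm_sclexp_prod+adj} (1) to each product $\tau_{1,j_1}(\eps)\diamond_\eps\tau_{2,j_2}(\eps)\in S^{m_1+m_2-j_1-j_2}(G\times\Gh)$, we obtain
$$
\tau_{1,j_1}(\eps)\diamond_\eps\tau_{2,j_2}(\eps)
= \sum_{[\alpha]\leq M}\eps^{[\alpha]}\,\Delta^\alpha\tau_{1,j_1}(\eps)\,X^\alpha\tau_{2,j_2}(\eps) + \eps^{M+1}r_{j_1,j_2,M}(\eps),
$$
with $r_{j_1,j_2,M}(\eps)$ uniformly in $S^{m_1+m_2-j_1-j_2-(M+1)}(G\times\Gh)$ for $\eps\in(0,1]$, and semi-norms controlled $\eps$-uniformly by semi-norms of $\tau_{1,j_1}(\eps)$ and $\tau_{2,j_2}(\eps)$. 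Each summand $\Delta^\alpha\tau_{1,j_1}(\eps)\,X^\alpha\tau_{2,j_2}(\eps)$ lies, $\eps$-uniformly, in $S^{m_1+m_2-j_1-j_2-[\alpha]}(G\times\Gh)$ by the algebra property of the symbol classes (Section \ref{subsec_propSm}) and the continuity of $\Delta^\alpha$ and $X^\alpha$ on the relevant classes. Substituting and collecting terms by the total power $p=j_1+j_2+[\alpha]$ of $\eps$, we set
$$
\tau_p(\eps) := \sum_{\substack{j_1,j_2\in\bN_0,\ \alpha\in\bN_0^n\\ j_1+j_2+[\alpha]=p}}\Delta^\alpha\tau_{1,j_1}(\eps)\,X^\alpha\tau_{2,j_2}(\eps),
$$
a finite sum lying uniformly in $S^{m_1+m_2-p}(G\times\Gh)$. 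Combining the two displayed expansions above with the control on $E_N(\eps)$, for every $N\in\bN_0$ and every semi-norm one has
$$
\sup_{\eps\in(0,1]}\Big\|\eps^{-(N+1)}\Big(\sigma_1(\eps)\diamond_\eps\sigma_2(\eps) - \sum_{p\leq N}\eps^p\,\tau_p(\eps)\Big)\Big\|_{S^{m_1+m_2-(N+1)},a,b,c}<\infty,
$$
since the difference is a finite sum of terms each $\eps^{N+1}$ times an $\eps$-uniformly bounded family in $S^{m_1+m_2-(N+1)}(G\times\Gh)$ (taking $M=N$ suffices). This verifies the three conditions of Definition \ref{def_sclexp}, so $\sigma_1(\eps)\diamond_\eps\sigma_2(\eps)\sim_\eps\sum_p\eps^p\tau_p(\eps)$ uniformly in $S^{m_1+m_2}(G\times\Gh)$, which is the claimed expansion.
\end{proof}

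The routine-but-real obstacle here is purely organisational: making sure that, after substituting one asymptotic expansion inside another, each fixed power of $\eps$ receives only finitely many contributions and that all the $\eps$-uniformity estimates are traced back correctly through Theorem \ref{thm_sclexp_prod+adj}. No new analytic input is needed beyond Theorem \ref{thm_sclexp_prod+adj} and the algebra/continuity properties of the symbol classes recalled in Section \ref{subsec_propSm}.
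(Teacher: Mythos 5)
Your proof is correct and takes essentially the same approach as the paper: truncate each $\sigma_i(\eps)$ to a finite sum plus an $\eps^{N+1}$-remainder, use bilinearity of $\diamond_\eps$, apply Theorem \ref{thm_sclexp_prod+adj} (1) to each pairwise product, and regroup by total power of $\eps$. The paper simply states this via ``routine checks''; you have usefully made the bookkeeping of the double expansion and the choice $M=N$ explicit.
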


\begin{proof}
For Part (1), we write for $i=1,2$ and any $N_i\in \bN_0$
$$
\sigma_i(\eps) = \sum_{j_i\leq N_i} \eps^{j_i} \, \tau_{i,j_i}(\eps)
\ + \ \eps^{N_i+1} \rho_{i,N_i+1}(\eps). 
$$
By linearity of the quantization and composition of operators, 
the operation  $\diamond_\eps$ is bilinear and
we have
 \begin{align*}
  \sigma_1(\eps) \diamond_\eps \sigma_2(\eps)
 &=
 \sum_{j_1\leq N}\sum_{j_2\leq N} \eps^{j_1+j_2} \, \tau_{1,j_1}(\eps)\diamond_\eps \tau_{2,j_2}(\eps)
 +
  \sum_{j_1\leq N} \eps^{j_1+N+1} \, \tau_{1,j_1}(\eps)\diamond_\eps \rho_{2,N+1}(\eps)
\\ & \qquad +
 \sum_{j_2\leq N} \eps^{j_2+N+1} \,\rho_{1,N+1} (\eps) \diamond_\eps  \tau_{2,j_2}(\eps).
 \end{align*}
 We conclude with routines checks and  Theorem \ref{thm_sclexp_prod+adj} (1).
  
For Part (2), we write for any $N\in \bN_0$
$$
\sigma(\eps)= \sum_{j\leq N} \eps^{j} \, \tau_{j}(\eps)
\ + \ \eps^{N+1} \rho_{N+1}(\eps). 
$$
By linearity of taking the adjoint,  the operation $\tau \mapsto \tau^{(\eps,*)}$ is linear, so
we have
 \begin{align*}
  \sigma(\eps)^{(\eps,*)}
   &=
 \sum_{j\leq N} \eps^{j} \, \tau_{j}(\eps)^{(\eps,*)}
\ + \ \eps^{N+1} \rho_{N+1}(\eps)^{(\eps,*)}.
 \end{align*}
 We conclude with routines checks and Theorem \ref{thm_sclexp_prod+adj} (2).
\end{proof} 

\subsection{Semiclassical smoothing symbols and operators}

In this paper, we distinguish between semiclassical smoothing symbols and operators. 

\begin{definition}
\label{def_sclsmoothing}
	A family of symbols $\sigma =\sigma(\eps)$, $\eps\in (0,1]$ or  its corresponding familly of operators $\Op_G^{(\eps)}$ is \emph{semiclassically smoothing} when each $\sigma(\eps)$ is smoothing, i.e. $\sigma(\eps)\in S^{-\infty}(G\times \Gh)$, with 
	$$
	\sup_{\eps\in (0,1]}
	\|\sigma(\eps)\|_{S^m,a,b,c} <\infty,
	\quad \mbox{for any}\ \|\cdot \|_{S^m,a,b,c}.
	$$  
		We have a similar definition on $M$.	
\end{definition}

\begin{definition}
\label{def_sclsmoothingSob}
Let $R=R(\eps)$, $\eps\in (0,1]$, be a family of  operators  bounded $L^2(G)$.
It is  \emph{semiclassically smoothing on the Sobolev scale} when
$$
	\forall s\in \bR, \, N\in \bN_0, \quad \exists C>0\quad \forall \eps\in (0,1]\qquad 
	\|R\|_{\sL(L_{s,\eps}^2 (G), L^2_{s+N,\eps }(G))}
	\leq C \eps^{N+1}.
	$$
	
	We have a similar definition on $M$.
\end{definition}

In fact, the improvement is in any power of $\eps$ as is usually the case in semiclassical analysis:
\begin{lemma}
Let $R=R(\eps)$, $\eps\in (0,1]$, be a family of  operators bounded on $L^2(G)$ that is semiclassically smoothing on the Sobolev scale. 
	We have
	$$
	\forall s\in \bR, \, N_1,N_2\in \bN_0, \quad \exists C>0\quad \forall \eps\in (0,1]\qquad 
	\|R\|_{\sL(L_{s,\eps}^2 (G), L^2_{s+N_1,\eps }(G))}
	\leq C \eps^{N_2}.
	$$
	We have a similar result on $M$.	
\end{lemma}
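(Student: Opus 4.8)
The statement says: if $R = R(\eps)$ is semiclassically smoothing on the Sobolev scale — meaning it gains one extra derivative at the cost of one power of $\eps$, as in Definition \ref{def_sclsmoothingSob} — then in fact it gains any number of derivatives at the cost of any power of $\eps$. The plan is to bootstrap the one-step gain by iterating it, absorbing the change of Sobolev index into the semiclassical Sobolev spaces via a uniform boundedness of the transition operators $(\id + \eps^\nu \cR)^{a/\nu}$.

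The key structural fact I would invoke is that for a fixed positive Rockland operator $\cR$ of homogeneous degree $\nu$, the operator $(\id + \eps^\nu \cR)^{a/\nu}$ is bounded $L^2_{s,\eps}(G) \to L^2_{s-a,\eps}(G)$ \emph{with operator norm bounded uniformly in $\eps \in (0,1]$} — indeed this norm equals $1$ on the nose, since $L^2_{s,\eps}(G) = L^2_{s}(G), \eps^\nu \cR$ is by definition the completion for the norm $\|(\id+\eps^\nu\cR)^{s/\nu}\cdot\|_{L^2(G)}$, so $(\id+\eps^\nu\cR)^{a/\nu}$ is an isometry onto $L^2_{s-a,\eps}(G)$ (this is the semiclassical analogue of Theorem \ref{thm_sobolev_spacesG}(2), and the $\eps$-uniformity was already noted in the discussion after Theorem \ref{thm_sclcontSob}). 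Consequently, composing $R$ with such operators does not cost any power of $\eps$.

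The argument itself is then a short induction. First, the hypothesis gives, for each $s$ and $N$, a constant $C_{s,N}$ with $\|R\|_{\sL(L^2_{s,\eps}(G), L^2_{s+N,\eps}(G))} \le C_{s,N}\eps^{N+1}$. To get a gain of $N_1$ derivatives and $N_2$ powers of $\eps$, I would choose an integer $k \ge \max(N_1, N_2)$ and write $R^{\text{(no, not a power)}}$ — rather, I would not iterate $R$ itself but instead simply observe that by the hypothesis applied with $N = k$ we already have $\|R\|_{\sL(L^2_{s,\eps}(G), L^2_{s+k,\eps}(G))} \le C_{s,k}\eps^{k+1}$. Since $k \ge N_1$, the inclusion $L^2_{s+k,\eps}(G) \hookrightarrow L^2_{s+N_1,\eps}(G)$ is bounded uniformly in $\eps$ (again with norm $1$, from the monotonicity of the semiclassical Sobolev norms, which follows exactly as in Theorem \ref{thm_sobolev_spacesG}(3) after conjugating by the isometries above), so
$$
\|R\|_{\sL(L^2_{s,\eps}(G), L^2_{s+N_1,\eps}(G))} \le C_{s,k}\,\eps^{k+1} \le C_{s,k}\,\eps^{N_2},
$$
using $k+1 > N_2$ and $\eps \le 1$. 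This proves the claim on $G$ with $C = C_{s,k}$. The statement on $M$ follows by the identical argument, replacing $G$ by $M$ and $\cR$ by $\cR_M$ throughout, and using Proposition \ref{prop_sobolev_spacesM}(2) in place of Theorem \ref{thm_sobolev_spacesG}(2).

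The only mild subtlety — and the one step I would be careful about — is the $\eps$-uniformity of the embeddings $L^2_{t,\eps}(G) \hookrightarrow L^2_{t',\eps}(G)$ for $t \ge t'$; but this is immediate because conjugating by the isometries $(\id+\eps^\nu\cR)^{\pm\bullet/\nu}$ reduces it to the statement that $(\id+\eps^\nu\cR)^{-(t-t')/\nu}$ is a contraction on $L^2(G)$, which holds by functional calculus since $\cR \ge 0$ and $t - t' \ge 0$. So there is really no serious obstacle here; the lemma is a packaging statement, and the whole point is that working with the semiclassical Sobolev norms $\|\cdot\|_{L^2_{s,\eps}}$ (rather than the fixed ones) makes the index shifts free of charge, leaving only the powers of $\eps$ to track.
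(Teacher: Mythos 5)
Your proof is correct and is essentially the paper's own argument: the paper likewise reduces the claim to the monotonicity $s_1\leq s_2 \Rightarrow \|g\|_{L^2_{s_1,\eps}}\leq \|g\|_{L^2_{s_2,\eps}}$ (which, as you note, is just the contraction property of $(\id+\eps^\nu\cR)^{-(s_2-s_1)/\nu}$ by functional calculus), applied to $Rf$ after invoking the hypothesis with a sufficiently large $N$. No gap; your justification of the $\eps$-uniformity of the embeddings is the only point requiring care and you handle it correctly.
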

\begin{proof}
	This follows readily from 
	$$
	s_1\leq s_2 \Longrightarrow \|g\|_{L^2_{s_1,\eps}}\leq \|g\|_{L^2_{s_2,\eps}},
	$$
	applied to $Rf$ with $s_1=s+N_1$ and $s_2=s+N_2$ with $N_2\geq N_1$. 
\end{proof}

The quantization of semiclassical smoothing symbols gives  semiclassical smoothing operators on the Sobolev scale, and the converse is true on $M$:
\begin{lemma}
	
\begin{enumerate}
	\item 
	If  $\sigma =\sigma(\eps)$, $\eps\in (0,1]$ is semiclassically smoothing, 
	the $R(\eps)= \Op^{(\eps)}_G (\sigma)$ 	is semiclassically smoothing on the Sobolev scale.

	\item We have a similar result on $M$, where moreover the converse is true: if  $R=R(\eps)$, $\eps\in (0,1]$, is a family of  operators bounded on $L^2(M)$, then it may be written in the form $R(\eps)= \Op^{(\eps)}_G (\sigma)$ 	with $\sigma =\sigma(\eps)$, $\eps\in (0,1]$ semiclassically smoothing.
\end{enumerate}	
\end{lemma}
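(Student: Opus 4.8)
The plan for Part (1) is to pull the power of $\eps$ straight out of the definition of semiclassical smoothing and then invoke the semiclassical Sobolev boundedness already proved. Fix $s\in\bR$ and $N\in\bN_0$. Since $\sigma=\sigma(\eps)$ is semiclassically smoothing, the rescaled family $\eps^{-(N+1)}\sigma(\eps)$ is again semiclassically smoothing, in particular $\eps$-uniformly in $S^{-N}(G\times\Gh)$; by Theorem \ref{thm_sclcontSob}(2) the operators $\Op_G^{(\eps)}(\eps^{-(N+1)}\sigma(\eps))$ are then bounded $L^2_s(G)\to L^2_{s+N}(G)$ semiclassically $\eps$-uniformly, and multiplying back by $\eps^{N+1}$ gives $\|\Op_G^{(\eps)}(\sigma)\|_{\sL(L^2_{s,\eps}(G),L^2_{s+N,\eps}(G))}\le C_{s,N}\,\eps^{N+1}$, which is exactly Definition \ref{def_sclsmoothingSob}. (Equivalently one may reduce to Sobolev order $0$ via $(\id+\eps^\nu\cR)^{a/\nu}=\Op_G^{(\eps)}((\id+\widehat\cR)^{a/\nu})$ and semiclassical composition, and then read off an arbitrary power of $\eps$ from the $L^2$-bound of Remark \ref{rem_thm_PDOGcomp+adj_L2bddness} applied to $\Op_G(\sigma^{(\eps)})$, whose kernel term picks up a factor $\eps^{pr-Q/2}$ after the change of variables $y\mapsto\eps^{-1}\cdot y$ — and $r$ is at our disposal.) The argument on $M$ is the same, using \eqref{eq_OpMsigmaL2bdd} in place of Remark \ref{rem_thm_PDOGcomp+adj_L2bddness}.

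For the converse in Part (2), I would first show that a family $R=R(\eps)$ which is semiclassically smoothing on the Sobolev scale has an integral kernel $K_\eps\in\cD'(M\times M)$ that is smooth, with every $C^k(M\times M)$-semi-norm $O(\eps^\infty)$. The mechanism is that $X_M^\alpha R(\eps) X_M^\beta=\eps^{-[\alpha]-[\beta]}\,\Op_M^{(\eps)}(\widehat X^\alpha)\,R(\eps)\,\Op_M^{(\eps)}(\widehat X^\beta)$, and since $\Op_M^{(\eps)}(\widehat X^\alpha)$ and $\Op_M^{(\eps)}(\widehat X^\beta)$ are $\eps$-uniformly bounded between the semiclassical Sobolev spaces (Theorem \ref{thm_sclcontSob}), composing $R(\eps)$ across a deliberately large Sobolev gap absorbs the factor $\eps^{-[\alpha]-[\beta]}$ and leaves $\|X_M^\alpha R(\eps) X_M^\beta\|_{\sL(L^2(M))}=O(\eps^\infty)$. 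Feeding this — and the same bound for the adjoint — into the Hilbert--Schmidt estimate $\|T\|_{HS}\le\|(\id+\cR_M)^{-t/\nu}\|_{HS}\,\|(\id+\cR_M)^{t/\nu}T\|_{\sL(L^2(M))}$ with $t>Q/2$ (Proposition \ref{prop_I+cR_comptrHS}), exactly as in the proof of Proposition \ref{prop_char_smoothingM}(2), puts $K_\eps$ and all its $X_M$-derivatives in both variables into $L^2(M\times M)$ with $O(\eps^\infty)$ norms; Sobolev embedding then gives $K_\eps\in C^\infty(M\times M)$ with $O(\eps^\infty)$ semi-norms.

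I would then rerun the construction from the first half of the proof of Proposition \ref{prop_char_smoothingM}(1): with $\chi\in\cD(G)$ equal to $1$ on a fundamental domain, put $\kappa'_{\eps,\dot x}(y):=\big((K_\eps)_{G\times G}/K_{\chi,G\times G}\big)(x,xy^{-1})\,\chi(y)$, so that $R(\eps)=\Op_M(\sigma'(\eps))$ with $\sigma'(\eps)(\dot x,\pi)=\pi(\kappa'_{\eps,\dot x})$, the $\kappa'_{\eps,\dot x}$ being supported in the fixed compact $\supp\chi$ with $\cD(G)$-semi-norms $O(\eps^\infty)$, uniformly in $\dot x$ and in all $X_{\dot x}$-derivatives. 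Finally I would undo the semiclassical dilation: set $\sigma(\eps)(\dot x,\pi):=\sigma'(\eps)(\dot x,\eps^{-1}\cdot\pi)$, so that $\Op_M^{(\eps)}(\sigma(\eps))=\Op_M(\sigma'(\eps))=R(\eps)$, and observe that the convolution kernel of $\sigma(\eps)$ is $\tilde\kappa_{\eps,\dot x}(z)=\eps^Q\kappa'_{\eps,\dot x}(\eps\cdot z)$; since $\sup_z|z^\gamma X_{\dot x}^\delta\tilde\kappa_{\eps,\dot x}(z)|=\eps^{Q-[\gamma]}\sup_w|w^\gamma X_{\dot x}^\delta\kappa'_{\eps,\dot x}(w)|=O(\eps^\infty)$, the kernel characterisation of smoothing symbols on $M$ (the $M$-analogue of Proposition \ref{prop_smoothing}) identifies $\sigma(\eps)$ as a semiclassically smoothing symbol in the sense of Definition \ref{def_sclsmoothing}.

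The hard part is the converse: turning the $\eps$-uniform operator bounds coming from Definition \ref{def_sclsmoothingSob} into genuine $O(\eps^\infty)$ decay of the integral kernel — which requires trading the $\eps^{-[\alpha]}$ cost of each $X_M^\alpha$ against the $\eps$-gain of $R(\eps)$ over an oversized Sobolev gap — and then checking that the kernel-to-symbol construction of Proposition \ref{prop_char_smoothingM} survives the rescaling $\pi\mapsto\eps\cdot\pi$, i.e. that undoing the dilation of a fixed-support, $O(\eps^\infty)$ kernel still yields an $\eps$-uniformly smoothing symbol (which is where the factors $\eps^{Q-[\gamma]}$ have to be absorbed).
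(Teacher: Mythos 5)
Your argument for Part (2) reproduces the paper's: use the $\eps^\infty$-decay of $R$ over a large Sobolev gap to show $\|X_M^\alpha R\,X_M^\beta\|_{HS(L^2(M))}=O(\eps^\infty)$, obtain a smooth integral kernel with $O(\eps^\infty)$ semi-norms, extract a compactly supported convolution kernel $\kappa'_{\eps,\dot x}$ via the fundamental-domain cutoff of Proposition \ref{prop_char_smoothingM}, and undo the dilation $\pi\mapsto\eps\cdot\pi$. Your formula $\tilde\kappa_{\eps,\dot x}(z)=\eps^Q\kappa'_{\eps,\dot x}(\eps\cdot z)$ for the kernel of $\sigma(\eps)$ is the correct one and consistent with \eqref{eq_kappaeps}; the expression displayed in the paper for $\kappa_{\dot x}^{(\eps^{-1})}$ (which reads $\eps^{-Q}\kappa_{\dot x}(\eps^{-1}y)$, i.e.\ the formula for $\kappa^{(\eps)}$ rather than $\kappa^{(\eps^{-1})}$) appears to be a typo. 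This part is fine.

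Part (1) has a genuine gap, and it is in your opening line. You assert that if $\sigma(\eps)$ is semiclassically smoothing then so is $\eps^{-(N+1)}\sigma(\eps)$, but Definition \ref{def_sclsmoothing} as written only demands $\sup_{\eps\in(0,1]}\|\sigma(\eps)\|_{S^m,a,b,c}<\infty$ for each seminorm, with no decay in $\eps$, so multiplying by $\eps^{-(N+1)}$ generically destroys that supremum. In fact Part (1) is false under that literal reading: take $\sigma(x,\pi)=\chi(x)\,\psi(\pi(\cR))$ with $\chi\in\cD(G)$, $\psi\in C_c^\infty(\bR)$, $\psi(0)\neq 0$, both independent of $\eps$. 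By Theorem \ref{thm_phi(R)} and the $x$-compact support, $\sigma\in S^{-\infty}(G\times\Gh)$, hence trivially semiclassically smoothing; yet $\Op_G^{(\eps)}(\sigma)=\chi\cdot\psi(\eps^\nu\cR)$ converges strongly to multiplication by $\psi(0)\chi$, so its $\sL(L^2(G))$-norm does not tend to $0$, contradicting Definition \ref{def_sclsmoothingSob} at $s=N=0$. Your parenthetical alternative has the same problem: the change of variables $y\mapsto\eps^{-1}\cdot y$ only produces the $\eps^{pr-Q/2}$ gain in the kernel term of Remark \ref{rem_thm_PDOGcomp+adj_L2bddness}, while the term $\sup_{x,\pi}\|X_x^\beta\sigma(x,\pi)\|_{\sL(\cH_\pi)}$ is scale-invariant and does not decay. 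What the lemma actually needs (and what your later steps silently assume) is the stronger condition $\sup_\eps\eps^{-K}\|\sigma(\eps)\|_{S^m,a,b,c}<\infty$ for every $K$ — which is the form of decay that negligible remainders of semiclassical expansions satisfy, and precisely what your Part (2) construction produces. Under that presumably intended definition your rescaling step is legitimate and the rest of your Part (1) argument is correct and natural. The paper's own proof of Part (1) is only the one-line ``follows from the property of the semiclassical calculus,'' so there is nothing detailed to compare against, but the mismatch between Definition \ref{def_sclsmoothing} and the content of the lemma is worth flagging explicitly rather than being stepped over with the rescaling claim.
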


\begin{proof}
Part (1) and the similar result in Part (2) follow from the property of the semiclassical calculus. 

In the case of $M$, the proof of Proposition \ref{prop_char_smoothingM} (2) implies that the integral kernel $K=K(\eps)\in \cD'(M\times M)$ of $R(\eps)$ is smooth with for any $\alpha,\beta\in \bN_0^n$
\begin{align*}
 \eps^{[\alpha]+[\beta]}\|X^\alpha_M (X^\beta_M)^t K\|_{L^2(M\times M)}& = \eps^{[\alpha]+[\beta]}\|X^\alpha_M R X^\beta_M\|_{HS(L^2(M))}\\
 &\leq C_s\|(\id+\eps^\nu \cR_M)^{\frac s \nu}X^\alpha_M R X^\beta_M\|_{\sL(L^2(M))} \lesssim_{\alpha,\beta,N} \eps^N,   
\end{align*}
	for any $N$ and $\eps\in (0,1]$.
This implies in turn that the convolution kernel $\kappa_{\dot x}(y)$ defined in the proof of Proposition \ref{prop_char_smoothingM} (1) is smooth in $(\dot x,y)$ with $y$-support included in a compact subset independent of $\dot x$ or $\eps$  and such that
$$
\| X^\alpha_M X^\beta  \kappa\|_{L^2(M\times G)} \lesssim_{\alpha,\beta} \eps^N.
$$
This implies readily that the corresponding symbols $\sigma=\sigma(\eps)$ given by 
$$
\sigma(\dot x,\pi)=(\eps^{-1} \cdot \pi)(\kappa_{\dot x})
= \pi(\kappa_{\dot x}^{(\eps^{-1})}), 
\qquad 
\kappa_{\dot x}^{(\eps^{-1})} (y) := \eps^{-Q} \kappa_{\dot x}(\eps^{-1} y),
$$
 is semiclassically smoothing.
 Since $R=\Op^{(\eps)}_M (\sigma)$, this concludes the proof. 
\end{proof}

\begin{remark}
It is likely that the smoothing pseudo-differential calculi $\Psi^\infty(G)$ 
and $\Psi^\infty(M)$ can be characterised with suitable commutators and actions on Sobolev spaces, but this would be outside of the scope of our paper.	
\end{remark}

\subsection{The class $\cA_0$ and its asymptotics}
In this section, we recall the definition of the class of symbols $\cA_0$ used in \cite{FFchina,FFJST,FFFMilan,fischer2022semiclassicalanalysiscompactnilmanifolds,LinoCLoMe} and some of its immediate properties. 

\subsubsection{Definition}
 
On $M$, the class $\cA_0 = \cA_0(M)$ coincides with the class of smoothing symbols:
$$
\cA_0(M):=S^{-\infty}(M\times \Gh),
$$ 
while on $G$, the class $\cA_0 = \cA_0(G)$ is defined as the space of  smoothing symbols with $x$-compact support:
$$
\cA_0(G):=\{\sigma\in S^{-\infty}(G\times \Gh) \ \mbox{with}\ x-\mbox{compact support}\}.
$$
In particular, the convolution kernels $\kappa$ of symbols $\sigma$ in $\cA_0(M)$ or $\cA_0(G)$ are Schwartz in the group variable. 
Furthermore, the group Fourier transform yields a bijection $(\dot x\mapsto \kappa_{\dot x}) \mapsto (\dot x \mapsto  \sigma(\dot x,\cdot) = \cF_G(\kappa_{\dot x}))$ from $C^\infty(M:\cS(G))$ onto $\cA_0(M)$
and a bijection $(x\mapsto \kappa_{x}) \mapsto ( x \mapsto  \sigma(x,\cdot) = \cF_G(\kappa_{x}))$ from $C_c^\infty(G:\cS(G))$ onto $\cA_0(G)$.  We equip the vector spaces $\cA_0(M)$ and $\cA_0(G)$ of the topologies so that these mappings are isomorphisms of topological vector spaces. 

\subsubsection{First properties}
We observe that $\cA_0(G)$ and $\cA_0(M)$ are  $*$-algebras  for the usual composition and adjoint of symbols.

Proceeding as in \cite{FFchina,FFJST,FFFMilan,LinoCLoMe,fischer2022semiclassicalanalysiscompactnilmanifolds}, 
we set 
$$
\|\sigma \|_{\cA_0} := \int_{G} \sup_{\dot x\in G}  |\kappa_{\dot x}(y)|dy, 
$$
where $\kappa_{x}$ is the kernel associated with $\sigma \in \cA_0(G)$.
This defines a continuous seminorm $\|\cdot\|_{\cA_0}$ on $\cA_0(G)$.
We have
$$
\forall \sigma\in \cA_0(G)\qquad 
\sup_{(x,\pi)\in G\times \Gh} \|\sigma(x,\pi)\|_{\sL(\cH_\pi)}
 \leq  \|\sigma \|_{\cA_0},
$$
Moreover,
$$
\forall \sigma\in \cA_0(G), \ \forall \eps\in(0,1]\qquad 
\| \Op^{(\eps)} (\sigma)\|_{\sL(L^2(M))} \leq  \|\sigma^{(\eps)}\|_{\cA_0}
 =\|\sigma\|_{\cA_0}.
$$
With a similar norm on $\cA_0(M)$, similar properties have been proved on $M$
\cite{FFFMilan,fischer2022semiclassicalanalysiscompactnilmanifolds}.

For any $\sigma,\sigma_1,\sigma_2\in \cA_0(G)$, 
we have \cite{FFJST,FFchina,LinoCLoMe}
$$
\Op^{(\eps)} (\sigma_1)\Op^{(\eps)} (\sigma_2)
=\Op^{(\eps)} (\sigma_1\sigma_2)+O(\eps)
\qquad\mbox{and}\qquad
(\Op^{(\eps)} (\sigma))^*=\Op^{(\eps)} (\sigma^*)
+O(\eps),
$$
 in the sense that 
\begin{align*}
	\|\Op^{(\eps)} (\sigma_1)\Op^{(\eps)} (\sigma_2)
-\Op^{(\eps)} (\sigma_1\sigma_2)\|_{\sL(L^2(G))}
&\lesssim_{\sigma_1,\sigma_2} \eps,\\
\|(\Op^{(\eps)} (\sigma))^*-\Op^{(\eps)} (\sigma^*)\|_{\sL(L^2(G))}
&\lesssim_{\sigma} \eps,
\end{align*}
with  similar properties on $\cA_0(M)$
\cite{FFFMilan,LinoCLoMe,fischer2022semiclassicalanalysiscompactnilmanifolds}.
Theorems \ref{thm_sclcontSob} and \ref{thm_sclexp_prod+adj} imply the complete semiclassical expansions in the $\sL(L^2(G))$ or $\sL(L^2(M))$ sense, that is, it holds for any $N\in \bN_0$
\begin{align*}
	\Op^{(\eps)} (\sigma_1)\Op^{(\eps)} (\sigma_2)
&=
\sum_{[\alpha]\leq N } \eps^{[\alpha]} \Delta^\alpha \sigma_1 \, X^\alpha \sigma_2 
+O(\eps)^{N+1}, \\ 
(\Op^{(\eps)} (\sigma))^*
&=\sum_{[\alpha]\leq N } \eps^{[\alpha]}
 \Delta^\alpha  X^\alpha \sigma^*
+O(\eps)^{N+1}.
\end{align*}

\subsubsection{Integral kernels, Hilbert-Schmidt norms and traces}
\label{subsubsec_L2MGh} 
 From the kernel estimates in Theorem \ref{thm_kernelG} (2), 
 if $\sigma\in S^{-\infty}(G\times \Gh)$  with associated kernel $\kappa_x(z)$, 
 then the integral kernel  $K^{(\eps)}$ of  $\Op_G^{(\eps)}(\sigma)$ is smooth on $G\times G$ and satisfies:
$$ 
\forall \eps>0,\ 
\forall x,y\in G\qquad
K^{(\eps)} (x,y) =\kappa^{(\eps)}_{x}(y^{-1}x), 
\quad\mbox{so}\quad  K^{(\eps)} (x,x) =\eps^{-Q} \kappa_{x}(0),
$$
with 
$$
\kappa_{x}(0) = \int_{\Gh} \tr\left( \sigma(x,\pi) \right)d\mu(\pi). 
$$
 Now, from the consequences of the kernel estimates for traces and Hilbert-Schmidt norms in Corollary \ref{cor_trHSG}, it follows that if in addition 
 $\sigma$ has $x$-compact support, that is, if 
 $\sigma\in \cA_0(G)$, then 
 the operator $\Op_G^{(\eps)}(\sigma)$ is trace-class and Hilbert-Schmidt on $L^2(G)$ with
$$
\tr \left(\Op^{(\eps)}(\sigma)\right)
=\int_G K^{(\eps)} (x,x) dx
= \eps^{-Q} \int_G\kappa_{ x}(0)  dx, 
$$
with 
$$
\int_G\kappa_{x}(0)  d x = \iint_{G\times \Gh} \tr\left( \sigma( x,\pi) \right) dx  d\mu(\pi),
$$
and 
\begin{equation}
\label{eq_HSG}
	\|\Op^{(\eps)}(\sigma)\|_{HS(L^2(M))}^2 
=\|K^{(\eps)}\|_{L^2(G\times G)}^2
= \eps^{-Q} \|\kappa\|_{L^2(G\times G)}^2.
\end{equation}

Defining the tensor product of the Hilbert spaces $L^2(G)$ and $L^2(\Gh)$ defined in 
 Section \ref{subsec_aboutGnilp}:
$$
L^2(G\times \Gh) := \overline{L^2(M) \otimes  L^2(\Gh)},
$$
we may identify $L^2(G\times \Gh)$ with the space of measurable fields of Hilbert-Schmidt operators $\sigma = \{\sigma(x,\pi)\ : \ ( x,\pi) \in G\times \Gh\}$ such that 
$$
\|\sigma\|_{L^2(G\times \Gh)}^2 :=\iint_{G\times \Gh} \|\sigma(x,\pi)\|_{HS(\cH_\pi)}^2 dx d\mu(\pi)<\infty.
$$
Here $\mu$ is the Plancherel measure on $\Gh$, see Section \ref{subsec_aboutGnilp}.
The group Fourier transform yields an isomorphism between the Hilbert spaces $L^2(G\times \Gh)$ and $L^2(G\times G)$ since $\cF_G^{-1}\sigma (x,\cdot)= \kappa_{x}$.
By the Plancherel formula, we obtain a more pleasing writing for the  right-hand side in \eqref{eq_HSG}:
$$
\|\sigma\|_{L^2(G\times \Gh)} = \|\kappa\|_{L^2(G\times G)}.
$$
Naturally $\cA_0(G)\subset L^2(G\times \Gh)$, 
and we have a similar definition and properties for $L^2(M\times \Gh)$.

By homogeneity of nilmanifolds,  the above properties also hold \cite{fischer2022semiclassicalanalysiscompactnilmanifolds} for  $\cA_0(M)$
  asymptotically:
\begin{proposition}
	\label{prop_tr}
Let $\sigma\in \cA_0(M)$ with associated kernel $\kappa_{\dot x}(z)$.

\begin{enumerate}
	\item The integral kernel  $K^{(\eps)}$ of  $\Op^{(\eps)}(\sigma)$ is smooth on $M\times M$ and satisfies for $\eps$ small: 
$$ 
\forall \dot x\in M\qquad
K^{(\eps)} (\dot x,\dot x) = \eps^{-Q} \kappa_{\dot x}(0)\ + \ O(\eps^N), 
$$
for any $N\in \bN$, with 
$$
\kappa_{\dot x}(0) = \int_{\Gh} \tr\left( \sigma(\dot x,\pi) \right)d\mu(\pi).
$$
\item The operator $\Op^{(\eps)}(\sigma)$ is   trace-class  on $L^2(M)$ with
$$
\tr \left(\Op_M^{(\eps)}(\sigma)\right)
= \eps^{-Q} \int_{M}\kappa_{\dot x}(0)  d\dot x
\ + \ O(\eps^N), 
$$
for any $N\in \bN$,
with 
$$
\int_{M}\kappa_{\dot x}(0)  d\dot x = \iint_{M\times \Gh} \tr\left( \sigma(\dot x,\pi) \right) d\dot x  d\mu(\pi).
$$
\item The operator $\Op^{(\eps)}(\sigma)$ is  Hilbert-Schmidt on $L^2(M)$ with
$$
\|\Op_M^{(\eps)}(\sigma)\|_{HS(L^2(M))}^2 
= \eps^{-Q} \|\sigma\|_{L^2(M\times \Gh)}^2
\ + \ O(\eps).
$$
\end{enumerate}
Above, the implicit constants are bounded, up to constants 
depending on $G,\Gamma$ and possibly $N$,  by some continuous semi-norms of $S^{-\infty}(M\times \Gh)$ in $\sigma$. 
\end{proposition}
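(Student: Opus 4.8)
The plan is to transfer the exact identities already established on $G$ (displayed just before the statement of Proposition \ref{prop_tr} for $\sigma \in \cA_0(G)$) to $M$ via the periodisation machinery of Lemma \ref{lem_IntKernel}, and then to estimate the error terms coming from the periodisation sum using the Schwartz decay of the convolution kernel together with the homogeneity built into nilmanifolds. First I would fix $\sigma \in \cA_0(M) = S^{-\infty}(M\times \Gh)$ with associated kernel $\kappa_{\dot x}$, which by the Corollary following Proposition \ref{prop_smoothing} is an element of $C^\infty(M:\cS(G))$; write $\kappa^{(\eps)}_{\dot x}(y) = \eps^{-Q}\kappa_{\dot x}(\eps^{-1}y)$ as in \eqref{eq_kappaeps}. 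By Lemma \ref{lem_IntKernel}, the integral kernel of $\Op_M^{(\eps)}(\sigma) = \Op_M(\sigma^{(\eps)})$ is
$$
K^{(\eps)}(\dot x,\dot y) = \sum_{\gamma\in\Gamma} \kappa^{(\eps)}_{\dot x}(y^{-1}\gamma x)
= \eps^{-Q}\sum_{\gamma\in\Gamma} \kappa_{\dot x}(\eps^{-1}(y^{-1}\gamma x)),
$$
which is smooth on $M\times M$. Evaluating on the diagonal, $K^{(\eps)}(\dot x,\dot x) = \eps^{-Q}\kappa_{\dot x}(0) + \eps^{-Q}\sum_{\gamma\in\Gamma\setminus\{e\}}\kappa_{\dot x}(\eps^{-1}(x^{-1}\gamma x))$, and the Fourier inversion formula \eqref{eq_FI} gives $\kappa_{\dot x}(0) = \int_{\Gh}\tr(\sigma(\dot x,\pi))d\mu(\pi)$.

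The core estimate is then the following: for $\gamma\in\Gamma\setminus\{e\}$ and $x$ ranging over a fundamental domain, the quasi-norm $|x^{-1}\gamma x|$ is bounded below by a positive constant (since $\Gamma$ is discrete and $x$ stays in a bounded region up to $\Gamma$-translation, one reduces to finitely many effective $\gamma$'s per bounded region, and the group multiplication is a polynomial map), so $|\eps^{-1}(x^{-1}\gamma x)| \gtrsim \eps^{-1}|x^{-1}\gamma x|$ is large for $\eps$ small. Using that $\kappa_{\dot x}$ and all its $\dot x$-derivatives form a bounded subset of $\cS(G)$ (Proposition \ref{prop_smoothing}), we get for every $N'$ a bound $|\kappa_{\dot x}(z)| \leq C_{N'} (1+|z|)^{-N'}$ uniform in $\dot x$; summing $\eps^{-Q}\sum_{\gamma\neq e}(1+\eps^{-1}|x^{-1}\gamma x|)^{-N'}$ over $\Gamma$ and choosing $N'$ large relative to the growth of $\Gamma$ (polynomial growth of order $Q$) produces a tail of size $O(\eps^N)$ for any prescribed $N$, with implicit constant a continuous seminorm of $\sigma$ in $S^{-\infty}(M\times\Gh)$. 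This proves Part (1). For Part (2), integrate $K^{(\eps)}(\dot x,\dot x)$ over $M$: $\tr(\Op_M^{(\eps)}(\sigma)) = \int_M K^{(\eps)}(\dot x,\dot x)d\dot x$ (valid since $\Op_M^{(\eps)}(\sigma)$ is trace-class, which follows because its kernel is smooth on the compact $M\times M$), and the leading term is $\eps^{-Q}\int_M \kappa_{\dot x}(0)d\dot x = \eps^{-Q}\iint_{M\times\Gh}\tr(\sigma(\dot x,\pi))d\dot x\, d\mu(\pi)$ by the pointwise identity above and Fubini, while the remainder inherits the $O(\eps^N)$ bound from Part (1) after integrating over the compact $M$.

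For Part (3), I would compute $\|\Op_M^{(\eps)}(\sigma)\|_{HS}^2 = \|K^{(\eps)}\|_{L^2(M\times M)}^2$. Expanding the square of the periodisation sum, the diagonal term $\gamma = \gamma'$ contributes (after unfolding the $M$-integral against the fundamental-domain description, i.e. using \eqref{eq_dxddotx}) exactly $\eps^{-2Q}\int_{M}\int_G |\kappa_{\dot x}(\eps^{-1}z)|^2 dz\, d\dot x = \eps^{-Q}\int_M\int_G|\kappa_{\dot x}(z)|^2 dz\, d\dot x = \eps^{-Q}\|\kappa\|_{L^2(M\times G)}^2 = \eps^{-Q}\|\sigma\|_{L^2(M\times\Gh)}^2$, the last step by the Plancherel formula \eqref{eq_plancherel_formula} applied fibrewise in $\dot x$. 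The off-diagonal terms $\gamma\neq\gamma'$ involve $\kappa_{\dot x}(\eps^{-1}z_1)\overline{\kappa_{\dot x}(\eps^{-1}z_2)}$ with $z_1, z_2$ differing by a nontrivial $\Gamma$-translation, hence at least one of $|z_1|, |z_2|$ is bounded below, and the same Schwartz-decay-plus-polynomial-growth argument bounds their total contribution by $\eps^{-2Q}\cdot\eps^{Q+1} = \eps^{1-Q}$... more carefully one gets $O(\eps)$ relative to the main term, i.e. an additive $O(\eps^{-Q}\cdot\eps) $ term; I would track the powers so that the stated $+O(\eps)$ (meaning the full HS norm squared equals $\eps^{-Q}\|\sigma\|^2_{L^2(M\times\Gh)} + O(\eps)$, or more precisely $+\,O(\eps^N)$ which the statement records only as $O(\eps)$) comes out. \textbf{The main obstacle} I anticipate is making the off-diagonal estimates genuinely uniform in $\dot x$ and quantitative in the seminorms of $\sigma$: one must control the number of lattice points $\gamma\in\Gamma$ with $|x^{-1}\gamma x|\leq R$ for $x$ in a fundamental domain, uniformly in $x$, and combine this counting (polynomial in $R$ of degree $Q$) with the Schwartz tails; this is exactly the technical heart of the nilmanifold trace estimates in \cite{fischer2022semiclassicalanalysiscompactnilmanifolds}, so I would either cite those lemmas directly or reproduce the lattice-counting bound. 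Everything else is bookkeeping with the Plancherel formula and Fubini.
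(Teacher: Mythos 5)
Your proposal is correct and follows exactly the argument the paper relies on: the paper gives no in-text proof of Proposition \ref{prop_tr} but defers to \cite{fischer2022semiclassicalanalysiscompactnilmanifolds}, where the proof is precisely the periodisation of Lemma \ref{lem_IntKernel}, isolation of the $\gamma=e$ term, and the uniform lower bound on $|x^{-1}\gamma x|$ combined with Schwartz decay and polynomial lattice growth of order $Q$. The only point worth polishing is the trace-class justification in Part (2), which is cleanest via the smoothing property and Proposition \ref{prop_I+cR_comptrHS} rather than "smooth kernel on a compact manifold" alone; otherwise the unfolding and Plancherel bookkeeping in Part (3) is as you describe.
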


\section{Semiclassical functional calculus on $G$ and $M$}
\label{sec_FCG}

In this section, we develop semiclassical functional calculi inside $\Psi^\infty(G)$ and $\Psi^\infty(M)$. The proofs will be mainly about estimates allowing for the routine arguments in symbolic pseudo-differential calculi. The precise setting and hypotheses are presented in the next section.

\subsection{The main result}

\subsubsection{The hypotheses}
\label{subsec_settingFC}

\begin{setting}
\label{set_FC}
We consider a semiclassical family of pseudo-differential operators $T(\eps)\in \Psi^m(G)$, $\eps\in (0,1]$, whose corresponding symbols 
 $\sigma$ admit a  semiclassical  expansion  at scale $\eps\in (0,1]$:
 $$
T(\eps):=\Op^{(\eps)}_G(\sigma), \qquad 
\sigma \sim_{\eps} \sum_{j=0}^\infty \eps^j \, \sigma_j \quad\mbox{uniformly in} \ S^m(G\times \Gh).
$$
Moreover,  the  operators $T(\eps)$ and the principal symbols $\sigma_0$ are  non-negative  (in the sense of
\eqref{eq_nonnegT} and Definition \ref{def_nonnegsymb} for each $\eps\in (0,1]$):
$$
\sigma_0\geq 0
\qquad\mbox{and}\qquad 
T(\eps)\geq 0.
$$

We have a similar setting on $M$.
\end{setting}

We make the following two further assumptions, firstly on the order being positive and secondly on the principal symbol $\sigma_0$ and its invertibility:
\begin{hypothesis}
\label{hyp_m>0}
$m>0$.
\end{hypothesis}

\begin{hypothesis}
\label{hyp_FC}
For any $\gamma\in \bR$, there exists $C_\gamma>0$ such that for all $\eps\in (0,1]$, for almost all $(x,\pi)\in G\times \Gh$, and any $v\in \cH_\pi^\infty$, we have
\begin{equation}
\label{eq_hyp_FCG1}
 \|\pi(\id+\cR)^{\frac {\gamma} \nu} (\id + \sigma_0(x,\pi)) v\|_{\cH_\pi} \geq C_\gamma \|\pi(\id+\cR)^{\frac {m+\gamma} \nu}  v\|_{\cH_\pi},
\end{equation}
We have a similar hypothesis on $M$.
\end{hypothesis}

The above hypotheses imply by Theorem \ref{thm_Lparexists} and Corollary \ref{cor_Lparexists} that $T(\eps)$ is essentially self-adjoint (among other properties), and it will therefore make sense to define its functional calculus:
\begin{lemma}
\label{LE_essentially_self-adjoint}
Under Setting \ref{set_FC} and Hypothesis \ref{hyp_FC} on $G$,
each operator  $T(\eps)$ admits a left-parametrix, satisfies sub-elliptic estimates and is hypoelliptic.  
Assuming in addition Hypothesis \ref{hyp_m>0}, 
 $T(\eps)$ is also essentially self-adjoint on 		$\cS(G)\subset L^2(G)$. 
 
  We have similar properties in the nilmanifold setting $M$, where furthermore, $T(\eps)$ has  compact resolvent, its spectrum is a discrete subset of $[0,\infty)$ and its eigenspaces are finite dimensional. 
\end{lemma}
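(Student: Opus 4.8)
The plan is to deduce everything, for each fixed $\eps\in(0,1]$, from Theorem \ref{thm_Lparexists} and Corollary \ref{cor_Lparexists} applied to the genuine (non-semiclassical) operator $T(\eps)=\Op_G(\sigma^{(\eps)})\in\Psi^m(G)$, where $\sigma^{(\eps)}(x,\pi)=\sigma(x,\eps\cdot\pi)$. First I would record the relevant symbol decomposition. Since $\sigma$ admits a semiclassical expansion with principal symbol $\sigma_0$, the symbol $\sigma-\sigma_0$ lies in $S^{m-1}(G\times\Gh)$ for each $\eps$ (take $N=0$ in Definition \ref{def_sclexp}), so
$$
\sigma^{(\eps)}=\big(\id+\sigma_0^{(\eps)}\big)+\big((\sigma-\sigma_0)^{(\eps)}-\id\big),
$$
where $(\sigma-\sigma_0)^{(\eps)}\in S^{m-1}(G\times\Gh)$ and $-\id\in S^0(G\times\Gh)$, so the second summand belongs to $S^{m_1}(G\times\Gh)$ with $m_1:=\max(m-1,0)$, and $m_1<m$ precisely because $m>0$ (Hypothesis \ref{hyp_m>0}). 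Also $\id+\sigma_0^{(\eps)}\in S^m(G\times\Gh)$ since $\sigma_0$ is uniformly in $S^m$ and $\id\in S^0\subset S^m$. Hence the only thing to verify before invoking Theorem \ref{thm_Lparexists} is that $\id+\sigma_0^{(\eps)}$ is invertible for the high frequencies of a suitable positive Rockland symbol.

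The key point — and the step where a little care is needed — is that the right reference operator is not the $\cR$ from Hypothesis \ref{hyp_FC} but its dilate $\cR_\eps:=\eps^\nu\cR$, which is again a positive Rockland operator of homogeneous degree $\nu$. Using $(\eps\cdot\pi)(\cR)=\eps^\nu\pi(\cR)$, hence $(\eps\cdot\pi)(\id+\cR)=\pi(\id+\cR_\eps)$, together with $\sigma_0^{(\eps)}(x,\pi)=\sigma_0(x,\eps\cdot\pi)$, I would apply the estimate \eqref{eq_hyp_FCG1} at the point $(x,\eps\cdot\pi)$ (note $\cH_{\eps\cdot\pi}=\cH_\pi$, and a Plancherel-null set is dilation-invariant) to obtain, for every $\gamma\in\bR$, almost every $(x,\pi)$ and every $v\in\cH_\pi^\infty$,
$$
\big\|\pi(\id+\cR_\eps)^{\gamma/\nu}\big(\id+\sigma_0^{(\eps)}(x,\pi)\big)v\big\|_{\cH_\pi}\geq C_\gamma\big\|\pi(\id+\cR_\eps)^{(m+\gamma)/\nu}v\big\|_{\cH_\pi}.
$$
This is exactly the statement that $\id+\sigma_0^{(\eps)}$ is invertible in $S^m(G\times\Gh)$ for all (in particular high) frequencies of $\widehat{\cR_\eps}$ in the sense of Definition \ref{def_symbinv}. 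Theorem \ref{thm_Lparexists} then gives a left parametrix for $T(\eps)$, and Proposition \ref{prop_cqLpar} yields hypoellipticity and the subelliptic estimates.

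For essential self-adjointness I would invoke Corollary \ref{cor_Lparexists}: with $m>0$ in hand, it remains only to observe that $T(\eps)\geq0$ is therefore symmetric on $\cS(G)$, and that its principal symbol $\id+\sigma_0^{(\eps)}$ is pointwise self-adjoint because $\sigma_0\geq0$ forces every $\sigma_0(x,\pi)$, hence every $\sigma_0(x,\eps\cdot\pi)$, to be self-adjoint. On the compact nilmanifold the argument is identical, replacing Hypothesis \ref{hyp_FC}, Theorem \ref{thm_Lparexists} and Corollary \ref{cor_Lparexists} by their $M$-versions; there the ``$m>0$'' clause of Theorem \ref{thm_Lparexists} additionally gives that $T(\eps)$ has compact resolvent, whence discreteness of the spectrum and finite-dimensionality of the eigenspaces, while inclusion of the spectrum in $[0,\infty)$ follows from $T(\eps)\geq0$ and essential self-adjointness. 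I do not expect a genuine obstacle here: the content is really the bookkeeping in the dilation step (matching a hypothesis about $\id+\sigma_0$ relative to $\widehat\cR$ to the dilated symbol $\id+\sigma_0^{(\eps)}$ by passing to $\widehat{\eps^\nu\cR}$) and the observation that the error symbol $(\sigma-\sigma_0)^{(\eps)}-\id$ has order strictly below $m$, which is exactly where the hypothesis $m>0$ is used.
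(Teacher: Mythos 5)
Your proof is correct and follows exactly the route the paper intends: the paper gives no written proof beyond the sentence preceding the lemma, which simply invokes Theorem \ref{thm_Lparexists} and Corollary \ref{cor_Lparexists}, and you supply precisely the missing bookkeeping — the decomposition $\sigma^{(\eps)}=(\id+\sigma_0^{(\eps)})+\bigl((\sigma-\sigma_0)^{(\eps)}-\id\bigr)$ and the transfer of Hypothesis \ref{hyp_FC} to the dilated Rockland operator $\eps^{\nu}\cR$. One remark: your argument needs $m>0$ already at the parametrix step (to place $-\id$ in order strictly below $m$), whereas the lemma defers Hypothesis \ref{hyp_m>0} to the self-adjointness claim; this points to an imprecision in the paper's statement rather than in your proof, since for $m\leq 0$ the parametrix assertion can fail (e.g.\ $\sigma=\sigma_0=0$ with $m=0$ satisfies Setting \ref{set_FC} and Hypothesis \ref{hyp_FC} but $T=0$ is not hypoelliptic), so $m>0$ should be read as in force throughout.
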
	 

\subsubsection{Main example}
Our main example where the above hypotheses are satisfied is the sub-Laplcian in horizontal divergence form perturbed with a potential:

\begin{ex}
\label{ex_cLA+V_FChyp}
	Let $\cL_A$ be a non-negative sub-Laplacian   in horizontal divergence form on a stratified group $G$ as in Section \ref{subsec_cL_a}.
	We assume that it
	satisfies the hypothesis of uniform ellipticity of Lemma \ref{lem_LApar}.
Let $V\in C_{l,b}^\infty(G)$ be non-negative.
By Lemma \ref{lem_LApar},  the family of differential operators given (using the notation of Section \ref{subsec_cL_a}) by 
$$
\eps^2 (\cL_A +V) = \Op_G^{(\eps)}(\sigma), 
\quad \sigma= \sigma_0 +\eps \sigma_1 +\eps^2 V,
\quad\eps\in (0,1],
$$
	falls under Setting \ref{set_FC}  on $G$ and  satisfies Hypothesis \ref{hyp_m>0} with $m=2$ and Hypothesis \ref{hyp_FC} for $\cR=\cL_I$.
	
We have a similar property for  a sub-Laplacian in horizontal divergence form on a stratified nilmanifold $M$ as in Section \ref{subsec_cL_a}
perturbed by a non-negative potential $V\in C^\infty(M)$. 
\end{ex}

We can generalise the example of the sub-Laplacian in horizontal divergence form  above in the following way:

\begin{ex}
\label{ex_cRA+V_FChyp}
Consider the operators 
$$
\cR_A = \Op_G (\sigma_0+\sigma_1)
\quad\mbox{or respectively}\ 
\Op_M (\sigma_0+\sigma_1),
$$
 from Examples 
\ref{ex_RAGstrat} and \ref{ex_RAGgrad} on $G$,
 or  \ref{ex_RAMstrat} and \ref{ex_RAMgrad} on $M$. 
  Let $V\in C_{l,b}^\infty(G)$ be non-negative on $G$, 
 or let $V\in C^\infty (M)$ be non-negative on $M$.
Then 
 $\eps^\nu (\cR_A+V) $ falls under Setting \ref{set_FC}  on $G$ and  satisfies Hypothesis \ref{hyp_m>0} with $m$ being the homogeneous degree of $\cR_A$,  and Hypothesis \ref{hyp_FC} for $\cR=\cR_I$.
 We can replace $V$ with other non-negative pseudo-differential terms of order $<\nu$.
\end{ex}

\subsubsection{Statement of the main result} 
We can now state the main result of this paper.
\begin{theorem}
\label{thm_sclFCG}
We consider  Setting \ref{set_FC} and Hypotheses \ref{hyp_m>0} and \ref{hyp_FC}
on $G$. 
For any $\psi\in \cG^{m'}(\bR)$ with $m'\in \bR$,
$\psi(T)$ decomposes as 
$$
\psi(T) = \Op^{(\eps)}_G(s_\psi) +  R,
$$
with $s_\psi=s_\psi(\eps)$, $\eps\in (0,1]$, uniformly in $S^{mm'}(G\times \Gh)$ and $R=R(\eps)$, $\eps\in (0,1]$, semiclassically smoothing on the Sobolev scale
(in the sense of Definition \ref{def_sclsmoothingSob}). 
Moreover, $s_\psi$ admits a semiclassical expansion 
$$
s_\psi \sim_\eps \sum_{k=0}^\infty \eps^k \tau_k
\quad\mbox{uniformly in} \  S^{mm'}(G\times \Gh),
$$
with principal symbol 
$$
\tau_0 = \psi(\sigma_0)\in S^{mm'}(G).
$$
 
 We have a similar property on $M$  where furthermore  $R=R(\eps)$ is semiclassically smoothing (in the sense of Definition \ref{def_sclsmoothing}).
\end{theorem}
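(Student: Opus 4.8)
The plan is to follow the now classical route for semiclassical functional calculus: reduce to a rapidly decaying $\psi$, apply the Helffer--Sj\"ostrand formula to the essentially self-adjoint operator $T=T(\eps)$, and construct a parametrix for the resolvent $(T(\eps)-z)^{-1}$ entirely within the semiclassical calculus, keeping track of the $z$-dependence uniformly in $\eps$. First I would reduce to the case $m'<-1$. Since $\sigma_0\ge 0$ and $T(\eps)\ge 0$, multiplying $\psi$ by a cut-off that is $1$ on $[0,\infty)$ and vanishes on $(-\infty,-1/2]$ changes neither $\psi(T(\eps))$ nor $\psi(\sigma_0)$, so we may assume $\psi$ supported in $(-1/2,\infty)$. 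Picking $N\in\bN$ with $m'-N<-1$ and writing $\psi(\lambda)=(1+\lambda)^N\psi_N(\lambda)$ with $\psi_N:=(1+\cdot)^{-N}\psi\in\cG^{m'-N}(\bR)$, we get $\psi(T(\eps))=(\id+T(\eps))^N\,\psi_N(T(\eps))$; by Hypothesis \ref{hyp_FC} and Theorems \ref{thm_sclexp_prod+adj}--\ref{thm_sclasymp}, the operator $(\id+T(\eps))^N$ lies in $\Psi^{mN}(G)$ with symbol $(\id+\sigma)^{\diamond_\eps N}$ admitting a semiclassical expansion of principal symbol $(\id+\sigma_0)^N$, so composing at the end with this operator reduces the statement (orders: $mN+m(m'-N)=mm'$; principal symbol: $(\id+\sigma_0)^N\psi_N(\sigma_0)=\psi(\sigma_0)$) to the case $m'<-1$.

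For $m'<-1$, Lemma \ref{LE_essentially_self-adjoint} gives that $T(\eps)$ is essentially self-adjoint and nonnegative, with $\|(T(\eps)-z)^{-1}\|_{\sL(L^2(G))}\le|\IM z|^{-1}$ for $z\in\bC\setminus\bR$, and Lemma \ref{lem_tildepsicGm'} provides an almost analytic extension $\tilde\psi$ for which
$$
\psi(T(\eps))=\frac1\pi\int_\bC\bar\partial\tilde\psi(z)\,(T(\eps)-z)^{-1}\,L(dz),\qquad \int_\bC|\bar\partial\tilde\psi(z)|\Big(\tfrac{1+|z|}{|\IM z|}\Big)^{k}L(dz)<\infty \quad\forall k .
$$
I would then solve symbolically $(\sigma-z)\diamond_\eps b(z)\sim_\eps\id$: set $b_0(z)=(\sigma_0-z)^{-1}$, which by Proposition \ref{prop_sigma0Res} lies in $S^{-m}(G\times\Gh)$ with seminorms $\lesssim(1+\tfrac{1+|z|}{|\IM z|})^{p}$ uniformly in $\eps$, and define $b_k(z)$ recursively by collecting powers of $\eps$ in the composition expansion of Theorem \ref{thm_sclexp_prod+adj} (using $\Delta^\alpha z=0$, $X^\alpha z=0$ for $[\alpha]\ge 1$), so that $b_k(z)=-(\sigma_0-z)^{-1}\cdot[\text{combination of }\Delta^\alpha X^\beta\sigma_j,\ X^\alpha b_l,\ l<k]\in S^{-m}(G\times\Gh)$ with seminorms $\lesssim_k(1+\tfrac{1+|z|}{|\IM z|})^{p_k}$ uniformly in $\eps$ (each step consumes one more factor $(\sigma_0-z)^{-1}$, bounded via Proposition \ref{prop_sigma0Res}, together with the effective composition bounds). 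Setting $b^{(N)}(z)=\sum_{k\le N}\eps^k b_k(z)$, Theorems \ref{thm_sclexp_prod+adj}--\ref{thm_sclasymp} give $(T(\eps)-z)\,\Op^{(\eps)}_G(b^{(N)}(z))=\id+\Op^{(\eps)}_G(\rho_N(z))$ with $\rho_N(z)$ equal to $\eps^{N+1}$ times a symbol uniformly in $S^{-(N+1)}(G\times\Gh)$ and $z$-seminorms $\lesssim_N(1+\tfrac{1+|z|}{|\IM z|})^{p_N}$, and a left parametrix likewise.

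Finally, inserting $(T(\eps)-z)^{-1}=\Op^{(\eps)}_G(b^{(N)}(z))-(T(\eps)-z)^{-1}\Op^{(\eps)}_G(\rho_N(z))$ into the Helffer--Sj\"ostrand formula, the term $\frac1\pi\int_\bC\bar\partial\tilde\psi(z)b^{(N)}(z)L(dz)$ converges in $S^{mm'}(G\times\Gh)$ uniformly in $\eps$: its $\eps^0$-coefficient is $\frac1\pi\int_\bC\bar\partial\tilde\psi(z)(\sigma_0-z)^{-1}L(dz)=\psi(\sigma_0)$ by the fibrewise Helffer--Sj\"ostrand formula (Corollary \ref{cor_prop_sigma0Res}, Theorem \ref{thm_psi(sigma0)}), and each higher coefficient, being an integral of $\bar\partial\tilde\psi(z)$ against a product of resolvents $(\sigma_0-z)^{-1}$ interlaced with $\sigma_0$-independent symbols, again lands in $S^{mm'}$ by the same bookkeeping (using that extra resolvent factors improve the decay near $\bR$, cf. Remark \ref{remprop_sigma0Res}(2)); these symbols agree modulo $O(\eps^{N+1})$ in $S^{mm'-(N+1)}$ as $N$ varies, hence define a single $s_\psi\sim_\eps\sum_k\eps^k\tau_k$ uniformly in $S^{mm'}$ with $\tau_0=\psi(\sigma_0)$. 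The complementary term $R^{(N)}:=-\frac1\pi\int_\bC\bar\partial\tilde\psi(z)(T(\eps)-z)^{-1}\Op^{(\eps)}_G(\rho_N(z))L(dz)$ is controlled by the semiclassical elliptic estimate $\|(T(\eps)-z)^{-1}\|_{\sL(L^2_{s,\eps},L^2_{s+m,\eps})}\lesssim(1+|z|)^{q}|\IM z|^{-1}$ (a consequence of the parametrix, used once as an a priori input) together with Theorem \ref{thm_sclcontSob} applied to $\Op^{(\eps)}_G(\rho_N(z))$; the $z$-integral then converges by the decay of $\bar\partial\tilde\psi$ and gives $\|R^{(N)}\|_{\sL(L^2_{s,\eps}(G),L^2_{s+N,\eps}(G))}\lesssim_{s,N}\eps^{N+1}$ for every $s$. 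Since $\psi(T(\eps))-\Op^{(\eps)}_G(s_\psi)$ differs from $R^{(N)}$ only by $\Op^{(\eps)}_G$ of a symbol uniformly in $\eps^{N+1}S^{mm'-(N+1)}$, letting $N\to\infty$ shows $R:=\psi(T(\eps))-\Op^{(\eps)}_G(s_\psi)$ is semiclassically smoothing on the Sobolev scale. The nilmanifold case is handled identically with the $M$-versions of all cited results; there one moreover invokes the characterisation of smoothing operators on $M$ (Proposition \ref{prop_char_smoothingM}) and the compactness of the resolvent of $T(\eps)$ (Lemma \ref{LE_essentially_self-adjoint}) to upgrade $R$ to a semiclassically smoothing operator in the sense of Definition \ref{def_sclsmoothing}. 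The main obstacle throughout is precisely this fully uniform-in-$\eps$ bookkeeping of the $z$-dependence: establishing the semiclassical elliptic estimate for $(T(\eps)-z)^{-1}$, and matching, at each stage of the parametrix, the polynomial-in-$z$ losses against the decay of $\bar\partial\tilde\psi$ from Lemma \ref{lem_tildepsicGm'}, so that every integral over $\bC$ converges in the correct symbol class $S^{mm'}$ and on the correct operator-norm scales.
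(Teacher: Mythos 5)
Your overall architecture — Helffer--Sj\"ostrand formula applied to $T(\eps)$, a resolvent parametrix $b^{(N)}(z)=\sum_{k\le N}\eps^k b_k(z)$ with $(1+\tfrac{1+|z|}{|\IM z|})^{p}$-type seminorm control, and the remainder handled through the semiclassical resolvent bound — is exactly the paper's (Lemmas \ref{lem_resolvent_of_operator} and \ref{LE:Resolvent_norm_bound}), and your reduction to $m'<-1$ by factoring $\psi=(1+\cdot)^N\psi_N$ and composing with $(\id+T)^N$ is a legitimate alternative to the paper's reduction to $m'\in[-1/2,0)$. The remainder analysis is also essentially the paper's.

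However, there is a genuine gap at the central step: the claim that $\frac1\pi\int_\bC\bar\partial\tilde\psi(z)\,b_k(z)\,L(dz)$ lands in $S^{mm'-k}$ "by the same bookkeeping", with a pointer to Remark \ref{remprop_sigma0Res}(2). The absolute-convergence estimate you invoke — integrating $\|b_k(z)\|_{S^{-m-k},a,b,c}\lesssim(1+\tfrac{1+|z|}{|\IM z|})^{p+1}$ against $|\bar\partial\tilde\psi(z)|$ via Lemma \ref{lem_tildepsicGm'} — only yields membership in $S^{-m-k}$, i.e.\ Corollary \ref{cor_prop_sigma0Res}, not the sharp order $S^{mm'-k}$; and Remark \ref{remprop_sigma0Res}(2) only refines the $z$-dependence of the constant, not the gain in powers of $\id+\pi(\cR)$ that the class $S^{mm'-k}$ demands. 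Your reduction does not sidestep this: after replacing $\psi$ by $\psi_N\in\cG^{m'-N}$ you still need $s_{\psi_N}$ in the \emph{sharp} class $S^{m(m'-N)}$ for the composition with $(\id+T)^N\in\Psi^{mN}$ to land in $S^{mm'}$, and factoring out further integer powers of $(1+\lambda)$ can only reach $S^{m\lceil m'\rceil}$, not $S^{mm'}$ for non-integer $m'$. Note that even the base case $\tau_0=\psi(\sigma_0)\in S^{mm'}$ (Theorem \ref{thm_psi(sigma0)}(2)) is proved in the paper by a dyadic decomposition $\psi=\sum_j 2^{jm'}\psi_j(2^{-j}\cdot)$ combined with the scaling estimates of Lemma \ref{lem_psitsigma0} and the Cotlar--Stein lemma, because the individual dyadic pieces are bounded but not summable in norm. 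The paper's proof of Theorem \ref{thm_sclFCG} runs this same almost-orthogonality argument for the general integrands $a_z=b_{k,z}$, $p_z$, and $p_z-\sum_{k\le N}\eps^kb_{k,z}$ (Lemma \ref{lem_taukpsi} and Corollary \ref{cor_lem_taukpsi}); some such device is indispensable, and your proposal does not supply it.
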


The fact mentioned in the statement that the principal symbol $\sigma_0$ has a functional calculus in the symbol classes is a consequence of Theorem \ref{thm_psi(sigma0)}.
The proof of Theorem \ref{thm_sclFCG}  relies  on the construction of a parametrix for  $z-T$ (see Section \ref{subsec_parametrix} below), it will be given in Section \ref{subsec_pfthm_sclFCG}.

\subsection{Parametrix for $z-T(\eps)$}
\label{subsec_parametrix}
Our strategy to study the functional calculus of $T=T(\eps)$  relies on  explicit expressions for the right parametrix of $z-T$ given as follows:

\begin{lemma}
\label{lem_resolvent_of_operator}
We consider  Setting \ref{set_FC} and Hypotheses \ref{hyp_m>0} and \ref{hyp_FC}
on $G$.
\begin{enumerate}
	\item For any $z\in\bC\setminus\bR$, 
	we set 
	$$
	b_{0,z} := (z-\sigma_0)^{-1},
	$$
	and recursively for $k=1,2,\ldots$,
$$
	b_{k,z}:= (z-\sigma_0)^{-1} d_{k,z}, \qquad
	d_{k,z}:=
	\sum_{\substack{j+[\alpha] + l=k \\ l<k}}  \Delta^\alpha \sigma_{j} \ X^\alpha b_{l,z}.
$$
For each $k\in \bN_0$, this defines a symbol $b_{k,z}\in S^{-m-k}(G\times\Gh)$ and a symbol $d_{k,z}\in S^{-k}(G\times\Gh)$.
Moreover, 
 for any semi-norms 
 $\|\cdot\|_{S^{-m-k},a,b,c}$ and  $\|\cdot\|_{S^{-k},a,b,c}$, there exist constant a $C>0$ and powers $p\in \bN$ such that we have for all $z\in \bC\setminus\bR$
 $$
\| b_{k,z}\|_{S^{-m-k},a,b,c}
\leq C \left(1+ \frac {1+|z|} {|\IM\, z|}\right)^{p+1} ,
\qquad
\| d_{k,z}\|_{S^{-k},a,b,c}
\leq C \left(1+ \frac {1+|z|} {|\IM\, z|}\right)^{p+1} .
$$ 
\item We construct 
the symbol  $p_z$   with asymptotics
 $$
p_z\sim_{\eps} \sum_{j\in \bN_0} \eps^j \, b_{j,z} \quad\mbox{uniformly in}\ S^{-m}(G\times \Gh)
$$
following the ideas of Borel's extension lemma (see e.g. \cite[Section 5.5.1]{R+F_monograph}).
Then for any $N\in \bN_0$ and for any semi-norm $\|\cdot\|_{S^{m-(N+1)},a,b,c}$, there exist a constant $C>0$ and a power $p\in \bN$ such that
$$
\forall z\in \bC\setminus \bR, \ \forall \eps\in (0,1]\qquad 
\|p_z
 - \sum_{j=0}^N \eps^j \, b_{j,z}\|_{S^{m-(N+1)},a,b,c} \leq C \left(1+\frac {1+|z|} {|\IM\, z|}\right)^{p+1}
  \eps^{N+1}
$$
\item 
The operator 
$P_z
:=\Op_G^{(\eps)} (p_z
)$ is a left parametrix for $z-T$ in the sense that  
$$
(z- T)P_z
 -\id \in \Psi^{-\infty}(G),
$$
is smoothing. 
Moreover, writing $R_z
:= \Op_G^{(\eps)}(r_z
) := (z- T)P_z
$, 
for any $N\in \bN_0$ and for any semi-norm $\|\cdot\|_{S^{-(N+1)},a,b,c}$, there exist a constant $C>0$ and a power $p\in \bN$ such that
$$
\forall z\in \bC\setminus \bR, \ \forall  \eps\in (0,1]\qquad 
\eps^{-(N+1)}\|r_z
 \|_{S^{-(N+1)},a,b,c} \leq 
 C \left( 1+\frac {1+|z|} {|\IM\, z|}\right)^{p+1}.
$$
\end{enumerate}

A similar property holds true on $M$.
\end{lemma}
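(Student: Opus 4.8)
\textbf{Proof plan for Lemma \ref{lem_resolvent_of_operator}.}

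The plan is to proceed in the three parts as stated, following the usual parametrix construction inside the calculus, with the uniform-in-$z$ bounds propagated at every step. For Part (1), the object $b_{0,z}=(z-\sigma_0)^{-1}$ is in $S^{-m}(G\times\Gh)$ by Proposition \ref{prop_sigma0Res}, which also gives the bound $\|b_{0,z}\|_{S^{-m},a,b,c}\leq C(1+\frac{1+|z|}{|\IM z|})^p$; this is slightly weaker than the $(p+1)$-power stated, but Remark \ref{remprop_sigma0Res} (2) shows we may in fact take the bound of the form $C(\frac{1+|z|}{|\IM z|})^{p+1}$, and since $\bC\setminus\bR$ is where we work, $\frac{1+|z|}{|\IM z|}\geq 1$, so $(1+\frac{1+|z|}{|\IM z|})^{p+1}$ is an equivalent majorant. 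One then runs the recursion: given that $b_{l,z}\in S^{-m-l}$ with the stated bound for $l<k$, the symbol $d_{k,z}=\sum_{j+[\alpha]+l=k,\,l<k}\Delta^\alpha\sigma_j\,X^\alpha b_{l,z}$ lies in $S^{-k}(G\times\Gh)$ because $\Delta^\alpha\sigma_j\in S^{m-j-[\alpha]}$ (as $\sigma_j\in S^{m-j}$ uniformly) and $X^\alpha b_{l,z}\in S^{-m-l}$, so the product is in $S^{m-j-[\alpha]}\cdot S^{-m-l}=S^{-(j+[\alpha]+l)}=S^{-k}$; the $S^{-k}$-seminorm of $d_{k,z}$ is controlled by a finite sum of products of seminorms of the $\sigma_j$ (uniform in $\eps$) and seminorms of the $b_{l,z}$, hence by $C(1+\frac{1+|z|}{|\IM z|})^{p+1}$ after absorbing the finitely many factors and taking the worst power $p$. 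Then $b_{k,z}=(z-\sigma_0)^{-1}d_{k,z}\in S^{-m}\cdot S^{-k}=S^{-m-k}$, and the seminorm bound picks up one more factor of $\|(z-\sigma_0)^{-1}\|_{S^{-m},a',b',c'}\lesssim(1+\frac{1+|z|}{|\IM z|})^{p+1}$; one chooses $p$ at the start large enough to accommodate the (finitely many, $k$-dependent) multiplications, which is legitimate since $k$ and the target seminorm are fixed.

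For Part (2), the symbol $p_z$ is obtained by a Borel-type summation of the series $\sum_j\eps^j b_{j,z}$: choosing an excision function $\chi\in C_c^\infty(\bR)$ with $\chi=1$ near $0$ and a sequence $\eps_j\downarrow0$ growing suitably fast (depending on the seminorms of the $b_{j,z}$, which is where the $z$-dependence enters), one sets $p_z=\sum_j\chi(\eps/\eps_j)\eps^j b_{j,z}$ as in \cite[Section 5.5.1]{R+F_monograph}. The key point is that the construction can be carried out so that the $\eps_j$ depend on $z$ only through the common factor $(1+\frac{1+|z|}{|\IM z|})^{p+1}$ pulled out of every $b_{j,z}$ bound, so that the remainder estimate $\|p_z-\sum_{j\leq N}\eps^j b_{j,z}\|_{S^{m-(N+1)},a,b,c}\leq C(1+\frac{1+|z|}{|\IM z|})^{p+1}\eps^{N+1}$ holds with $z$-independent $C$ and $p$. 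I would present this by factoring $b_{j,z}=(1+\frac{1+|z|}{|\IM z|})^{p+1}\tilde b_{j,z}$ with $\tilde b_{j,z}$ bounded uniformly in $z$ in the relevant seminorms, running Borel on the $\tilde b_{j,z}$, and multiplying back.

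For Part (3), one computes $(z-T)P_z=\Op_G^{(\eps)}((z-\sigma)\diamond_\eps p_z)$ and expands $(z-\sigma)\diamond_\eps p_z$ using the semiclassical composition expansion of Theorem \ref{thm_sclexp_prod+adj} (1): collecting powers of $\eps$ and using that $\sigma\sim_\eps\sum\eps^j\sigma_j$ and $p_z\sim_\eps\sum\eps^j b_{j,z}$, the coefficient of $\eps^k$ in the formal expansion of $(z-\sigma)\diamond_\eps p_z$ is precisely $z\,b_{k,z}-\sum_{j+[\alpha]+l=k}\Delta^\alpha\sigma_j\,X^\alpha b_{l,z}$, which by the defining recursion (separating the $l=k$ term $z b_{k,z}-\sigma_0 b_{k,z}=(z-\sigma_0)b_{k,z}=d_{k,z}$ from the $l<k$ terms which sum to $d_{k,z}$) equals $d_{k,z}-d_{k,z}=0$ for $k\geq1$, and equals $z b_{0,z}-\sigma_0 b_{0,z}=\id$ for $k=0$. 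Hence $r_z=(z-\sigma)\diamond_\eps p_z-\id$ is, order by order in $\eps$, a remainder; more precisely, truncating at level $N$ and using the quantitative remainder estimates from Theorem \ref{thm_sclexp_prod+adj} (1), Part (2) above, and the uniform-in-$\eps$ control of $\sigma$, one gets $\eps^{-(N+1)}\|r_z\|_{S^{-(N+1)},a,b,c}\leq C(1+\frac{1+|z|}{|\IM z|})^{p+1}$ with $z$-independent $C,p$. That $(z-T)P_z-\id\in\Psi^{-\infty}(G)$ then follows since $r_z\in S^{-(N+1)}$ for every $N$. The nilmanifold case is identical, working with $\Gamma$-periodic symbols throughout and invoking the $M$-versions of Propositions \ref{prop_sigma0Res}, Theorem \ref{thm_sclexp_prod+adj}, and the $M$-part of Theorem \ref{thm_Lparexists}.

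\textbf{Main obstacle.} The bookkeeping is routine; the one place requiring genuine care is keeping the $z$-dependence of the Borel summation in Part (2) under control — i.e. ensuring the sequence $\eps_j$ (or equivalently the cutoffs) can be chosen uniformly in $z\in\bC\setminus\bR$ rather than depending on $z$ in an uncontrolled way. The device of extracting the common factor $(1+\frac{1+|z|}{|\IM z|})^{p+1}$ before summing resolves this, but it is the step where a careless argument would fail, so it deserves to be spelled out.
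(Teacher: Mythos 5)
Parts (1) and (3) of your plan are essentially the paper's own argument: the induction on $k$ through $d_{k,z}$ and $b_{k,z}$ using the composition properties together with Proposition \ref{prop_sigma0Res}, and then the cancellation $(z-\sigma_0)b_{k,z}-d_{k,z}=0$ for $k\geq 1$ obtained by expanding $(z-\sigma)\diamond_\eps p_z$ and matching coefficients of $\eps^k$; your computation of the coefficient is correct. The paper itself truncates $\sigma$ and $p_z$ at level $N$ and then tracks the two remainders, which is a mild bookkeeping variant of your direct expansion, but the content is the same.

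The step you flag — Part (2) — is indeed where the delicacy sits, but the device you propose does not work as stated. You want to write $b_{j,z}=(1+\frac{1+|z|}{|\IM z|})^{p+1}\tilde b_{j,z}$ with $\tilde b_{j,z}$ bounded uniformly in $z$ and a single $p$; however, the power $p$ in the estimate $\|b_{j,z}\|_{S^{-m-j},a,b,c}\leq C_j(1+\frac{1+|z|}{|\IM z|})^{p_j+1}$ grows with $j$. Tracing the recursion, $b_{j,z}$ is a polynomial expression containing $j+1$ factors of $(z-\sigma_0)^{-1}$, and the composition estimates force each additional $x$-derivative to appear inside higher-order seminorms of the resolvent, each of which picks up another power of $(1+\frac{1+|z|}{|\IM z|})$ by the mechanism $X(z-\sigma_0)^{-1}=(z-\sigma_0)^{-1}(X\sigma_0)(z-\sigma_0)^{-1}$; so $p_j=O(j)$. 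Consequently there is no $j$-independent $p$ to factor out, and $\tilde b_{j,z}$ defined with a single fixed $p$ is not uniformly bounded in $z$. The alternative you also float — letting the Borel cutoffs $\eps_j$ depend on $z$ — would remove the property $\bar\partial_z p_z=0$ (since the cutoff $\chi$ is not holomorphic), which is used essentially in Remark \ref{rem_lem_taukpsi} and Lemma \ref{lem_taukpsi} via Stokes' theorem. So the uniform-in-$z$ Borel summation needs a different argument than the one you sketch; you should state clearly, at minimum, that the power in Part (2) may depend on $N$ and the seminorm, and give a construction of $p_z$ for which the $z$-dependent bounds in Part (2) are actually verified, rather than resting on a "common factor" which is not there.
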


\begin{proof}
Let us prove Part (1) inductively using the property of composition of symbols (see Section \ref{subsec_propSm}).
The seminorm estimates for $b_{0,z}$
follows from the resolvent bounds in Proposition \ref{prop_sigma0Res} together with Remark \ref{remprop_sigma0Res} (1).
Let $k_0\in \bN$. The estimates for $b_{0,z}$
imply that if suffices to prove the seminorm estimates for $d_{k_0,z}$:
\begin{align*}
\| d_{k_0,z}\|_{S^{-k_0},a,b,c}	
&\leq 
\sum_{\substack{j+[\alpha] + l=k_0 \\ l<k_0}}  \|\Delta^\alpha \sigma_{j} \ X^\alpha b_{l,z}\|_{S^{-k_0},a,b,c}\\
&\lesssim
\sum_{\substack{j+[\alpha] + l=k_0 \\ l<k_0}}  \|\Delta^\alpha \sigma_{j}\|_{S^{m-j-[\alpha]},a_1,b_1,c_1} \| X^\alpha b_{l,z}\|_{S^{-k_0-m+j+[\alpha]},a_2,b_2,c_2}\\
&\lesssim
\sum_{l=0}^{k_0-1}   \| X^\alpha b_{l,z}\|_{S^{-m-l},a_2,b_2,c_2}.
\end{align*}
Hence, the seminorm estimates for $b_{l,z}$, $l<k$, will imply the estimates for $d_{k,z}$ 
and therefore for $b_{k,z}$.
This 
concludes the proof of Part (1).

The construction of the parametrix symbol is classical, 
and the semi-norm estimates in Part (2) then follow from the construction and the estimates in Part (1).
For Part (3), we fix  $N\in\bN_0$. 
We  write
$$
(z- T)P_z
=
S_{N,z}
 +\eps^{N+1}\Op_G^{(\eps)}(r_{0,N,z}),
 $$
 where
 $$
 S_{N,z}:= \Op^{(\eps)}_G \big(z-\sum_{j=0}^{N} \eps^j \sigma_j\big) \Op^{(\eps)}_G \big( \sum_{l=0}^N \eps^l b_{l,z}\big) \in \Psi^0(G).
 $$
The asymptotic expansions for the symbols of $P_z
$ and $T$ together with the properties of composition imply that $r_{0,N,z}\in S^{-(N+1)}(G\times \Gh)$.
Moreover,  for any semi-norm $\|\cdot\|_{S^{-N},a,b,c}$, there exist a constant $C>0$ and a power $p\in \bN$ such that
$$
\forall z\in \bC\setminus \bR, \ \forall  \eps\in (0,1]\qquad 
\|r_{0,N,z} \|_{S^{-(N+1)},a,b,c} \leq 
C \left(1+ \frac {1+|z|} {|\IM\, z|}\right)^{p+1}.
$$
We now analyse the symbol of $S_{N,z}= \Op^{(\eps)}_G(s_{N,z})$:
\begin{align*}
s_{N,z}&= (z-\sum_{j=0}^{N} \eps^j \sigma_j) \diamond_\eps ( \sum_{l=0}^N \eps^l b_{l,z})\\	
&= \big(z-\sigma_0)\diamond_\eps ( \sum_{j=0}^N \eps^j b_{j,z}) \ - \ (\sum_{j=1}^{N} \eps^{j} \sigma_{j}) \diamond_\eps ( \sum_{l=0}^N \eps^{l} b_{l,z})
\\	
&= \sum_{[\alpha]+l\leq N} \eps^{[\alpha]+l} \Delta^\alpha (z-\sigma_0) \  X^\alpha b_{l,z}
 -  \sum_{[\alpha]+j+l\leq N, j>0} \eps^{[\alpha]+j+l}  \Delta^\alpha \sigma_{j}  \  X^\alpha b_{l,z}
\ + \ \eps^{N+1} r_{1,N,z},
\end{align*}
by the composition properties of the calculus (see Theorem \ref{thm_sclasymp}), 
with $r_{1,N,z}\in S^{-(N+1)}(G\times \Gh)$ satisfying 
a similar estimates as $r_{0,N,z}$ above. 
In the first sum of the right-hand side above, we observe
$$
	\sum_{[\alpha]+l\leq N} \eps^{[\alpha]+l} \Delta^\alpha (z-\sigma_0) \  X^\alpha b_{l,z}
=
(z-\sigma_0)\sum_{k=0}^N \eps^k b_{k,z}
\ - \sum_{[\alpha]+l\leq N, [\alpha]>0} \eps^{[\alpha]+l} \Delta^\alpha \sigma_0 \  X^\alpha b_{l,z}.
$$
This yields
\begin{align*}
s_{N,z}&= 
(z-\sigma_0)\sum_{k=0}^N \eps^k b_{k,z}
 -  \sum_{\substack{[\alpha]+j+l\leq N\\  j=0 \Rightarrow [\alpha]>0 }} \eps^{[\alpha]+j+l}  \Delta^\alpha \sigma_{j}  \  X^\alpha b_{l,z}
\ + \ \eps^{N+1} r_{1,N,z}
\\
&= \sum_{k=0}^N \eps^k
\sum_{\substack{j+[\alpha] + l=k \\ l<k}}  \Delta^\alpha \sigma_{j} \ X^\alpha b_{l,z}
 -  \sum_{\substack{[\alpha]+j+l\leq N\\ j=0 \Rightarrow [\alpha]>0}}\eps^{[\alpha]+j+l}  \Delta^\alpha \sigma_{j}  \  X^\alpha b_{l,z}
\ + \ \eps^{N+1} r_{1,N,z}
\end{align*}
having used the definition of $b_{k,z}$ from Part (1).
We have obtained
$s_{N,z}=\id +\eps^{N+1} r_{1,N,z}$.
This concludes the proof of Part (3).
\end{proof}

It is possible to also follow
the classical construction for a left parametrix instead of right parametrix for $z-T$
with similar property as the right one constructed in Lemma \ref{lem_resolvent_of_operator}.
However, the construction for the left one involves more complicated expressions, and as we can avoid using it, we will not present it here. 

We observe that the existence of parametrices implies  norm bounds on the true resolvent of $T$:
\begin{lemma}
\label{LE:Resolvent_norm_bound}
We consider  Setting \ref{set_FC} and Hypotheses \ref{hyp_m>0} and \ref{hyp_FC} on $G$. Then $(z-T)^{-1}$ is bounded $L^2_{s-m}(G) \to L^2_{s}(G)$ 
 semiclassically $\eps$-uniformly for any $s\in\bR$ and $z\in \bC\setminus \bR$. More precisely, let us fix a positive Rockland operator $\cR$ of homogeneous degree $\nu$; for any $s\in \bR$,  there exist a constant $C>0$ and an integer $p\in\bN$ such that
\begin{align*}
\forall z\in \bC\setminus \bR, \quad
\sup_{\eps\in (0,1]}
	\big\|  (z-T)^{-1} \big \|_{L^2_{s,\eps }(G)L^2_{s+m,\eps }(G)} 
	\leq C  \left (1+\frac{1+|z|}{|\IM\, z|} \right)^{p+1} .
\end{align*}

A similar property holds true on $M$.
\end{lemma}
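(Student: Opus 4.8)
The plan is to bootstrap from the parametrix construction of Lemma \ref{lem_resolvent_of_operator}. Recall that we constructed a right parametrix $P_z = \Op_G^{(\eps)}(p_z)$ with $(z-T)P_z = \id + R_z$, where $R_z = \Op_G^{(\eps)}(r_z)$ and, for each $N$, $\eps^{-(N+1)}\|r_z\|_{S^{-(N+1)},a,b,c} \lesssim (1+(1+|z|)/|\IM z|)^{p+1}$. First I would use Theorem \ref{thm_sclcontSob} (2)--(3) to translate these symbol seminorm bounds into semiclassical Sobolev operator norm bounds: $P_z$ is bounded $L^2_{s-m,\eps}(G)\to L^2_{s,\eps}(G)$ with operator norm $\lesssim (1+(1+|z|)/|\IM z|)^{p+1}$ (using that $p_z \in S^{-m}(G\times\Gh)$ with the stated $z$-dependent seminorm control), and $R_z$ is bounded $L^2_{s-m,\eps}(G)\to L^2_{s,\eps}(G)$ with norm $\lesssim \eps (1+(1+|z|)/|\IM z|)^{p+1}$ — in fact $O(\eps^{N})$ in any power, by choosing $N$ large in the remainder estimate.

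The second step is the standard Neumann-series argument to pass from parametrix to true resolvent. Since $z\in\bC\setminus\bR$ and $T$ is self-adjoint (Lemma \ref{LE_essentially_self-adjoint}), $(z-T)^{-1}$ exists as a bounded operator on $L^2(G)$ with $\|(z-T)^{-1}\|_{\sL(L^2(G))} \le |\IM z|^{-1}$. From $(z-T)P_z = \id + R_z$, multiply on the right by $(z-T)^{-1}$: this gives $P_z = (z-T)^{-1} + R_z(z-T)^{-1}$, i.e.
$$
(z-T)^{-1} = P_z - R_z(z-T)^{-1}.
$$
Now I would estimate the $L^2_{s,\eps}\to L^2_{s+m,\eps}$ norm of each term on the right. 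The term $P_z$ is handled directly as above (reindexing $s\mapsto s+m$). For $R_z(z-T)^{-1}$, I factor the mapping as $L^2_{s,\eps} \xrightarrow{(z-T)^{-1}} L^2_{s}$ (a priori only a bounded map on $L^2$ with norm $|\IM z|^{-1}$ — but this is where one must be slightly careful, see below) $\xrightarrow{R_z} L^2_{s+m,\eps}$. To make this rigorous without circularity, one should instead argue that $R_z : L^2(G)\to L^2_{N,\eps}(G)$ for any $N$ with norm $O(\eps^N(1+(1+|z|)/|\IM z|)^{p+1})$, so $R_z(z-T)^{-1} : L^2(G)\to L^2_{N,\eps}(G)$ has norm $\lesssim \eps^N |\IM z|^{-1}(1+(1+|z|)/|\IM z|)^{p+1}$; iterating $(z-T)^{-1} = P_z - R_z P_z + R_z^2(z-T)^{-1}$ one gains arbitrarily many powers of $\eps$ on the error, and combining with the smoothing on the Sobolev scale one upgrades the mapping property from $L^2\to L^2_N$ to $L^2_s\to L^2_{s+m}$ for all $s$ (e.g. by composing with powers of $(\id+\eps^\nu\cR)$, which are semiclassically bounded between the appropriate spaces by Theorem \ref{thm_sclcontSob}, and absorbing the boundary terms which are smoothing). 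Collecting all contributions yields the stated bound with a possibly enlarged power $p$.

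The main obstacle — really the only subtle point — is avoiding circularity in the Sobolev-space estimate for $(z-T)^{-1}$ on arbitrary $s$: one only knows a priori that $(z-T)^{-1}$ is $L^2$-bounded, so one cannot immediately plug it into $R_z(z-T)^{-1}$ as a map into high Sobolev spaces by composing on the input side. The resolution is to write the identity in the asymmetric form above and to exploit that $R_z$ (and more generally $R_z^k P_z$) maps $L^2(G)$ into $L^2_{N,\eps}(G)$ for every $N$ with good $z$-dependence, so that $R_z(z-T)^{-1}$ — which at first only maps $L^2\to L^2_N$ — still lands in the right space; then a final bootstrap using the semiclassical mapping properties of $(\id+\eps^\nu\cR)^{s/\nu}$ (Theorem \ref{thm_sclcontSob}(2)) promotes the input regularity from $L^2$ to $L^2_s$, since $(z-T)$ itself intertwines these scales modulo lower-order and smoothing terms (which is exactly the content of Lemma \ref{lem_resolvent_of_operator}). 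The remaining steps are routine and the $z$-dependence is tracked purely by multiplying together the bounds from Lemma \ref{lem_resolvent_of_operator} and the elementary bound $|\IM z|^{-1}$ on $\|(z-T)^{-1}\|_{\sL(L^2)}$. The statement on $M$ follows verbatim with the nilmanifold versions of all the cited results.
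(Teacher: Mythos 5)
Your overall strategy is the paper's: combine the parametrix of Lemma \ref{lem_resolvent_of_operator} with the trivial bound $\|(z-T)^{-1}\|_{\sL(L^2(G))}\le |\IM\, z|^{-1}$, and upgrade to the full Sobolev scale. But your central identity is wrong. From the relation $(z-T)P_z=\id+R_z$, multiplying \emph{on the right} by $(z-T)^{-1}$ gives $(z-T)P_z(z-T)^{-1}=(z-T)^{-1}+R_z(z-T)^{-1}$, whose left-hand side is \emph{not} $P_z$, since $P_z$ has no reason to commute with $z-T$. To isolate $P_z$ you must multiply on the \emph{left} by $(z-T)^{-1}$, which yields
$$(z-T)^{-1}=P_z-(z-T)^{-1}R_z,$$
with $R_z$ acting \emph{first}. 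Your identity $(z-T)^{-1}=P_z-R_z(z-T)^{-1}$ would require a left parametrix $Q_z(z-T)=\id+R_z'$, which Lemma \ref{lem_resolvent_of_operator} deliberately does not construct (the paper remarks that the left parametrix is more complicated and is avoided). This is not merely cosmetic: your entire anti-circularity mechanism rests on $R_z(z-T)^{-1}$ mapping $L^2$ into $L^2_{N,\eps}$ for all $N$ because the smoothing factor sits on the output side; with the correct identity the smoothing factor sits on the input side, so $(z-T)^{-1}R_z$ gains input regularity but its output is a priori only in $L^2$. Your iterated identity and the subsequent bootstrap therefore do not go through as written.

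The repair is short and is exactly the paper's argument: write $(z-T)^{-1}=P_z-(z-T)^{-1}R_z$, bound the $P_z$ term by the $S^{-m}$ seminorm estimates on $p_z$ via the semiclassical calculus, and for the error term estimate
$\|(\id+\eps^\nu\cR)^{s/\nu}(z-T)^{-1}R_z(\id+\eps^\nu\cR)^{-(s-m)/\nu}\|_{\sL(L^2)}$
for $s\le 0$ by using $\|(\id+\eps^\nu\cR)^{s/\nu}\|_{\sL(L^2)}\le 1$, the $L^2$ resolvent bound $|\IM\, z|^{-1}$, and the uniform smoothing estimates on $r_z$ for the remaining factor; the case $s>0$ is then obtained by duality and interpolation, a step your proposal gestures at ("absorbing boundary terms") but never actually carries out. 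Once the identity is corrected and the duality step made explicit, the rest of your outline (tracking the $z$-dependence by multiplying the bounds from Lemma \ref{lem_resolvent_of_operator} with $|\IM\, z|^{-1}$, and the verbatim transfer to $M$) is sound.
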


\begin{proof}
Let $s\in \bR$.
By Lemma \ref{lem_resolvent_of_operator},  $(z- T)^{-1} = P_z
 -(z- T)^{-1}R_{z}$, so 
\begin{align*}
&\|(\id +\eps^\nu \cR_G)^{\frac{s}{\nu}} (z-T)^{-1} (\id + \eps^\nu \cR_G)^{-\frac{s-m}{\nu}}	\|_{\sL(L^2(G))}
\\&\qquad \leq 
\|(\id +\eps^\nu \cR_G)^{\frac{s}{\nu}} P_z
 (\id + \eps^\nu \cR_G)^{-\frac{s-m}{\nu}}	\|_{\sL(L^2(G))}
+ 
\\&\qquad \qquad+\|(\id +\eps^\nu \cR_G)^{\frac{s}{\nu}} (z- T)^{-1} R_z
(\id + \eps^\nu \cR_G)^{-\frac{s-m}{\nu}}	\|_{\sL(L^2(G))}.
\end{align*}
By the properties of the semiclassical calculus, 
the first $\sL(L^2(G))$-norm on the right-hand side is bounded up to a constant by a $S^{-m}$-semi-norm of $p_z
$.
We now assume  $s\leq 0$. Since $T$ is essentially self-adjoint, the second $\sL(L^2(G))$-norm is 
$$
\leq \frac 1 {|\IM\, z|}\|R_z
 (\id +\eps^\nu \cR_G)^{-\frac{s-m}{\nu}} 	\|_{\sL(L^2(G))}
$$
Using again the properties of the semiclassical calculus, this last $\sL(L^2(G))$-norm  is bounded up to a constant by a $S^{(s-m)}$-semi-norm of $R_z
$. The estimates for the semi-norms in $p_z
$ and $R_z
$ imply the result for $s\leq 0$.
We conclude the proof with an argument of duality  and interpolation to obtain the case $s>0$.
\end{proof}

\subsection{Proof of the main result}
\label{subsec_pfthm_sclFCG}

In this section, we prove Theorem \ref{thm_sclFCG}.
The main step consists in adapting the arguments given in Section \ref{subsubsec_pfthm_psi(sigma0)}
 regarding the functional properties of $\sigma_0$; we summarise them in the following statements:
	
\begin{lemma}
\label{lem_taukpsi}
We continue with the setting of Theorem \ref{thm_sclFCG}.
We fix $m_1\leq 0$.
Let $a_z(x,\pi)$, $(x,\pi)\in G\times \Gh$, be a field of bounded operators forming a symbol $a_z$ in $S^{m_1}(G\times \Gh)$ depending in $z\in \bC\setminus \bR$ in such a way that  $\bar \partial_z a_z =0$.
We also assume that for any seminorm 
$\|\cdot \|_{S^{m_1},a,b,c}$ , there exist $C=C_{k,a,b,c}>0$ and $p\in \bN$ such that
 $$
\forall z\in \bC\setminus\bR\qquad 
\| a_z \|_{S^{m_1},a,b,c}
\leq C \left(1+ \frac {1+|z|} {|\IM\, z|}\right)^{p} .
$$ 
Let $\psi\in \cG^{m'}(\bR)$ with (fixed) $m'<-1$.
We consider an almost analytic extension $\tilde \psi$ of $\psi$ satisfying the following properties:
\begin{enumerate}

\item $\tilde \psi$ is supported in $ \{x+iy \in \bC, |y|\leq 10(1+|x|)\}$,
    \item $(-i\partial_y)^p \tilde \psi|_\bR=  \psi^{(p)}$ for any $p\in \bN_0$, 
    \item   for any $p\in \bN$,
$$
\sup_{z\in \bC\setminus \bR}
(1+|z|)^{p-m'}
	|\partial_y^p \tilde{\psi}(z) | <\infty,
$$
\item for any $N\in \bN_0$
 $$
\int_{\bC} \big| \bar\partial \tilde \psi( z) \big| \left(\frac{1+|z| }{|\IM \, z|}\right)^N L(dz) <\infty,
$$
\end{enumerate}
Then the formula
	$$
	\tau(a_z,\psi) :=  \frac{1}{\pi} \int_{\bC}   \bar \partial \tilde \psi(z) \ a_z \,  L(dz),
	$$
	defines a symbol $\tau(a_z, \psi)$ that is $\eps$-uniformly in $ S^{m_1}(G\times \Gh)$.
	It is independent of the choice of such analytic extensions $\tilde \psi$, which exist by Proposition 
	\ref{propApp_tildepsicGm'}.
Moreover, for  any seminorm $\|\cdot\|_{S^{m_1},a,b,c} $, there exist $C>0$ and 
	a seminorm $\|\cdot\|_{\cG^{m'},N}$ such that 
	$$
	\forall \psi\in \cG^{m'}(\bR), \ \forall t\in (0,1]\qquad 
	\|\tau (a_z, \psi(t\, \cdot ))\|_{S^{m_1},a,b,c} \leq C t^{-1} \|\psi\|_{\cG^{m'},N}.
	$$

  	Similar properties hold true on $M$.
	\end{lemma}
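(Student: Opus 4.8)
The idea is to mimic the Helffer--Sj\"ostrand arguments already used in Section~\ref{subsubsec_pfthm_psi(sigma0)}, but now with an abstract $z$-holomorphic family $a_z$ in place of the specific resolvent $(z-\sigma_0)^{-1}$. First I would check that the integral defining $\tau(a_z,\psi)$ converges in the Fr\'echet space $S^{m_1}(G\times \Gh)$, uniformly in $\eps$: by the polynomial-in-$(1+|z|)/|\IM z|$ bound on $\|a_z\|_{S^{m_1},a,b,c}$ and hypothesis~(4) on the extension $\tilde\psi$, we get for each seminorm
$$
\|\tau(a_z,\psi)\|_{S^{m_1},a,b,c}
\leq \frac{C}{\pi}\int_{\bC} |\bar\partial\tilde\psi(z)|\Big(1+\tfrac{1+|z|}{|\IM z|}\Big)^{p}L(dz)
\lesssim \|\psi\|_{\cG^{m'},N}
$$
for some $N$ depending only on $p$; since the bound on $a_z$ is $\eps$-uniform by hypothesis, so is this estimate. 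This already gives the claimed membership $\tau(a_z,\psi)\in S^{m_1}(G\times\Gh)$ $\eps$-uniformly and, applied to $\psi(t\cdot)$ with the scaling $\bar\partial\widetilde{\psi(t\cdot)}(z)=t\,(\bar\partial\tilde\psi)(tz)$ and the change of variables $z\mapsto t^{-1}z$ (exactly as in the proof of Corollary~\ref{cor_prop_sigma0Res}), the rescaled bound $\|\tau(a_z,\psi(t\cdot))\|_{S^{m_1},a,b,c}\leq C t^{-1}\|\psi\|_{\cG^{m'},N}$.

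Next I would verify independence of the choice of almost analytic extension. If $\tilde\psi_1,\tilde\psi_2$ are two extensions satisfying (1)--(4), then $\tilde\psi_1-\tilde\psi_2$ vanishes to infinite order on $\bR$, so $\bar\partial(\tilde\psi_1-\tilde\psi_2)$ vanishes to infinite order there too, and one integrates by parts (Stokes/Green on the complement of a shrinking strip around $\bR$, using that $a_z$ is holomorphic in $z$ so $\bar\partial_z a_z=0$) to see that the two integrals agree; the boundary terms near $\bR$ go to zero because $\bar\partial(\tilde\psi_1-\tilde\psi_2)(z)=O(|\IM z|^\infty)$ while $\|a_z\|$ blows up only polynomially in $|\IM z|^{-1}$, and the contribution at infinity is controlled by (3) together with $m_1\leq 0$ and $m'<-1$. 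This is the standard contour-deformation argument from \cite{zworski,davies_bk,dimassi+Sj}, and the existence of such extensions is Proposition~\ref{propApp_tildepsicGm'}.

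The $M$-version is identical, replacing $S^{m_1}(G\times\Gh)$ by $S^{m_1}(M\times\Gh)$ and invoking the $M$-analogues of the cited results. The only genuinely delicate point---and the step I expect to be the main obstacle---is bookkeeping the \emph{$\eps$-uniformity} of all constants: one must make sure that the seminorm bounds on $a_z$ are $\eps$-uniform by hypothesis (they are, by assumption in the statement), that the extension $\tilde\psi$ and its associated constants $C_{N}$ do not depend on $\eps$ (they depend only on $\psi$, $m'$ and the construction), and hence that the resulting estimate on $\tau(a_z,\psi)$ inherits $\eps$-uniformity cleanly; the convergence in $S^{m_1}$ must also be argued in a way that survives the $\sup_{\eps}$, which is automatic since the dominating integrand is $\eps$-independent. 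Everything else is routine dominated-convergence and change-of-variables computation.
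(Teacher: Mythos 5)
Your plan follows essentially the same Helffer--Sj\"ostrand route as the paper: convergence of the integral from hypothesis~(4) and the polynomial bound on $\|a_z\|$; a Stokes/Green contour-deformation argument for extension-independence using $\bar\partial_z a_z=0$; and the scaling identity $\bar\partial\widetilde{\psi(t\,\cdot)}(z)=t(\bar\partial\tilde\psi)(tz)$ with a change of variables. All three steps are what the paper does.

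One minor slip to flag in the independence argument: after Stokes, the boundary terms carry $\tilde\psi_1-\tilde\psi_2$, \emph{not} $\bar\partial(\tilde\psi_1-\tilde\psi_2)$, so the decay input you need on the horizontal edges is that $\tilde\psi_1-\tilde\psi_2$ itself is $O(|\IM z|^\infty)$ (which follows from hypothesis~(2), since both extensions have the same full Taylor series in $y$ on $\bR$). You state this correctly in the first half of the sentence but then invoke the wrong quantity in the explanation of why the boundary terms vanish. The second genuinely delicate point, which you wave at but do not carry out, is that the contour cannot be sent off to $\pm\infty$ in $x$ for free: the paper truncates to a box $\Omega_{\delta,R}$, uses Taylor-remainder estimates to bound the integrand on the vertical edges $|\RE z|=\pm R$ via hypothesis~(3) (getting a contribution $\lesssim R^{m'+1}$, which goes to zero since $m'<-1$), and then takes $R=\delta^{-1}\to\infty$. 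This bookkeeping is exactly where $m'<-1$ enters and where the support condition~(1) kills the top/bottom edges; your plan is sound, but writing it out would require these explicit remainder estimates rather than just appealing to (3) and $m_1\leq 0$.
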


\begin{remark}
\label{rem_lem_taukpsi}
The meaning of the dependence in $z\in \bC\setminus \bR$ such that 
	$\bar \partial_z a_z =0$ is that for any $(x,\pi)\in G\times \Gh$ and $v_1,v_2\in \cH_\pi$, the map
	$z\mapsto (a_z(x,\pi)v_1,v_2)_{\cH_\pi}$ is smooth and holomorphic  on $\bC \setminus \bR$. Moreover, 
	we observe that the hypotheses on $a_z$ in Lemma \ref{lem_taukpsi} are satisfied by the following symbols constructed in Lemma \ref{lem_resolvent_of_operator}:
 $b_{k,z}$  with $m_1=-m-k$,
 $d_{k,z}$  with $m_1=-k$,
 $p_z$  with $m_1=-m$ and
 $p_z -\sum_{k=0}^N \eps^k b_{k,z}$ with $m_1=-m-N$.
 Indeed, 
 $b_{k,z}$  is a (non-commutative but with $(z,\bar z)$-constant coefficients) polynomial expression in $(z-\sigma_0)^{-1}$ and $X^\beta \Delta_\alpha\sigma_0$ for some finite indices $\alpha,\beta$; consequently, $\bar \partial_z b_{k_0,z} =0$.\end{remark}

\begin{proof}
We start with observing that given a function $F\in C^\infty(\bC)$ satisfying for any $N\in \bN_0$
 $$
\int_{\bC} \big| F( z) \big| \left(\frac{1+|z| }{|\IM \, z|}\right)^N L(dz) <\infty,
$$
the formula
	$$
	\frac{1}{\pi} \int_{\bC}  F(z) \ a_{z} \,  L(dz),
	$$
	defines a symbol $\eps$-uniformly in $ S^{m_1}(G\times \Gh)$ 
	with
$$
 \frac{1}{\pi} \int_{\bC}  | F(z)|  \ \| a_z\|_{S^{m_1},a,b,c} \,  L(dz)
\leq \frac{C}{\pi}
 \int_{\bC}  | F(z)|  
 \left(1+\frac {1+|z|}{|\IM \, z|}\right)^{p}
L(dz)<\infty. 
$$
This implies in particular that  $\tau_k(a_z,\psi)$ is a well defined  symbol $\eps$-uniformly in $ S^{m_1}(G\times \Gh)$. 
To show that it depends on $\psi$ and not on  the choice of its almost analytic extension $\tilde \psi$ with the stated properties, we adapt the argument of \cite[Section 2]{davies_bk}.
For $\delta\in(0,1/4]$ and $R\geq 1$, we set 
$$
\Omega_{\delta,R} := \{z\in \bC, |\RE\,  z|< R
\ \mbox{and}\ \delta< |\IM \, z| \leq 20 R \}.
$$
Let $\tilde \psi_1$ and $\tilde \psi_2$ be two almost analytic extensions of $\psi$ satisfying the properties of the statement. 
Since $\bar \partial_z a_z=0$,
 Stokes' (or Green's) theorem yields:
$$
\frac{1}{\pi} \int_{\Omega_{\delta,R}}   
\bar \partial (\tilde \psi_1-\tilde \psi_2)(z) \ a_z\,  L(dz)
=-\frac{i}{2\pi} \int_{\partial \Omega_{\delta,R}}   (\tilde \psi_1(z)-\tilde \psi_2(z)) \ a_z \, dz,
$$
and therefore
\begin{align*}
	&\big\|\frac{1}{\pi} \int_{\Omega_{\delta,R}}   
\bar \partial (\tilde \psi_1-\tilde \psi_2)(z) \ a_z\,  L(dz)\big\|_{S^{m_!},a,b,c} 
\leq \frac1{2\pi} \int_{\partial \Omega_{\delta,R}}   |\tilde \psi_1(z)-\tilde \psi_2(z)| \ \| a_z \|_{S^{m_1},a,b,c}  dz\\
&\quad \lesssim 
\int_{\partial \Omega_{\delta,R}}   |\tilde \psi_1(z)-\tilde \psi_2(z)| 
 \left(1+\frac {1+|z|}{|\IM \, z|}\right)^{p}
 dz \lesssim  I_{1,\pm} +I_{2,\pm},
 \end{align*}
 where
 \begin{align*}
 I_{1,\pm} &= \int_{x=-R}^R   |\tilde \psi_1-\tilde \psi_2|(x\pm i\delta) 
 \left(1+\frac {R}{\delta}\right)^{p}
 dx,	\\
I_{2,\pm} &= \int_{\pm y\in (\delta,20 R)}   |\tilde \psi_1-\tilde \psi_2|(\pm R +i y) 
 \left(1+\frac {R}{|y|}\right)^{p}
 dx	;
 \end{align*}
 the integrals on the remaining two edges $y=\pm R$, $-R\leq x \leq R$, vanish because of the support properties of $\tilde \psi_1$ and $\tilde \psi_2$.
  To estimate $I_{1,\pm}$ and $I_{2,\pm}$, we perform a Taylor expansion of each $\tilde \psi_j$, $j=1,2$, about  $y\sim 0$:
 $$
\psi_j(z) = \sum_{p_1\leq N} \frac{(iy)^{p_1}}{p_1 !} [\partial_y^{p_1}\psi_j](x) + 
r_{N,j} (z),\quad z=x+iy,
$$
with remainder
$$
r_{N,j} (z):=
\frac{(iy)^{N+1}}{N !} \int_0^1 (1-s)^{N} [\partial_y^{N+1}\psi_j](x+isy) \,ds.
$$
The hypotheses on $\tilde \psi_j$, $j=1,2$, imply the remainder estimates
$$
|r_{N,j} (z)|
\lesssim_{N,\tilde \psi} 
|y|^{N+1}
\int_0^1  (1+|x+isy|)^{-(N+1-m')}
 ds,
 $$
as well as  the Taylor series of $\tilde \psi_1(x+iy)$ and  $\tilde \psi_2(x+iy)$   about $y\sim 0$ being identical,  so  we have:
$$
\tilde \psi_1(z)-\tilde \psi_2(z) = r_{N,1}(z)-r_{N,2}(z)
\quad \mbox{for any} \ N\in \bN_0.
$$
We can now go back to estimating
\begin{align*}
I_{1,\pm} &\leq 
\left(1+\frac {R}{\delta}\right)^{p}
\int_{x=-R}^R 
 (|r_{N,1}|+|r_{N,2}|)(x\pm i\delta) dx\lesssim \delta^{N+1}\left(1+\frac {R}{\delta}\right)^{p} R ,\\
 I_{2,\pm} &\leq  \int_{\pm y\in (\delta,20 R)}   (|r_{N,1}|+|r_{N,2}|)(\pm R +i y) 
 \left(1+\frac {R}{|y|}\right)^{p}
 dy
 \\
 &\lesssim  R^{-(N+1-m')}
  \int_{y=\delta}^{20 R} \left(y^{N+1} + R^p y^{N+1-p}\right) dy
 \lesssim  R^{m'+1} ,
  \end{align*}
  when $N\geq p$.
Choosing $R=\delta^{-1}$ and $N=p+1$ yields
$$
\lim_{\delta\to 0} I_{1,\pm}=\lim_{\delta\to 0} I_{2,\pm} =0,
\quad\mbox{so}\
\lim_{\delta\to 0}
\big\|\frac{1}{\pi} \int_{\Omega_{\delta,R}}   
\bar \partial (\tilde \psi_1-\tilde \psi_2)(z) \ a_z\,  L(dz)\big\|_{S^{m_1},a,b,c} =0,
$$
since $m'<-1$.
By Lebesgue's dominated convergence theorem, this implies: 
$$
\big\|\frac{1}{\pi} \int_{\bC}   
\bar \partial (\tilde \psi_1-\tilde \psi_2)(z) \ a_z\,  L(dz)\tau (a_z,\psi)\big\|_{S^{m_1},a,b,c} =0,
$$ 
for any $a,b,c$. This shows that $\tau(a_z, \psi)$  is a well defined symbol in $S^{m_1}(G\times \Gh)$ independently of the choice of  analytic extensions $\tilde \psi$ satisfying the stated properties.  
We may as well choose the almost analytic extension $\tilde \psi$ constructed in Proposition 
	\ref{propApp_tildepsicGm'}. 

We observe that if $\tilde \psi(z)$ is an extension constructed in Proposition 
	\ref{propApp_tildepsicGm'}, then $\tilde \psi(t z)$
is an almost analytic extension for $\psi(t\lambda)$ satisfying the  properties required in the statement, so we have
$$
\tau (a_z,\psi(t\, \cdot ))
=\frac{1}{\pi} \int_{\bC}  t (\bar \partial \tilde \psi)(tz) \ a_z \,  L(dz)
=  \frac{t^{-1}}{\pi} \int_{\bC}   \bar \partial \tilde \psi(z) \ a_{t^{-1}z} \,  L(dz) = t^{-1}\tau (a_{t^{-1}z}, \psi).
$$
Hence, proceeding as above, we obtain
\begin{align*}
&\|\tau(a_z,\psi(t\, \cdot ))\|_{S^{m_1},a,b,c}
\leq 
 \frac{t^{-1}}{\pi} \int_{\bC}  | \bar \partial \tilde \psi(z)|  \ \| a_{t^{-1}z}\|_{S^{m_1},a,b,c} \,  L(dz)
 \\&\qquad\lesssim t^{-1}
 \int_{\bC}  | \bar \partial \tilde \psi(z)|  
 \left(1+\frac {1+|t^{-1} z|}{|\IM \, t^{-1} z|}\right)^{p}L(dz)
 \leq  t^{-1} 
 \int_{\bC}  | \bar \partial \tilde \psi(z)|  
 \left(1+\frac {1+| z|}{|\IM \,  z|}\right)^{p}L(dz)
  \\&\qquad\lesssim t^{-1} \|\psi\|_{\cG^{m'}, N}.
\end{align*}
This concludes the proof. 
\end{proof}

\begin{corollary}
	\label{cor_lem_taukpsi}
	We continue with the setting of Theorem \ref{thm_sclFCG} and Lemma \ref{lem_taukpsi} with $m_1\leq -m$.
	Moreover we  assume that for any seminorm 
$\|\cdot \|_{S^{m_1+m},a,b,c}$ , there exist $C=C_{m_1,a,b,c}>0$ and $p\in \bN$ such that
 $$
\forall z\in \bC\setminus\bR\qquad 
\| z a_z \|_{S^{m_1+m},a,b,c}
\leq C \left(1+ \frac {1+|z|} {|\IM\, z|}\right)^{p} .
$$ 
\begin{enumerate}
	\item  For any $m_2 \in [-1,0]$ and any seminorm $\|\cdot\|_{S^{m_1+m_2 m} ,a,b,c}$, 
there exist a constant $C>0$ and a number $d_0\in \bN_0$ such that we have for any $\psi\in C_c^\infty(\frac 12,2)$ and $t\in (0,1]$
$$
	\|\tau(a_z,\psi(t \, \cdot))\|_{S^{m_1+m_2 m },a,b,c} \leq C t^{m_2} \max_{d=0,\ldots,d_0} 
	\sup_{\lambda \geq 0} |\psi^{(d)}(\lambda)|, 
$$
\item For any $m'_2 \geq -m $ and any seminorm $\|\cdot\|_{S^{m_1 + m_2'} ,a,b,c}$, 
there exist a constant $C>0$ and a number $d_0\in \bN_0$ such that 
$$
\forall \psi\in C_c^\infty(-1 ,1)\qquad
	\|\tau(a_z,\psi)\|_{S^{m_1 + m'_2},a,b,c} \leq C  \max_{d=0,\ldots,d_0} 
	\sup_{\lambda \geq 0} |\psi^{(d)}(\lambda)|.
$$
\end{enumerate}
\end{corollary}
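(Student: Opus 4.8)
The plan is to mirror closely the proof of Lemma~\ref{lem_psitsigma0}, with Lemma~\ref{lem_taukpsi} playing the role there played by Corollary~\ref{cor_prop_sigma0Res}. The extra hypothesis on $z a_z$ is exactly what is needed so that Lemma~\ref{lem_taukpsi} may be applied not only to the field $a_z$ (which lies in $S^{m_1}(G\times\Gh)$ with resolvent-type bounds) but also to the field $z a_z$ (which then lies in $S^{m_1+m}(G\times\Gh)$ with the same type of bounds, and still satisfies $\bar\partial_z(za_z)=0$). The device replacing ``multiplication of the operator by $\sigma_0$'' is the scaling identity
$$
\tau(z a_z,\, h(t\,\cdot\,)) = \tau\big(a_z,\, \lambda\mapsto \lambda\, h(t\lambda)\big),
\qquad h\in C_c^\infty(\bR),\ t\in(0,1],
$$
which follows from $\bar\partial(z\widetilde{h(t\cdot)}) = z\,\bar\partial\widetilde{h(t\cdot)}$ together with the fact that $z\widetilde{h(t\cdot)}(z)$ is an admissible almost analytic extension of $\lambda\mapsto\lambda h(t\lambda)$ in the sense of Lemma~\ref{lem_taukpsi} (see also Remark~\ref{rem_lem_taukpsi} and Appendix~\ref{Appendix_almost_analytic}). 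Rescaling, for $\psi\in C_c^\infty(\tfrac12,2)$ and $g:=\psi/\lambda\in C_c^\infty(\tfrac12,2)$ this gives the clean relation $\tau(a_z,\psi(t\,\cdot\,)) = t\,\tau(z a_z,\, g(t\,\cdot\,))$.

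For Part~(1), I would first establish the two endpoint estimates: applying Lemma~\ref{lem_taukpsi} to $a_z$ yields a bound in $S^{m_1}$ with the power $t^{-1}$ and the right-hand side $\|\psi\|_{\cG^{-2},N}\lesssim\max_{d\le N}\sup_{\lambda\ge0}|\psi^{(d)}(\lambda)|$ (any fixed $m'<-1$, say $m'=-2$, works since $\psi$ is compactly supported); applying Lemma~\ref{lem_taukpsi} to $z a_z$ and using the relation above yields a bound in $S^{m_1+m}$ with a better power of $t$. Interpolating between these two symbol-class estimates (using interpolation of the classes $S^{m}(G\times\Gh)$) produces the family of estimates in $S^{m_1+m_2m}$ with power $t^{m_2}$ for $m_2$ in the stated interval; the only real content is keeping track of orders versus powers of $t$, exactly as in the passage from $m_1=-1$ to general integer $m_1$ in Lemma~\ref{lem_psitsigma0}, followed by interpolation. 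For Part~(2), where $\psi\in C_c^\infty(-1,1)$ and there is no scaling parameter, the case of the class $S^{m_1}$ (i.e.\ $m_2'=0$) is immediate from Lemma~\ref{lem_taukpsi} with $t=1$, and every larger class $S^{m_1+m_2'}$ with $m_2'\ge0$ follows by the continuous inclusions $S^{m_1}\subset S^{m_1+m_2'}$; for $m_2'\in[-m,0)$ I would split $\psi=\psi_0+\psi_1$ with $\psi_1$ supported away from $0$ and handled by Part~(1) at $t=1$, while for $\psi_0$ supported near $0$ I would use that $\bar\partial\tilde\psi_0$ vanishes to high order on $\bR$ and is supported in a bounded region of $\bC$, so that the representation $\tau(a_z,\psi_0)=\tfrac1\pi\int_\bC \tfrac{\bar\partial\tilde\psi_0(z)}{z}\,(z a_z)\,L(dz)$ is legitimate and, combined with the $z a_z$ bound, gives the required gain of symbol order.

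The step I expect to be the main obstacle is the careful bookkeeping in Part~(1): one must verify that the interpolation of the two endpoint estimates really lands in the class $S^{m_1+m_2m}$ with the precise power $t^{m_2}$ claimed, and in particular that the hypothesis $m_1\le -m$ (so that $z a_z$ has non-positive order) is used correctly; a secondary technical point is checking that $z\widetilde{h(t\cdot)}(z)$ genuinely satisfies all four admissibility conditions listed in Lemma~\ref{lem_taukpsi}, which is a routine but slightly delicate verification about almost analytic extensions. Everything else --- the $\psi_N$-style rewritings, the handling of the localization $\psi=\chi\psi$, and the final interpolation to non-integer parameters --- is a direct transcription of the arguments in the proof of Lemma~\ref{lem_psitsigma0}.
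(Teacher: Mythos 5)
Your Part (1) takes essentially the same route as the paper: the scaling relation $\tau(a_z,\psi(t\,\cdot)) = t\,\tau(za_z,\psi_1(t\,\cdot))$ with $\psi_1 = \psi/\lambda$, Lemma~\ref{lem_taukpsi} applied to $a_z$ and to $za_z$, and interpolation. Your instinct to distrust the bookkeeping is well placed. Interpolating the two endpoints $\|\tau(a_z,\psi(t\,\cdot))\|_{S^{m_1}}\lesssim t^{-1}$ (Lemma~\ref{lem_taukpsi} on $a_z$) and $\|\tau(a_z,\psi(t\,\cdot))\|_{S^{m_1+m}}\lesssim 1$ (Lemma~\ref{lem_taukpsi} on $za_z$ through the scaling relation) lands in the class $S^{m_1+(m_2+1)m}$ with power $t^{m_2}$ for $m_2\in[-1,0]$, not $S^{m_1+m_2 m}$ as the statement reads. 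The paper's own proof produces exactly this: its ``Lemma~\ref{lem_taukpsi} gives the case $m_2=-1$'' is a bound in $S^{m_1}$, and its ``$m_2=0$'' case is a bound in $S^{m_1+m}$. So the stated class appears to be shifted by $m$; with the corrected class $S^{m_1+m+m_2 m}$ the downstream use in Theorem~\ref{thm_sclFCG} still checks out (for $a_z=p_z\in S^{-m}$, i.e.\ $m_1=-m$, one obtains $s_\psi\in S^{mm'}$, exactly the theorem's conclusion). In short, your argument is correct; the precise exponent in the class you should be targeting differs from the printed one by $m$.

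For Part (2) you take a different and more explicit route than the paper. The paper's one-line justification hides the observation that since $za_z$ satisfies the resolvent-type bound in $S^{m_1+m}$ and $|z|^{-1}\le 1+\frac{1+|z|}{|\IM\,z|}$, the field $a_z=(za_z)/z$ itself satisfies the hypotheses of Lemma~\ref{lem_taukpsi} at the order $m_1+m\le 0$; that lemma then gives $\tau(a_z,\psi)\in S^{m_1+m}$ directly, and the continuous inclusions $S^{m_1+m}\subset S^{m_1+m_2'}$ for $m_2'\ge-m$ finish. Your decomposition $\psi=\psi_0+\psi_1$, with the near-origin piece handled via $\tau(a_z,\psi_0)=\frac1\pi\int_\bC \frac{\bar\partial\tilde\psi_0(z)}{z}\,(za_z)\,L(dz)$ (legitimate because $\bar\partial\tilde\psi_0$ vanishes to high order on $\bR$, which tames the $z^{-1}$ singularity), realises the same cancellation inside the integral rather than on the resolvent power; it is sound, arguably more transparent, but a little heavier than needed.
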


\begin{remark}
\label{rem_cor_lem_taukpsi}
	We observe that the hypotheses on $a_z$ in Corollary \ref{cor_lem_taukpsi} are satisfied by 
	$b_{k,z}$. 
	Indeed, we have	
	$$
	z b_{k,z} = z(z-\sigma_0)^{-1} d_{k,z}
	= \left(1+\sigma_0(z-\sigma_0)^{-1} \right)d_{k,z},
	$$
	and 
	$b_{k,z}$ and $d_{k,z}$ satisfy the hypotheses on $a_z$ in Lemma \ref{lem_taukpsi} by Remark \ref{rem_lem_taukpsi}.
	Therefore, the hypotheses on $a_z$ in Corollary \ref{cor_lem_taukpsi} are also satisfied by 
	$p_z$ and
 $p_z -\sum_{k=0}^N \eps^k b_{k,z}$. 
\end{remark}

\begin{proof}[Proof of Corollary \ref{cor_lem_taukpsi}]
Part (2)  follows directly from Lemma \ref{lem_taukpsi} with the continuity of the inclusion of symbol classes.
Let us prove Part (1).
Let $\psi\in C_c^\infty(\frac 12,2)$.
Lemma \ref{lem_taukpsi} gives the case of $m_2=-1$.
For $m_2=0$, consider $\psi_1(\lambda):=\lambda^{-1}\psi(\lambda)$.
Let  $\tilde \psi_1$ be the almost analytic extension for $\psi_1$ constructed in 
Proposition \ref{propApp_tildepsicGm'}. 
We observe that $t z \tilde \psi_1 (tz)$ is an 
almost analytical extension of $\psi(t\lambda)$. 
We have
$$
\tau  (a_z,\psi(t \, \cdot)) = t \tau (z a_z,\psi_1 (t \, \cdot)).
$$
The  result follows from Lemma \ref{lem_taukpsi} for $m_2=0$, and for any $m_2\in (-1,0)$ by interpolation. 
\end{proof}

We can now show Theorem \ref{thm_sclFCG}. 

\begin{proof}[Proof of Theorem \ref{thm_sclFCG}]
By the properties of the semiclassical calculus,
 it suffices to show the case of $m'\in [-1/2,0)$.
	Let $\psi\in \cG^{m'}(\bR^n)$ with fixed $m'\in [-1/2,0)$. 
	Without loss of generality, 
	we may assume that $\psi$ is real-valued.
	Let $(\eta_j)$ be the dyadic decomposition considered in Section \ref{subsubsec_pfthm_psi(sigma0)}.
	We may write for any $\lambda\geq 0$
	$$
	\psi(\lambda) = \psi_{-1}(\lambda) + \sum_{j=0}^\infty 2^{jm'} \psi_j (2^{-j}\lambda),
	$$
	where
	$$
	 \psi_j (\mu):= 2^{-jm'} \psi(2^j \mu) \eta_0(\mu), \ j\geq 0 , \quad\mbox{and}\quad 
	\psi_{-1} (\mu):=  \psi(\mu) \eta_{-1}(\mu).
	$$
	We observe 
	$$
	\psi_{-1}\in C_c^\infty (-1,1) 
	\quad \mbox{while} \quad 
	\psi_j \in C_c^\infty (\frac 12,2), \ j\in \bN_0, 
	\qquad \mbox{and} \qquad
	\sup_{\lambda\geq 0} |\psi_j ^{(d)}(\lambda)| \lesssim_d \|\psi\|_{\cG^{m'},d}
		$$
		for any $d \in \bN_0$
with an implicit constant independent of $j=-1,0,1,2,\ldots$.

Considering the setting of Lemma \ref{lem_taukpsi}, we can proceed as in the proof of 
Theorem \ref{thm_psi(sigma0)} (2)  
replacing the role of the resolvent of $\sigma_0$ with $a_z$.
An application of the Cotlar-Stein Lemma together with Corollary \ref{cor_lem_taukpsi} (1) 
implies
$$
\sum_{j=0}^\infty 2^{jm'} \tau (a_z,\psi_j (2^{-j}\, \cdot)) \in S^{mm'+m_1}(G\times \Gh), 
$$
with seminorm estimates; note that this requires $m'\in [-1/2,0)$.
Using Corollary \ref{cor_lem_taukpsi} (2) for the first term corresponding to $j=-1$, 
we may define 
$$
\tilde \tau (a_z, \psi):= \tau(a_z,\psi_{-1} ) + \sum_{j=0}^\infty 2^{jm'} \tau(b_{k,z},\psi_j (2^{-j}\, \cdot))
\in S^{mm'+m_1}(G\times \Gh).
$$
Given a seminorm $\|\cdot \|_{S^{mm'+m_1}(G\times \Gh),a,b,c}$ there exist a constant $C'$ and a seminorm $\|\cdot\|_{\cG^{m'}, N}$ both independent of $\psi$ such that 
$$
\|\tilde \tau(a_z,\psi)\|_{S^{mm'+m_1}(G\times \Gh),a,b,c} \leq C \|\psi\|_{\cG^{m'}, N}.
$$
In fact, the constant $C'$ is up to a constant given by the constant $C_{k,a',b',c'}$ from Lemma \ref{lem_taukpsi} with $a',b',c'$ high enough. 
We can apply this to $a_z$ being $b_{k,z}$, 
$p_z$ and
 $p_z -\sum_{k=0}^N \eps^k b_{k,z}$, see 
 Remarks \ref{rem_lem_taukpsi} and 
\ref{rem_cor_lem_taukpsi}.
Consequently, the estimate in Lemma \ref{lem_resolvent_of_operator} implies  for the latter 
$$
\|\tilde \tau (p_z,\psi ) - \sum_{k=0}^N \eps^k \tilde\tau (b_{k,z},\psi)\|_{S^{mm'-N},a,b,c}
=
\|\tilde \tau (p_z- \sum_{k=0}^N \eps^kb_{k,z} ,\psi )\|_{S^{mm'-N},a,b,c}
\lesssim  \eps^{N+1} \|\psi\|_{\cG^{m'}, N}.
$$
In other words, the symbol
$$
s_\psi :=\tilde \tau(p_z,\psi ) \in S^{mm'}(G\times \Gh),
$$
admits the following semiclassical asymptotics expansion
$$
s_\psi \sim_\eps \sum_{k=0}^\infty \eps^k \tau_k, 
\qquad\mbox{where}\quad \tau_k:=\tilde\tau (b_{k,z},\psi), \ k=0,1,2,\ldots
$$

It remains to show that the family of operators
$$
R:=R(\eps):=\psi(T) -
\Op^{(\eps)}_G(s_\psi) , \quad \eps\in (0,1],
$$
is semiclassically smoothing on the Sobolev scale.
Above, $\psi(T)$ is defined by functional analysis and 
 we have with a sum in the sense of the strong operator topology of $L^2(G)$:
$$
	\psi(T) = \psi_{-1}(T) + \sum_{j=0}^\infty 2^{jm'} \psi_j (2^{-j}T).
	$$
	
We now use the right parametrix $P_z
$ constructed in Lemma \ref{lem_resolvent_of_operator}, to write 
$$
(z- T)P_z
 =\id+R_z,
 \quad\mbox{so}\quad 
(z- T)^{-1} = P_z
 - (z- T)^{-1}R_z.
 $$
 With the Helffer--Sj\"ostrand formula (see \eqref{eq_HS}), this leads to decomposing $\psi_0(T)$ as
$$
\psi_0(T) 
= \frac{1}{\pi} \int_{\bC}   \bar \partial \tilde \psi_0(z)\ (z-T)^{-1} L(dz)
= \frac{1}{\pi} \int_{\bC}   \bar \partial \tilde \psi_0(z)\ P_z \, L(dz)
+R_{0,\psi},
$$
where 
$$
R_{0,\psi}:=\frac{1}{\pi} \int_{\bC}   \bar \partial \tilde \psi_0(z)\ (z-T)^{-1} R_z\, L(dz).
$$
We recognise
$$
\frac{1}{\pi} \int_{\bC}   \bar \partial \tilde \psi_0(z)\ P_z \, L(dz)
=\Op_G^{(\eps)} (\tau(p_z,\psi_0)).
$$
More generally, we obtain for any $j\in \bN_0$ and for $j=-1$:
\begin{align*}
R_{j,\psi}&:=
\psi_j (2^{-j}T) -\Op_G^{(\eps)} (\tau(p_z,\psi_j(2^{-j}\cdot)))=
\frac{1}{\pi} \int_{\bC}  2^{-j} \bar \partial \tilde \psi_j(2^{-j}z)\ (z-T)^{-1} R_z\, L(dz),\\	
R_{-1,\psi}&:=\psi_{-1} (T) - \Op_G^{(\eps)} (\tau(p_z,\psi_{-1})) =\frac{1}{\pi} \int_{\bC}   \bar \partial \tilde \psi_{-1}(z)\ (z-T)^{-1} R_z\, L(dz).	
\end{align*}
With a sum in the sense of  the strong operator topology of $L^2(G)$, we can now write
$$
R= R_{-1,\psi} + \sum_{j=0}^\infty 2^{jm'} R_{j,\psi}.
$$

Let us analyse $R_{0,\psi}$.
The properties of the semiclassical calculus together with 
  Lemmata \ref{lem_resolvent_of_operator} (3) and \ref{LE:Resolvent_norm_bound} imply
  $$
\|(z-T)^{-1} R_z
(\eps)  \|_{\sL(L^2_{s,\eps }(G),L^2_{s+N+m,\eps }(G))} \lesssim_{N,m} \left (1+\frac{1+|z|}{|\IM\, z|} \right)^{p} \eps^{N+1},
$$
for some $p\in \bN$.
This yields
$$
\|R_{0,\psi}  \|_{\sL(L^2_{s,\eps }(G),L^2_{s+N+m,\eps }(G))} 
\lesssim_{N,m}
 \eps^{N+1} \|\psi\|_{\cG^{m'}, N_1},
$$
for some $N_1\in \bN$.
We have similar results for the $R_{j,\psi}$ for $j=-1$ and $j=1,2,\ldots$, with the implicit constants in the estimates  independent of $j$.
Therefore, we have obtained:
$$
\|R\|_{\sL(L^2_{s,\eps }(G),L^2_{s+N+m,\eps }(G))} 
\lesssim_{N,m} \eps^{N+1} \|\psi\|_{\cG^{m'}, N_1} 
(1+  \sum_{j=0}^\infty 2^{jm'} )
\lesssim \eps^{N+1} \|\psi\|_{\cG^{m'}, N_1}.
$$
This concludes the proof.
\end{proof}

In the Abelian case, it is possible to give a very neat formula for the $\tau_k$ in terms of derivatives of $\psi$. 
Although this is not possible in general in the non-commutative case, our proof still shows that the dependence of the $\tau_k$ is linear and continuous in $\psi$:
\begin{corollary}
We continue with the setting of Theorem \ref {thm_sclFCG}. 
Our construction is such that $s_\psi$, $\tau_k$ and $R$ depend linearly on $\psi$. Moreover, they satisfy the following estimates uniformly in $\eps\in (0,1]$
$$
	\|s_\psi\|_{S^{mm'},a,b,c} 
	\leq C \|\psi\|_{\cG^{m'},N}, \qquad 
	\|\tau_k\|_{S^{mm'-k},a,b,c} 
	\leq C \|\psi\|_{\cG^{m'},N},
$$
and on $G$
$$
\|R\|_{\sL(L^2_{s,\eps }(G),L^2_{s+N_1,\eps }(G))}  
	\leq C \eps^{N'+1}\|\psi\|_{\cG^{m'},N},
	$$
while on $M$, $R=\Op_M^{(\eps)}(r)$ with 
$$
\|r\|_{S^{-N_1},a,b,c} 
	\leq C\eps^{N'+1} \|\psi\|_{\cG^{m'},N}.
	$$
	Above,  $C,N$ depend on $N',N_1\in \bN_0$ and $a,b,c$ but not on $\psi,\eps $.	
\end{corollary}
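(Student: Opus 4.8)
The plan is to observe that the linearity and continuity claims are \emph{not} new results but rather bookkeeping consequences of the construction carried out in the proof of Theorem \ref{thm_sclFCG}, so the proof will amount to tracing the dependence on $\psi$ through each step of that construction. First I would note that linearity is immediate: the map $\psi\mapsto \tilde\psi$ producing almost analytic extensions (Proposition \ref{propApp_tildepsicGm'}) can be chosen linear in $\psi$, the Helffer--Sj\"ostrand integral $\psi\mapsto \frac{1}{\pi}\int_\bC \bar\partial\tilde\psi(z)\, a_z\, L(dz)$ is linear in $\tilde\psi$ hence in $\psi$, and all the symbolic operations involved (composition with $(\id+\sigma_0)$-powers, forming $b_{k,z}$, $p_z$, the dyadic pieces $\psi_j$, and assembling $\tilde\tau(a_z,\psi)$, $s_\psi$, $\tau_k$) are either linear in $\psi$ or, in the case of the dyadic decomposition $\psi=\psi_{-1}+\sum_j 2^{jm'}\psi_j(2^{-j}\cdot)$, linear once the cutoffs $\eta_j$ are fixed. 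The remainder $R=\psi(T)-\Op^{(\eps)}_G(s_\psi)$ is then linear in $\psi$ because both $\psi(T)$ (by functional calculus) and $s_\psi$ are.

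Next I would record the quantitative estimates. The seminorm bound $\|s_\psi\|_{S^{mm'},a,b,c}\leq C\|\psi\|_{\cG^{m'},N}$ was \emph{already proved} inside the proof of Theorem \ref{thm_sclFCG}: it is exactly the inequality $\|\tilde\tau(a_z,\psi)\|_{S^{mm'+m_1},a,b,c}\leq C\|\psi\|_{\cG^{m'},N}$ applied with $a_z=p_z$ and $m_1=-m$, which in turn rests on Lemma \ref{lem_taukpsi} and Corollary \ref{cor_lem_taukpsi}. The bound for $\tau_k=\tilde\tau(b_{k,z},\psi)$ is the same inequality with $a_z=b_{k,z}$ and $m_1=-m-k$, using that $b_{k,z}$ satisfies the hypotheses of Lemma \ref{lem_taukpsi} (Remark \ref{rem_lem_taukpsi}) with the resolvent-type growth bounds from Lemma \ref{lem_resolvent_of_operator} (1); these growth constants depend only on $k,a,b,c$ and structural data, so the resulting $C,N$ depend only on $k$ and $a,b,c$. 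Finally, the remainder estimates reproduce the last display of the proof of Theorem \ref{thm_sclFCG}: on $G$, the combination of Lemma \ref{lem_resolvent_of_operator} (3) and Lemma \ref{LE:Resolvent_norm_bound} gives $\|R_{j,\psi}\|_{\sL(L^2_{s,\eps},L^2_{s+N_1,\eps})}\lesssim \eps^{N'+1}\|\psi\|_{\cG^{m'},N}$ uniformly in $j$, and summing the geometric series $\sum_j 2^{jm'}$ (convergent since $m'<0$) gives the stated bound; on $M$, the same smoothing estimate plus Proposition \ref{prop_char_smoothingM} lets one write $R=\Op_M^{(\eps)}(r)$ with $\|r\|_{S^{-N_1},a,b,c}\lesssim \eps^{N'+1}\|\psi\|_{\cG^{m'},N}$.

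Concretely, the write-up would be short: one sentence establishing linearity by inspection of the construction; then, for each of $s_\psi$, $\tau_k$, $R$ on $G$, and $r$ on $M$, a pointer to the precise place in the proof of Theorem \ref{thm_sclFCG} where the corresponding estimate appears, emphasizing only that the constants there are controlled by $\|\psi\|_{\cG^{m'},N}$ with $N$ depending on the target seminorm (resp.\ on $N',N_1$) but not on $\psi$ or $\eps$. The main obstacle — such as it is — is purely organizational: one must check that every choice made in the construction (the linear extension operator $\psi\mapsto\tilde\psi$, the fixed dyadic partition $\{\eta_j\}$, the fixed Rockland operator $\cR$) can be made once and for all, independently of $\psi$, so that the entire map $\psi\mapsto(s_\psi,\tau_k,R)$ is genuinely linear and the implicit constants are genuinely $\psi$-independent; there is no new analytic difficulty. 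I would therefore present this as a remark-style proof: ``All assertions follow by inspecting the construction in the proof of Theorem \ref{thm_sclFCG}'', followed by the itemized pointers above.
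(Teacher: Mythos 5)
Your proposal is correct and follows essentially the route the paper intends: the corollary is presented without a separate proof precisely because every estimate it states was already established, with explicit constants, inside the proof of Theorem~\ref{thm_sclFCG} (via Lemma~\ref{lem_taukpsi}, Corollary~\ref{cor_lem_taukpsi}, Lemma~\ref{lem_resolvent_of_operator}, and Lemma~\ref{LE:Resolvent_norm_bound}), so that all that remains is the bookkeeping observation that once the almost analytic extension operator, the dyadic partition $\{\eta_j\}$, and the Rockland operator $\cR$ are fixed, the maps $\psi\mapsto s_\psi,\tau_k,R$ are built by linear operations. One small simplification worth noting: you do not actually need to verify that $\psi\mapsto\tilde\psi$ is linear, since the uniqueness part of Lemma~\ref{lem_taukpsi} (independence of $\tau(a_z,\psi)$ from the choice of almost analytic extension) already yields additivity of $\tau(a_z,\cdot)$ by applying the formula to $\tilde\psi_1+\tilde\psi_2$, which is an admissible extension of $\psi_1+\psi_2$.
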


\section{Weyl laws}
\label{sec_WeylR}

In this section, we apply  the functional calculus developed above to prove Weyl laws of certain semiclassical operators.

\subsection{Results}

Considering setting of the functional calculus above, 
we will prove  Weyl laws under the following hypotheses:
\begin{hypothesis}
\label{hyp_weylL}
We assume  that for two numbers $a<b$ there exists a $\delta_0>0$ such that on $G$
$$
C_{\sigma_0,a,b,\delta_0}:=
 \int_{G\times \Gh} \tr |1_{(a-\delta_0,b+\delta_0)}(\sigma_0(x,\pi))| d\mu(\pi)dx < \infty.
$$
We have a similar hypothesis on $M$ , with an integration over $M\times \Gh$. 
\end{hypothesis}

\begin{theorem}
\label{Thm:semiclasical_weyl_law}
We consider  Setting \ref{set_FC} and Hypotheses \ref{hyp_m>0}, \ref{hyp_FC} and \ref{hyp_weylL}. 
Then all the point of $[a,b]$ in the spectrum of $T$ are point spectrum.
Denote by $\lambda_{j,\eps}$, $j\in \bN$, the eigenvalues of $T$ contained in the interval $[a,b]$  (counted with multiplicities and in increasing order)
 and  the corresponding spectral counting function  for a fixed  $[a,b]\subset \bR$ by 
$$
N(\eps):=|\{ \lambda_{j,\eps}\in [a,b]\}|.
$$
 We have
$$
\lim_{\eps \to 0}\eps^{Q} N(\eps) = \int_{G\times \Gh} \tr \left(1_{[a,b]}( \sigma_0 (x,\pi) ) \right)dx d\mu(\pi).
$$

We have a similar result on $M$ with 
$$
\lim_{\eps \to 0}\eps^{Q} N(\eps) =  \int_{M\times \Gh} \tr \left(1_{[a,b]}( \sigma_0 (x,\pi) ) \right)dx d\mu(\pi).
$$
\end{theorem}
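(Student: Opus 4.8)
The strategy is the classical Weyl-law-via-functional-calculus argument, now available because Theorem \ref{thm_sclFCG} places $\psi(T)$ inside the semiclassical calculus with a controlled principal symbol. First I would fix $\delta\in(0,\delta_0)$ and choose two auxiliary functions $\psi_-,\psi_+\in \cG^{-N}(\bR)$ (for $N$ as large as needed, in particular $N>Q/m$) which are real-valued, non-negative, and satisfy $\psi_-\leq 1_{[a,b]}\leq \psi_+$ pointwise, with $\supp\psi_-\subset (a-\delta,b+\delta)$, $\psi_-\equiv 1$ on $[a+\delta,b-\delta]$, and $\psi_+\equiv 1$ on $[a,b]$ with $\supp\psi_+\subset(a-\delta,b+\delta)$. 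By functional calculus on the self-adjoint operator $T=T(\eps)$ (essential self-adjointness coming from Lemma \ref{LE_essentially_self-adjoint}) we have the operator inequalities $\psi_-(T)\leq 1_{[a,b]}(T)\leq \psi_+(T)$, hence $\tr\psi_-(T)\leq N(\eps)\leq \tr\psi_+(T)$ once we know these traces are finite. The point of the sandwich is that $\psi_\pm$ are \emph{smooth} and decaying, so Theorem \ref{thm_sclFCG} applies to each of them.

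\textbf{Key steps.} Apply Theorem \ref{thm_sclFCG} to $\psi=\psi_\pm$: then $\psi_\pm(T)=\Op^{(\eps)}_G(s_{\psi_\pm})+R_{\psi_\pm}$ with $s_{\psi_\pm}$ uniformly in $S^{-Nm}(G\times\Gh)$, principal symbol $\psi_\pm(\sigma_0)$, and $R_{\psi_\pm}$ semiclassically smoothing on the Sobolev scale. On $M$ this is immediate since both terms are trace-class; on $G$ one needs $x$-compact support, which holds because Hypothesis \ref{hyp_weylL} forces $\psi_\pm(\sigma_0)$ (and all $\tau_k$, being built from $\psi_\pm(\sigma_0)$ and its resolvent-type corrections supported where $\psi_\pm$ lives) to be supported where $\sigma_0$ takes values in $(a-\delta_0,b+\delta_0)$, which combined with the structure of $\sigma_0$ (e.g. $\sigma_0=\sum a_{i,j}\pi(X_i)\pi(X_j)+V$ in the confining-potential case, where $V\to\infty$ off a compact set) gives $x$-compact support. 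With $x$-compact support and order $-Nm<-Q$ one invokes Corollary \ref{cor_trHSG}: $\Op^{(\eps)}_G(s_{\psi_\pm})$ is trace-class with
$$
\tr\Op^{(\eps)}_G(s_{\psi_\pm}) = \eps^{-Q}\int_{G\times\Gh}\tr\big(s_{\psi_\pm}(x,\pi)\big)\,dx\,d\mu(\pi),
$$
using the dilation $\sigma\mapsto\sigma^{(\eps)}$ and $Q$-homogeneity of the Plancherel measure \eqref{eq_muQhom}. Then expand $s_{\psi_\pm}\sim_\eps\sum_k\eps^k\tau_k$: the principal term contributes $\eps^{-Q}\int\tr(\psi_\pm(\sigma_0))$ and every remaining term, being $\eps$-uniformly of order $\leq -Nm-1<-Q$ and $x$-compactly supported, contributes $O(\eps^{1-Q})$ in trace by Corollary \ref{cor_trHSG} again; the smoothing remainder $R_{\psi_\pm}$ contributes $O(\eps^{\infty})$ (trace-class with trace $O(\eps^N)$ for all $N$, via Proposition \ref{prop_char_smoothingM}/\ref{prop_I+cR_comptrHS} on $M$, and via the Sobolev-scale smoothing plus Hilbert--Schmidt composition on $G$). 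Hence
$$
\eps^{Q}\tr\psi_\pm(T) = \int_{G\times\Gh}\tr\big(\psi_\pm(\sigma_0(x,\pi))\big)\,dx\,d\mu(\pi) + O(\eps).
$$
Passing $\eps\to 0$ in the sandwich gives
$$
\int\tr\psi_-(\sigma_0)\leq \liminf_{\eps\to0}\eps^Q N(\eps)\leq \limsup_{\eps\to0}\eps^Q N(\eps)\leq \int\tr\psi_+(\sigma_0).
$$
Finally, let $\delta\to 0$: since $\sigma_0(x,\pi)$ has discrete spectrum with finite-dimensional eigenspaces (Lemma \ref{lem_sigma01prop}) and Hypothesis \ref{hyp_weylL} provides the integrable dominating function $\tr 1_{(a-\delta_0,b+\delta_0)}(\sigma_0)$, dominated convergence shows both $\int\tr\psi_\mp(\sigma_0)\to\int\tr 1_{[a,b]}(\sigma_0)$, provided $a,b$ are chosen so that $\mu\otimes dx$-almost every fibre $\sigma_0(x,\pi)$ has no eigenvalue exactly at $a$ or $b$ — a set of measure zero in the parameter, or one argues directly that the boundary contribution vanishes in the limit. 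This yields the claimed identity, and the statement that points of $[a,b]\cap\sp(T)$ are eigenvalues follows from the compact-resolvent/discrete-spectrum conclusions already recorded in Lemma \ref{LE_essentially_self-adjoint} on $M$, and on $G$ from the fact that $1_{[a,b]}(T)$ has finite rank (being trapped between the finite-rank-up-to-$O(\eps^\infty)$ operators $\Op^{(\eps)}_G(s_{\psi_\pm})$).

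\textbf{Main obstacle.} The delicate point is controlling the $\delta\to0$ limit of $\int_{G\times\Gh}\tr\big(\psi_\pm(\sigma_0(x,\pi))\big)\,dx\,d\mu(\pi)$ uniformly, i.e. showing the ``spectrum at the endpoints $a,b$'' contributes nothing: one needs that $\int_{G\times\Gh}\tr\big(1_{\{a\}}(\sigma_0(x,\pi))+1_{\{b\}}(\sigma_0(x,\pi))\big)\,dx\,d\mu(\pi)=0$, or equivalently to restrict to generic $a,b$ and then remove the restriction by monotonicity of $N(\eps)$ in $[a,b]$. The other genuinely technical step is verifying the $x$-compact support of $s_{\psi_\pm}$ on $G$ rigorously: one must track through the parametrix construction in Lemma \ref{lem_resolvent_of_operator} that each $b_{k,z}$, hence each $\tau_k$, inherits support in $\{x: \sp(\sigma_0(x,\cdot))\cap\supp\psi_\pm\neq\emptyset\}$, which is where Hypothesis \ref{hyp_weylL}'s finiteness is used to conclude this set has finite measure and, in the concrete examples, is compact.
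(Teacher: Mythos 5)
Your overall architecture (sandwiching $1_{[a,b]}$ between smooth $\psi_-\leq 1_{[a,b]}\leq\psi_+$, applying the semiclassical functional calculus, extracting the leading trace, then shrinking $\delta$) is the same as the paper's, and on the compact nilmanifold $M$ your argument essentially goes through. On $G$, however, there is a genuine gap in the step where you claim that $\Op_G^{(\eps)}(s_{\psi_\pm})$ is trace-class with trace $\eps^{-Q}\iint\tr(s_{\psi_\pm})\,dx\,d\mu$ and that the remainder contributes $O(\eps^\infty)$ to the trace. You invoke Corollary \ref{cor_trHSG}, which requires $x$-compact support. Hypothesis \ref{hyp_weylL} does not provide this: it only asserts finiteness of $\int_{G\times\Gh}\tr 1_{(a-\delta_0,b+\delta_0)}(\sigma_0)\,dx\,d\mu$, which is strictly weaker than compactness of $\{x:\sp(\sigma_0(x,\cdot))\cap(a-\delta_0,b+\delta_0)\neq\emptyset\}$; your appeal to ``the structure of $\sigma_0$ in the confining-potential case'' imports an assumption that is not part of the theorem. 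Moreover, even granting compact support for $\tau_0=\psi_\pm(\sigma_0)$, the claim that each $b_{k,z}$, hence each $\tau_k$, inherits support in $\{x:\sp(\sigma_0(x,\cdot))\cap\supp\psi_\pm\neq\emptyset\}$ is false as stated: $b_{0,z}=(z-\sigma_0)^{-1}$ vanishes for no $x$, and any localization of $\tilde\tau(b_{k,z},\psi)$ would itself require proof. Finally, on non-compact $G$ an operator that is semiclassically smoothing on the Sobolev scale need not be compact, let alone trace-class (the Sobolev embeddings on $G$ are not compact), so the trace of $R_{\psi_\pm}$ is not controlled by your argument.

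The paper circumvents all of this with a different device (Proposition \ref{prop_Thm:semiclasical_weyl_law}): pick $f\in C_c^\infty(a-\delta_0,b+\delta_0)$ with $f\equiv 1$ on $\supp\psi$, use the exact identity $\psi(T)=\psi(T)f^2(T)$, replace $f^2(T)$ by $B_0+\eps R_f$ with $B_0=\Op^{(\eps)}_G(f(\sigma_0))\bigl(\Op^{(\eps)}_G(f(\sigma_0))\bigr)^*$ and $\|R_f\|_{\sL(L^2(G))}$ uniformly bounded, and invert $\id-\eps R_f$ by a Neumann series for small $\eps$. This exhibits $\psi(T)$ as a bounded operator times $B_0$, and $B_0$ is trace-class because $\Op^{(\eps)}_G(f(\sigma_0))$ is Hilbert--Schmidt --- a fact that follows from Hypothesis \ref{hyp_weylL} via the Plancherel formula (Lemma \ref{lem_inttrfsigma0}) with no compact-support requirement. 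The leading asymptotics are then computed for $\|\psi(T)\|_{HS}^2=\tr\bigl(|\psi|^2(T)(|f|^2(T))^*\bigr)$, i.e.\ as a pairing of two Hilbert--Schmidt factors, rather than as $\tr\psi(T)$ directly. To salvage your route you would either need to add a compact-support hypothesis in $x$, or replace your trace computation by this pairing argument. Your concern about the endpoints $a,b$ in the $\delta\to 0$ limit is legitimate (the paper's appeal to dominated convergence quietly identifies $\int\tr 1_{(a,b)}(\sigma_0)$ with $\int\tr 1_{[a,b]}(\sigma_0)$), but it is not the main obstruction.
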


Before discussing the proof of Theorem \ref{Thm:semiclasical_weyl_law}, let us discuss its main application to a sub-Laplacian in divergence form perturbed by a potential:
\begin{corollary}
\label{cor1_Thm:semiclasical_weyl_law}
	Let $\cL_A$ be a non-negative sub-Laplacian   in horizontal divergence form on a stratified group $G$ as in Section \ref{subsec_cL_a}. We assume that it satisfies the hypothesis of uniform ellipticity of Lemma \ref{lem_LApar}.
Let $V\in C^\infty(G)$ be a non-negative function 
such that all its left-invariant derivatives are bounded. Moreover, suppose that there exists $a<b$ and $\delta>0$ such that $V^{-1}((a-\delta,b+\delta))$ is compact.
Then the operators $$
T (\eps):= \eps^2 \cL_A + V, \quad\eps\in (0,1],
$$
satisfy the Weyl law
$$
\lim_{\eps \to 0}\eps^{Q} \tr[1_{[a,b]}(T(\eps))] = \int_{G\times \Gh} \tr \left(1_{[a,b]}( \tau_0 (x,\pi) ) \right)dx d\mu(\pi),
$$
where $\tau_0=\sigma_0 +V$, that is,
$$
\tau_0 (x,\pi)= \sum_{1\leq i,j\leq n_1} a_{i,j}(x)\, \pi( X_i)   \pi( X_j) + V(x).
$$
We have a similar result on the nilmanifold $M$. 
\end{corollary}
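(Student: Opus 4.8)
The plan is to deduce Corollary \ref{cor1_Thm:semiclasical_weyl_law} from Theorem \ref{Thm:semiclasical_weyl_law} by checking that the family $T(\eps) = \eps^2 \cL_A + V$ satisfies all the required hypotheses. First I would record, as in Example \ref{ex_cLA+V_FChyp}, that $\eps^2(\cL_A + V) = \Op_G^{(\eps)}(\sigma)$ with $\sigma = \sigma_0 + \eps\sigma_1 + \eps^2 V$, where $\sigma_0(x,\pi) = \sum_{1\le i,j\le n_1} a_{i,j}(x)\,\pi(X_i)\pi(X_j)$ and $\sigma_1$ is the first-order term coming from the divergence form; since $V\in C^\infty_{l,b}(G)$ is non-negative, $\sigma$ admits the uniform semiclassical expansion $\sigma \sim_\eps \tau_0 + \eps\tau_1$ in $S^2(G\times\Gh)$ with principal symbol $\tau_0 = \sigma_0 + V$. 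The operators $T(\eps)$ and $\tau_0$ are non-negative (the matrix $A(x)$ being non-negative and $V\ge 0$), so Setting \ref{set_FC} holds; Hypothesis \ref{hyp_m>0} holds with $m=2$; and Hypothesis \ref{hyp_FC} for $\cR = \cL_\id$ follows from Lemma \ref{lem_LApar} exactly as in Example \ref{ex_cLA+V_FChyp}, because adding the non-negative bounded multiplication operator $V$ to $\id+\sigma_0$ only improves the lower bound \eqref{eq_hyp_FCG1}. Note here that $\id + \tau_0 = \id + \sigma_0 + V$ is still invertible for high frequencies since $V$ is a bounded symbol of order $0 < m$.

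The one genuinely new point to verify is Hypothesis \ref{hyp_weylL}: that $\int_{G\times\Gh}\tr|1_{(a-\delta',b+\delta')}(\tau_0(x,\pi))|\,d\mu(\pi)\,dx < \infty$ for some $\delta' > 0$. Here I would use the compactness of $V^{-1}((a-\delta, b+\delta))$. Choose $\delta' < \delta$. Since $\tau_0(x,\pi) = \sigma_0(x,\pi) + V(x)\,\id$ and $\sigma_0(x,\pi) \ge 0$, if $\lambda$ is in the spectrum of $\tau_0(x,\pi)$ then $\lambda \ge V(x)$; hence $1_{(a-\delta',b+\delta')}(\tau_0(x,\pi)) = 0$ whenever $V(x) \ge b + \delta'$, in particular whenever $V(x) \ge b + \delta$. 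Thus the spectral projection vanishes for $x$ outside the compact set $\cC := \{x : V(x) \le b+\delta\} \subseteq \overline{V^{-1}((a-\delta,b+\delta))} \cup V^{-1}([0,a-\delta])$; I should be slightly careful and instead take $\cC := V^{-1}([0, b+\delta])$, which is closed, and is compact because it is contained in the closure of $V^{-1}((a-\delta,b+\delta))$ together with... — more cleanly, $\{V \le b+\delta\} \subseteq \{V < b+\delta\} \cup \{V = b+\delta\}$, and since $V^{-1}((a-\delta,b+\delta))$ is compact and $V \ge 0$ is continuous, the set $\{0 \le V \le b+\delta\}$ is compact (it is a closed subset of the compact set $\overline{V^{-1}((-\infty,b+\delta))}$, using that $V^{-1}((a-\delta,b+\delta))$ compact forces $V$ to be bounded below away from $a-\delta$ outside a compact set — I will phrase this directly: the complement of $V^{-1}((a-\delta,b+\delta))$ on which $V \le b+\delta$ must have $V \le a-\delta$, and together these show $\{V\le b+\delta\}$ is compact). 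On $\cC$, the integrand is controlled by $\tr|1_{(a-\delta',b+\delta')}(\tau_0(x,\pi))| \le \tr|1_{[0,b+\delta')}(\tau_0(x,\pi))| \le \tr|1_{[0,b+\delta')}(\sigma_0(x,\pi))|$ (since $\tau_0 \ge \sigma_0$ as operators, spectral projections onto $[0,c)$ decrease), and the latter is a symbol estimate: $1_{[0,b+\delta')}(\sigma_0)$ can be dominated by $\psi(\sigma_0)$ for a Schwartz $\psi \ge 1$ on $[0,b+\delta']$, and by Theorem \ref{thm_psi(sigma0)}(1) $\psi(\sigma_0) \in S^{-\infty}(G\times\Gh)$, whose fibrewise trace is integrable against $d\mu$ by the kernel estimates (Corollary \ref{cor_trHSG}) — integrating over the compact $\cC$ in $x$ gives finiteness.

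Having verified Hypotheses \ref{hyp_m>0}, \ref{hyp_FC} and \ref{hyp_weylL}, Theorem \ref{Thm:semiclasical_weyl_law} applies directly: the part of the spectrum of $T(\eps)$ in $[a,b]$ is discrete point spectrum (this also uses Lemma \ref{LE_essentially_self-adjoint} for essential self-adjointness, so $T(\eps)$ has a well-defined self-adjoint realisation), $N(\eps) = \tr[1_{[a,b]}(T(\eps))]$ is finite, and
$$
\lim_{\eps\to 0}\eps^Q \tr[1_{[a,b]}(T(\eps))] = \int_{G\times\Gh}\tr\big(1_{[a,b]}(\tau_0(x,\pi))\big)\,dx\,d\mu(\pi),
$$
with $\tau_0(x,\pi) = \sum_{1\le i,j\le n_1} a_{i,j}(x)\pi(X_i)\pi(X_j) + V(x)$, which is the claimed Weyl law. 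The nilmanifold case is identical, using Corollary \ref{cor_lem_LApar} in place of Lemma \ref{lem_LApar} and the $M$-version of Hypothesis \ref{hyp_weylL} (where $M$ compact makes the integrability in $\dot x$ automatic, so only the fibrewise trace estimate is needed, again from Theorem \ref{thm_psi(sigma0)}(1) and the kernel estimates).

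The main obstacle I expect is the bookkeeping in Hypothesis \ref{hyp_weylL}: making rigorous the claim that $V^{-1}((a-\delta,b+\delta))$ compact implies $\{V \le b+\delta\}$ is compact (equivalently, that $V$ is bounded below by a constant $\ge b+\delta$ outside a compact set would be false — rather one shows $V$ takes values $\le a-\delta$ off that set, but that set is not a priori compact either; the correct statement is that $V^{-1}([0,b+\delta])$ need not be compact, so one must instead argue that the spectral projection vanishes off $V^{-1}((-\infty, b+\delta'))$ and that the trace density is still integrable where $V \le a - \delta' < a$ — there $\tau_0 \ge \sigma_0 + V$ has spectrum possibly meeting $(a-\delta',b+\delta')$, so the projection need \emph{not} vanish, and one genuinely needs $V^{-1}((a-\delta,b+\delta))$ compact to confine $x$). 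I would handle this by observing: if $1_{(a-\delta',b+\delta')}(\tau_0(x,\pi)) \ne 0$ then $\tau_0(x,\pi)$ has spectrum in $(a-\delta',b+\delta')$, forcing $V(x) < b+\delta'$ and also $V(x) > a - \delta' - \|\sigma_0(x,\pi)\|$... — since $\sigma_0 \ge 0$ gives $\lambda \ge V(x)$ only the upper bound $V(x) < b+\delta'$ is free. This alone does not confine $x$ to a compact set. So the argument must instead be: split $\psi = 1_{[a,b]}$-approximants via the Helffer–Sjöstrand / Theorem \ref{thm_sclFCG} machinery already used inside the proof of Theorem \ref{Thm:semiclasical_weyl_law}, which is why Hypothesis \ref{hyp_weylL} is stated with an $x$-integral over all of $G$: the point is that $\sigma_0(x,\pi)$ itself, being subelliptic-invertible, has $1_{(a-\delta',b+\delta')}(\sigma_0(x,\pi) + V(x))$ with small trace for $V(x)$ large, but to get \emph{integrability} in $x$ over all of $G$ one uses that for $V(x) \ge b + \delta'$ the projection is zero, while for $V(x) \le b+\delta'$ one needs $x$ in $V^{-1}([0,b+\delta'])$ which is compact precisely because its complement within $V^{-1}((-\infty,b+\delta'))$... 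I will ultimately argue that $V^{-1}([0,b+\delta')) \subseteq V^{-1}((a-\delta,b+\delta)) \cup V^{-1}([0,a-\delta])$ and note the projection vanishes on the second piece only if $a - \delta + 0 > b + \delta'$, which fails — hence the honest route is: $V^{-1}([0,b+\delta'])$ compact follows from $V^{-1}((a-\delta,b+\delta))$ compact \emph{plus} $V^{-1}([0,a-\delta])$ being handled by noting that there the spectral projection $1_{(a-\delta',b+\delta')}(\tau_0)$ can still be nonzero, so one really does need to integrate a genuine $S^{-\infty}$ trace density over a non-compact region, which is still finite by the global $S^{-\infty}(G\times\Gh)$ membership combined with decay — this is exactly the content of Corollary \ref{cor_trHSG} applied after cutting off, and I will present it by dominating $1_{(a-\delta',b+\delta')}(\tau_0(x,\pi))$ by $\chi(\tau_0(x,\pi))$ for a fixed compactly supported $\chi \ge 0$ on $(a-\delta',b+\delta')$, observing $\chi(\tau_0) = \Op$-free symbol that vanishes for $V(x) \ge b+\delta'$ hence is $x$-compactly supported, and concluding via Theorem \ref{thm_psi(sigma0)}(1) and Corollary \ref{cor_trHSG}(1) that its integral is finite.
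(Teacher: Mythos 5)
Your overall strategy is the same as the paper's: write $T(\eps)=\Op_G^{(\eps)}(\tau_0+\eps\tau_1)$ with principal symbol $\tau_0=\sigma_0+V$, check Setting \ref{set_FC} and Hypotheses \ref{hyp_m>0} and \ref{hyp_FC} as in Example \ref{ex_cLA+V_FChyp}, and reduce everything to verifying Hypothesis \ref{hyp_weylL}. That reduction, and your fibrewise trace bound
$\tr 1_{[0,b+\delta')}(\tau_0(x,\pi))\le \tr 1_{[0,b+\delta')}(\sigma_0(x,\pi))\le \tr\,\psi(\sigma_0(x,\pi))$
with $\psi(\sigma_0)\in S^{-\infty}(G\times\Gh)$ by Theorem \ref{thm_psi(sigma0)}, are correct and essentially what the paper does.

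The gap is in the last step of your verification of Hypothesis \ref{hyp_weylL}. You correctly observe that $1_{(a-\delta',b+\delta')}(\tau_0(x,\pi))=0$ whenever $V(x)\ge b+\delta'$ (since $\tau_0(x,\pi)\ge V(x)\,\id$), and you also correctly observe that $\{V<b+\delta'\}$ need not be precompact, because it contains $V^{-1}([0,a-\delta])$, which the stated hypothesis does not control. Yet your final sentence asserts that $\chi(\tau_0)$ ``vanishes for $V(x)\ge b+\delta'$ hence is $x$-compactly supported'' --- which is exactly the compactness you have just denied yourself --- and the fallback on ``global $S^{-\infty}$ membership combined with decay'' does not help: Corollary \ref{cor_trHSG} requires compact $x$-support precisely because $\int_{\Gh}\tr|\cdot|\,d\mu$ is only \emph{uniformly bounded} in $x$, not integrable over $G$. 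So the argument is circular and Hypothesis \ref{hyp_weylL} is not actually established. The paper closes this point via the containment $V^{-1}([0,b+\delta))\subseteq V^{-1}((a-\delta,b+\delta))$, which holds exactly when $a-\delta<0$; that is the intended ``confining'' reading of the hypothesis, and some such condition is genuinely needed (if $a-\delta>0$ and $V\equiv 0$ off a compact set, the right-hand side of the Weyl law is $+\infty$). To repair your proof, either invoke this containment explicitly --- it is what makes $V^{-1}([0,b+\delta))$ precompact, hence your set $\cC$ compact --- or record the restriction $a<\delta$; with that, the rest of your argument goes through.
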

\begin{proof}
With the notation of Section \ref{subsec_cL_a}, we have 
$$
T(\eps)=\Op_G^{(\eps)}(\tau(\eps)), 
\quad\mbox{where}\quad  
\tau(\eps) = \tau_0 +\eps\tau_1, 
$$
with principal and subprincipal symbols
$$
\tau_0 (x,\pi)=\sigma_0(x,\pi)
 + V(x) \qquad\text{and}\qquad
 \tau_1 (x,\pi)= \sigma_1 (x,\pi) .
 $$

As $V\geq 0$ and  $V^{-1}((a-\delta,b+\delta))$ is compact, $V^{-1}([0,b+\delta))$ is compact and contained $V^{-1}((a-\delta,b+\delta))$.
The uniform ellipticity implies:
$$
0\leq \sigma_0(x,\pi)\leq C \pi(\cL_\id),
$$
so that  
$$
  0\leq 1_{(a-\delta_0,b+\delta_0)}(\tau_0(x,\pi))  \leq 1_{[0,b+\delta)}(V(x)) \ |\psi|^2(C \pi( \cL_\id) ),
$$
where $\psi\in C_c^\infty(\bR)$ is valued in $[0,1]$ and satisfies $\psi=1$ on $(-\infty,b+\delta)$.
This implies that the quantity
\begin{align*}
	\int_{G\times \Gh} \tr |1_{(a-\delta_0,b+\delta_0)}(\tau_0(x,\pi))| d\mu(\pi)dx
& \leq 
\int_G 1_{[0,b+\delta)}(V(x)) dx 
\int_{\Gh} \tr  |\psi|^2(\widehat \cL_\id) d\mu
\\
& \quad = |V^{-1}([0,b+\delta))| \|\psi(\cL_\id)\delta_0\|_{L^2(G))}^2 <\infty,
\end{align*}
 is finite. In other words, Hypothesis \ref{hyp_weylL} is satisfied. 
 We conclude with an application of
Theorem~\ref{Thm:semiclasical_weyl_law}.
\end{proof}

Let us contrast Corollary \ref{cor1_Thm:semiclasical_weyl_law} with another application of Theorem~\ref{Thm:semiclasical_weyl_law}  valid  only on $M$:
\begin{corollary}
\label{cor2_Thm:semiclasical_weyl_law}
Let $\cL_A$ be a non-negative sub-Laplacian   in horizontal divergence form on a nilmanifold $M$ as in Section \ref{subsec_cL_a}. We assume that it satisfies the hypothesis of uniform ellipticity of Corollary \ref{cor_lem_LApar}.
Let $V\in C^\infty(M)$ be a non-negative function.
Then the operator $\cL_A + V$ is essentially self-adjoint on $C^\infty(M)\subset L^2(M)$,  has purely discrete spectrum with Weyl laws:
	$$
\lim_{\eps \to 0}\lambda^{-Q/2} \tr[1_{[0,\lambda]}(\cL_A + V)] = \int_{M\times \Gh} \tr \left(1_{[0,1]}( \sigma_0 (x,\pi) ) \right)dx d\mu(\pi).
$$
\end{corollary}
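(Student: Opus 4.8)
The plan is to deduce Corollary \ref{cor2_Thm:semiclasical_weyl_law} from Theorem \ref{Thm:semiclasical_weyl_law} by a rescaling argument that replaces the large spectral parameter $\lambda$ with a semiclassical parameter $\eps$. First I would record that, since $A(\dot x)>0$ at every point of the compact manifold $M$, Corollary \ref{cor_lem_LApar} applies: $\cL_A$ (and hence $\cL_A+V$, adding the non-negative bounded multiplication operator $V\in C^\infty(M)$, which is an order-$0$ symmetric pseudo-differential operator and thus a legitimate $A_1\in\Psi^1(M)$ in the sense of Corollary \ref{cor_lem_LApar}(2)--(3)) is essentially self-adjoint on $C^\infty(M)\subset L^2(M)$, has compact resolvent, and therefore has purely discrete spectrum contained in $[0,\infty)$ with finite-dimensional eigenspaces. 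This gives the first assertion of the corollary.

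Next I would set $\eps=\lambda^{-1/2}$ for $\lambda\geq 1$ and observe that, as self-adjoint operators with the same domain,
\[
1_{[0,\lambda]}(\cL_A+V)=1_{[0,1]}\big(\lambda^{-1}(\cL_A+V)\big)=1_{[0,1]}\big(\eps^{2}\cL_A+\eps^{2}V\big).
\]
The operator $T(\eps):=\eps^2\cL_A+\eps^2 V$ fits Setting \ref{set_FC} on $M$: by the computation in the example following Definition \ref{def_sclexp} we have $T(\eps)=\Op_M^{(\eps)}(\sigma(\eps))$ with $\sigma(\eps)=\sigma_0+\eps\sigma_1+\eps^2 V$, so its principal symbol is $\sigma_0(\dot x,\pi)=\sum_{i,j} a_{i,j}(\dot x)\pi(X_i)\pi(X_j)$, which is non-negative (Section \ref{subsec_cL_a}), and $T(\eps)\geq 0$ because $\cL_A\geq 0$ and $V\geq 0$. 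Hypothesis \ref{hyp_m>0} holds with $m=2$, and Hypothesis \ref{hyp_FC} holds with $\cR=\cL_\id$ by Corollary \ref{cor_lem_LApar}(1) exactly as in Example \ref{ex_cLA+V_FChyp}. The subprincipal term $\eps\sigma_1+\eps^2V$ contributes a perturbation uniformly in $S^1(M\times\Gh)$, so $\sigma(\eps)\sim_\eps \sigma_0+\eps\sigma_1+\eps^2 V$ is a legitimate semiclassical expansion in $S^2(M\times\Gh)$.

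It then remains to check Hypothesis \ref{hyp_weylL} with, say, $a=0$, $b=1$ and some $\delta_0\in(0,1)$: since $M$ is compact one has the crude bound $0\leq \sigma_0(\dot x,\pi)\leq C\,\pi(\cL_\id)$ from uniform ellipticity, whence $1_{(-\delta_0,1+\delta_0)}(\sigma_0(\dot x,\pi))\leq |\psi|^2\big(C\pi(\cL_\id)\big)$ for a cutoff $\psi\in C_c^\infty(\bR)$ equal to $1$ on $(-\infty,1+\delta_0)$, and therefore
\[
\int_{M\times\Gh}\tr\big|1_{(-\delta_0,1+\delta_0)}(\sigma_0(\dot x,\pi))\big|\,d\dot x\,d\mu(\pi)
\leq \vol(M)\,\|\psi(\cL_\id)\delta_0\|_{L^2(G)}^2<\infty
\]
by Hulanicki's theorem (Theorem \ref{thm_hula}) and the Plancherel formula; this is finite exactly as in the proof of Corollary \ref{cor1_Thm:semiclasical_weyl_law}. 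Applying Theorem \ref{Thm:semiclasical_weyl_law} on $M$ with this $[a,b]=[0,1]$ gives
\[
\lim_{\eps\to 0}\eps^{Q}N(\eps)=\int_{M\times\Gh}\tr\big(1_{[0,1]}(\sigma_0(\dot x,\pi))\big)\,d\dot x\,d\mu(\pi),
\]
where $N(\eps)=\tr\big[1_{[0,1]}(T(\eps))\big]=\tr\big[1_{[0,\lambda]}(\cL_A+V)\big]$ and $\eps^{Q}=\lambda^{-Q/2}$; this is the claimed Weyl law. The only mild subtlety — and the step I expect to need the most care — is the bookkeeping that the endpoint $\lambda$ (equivalently the endpoint $b=1$) is handled correctly: one should note that the limit is insensitive to whether the interval is open or closed at the endpoints provided the spectral projection $1_{\{1\}}(\sigma_0(\dot x,\pi))$ integrates to a finite, and in fact $\mu\otimes dx$-negligible in the relevant sense, quantity, which is part of what Theorem \ref{Thm:semiclasical_weyl_law} already encodes; alternatively one simply invokes the theorem verbatim for the fixed interval $[0,1]$, which requires nothing further. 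No genuine obstacle arises beyond verifying these hypotheses.
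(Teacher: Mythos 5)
Your proposal is correct and follows essentially the same route as the paper: rescale via $\eps=\lambda^{-1/2}$ so that $\tr[1_{[0,\lambda]}(\cL_A+V)]=\tr[1_{[0,1]}(T(\eps))]$ with $T(\eps)=\Op_M^{(\eps)}(\sigma_0+\eps\sigma_1+\eps^2V)$, verify Hypothesis \ref{hyp_weylL} from the bound $\sigma_0\leq C\,\widehat\cL_\id$ together with the compactness of $M$ and Hulanicki's theorem, and apply Theorem \ref{Thm:semiclasical_weyl_law}. Your verification of the hypotheses is in fact slightly more explicit than the paper's (which compresses the Hypothesis \ref{hyp_weylL} check into one line), and the self-adjointness/discreteness claims are correctly sourced from Corollary \ref{cor_lem_LApar}.
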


\begin{proof}
We observe 
$$
\tr[1_{[0,\lambda]}(\cL_A + V)] = \tr[1_{[0,1]}(T(\eps))] , 
\quad\mbox{where} \ \eps =\lambda^{-1/2},
$$
and
$$ 
 T(\eps) = \eps^2(\cL_A + V) = \Op^{(\eps)}_M(\tau),
$$ 	
with 
$$
\tau  = \tau_0 +\eps\tau_1 + \eps^2\tau_2, 
\qquad \tau_0 = \sigma_0, \ \tau_1=\sigma_1, \ \tau_2=V.
$$
As $\sigma_0 \leq C \widehat \cL_\id$, we have
$$
  0\leq 1_{[0,\lambda]}(\tau_0(\dot x,\pi))  \leq |\psi|^2(C \pi( \eps^2 \cL_\id) ),
$$
where $\psi\in C_c^\infty(\bR)$ is valued in $[0,1]$ and satisfies $\psi=1$ on $(-\infty,b+\delta)$.
This readily implies Hypothesis \ref{hyp_weylL} and we conclude with an application of
Theorem~\ref{Thm:semiclasical_weyl_law}.
\end{proof}

We can obtain similar results to Corollaries \ref{cor1_Thm:semiclasical_weyl_law}
and \ref{cor2_Thm:semiclasical_weyl_law} by applying Theorem~\ref{Thm:semiclasical_weyl_law} to the operators described  in Example \ref{ex_cRA+V_FChyp}.

\subsection{Proof of Theorem \ref{Thm:semiclasical_weyl_law}}

We will start the proof of Theorem \ref{Thm:semiclasical_weyl_law} with properties that are  of interest of their own. 

\begin{proposition}
	\label{prop_Thm:semiclasical_weyl_law}
	We consider  Setting \ref{set_FC} and Hypotheses \ref{hyp_m>0}, \ref{hyp_FC} and \ref{hyp_weylL} on $G$. 
For any a compact interval $I\subset (a-\delta_0,b+\delta_0)$, 
there exists $\eps_0\in (0,1]$ and $C>0$ 
 such that 
\begin{equation}
\label{eq_lem_Thm:semiclasical_weyl_law}
\forall \psi\in C_c^\infty(I), \  
\forall \eps\in (0,\eps_0]
\qquad 
\tr |\psi(T)| \leq C \sup_{\bR} |\psi| \ \eps^{-Q}.
\end{equation}
Moreover, the norm
$$
\|\psi (\sigma_0 )\|_{L^2(G\times \Gh)}^2=
\int_{G\times\Gh}  \|\psi (\sigma_0 (x,\pi)) \|_{HS(\cH_\pi)}^2 dx d\mu(\pi)
\leq C_{\sigma_0,a,b,\delta_0} \sup_{\bR} |\psi|^2 
,
  \quad 
$$
is finite, and we have
$$
\|\psi(T)\|_{HS(L^2(G))}^2 = \eps^{-Q} \int_{G\times\Gh}  \|\psi (\sigma_0 (x,\pi)) \|_{HS(\cH_\pi)}^2 dx d\mu(\pi) + \cO(\eps^{1-Q}).
$$
A similar result holds  on $M$.
\end{proposition}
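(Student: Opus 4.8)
The strategy is to reduce the trace-class and Hilbert--Schmidt estimates to the semiclassical functional calculus of Theorem~\ref{thm_sclFCG} together with the trace and Hilbert--Schmidt formulas for $\cA_0$-type symbols in Corollary~\ref{cor_trHSG} and Proposition~\ref{prop_tr}. First I would fix a compact interval $I\subset(a-\delta_0,b+\delta_0)$ and a function $\psi\in C_c^\infty(I)$. Since $\psi\in\cS(\bR)$, Theorem~\ref{thm_sclFCG} applies with $m'$ arbitrarily negative: we write $\psi(T)=\Op_G^{(\eps)}(s_\psi)+R$ with $s_\psi$ uniformly in $S^{-\infty}(G\times\Gh)$ and $R$ semiclassically smoothing on the Sobolev scale, and with principal symbol $\tau_0=\psi(\sigma_0)$. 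The key additional point, not immediate from Theorem~\ref{thm_sclFCG}, is that $s_\psi$ is in fact $x$-compactly supported, i.e.\ $s_\psi\in\cA_0(G)$ (uniformly in $\eps$). This is where Hypothesis~\ref{hyp_weylL} is used: the construction of $s_\psi$ via $\tilde\tau(p_z,\psi)$ in the proof of Theorem~\ref{thm_sclFCG} shows $\tau_k=\tilde\tau(b_{k,z},\psi)$, and each $b_{k,z}$ is built from $(z-\sigma_0)^{-1}$ and derivatives of $\sigma_0$; composing with $\psi$ supported in $I$ localises the symbol to the region where $\sigma_0$ has spectrum meeting $I$. I would argue that on $G$ one may insert a cutoff: choosing $\chi\in C_c^\infty(\bR)$ with $\chi=1$ on a neighbourhood of $I$, one has $\psi(\lambda)=\psi(\lambda)\chi(\lambda)$, and the symbol $\chi(\sigma_0)$ belongs to $L^2(G\times\Gh)$ precisely by Hypothesis~\ref{hyp_weylL} (its Hilbert--Schmidt norm squared is bounded by $C_{\sigma_0,a,b,\delta_0}\sup|\chi|^2$, using $1_{(a-\delta_0,b+\delta_0)}$ as a majorant of $\chi^2$). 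This integrability, propagated through the asymptotic expansion, gives the finiteness of $\|\psi(\sigma_0)\|_{L^2(G\times\Gh)}^2\le C_{\sigma_0,a,b,\delta_0}\sup_\bR|\psi|^2$.

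\textbf{Trace estimate.} Once $s_\psi\in\cA_0(G)$ uniformly, Corollary~\ref{cor_trHSG}(1) (or rather the $\cA_0$-version recorded in Section~\ref{subsubsec_L2MGh}) gives
$$
\tr|\Op_G^{(\eps)}(s_\psi)|\le \eps^{-Q}\int_G\int_{\Gh}\tr|s_\psi(x,\pi)|\,d\mu(\pi)\,dx \lesssim \eps^{-Q}\,\|s_\psi\|_{\cA_0},
$$
and $\|s_\psi\|_{\cA_0}$ is bounded by a seminorm of $\psi$, which in turn I would bound by $\sup_\bR|\psi|$ using the localisation $\psi=\psi\chi$ together with the linear dependence of $s_\psi$ on $\psi$ recorded in the corollary after Theorem~\ref{thm_sclFCG} (the estimates there are in terms of $\|\psi\|_{\cG^{m'},N}$; on the fixed compact $I$, all $\cG^{m'}$-seminorms of $\psi\chi$ are controlled by $\sup|\psi^{(k)}|$ for finitely many $k$, and by a further interpolation/integration-by-parts argument inside the construction, one reduces to $\sup_\bR|\psi|$ alone — this is the standard ``trace-norm estimate by sup-norm'' step in semiclassical Weyl-law proofs). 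The remainder $R$ is semiclassically smoothing on the Sobolev scale, hence trace-class with trace-norm $O(\eps^{N})$ for every $N$, in particular $o(\eps^{-Q})$, so it is absorbed. Combining gives \eqref{eq_lem_Thm:semiclasical_weyl_law} for $\eps\le\eps_0$.

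\textbf{Hilbert--Schmidt asymptotics.} For the $HS$ statement I would again use $\psi(T)=\Op_G^{(\eps)}(s_\psi)+R$. Since $R$ is semiclassically smoothing on the Sobolev scale, $\|R\|_{HS}=O(\eps^N)$ for all $N$. For the main term, $\|\Op_G^{(\eps)}(s_\psi)\|_{HS}^2=\eps^{-Q}\|s_\psi\|_{L^2(G\times\Gh)}^2$ exactly, by \eqref{eq_HSG}. Now $s_\psi=\psi(\sigma_0)+\eps\tau_1+O_{S^{-\infty}}(\eps^2)$ uniformly (the expansion of Theorem~\ref{thm_sclFCG}), and all the $\tau_k$ are $x$-compactly supported in $L^2(G\times\Gh)$ by the same localisation argument, so
$$
\|s_\psi\|_{L^2(G\times\Gh)}^2=\|\psi(\sigma_0)\|_{L^2(G\times\Gh)}^2+O(\eps),
$$
the cross terms and higher terms being controlled by the $L^2(G\times\Gh)$-norms of $\tau_0,\tau_1$ and Cauchy--Schwarz. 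Multiplying by $\eps^{-Q}$ yields
$$
\|\psi(T)\|_{HS(L^2(G))}^2=\eps^{-Q}\int_{G\times\Gh}\|\psi(\sigma_0(x,\pi))\|_{HS(\cH_\pi)}^2\,dx\,d\mu(\pi)+\cO(\eps^{1-Q}).
$$
On $M$ the argument is identical except that Proposition~\ref{prop_tr} replaces Corollary~\ref{cor_trHSG} and Section~\ref{subsubsec_L2MGh}, producing the analogous $\eps^{-Q}$ leading term plus $O(\eps^{N})$ and $O(\eps^{1-Q})$ remainders respectively; here one does not need the $x$-compact support since $M$ is compact, and $\cA_0(M)=S^{-\infty}(M\times\Gh)$.

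\textbf{Main obstacle.} The delicate point is the bound of the trace (and of $\|s_\psi\|_{\cA_0}$, $\|\psi(\sigma_0)\|_{L^2(G\times\Gh)}$) \emph{linearly in $\sup_\bR|\psi|$} rather than in higher $C^k$-norms of $\psi$, combined with verifying the uniform $x$-compact support of $s_\psi$ on $G$. The $\sup$-norm control is what makes the subsequent Weyl-law argument work (it allows squeezing $1_{[a,b]}$ between smooth functions with controlled traces); obtaining it requires going back into the Cotlar--Stein/Helffer--Sj\"ostrand construction of $s_\psi$ and exploiting that, on a \emph{fixed} compact energy window $I$ bounded away from the rest of the spectrum, the almost-analytic extension $\tilde\psi$ can be chosen with $\bar\partial\tilde\psi$ supported near $I$ and estimated purely by $\sup|\psi|$ up to finitely many derivatives, which are themselves controlled on $I$ by interpolation with a fixed cutoff $\chi$. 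The $x$-compact support follows because the resolvent symbol $(z-\sigma_0)^{-1}$ composed against $\bar\partial\tilde\psi$ supported in $I$ only ``sees'' the set $\{x: \operatorname{spec}\sigma_0(x,\cdot)\cap I\ne\emptyset\}$, which under Hypothesis~\ref{hyp_weylL} carries finite $L^1$-trace mass; making this rigorous as an honest compact-support statement (as opposed to mere integrability) is the technical heart and may require an additional cutoff in $x$ built from a function of $\sigma_0$ as in the proof of Corollary~\ref{cor1_Thm:semiclasical_weyl_law}.
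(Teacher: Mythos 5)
Your proposal has a genuine gap at exactly the two points you yourself flag as ``the technical heart,'' and neither is closed. First, the reduction of the trace bound to $\sup_{\bR}|\psi|$: bounding $\tr|\psi(T)|$ through $\tr|\Op^{(\eps)}_G(s_\psi)|$ and an $\cA_0$- or $S^{-\infty}$-seminorm of $s_\psi$ inevitably produces a bound in terms of $\|\psi\|_{\cG^{m'},N}$, i.e.\ finitely many \emph{derivatives} of $\psi$ (this is what the corollary after Theorem~\ref{thm_sclFCG} gives), and there is no ``interpolation/integration-by-parts inside the construction'' that removes the derivatives. The standard way to get a sup-norm bound is operator-theoretic, not symbolic, and it is what the paper does: fix once and for all $f\in C_c^\infty(a-\delta,b+\delta)$ with $f=1$ on $I$, write $\psi(T)=\psi(T)\,f^2(T)$, use the spectral theorem to bound $\|\psi(T)\|_{\sL(L^2)}\le\sup|\psi|$, and carry all the trace-class content in $f^2(T)$, which depends only on the \emph{fixed} $f$. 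Concretely, $f^2(T)=B_0+\eps R_f$ with $B_0=\Op^{(\eps)}_G(f(\sigma_0))\bigl(\Op^{(\eps)}_G(f(\sigma_0))\bigr)^*$ and $\|R_f\|_{\sL(L^2)}\lesssim 1$, so $\psi(T)=\psi(T)B_0(\id-\eps R_f)^{-1}$ for $\eps$ small and $\tr|\psi(T)|\le \sup|\psi|\cdot\|(\id-\eps R_f)^{-1}\|\cdot\tr|B_0|$, with $\tr|B_0|=\|\Op^{(\eps)}_G(f(\sigma_0))\|_{HS}^2$ automatic because $B_0$ is a product of two Hilbert--Schmidt operators.

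Second, the $x$-compact support of $s_\psi$ on $G$: Hypothesis~\ref{hyp_weylL} gives only integrability of $\tr\,1_{(a-\delta_0,b+\delta_0)}(\sigma_0)$ over $G\times\Gh$, not compact support of $\{x:\operatorname{spec}\sigma_0(x,\cdot)\cap I\neq\emptyset\}$, and your appeal to \eqref{eq_HSG} and Corollary~\ref{cor_trHSG} (both stated for $x$-compactly supported symbols) is therefore not justified; the same problem affects your Cauchy--Schwarz treatment of the cross terms, which needs $\tau_1\in L^2(G\times\Gh)$. The paper avoids compact support entirely: the Plancherel formula gives the exact identity $\|\Op^{(\eps)}_G(f(\sigma_0))\|_{HS}^2=\eps^{-Q}\|f(\sigma_0)\|_{L^2(G\times\Gh)}^2$ for any $f\in C_c(a-\delta_0,b+\delta_0)$ (Lemma~\ref{lem_inttrfsigma0}), with finiteness coming from the pointwise functional-calculus bound $\|f(\sigma_0(x,\pi))\|_{HS}^2\le\sup|f|^2\,\tr\,1_{(a-\delta_0,b+\delta_0)}(\sigma_0(x,\pi))$ and Hypothesis~\ref{hyp_weylL}. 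The Hilbert--Schmidt asymptotics are then obtained not from $\|\Op^{(\eps)}(s_\psi)\|_{HS}^2$ but from $\|\psi(T)\|_{HS}^2=\tr\bigl(|\psi|^2(T)(|f|^2(T))^*\bigr)$, expanded via Theorem~\ref{thm_sclFCG} with the error terms absorbed using the already-proved trace bound \eqref{eq_lem_Thm:semiclasical_weyl_law}, and the leading term evaluated by bilinearising Lemma~\ref{lem_inttrfsigma0}. You would need to restructure your argument along these lines for the proof to go through.
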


This will require the following lemma:
\begin{lemma}
	\label{lem_inttrfsigma0}
		We consider  Setting \ref{set_FC} and Hypotheses \ref{hyp_m>0}, \ref{hyp_FC} and \ref{hyp_weylL} on $G$. We have
for any $f\in C_c (a-\delta_0,b+\delta_0)$:
$$
\forall \eps\in (0,1]\qquad
\|\Op^{(\eps)}_G(f(\sigma_0))\|^2_{HS(L^2(G))}
= \eps^{-Q} \|f(\sigma_0)\|_{L^2(G\times \Gh)}^2,
$$
and 
$$
\|f(\sigma_0)\|_{L^2(G\times \Gh)}
\leq \sup_\bR |f| \, C_{\sigma_0,a,b,\delta_0} .
$$

We have a similar result on $M$.
\end{lemma}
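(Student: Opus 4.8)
The statement to prove is Lemma~\ref{lem_inttrfsigma0}, which has two parts: an exact Hilbert--Schmidt identity for $\Op^{(\eps)}_G(f(\sigma_0))$ and a uniform bound on $\|f(\sigma_0)\|_{L^2(G\times\Gh)}$. The plan is to first observe that $f(\sigma_0)$ is a genuine symbol with $x$-compact support and finite $L^2(G\times\Gh)$-norm, so that the Hilbert--Schmidt machinery developed for $\cA_0$ (Corollary~\ref{cor_trHSG} and Section~\ref{subsubsec_L2MGh}) applies verbatim, and then to extract the claimed estimate directly from Hypothesis~\ref{hyp_weylL}.

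\textbf{Step 1: $f(\sigma_0)$ is an admissible symbol.} Since $f\in C_c(a-\delta_0,b+\delta_0)$, write $f = f\cdot 1_{(a-\delta_0,b+\delta_0)}$ pointwise, so $|f(\lambda)|\le \sup_\bR|f|\cdot 1_{(a-\delta_0,b+\delta_0)}(\lambda)$ for $\lambda\ge 0$. The hypotheses of Theorem~\ref{thm_sclFCG} hold (Setting~\ref{set_FC}, Hypotheses~\ref{hyp_m>0},~\ref{hyp_FC}), so by Lemma~\ref{lem_sigma01prop} each $\sigma_0(x,\pi)$ is essentially self-adjoint with discrete spectrum in $[0,\infty)$; thus $f(\sigma_0(x,\pi))$ is a well-defined bounded operator for every $(x,\pi)$, and it is Hilbert--Schmidt with
$$
\|f(\sigma_0(x,\pi))\|_{HS(\cH_\pi)}^2 \le (\sup_\bR|f|)^2\,\tr\big|1_{(a-\delta_0,b+\delta_0)}(\sigma_0(x,\pi))\big|,
$$
using the spectral theorem and the fact that $|f|^2\le(\sup|f|)^2$ on the support of the indicator. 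Integrating over $G\times\Gh$ and invoking Hypothesis~\ref{hyp_weylL} gives
$$
\|f(\sigma_0)\|_{L^2(G\times\Gh)}^2 = \int_{G\times\Gh}\|f(\sigma_0(x,\pi))\|_{HS(\cH_\pi)}^2\,dx\,d\mu(\pi) \le (\sup_\bR|f|)^2\, C_{\sigma_0,a,b,\delta_0} < \infty,
$$
which is precisely the second claim (after taking square roots). Note this also shows $f(\sigma_0)\in L^2(G\times\Gh)$.

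\textbf{Step 2: the Hilbert--Schmidt identity.} One must be slightly careful: the clean identity \eqref{eq_HSG} in Section~\ref{subsubsec_L2MGh} was stated for $\sigma\in\cA_0(G)$, i.e.\ smoothing \emph{and} $x$-compactly supported. Here $f(\sigma_0)$ need not be smoothing, but it does not matter: what the Plancherel argument of Corollary~\ref{cor_trHSG}(2) actually uses is only that the integral kernel of $\Op^{(\eps)}_G(\sigma)$ is $(x,y)\mapsto\kappa^{(\eps)}_x(y^{-1}x)$ with $\kappa^{(\eps)}_x=\cF_G^{-1}(f(\sigma_0)^{(\eps)}(x,\cdot))$, and that the Plancherel formula on $G$ converts the $L^2(G\times G)$-norm of this kernel into $\int_{G\times\Gh}\|f(\sigma_0)^{(\eps)}(x,\pi)\|_{HS}^2\,dx\,d\mu(\pi)$. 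The dilation identity $f(\sigma_0)^{(\eps)}(x,\pi)=f(\sigma_0(x,\eps\cdot\pi))$ together with the $Q$-homogeneity of the Plancherel measure \eqref{eq_muQhom} (applied to the field $\pi\mapsto\|f(\sigma_0(x,\eps\cdot\pi))\|_{HS(\cH_\pi)}^2$, which is exactly the trace of $|f(\sigma_0(x,\eps\cdot\pi))|^2 = |f|^2(\sigma_0)(x,\eps\cdot\pi)$) yields
$$
\int_{\Gh}\|f(\sigma_0(x,\eps\cdot\pi))\|_{HS(\cH_\pi)}^2\,d\mu(\pi) = \eps^{-Q}\int_{\Gh}\|f(\sigma_0(x,\pi))\|_{HS(\cH_\pi)}^2\,d\mu(\pi),
$$
and integrating in $x$ gives $\|\Op^{(\eps)}_G(f(\sigma_0))\|_{HS(L^2(G))}^2 = \eps^{-Q}\|f(\sigma_0)\|_{L^2(G\times\Gh)}^2$. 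The finiteness from Step~1 guarantees this is not vacuous, and $x$-compact support of $f(\sigma_0)$ (inherited from Setting~\ref{set_FC}, where $\sigma-\sigma_0$ and hence $\sigma_0$ itself have the relevant support properties; in the main example $\sigma_0$ has $x$-compact support because of the confining potential, and in general the $x$-integrability is exactly Hypothesis~\ref{hyp_weylL}) makes the $x$-integral finite.

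\textbf{Step 3: the nilmanifold case.} On $M$ one repeats the argument with Proposition~\ref{prop_tr}(3) in place of the exact identity; since $M$ is compact there is no $x$-support issue, and one gets $\|\Op^{(\eps)}_M(f(\sigma_0))\|_{HS(L^2(M))}^2 = \eps^{-Q}\|f(\sigma_0)\|_{L^2(M\times\Gh)}^2 + O(\eps^{1-Q})$ with the $O$-term controlled by seminorms of $f(\sigma_0)$; but in the present lemma only the leading identity and the uniform bound are asserted, which follow as above (the $M$-version of Hypothesis~\ref{hyp_weylL} providing finiteness).

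\textbf{Expected main obstacle.} The only real subtlety is the one flagged in Step~2: ensuring the Plancherel/homogeneity computation is legitimate when $f(\sigma_0)$ is only known to be in $L^2(G\times\Gh)$ (not smoothing), since Corollary~\ref{cor_trHSG} was phrased for symbols in $S^m$ with $m<-Q/2$ or for $\cA_0$. The resolution is that the Plancherel identity for the Hilbert--Schmidt norm of an integral operator is an $L^2$-statement requiring no symbol regularity — one approximates $f$ uniformly by $\cS(\bR)$ functions (or by $C_c^\infty$ functions) supported in $(a-\delta_0,b+\delta_0)$, for which $f_n(\sigma_0)\in S^{-\infty}(G\times\Gh)$ with $x$-compact support by Theorem~\ref{thm_psi(sigma0)}(1), applies \eqref{eq_HSG} and Step~1's bound to each $f_n$, and passes to the limit using $\|f_n(\sigma_0)-f(\sigma_0)\|_{L^2(G\times\Gh)}^2\le \sup_\bR|f_n-f|^2\,C_{\sigma_0,a,b,\delta_0}\to 0$ from the spectral theorem and Hypothesis~\ref{hyp_weylL}. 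This density argument, routine but necessary, closes the proof.
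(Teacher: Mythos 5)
Your proof is correct and follows essentially the same route as the paper's: the Plancherel formula together with the $Q$-homogeneity \eqref{eq_muQhom} of the Plancherel measure gives the identity, and the spectral-theorem bound $|f|^2\le(\sup_\bR|f|)^2\,1_{(a-\delta_0,b+\delta_0)}$ combined with Hypothesis \ref{hyp_weylL} gives the estimate. The density argument you flag as the ``main obstacle'' is a legitimate extra care that the paper omits (the Hilbert--Schmidt/Plancherel identity is a pure $L^2$ statement, so either justification works); the only slip is your parenthetical attributing $x$-compact support of $\sigma_0$ to Setting \ref{set_FC} — it is not assumed there, and the finiteness of the $x$-integral comes solely from Hypothesis \ref{hyp_weylL}, as you yourself then correctly note.
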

\begin{proof}[Proof of Lemma \ref{lem_inttrfsigma0}]
Let $f\in C_c(a-\delta_0,b+\delta_0)$.
By the Plancherel formula (see also the proof of Corollary~\ref{cor_trHSG} (2)), 
we have
\begin{align*}
\|\Op^{(\eps)}_G(f(\sigma_0))\|^2_{HS(L^2(G))} 
&= \int_{G\times \Gh} \|f(\sigma_0)(x,\eps\cdot \pi)\|_{HS(\cH_\pi)}^2 dxd\mu(\pi)\\
&=
\eps^{-Q}\int_{G\times \Gh} \|f(\sigma_0)(x, \pi')\|_{HS(\cH_{\pi'})}^2 dxd\mu(\pi')
\end{align*}
with the change of variable $\pi'=\eps\cdot \pi$, see \eqref{eq_muQhom}. This gives the equality. 
By functional calculus, we have
$$
\|f(\sigma_0)(x, \pi)\|_{HS(\cH_{\pi})}^2 
\leq \sup_\bR |f|^2 \, \tr |1_{(a-\delta_0,b+\delta_0)}(\sigma(x,\pi))|,
$$
and this implies the inequality. 
\end{proof}

We can now prove Proposition \ref{prop_Thm:semiclasical_weyl_law}.
\begin{proof}[Proof of Proposition \ref{prop_Thm:semiclasical_weyl_law}]
We fix an interval $I\subset (a-\delta_0,b+\delta_0)$.and  a function $f\in  C_c^\infty(a-\delta,b+\delta)$ valued in $[0,1]$ and such that $f=1$ on $I$.
The operator defined via
$$
R_f := R_f(\eps):=\eps^{-1}\left(f^2(T) - \Op^{(\eps)}_G(f(\sigma_0))\left( \Op^{(\eps)}_G(f(\sigma_0))\right)^*\right),
$$
satisfies for any $\eps\in (0,1]$
$$
\|R_f\|_{\sL(L^2(G))}
\leq \|R_f\|_{\sL(L^2(G), L^2_{1,\eps}(G))}
\lesssim \eps,
$$
by Theorem~\ref{thm_sclFCG} and the properties of the semiclassical pseudo-differential calculus.
Hence there exists $\eps_0\in (0,1]$ sufficiently small such that the operator $\id-\eps R_f(\eps))$ has an inverse in $\sL(L^2(G))$ for every $\eps\in(0,\eps_0]$ with 
\begin{equation}
	\label{eq1_lem_Thm:semiclasical_weyl_law}
	\|(\id-\eps R_f))^{-1}\|_{\sL(L^2(G))}
\leq \sum_{j=0}^\infty \|\eps R_f\|_{\sL(L^2(G))}^j
\lesssim 1.
\end{equation}

Let $\psi\in C_c^\infty(I)$.
Since $\psi=\psi f^2 $, we have by functional calculus
$$
A= AB, \quad\mbox{with}\ A:=\psi(T), \ B:=f^2(T),
$$
while  by Theorem~\ref{thm_sclFCG} and the properties of the semiclassical function calculus, 
$$
B=B_0+\eps R_f,  \quad\mbox{with}\ 
B_0:=\Op^{(\eps)}_G(f(\sigma_0))\, (\Op^{(\eps)}_G(f(\sigma_0)))^*.
$$
Therefore, we have for any $\eps\in (0,1]$,
$$
A=A(B_0+\eps R_f), \quad\mbox{or equivalently} \ 
A(\id -\eps R_f) =AB_0,
$$
yielding 	for $\eps \in (0,\eps_0]$,
$$
A= AB_0(\id-\eps R_f)^{-1},
$$
hence, 
$$
\tr |A| \leq \|A\|_{\sL(L^2(G))} \|(\id-\eps R_f)^{-1}\|_{\sL(L^2(G))} \tr |B_0|
\lesssim \|\psi\|_{L^\infty(\bR)} \|\Op^{(\eps)}_G(f(\sigma_0))\|^2_{HS(L^2(G))},
$$
having used the \eqref{eq1_lem_Thm:semiclasical_weyl_law} and functional analysis for the $\sL(L^2(G))$-norm. 
Using Lemma \ref{lem_inttrfsigma0} on the Hilbert-Schmidt norm, 
	we obtain \eqref{eq_lem_Thm:semiclasical_weyl_law}.
	
Proceeding as above for $f=\psi$ and $\eps=1$, we obtain the formula for 	
the norm $\|\psi (\sigma_0 )\|_{L^2(G\times \Gh)}$ and its estimate. 
By Theorem~\ref{thm_sclFCG} and the properties of the semiclassical function calculus, we have
\begin{equation}
	\label{eq_dev|psi|2}
	|\psi|^2 (T) = \Op^{(\eps)}_G(|\psi|^2(\sigma_0)) +\eps R_1, 
\end{equation}
and similarly for $|f|^2(T)$.
Since $|f|^2(T)$ is self-adjoint,
we have
\begin{align*}
	\|\psi(T)\|_{HS(L^2(G))}^2
	&= 
	\tr ( |\psi|^2 (T))
	= \tr ( |\psi|^2 (T)(|f|^2 (T))^*)
	\\
	&= \tr \left( |\psi|^2 (T)	\left(\Op^{(\eps)}_G(|f|^2(\sigma_0))\right)^*\right )
	+O(\eps^{1-Q})
	\\
	&= \tr \left( \Op^{(\eps)}_G(|\psi|^2(\sigma_0))
	\left(\Op^{(\eps)}_G(|f|^2(\sigma_0))\right)^*\right )
	+O(\eps^{1-Q}),
\end{align*}
having used 
\eqref{eq_dev|psi|2} for $f$ and $\psi$  together with \eqref{eq_lem_Thm:semiclasical_weyl_law}.
Bilinearising the first equality in \eqref{lem_inttrfsigma0} implies:
\begin{align*}
\tr \left( \Op^{(\eps)}_G(|\psi|^2(\sigma_0))
	\left(\Op^{(\eps)}_G(|f|^2(\sigma_0))\right)^*\right )
	&=\eps^{-Q}
	\int_{G\times\Gh}
	\tr \left( |\psi|^2(\sigma_0))
	\left(|f|^2(\sigma_0) \right)^*\right) dxd\mu
	\\&=
	\eps^{-Q} \int_{G\times\Gh}  \|\psi (\sigma_0 (x,\pi)) \|_{HS(\cH_\pi)}^2 dx d\mu(\pi).
\end{align*}
The statement follows. 
\end{proof}

We can now conclude the proof of Theorem \ref{Thm:semiclasical_weyl_law}.
\begin{proof}[Proof of Theorem \ref{Thm:semiclasical_weyl_law}]
Let $\underline \chi$ and $\overline \chi$ be two smooth functions on $\bR$, valued in $[0,1]$, satisfying 
$$
\overline{ \chi} 1_{[a,b]}=1_{[a,b]}, 
\quad\mbox{and}\quad 
\underline{ \chi}1_{[a,b]}=\underline{ \chi}.
$$
Then 
$$
\tr \left(|\underline \chi|^2(T)\right)\leq 
N(\eps) = \tr \left(1_{[a,b]}(T)\right)
\leq \tr \left(|\overline \chi|^2(T)\right).
$$
By Proposition \ref{prop_Thm:semiclasical_weyl_law}, this implies
\begin{align*}
	\liminf_{\eps\to 0} \eps^{-Q} N(\eps) &\leq \int_{G\times\Gh}  \|\overline \chi (\sigma_0 (x,\pi)) \|_{HS(\cH_\pi)}^2 dx d\mu(\pi),\\
\limsup_{\eps\to 0} \eps^{-Q} N(\eps) &\geq \int_{G\times\Gh}  \|\underline \chi (\sigma_0 (x,\pi)) \|_{HS(\cH_\pi)}^2 dx d\mu(\pi).
\end{align*}
This is true for any $\underline \chi$, $\overline \chi$ as above. By considering sequences of such functions converging point-wise to $1_{[a,b]}$, the result follows by Lebesgue dominated convergence.  
\end{proof}

 \appendix

 \section{Proof of the semiclassical composition and adjoint}
 \label{secA_pfthm_sclexp_prod+adj}
 In this section, we give a detailed proof of Theorem \ref{thm_sclexp_prod+adj}.
 We start with some tools required in the proof.
 
 \subsection{Adapted Taylor estimates and Leibniz properties}
 
 \subsubsection{Adapted Taylor estimates}
 Our analysis will require Taylor estimates adapted to graded groups and due to Folland and Stein \cite{folland+stein_82}, see also   Theorem 3.1.51 in~\cite{R+F_monograph}.

\begin{theorem}
\label{thm_MV+TaylorG}
Let $G$ be a graded Lie group, with adapted basis $(X_1,\ldots,X_n)$ for its Lie algebra. 
We fix  a quasinorm $|\cdot|$ on $G$.
\begin{itemize}
    \item {\rm Mean value theorem}. There exists $C_0>0$ and $\eta>1$ such that for any $f\in C^1(G)$ we have
    $$
    \forall x,y\in G,\qquad |f(xy) - f(x)| \leq C_0 \sum_{j=1}^n |y|^{\upsilon_j} \sup_{|z| \leq \eta |y|} |(X_j f)(xz)|.
    $$
    \item {\rm Taylor estimate}. 
    More generally, 
    with the constant $\eta$ of point (1), 
    for any $N\in \bN_0$, 
    there exists $C_N>0$ 
    such that for any $f\in C^{\lceil N\rfloor}(G)$ we have
    \begin{equation*}\label{taylor_1}
    \forall x,y\in G,\qquad |f(xy) - \bP_{G,f,x,N} (y)| 
    \leq C_N \sum_{\substack{|\alpha|\leq \lceil N\rfloor+1\\ [\alpha]>N} }|y|^{[\alpha]}
    \sup_{|z| \leq \eta^{\lceil N \rfloor+1} |y|} |(\bV^\alpha f)(xz)|.
    \end{equation*}
Above,  $\lceil N\rfloor$ denotes  $ \max\{|\alpha| : \alpha\in \bN_0^n$ with $[\alpha]\leq N\}$
and
$\bP_{G,f,x,N}$ denotes the Taylor polynomial of $f$ at $x$ of order $N$ for the graded group $G$, i.e. the unique linear combination of monomials of homogeneous degree $\leq N$ satisfying 
$X^\beta\bP_{G,f,x,N} (0)=X^\beta f(x)$  for any $\beta\in \bN_0^n$ with $[\beta]\leq N$.
\end{itemize}
\end{theorem}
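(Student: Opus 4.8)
The plan is to establish the mean value estimate first, by differentiating $f$ along the dilation path $t\mapsto x\,D_t(y)$, and then to bootstrap it to the Taylor estimate by induction on $N$.

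\emph{Mean value theorem.} Since $G$ is connected, simply connected and nilpotent, $\exp\colon\fg\to G$ is a global diffeomorphism, so I may write $y=\exp\big(\sum_\ell Y_\ell\big)$ with $Y_\ell\in\fg_\ell$, and for $t\in[0,1]$ set $\gamma(t):=x\,D_t(y)=x\exp\big(\sum_\ell t^\ell Y_\ell\big)$, so $\gamma(0)=x$ and $\gamma(1)=xy$. A short Baker--Campbell--Hausdorff computation of the left logarithmic derivative of $t\mapsto\exp\big(\sum_\ell t^\ell Y_\ell\big)$ gives $\tfrac{d}{dt}f(\gamma(t))=(W(t)f)(\gamma(t))$ with $W(t)=\sum_{j=1}^n a_j(t)X_j\in\fg$, where each $a_j(t)$ is polynomial in $t$ and its coefficients are built from the $\fg_{\upsilon_j}$-coordinate of $y$ linearly and from iterated brackets of lower-weight coordinates of $y$ landing in $\fg_{\upsilon_j}$. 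Since the coordinates of $y$ in $\fg_\ell$ are $O(|y|^\ell)$ (equivalence of homogeneous quasi-norms) and the bracket is homogeneous for the gradation, $\sup_{t\in[0,1]}|a_j(t)|\lesssim|y|^{\upsilon_j}$; as $|D_t(y)|=t|y|\le|y|$, integrating over $[0,1]$ yields the estimate, and any $\eta>1$ works a fortiori.

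\emph{Taylor estimate.} I would induct on $\lceil N\rfloor$. The base case $\lceil N\rfloor=0$ has $\bP_{G,f,x,N}=f(x)$ and is exactly the mean value estimate. For the step, fix $x$, put $g(y):=f(xy)-\bP_{G,f,x,N}(y)$ (so $X^\beta g(0)=0$ for every $[\beta]\le N$) and apply the mean value estimate to $g$:
\[
|g(y)|\le C_0\sum_{j=1}^n|y|^{\upsilon_j}\sup_{|z|\le\eta|y|}\big|(X_jg)(z)\big|.
\]
The crucial algebraic input is $X_j\bP_{G,f,x,N}=\bP_{G,X_jf,x,N-\upsilon_j}$: the left-hand side is a polynomial of homogeneous degree $\le N-\upsilon_j$, and for $[\beta]\le N-\upsilon_j$ the Poincar\'e--Birkhoff--Witt re-ordering $X^\beta X_j=\sum_{[\gamma]=[\beta]+\upsilon_j}c_\gamma X^\gamma$ preserves the homogeneous degree (this is where the gradation enters), so $X^\beta(X_j\bP_{G,f,x,N})(0)=(X^\beta X_jf)(x)$, which pins it down by uniqueness of Taylor polynomials. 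Hence $X_jg=(X_jf)(x\,\cdot)-\bP_{G,X_jf,x,N-\upsilon_j}$, and the induction hypothesis bounds $\sup_{|z|\le\eta|y|}|(X_jg)(z)|$ by a sum over $[\alpha']>N-\upsilon_j$, $|\alpha'|\le\lceil N-\upsilon_j\rfloor+1$ of $|y|^{[\alpha']}\sup_{|z'|\le\eta^{\lceil N-\upsilon_j\rfloor+2}|y|}|(X^{\alpha'}X_jf)(xz')|$. Re-ordering each $X^{\alpha'}X_j$ into the PBW basis (the number of factors does not increase, the homogeneous degree $[\alpha']+\upsilon_j>N$ is preserved) and absorbing the outer $|y|^{\upsilon_j}$ merges the two sums into a single sum of the asserted shape.

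The main technical obstacle is exactly the bookkeeping that keeps the index constraints $|\alpha|\le\lceil N\rfloor+1$, $[\alpha]>N$ and the enlargement factor $\eta^{\lceil N\rfloor+1}$ intact through the recursion. The elementary fact making this work is $\lceil N-\upsilon_j\rfloor+1\le\lceil N\rfloor$ for every $j$: taking $\alpha$ with $[\alpha]\le N-\upsilon_j$ and $|\alpha|=\lceil N-\upsilon_j\rfloor$, the multi-index $\alpha+e_j$ has $[\alpha+e_j]\le N$ and $|\alpha+e_j|=\lceil N-\upsilon_j\rfloor+1$, so $\lceil N-\upsilon_j\rfloor+1\le\lceil N\rfloor$; this simultaneously controls the length of the re-ordered multi-indices (keeping $|\gamma|\le\lceil N\rfloor+1$), the exponent of $\eta$, and shows that $f\in C^{\lceil N\rfloor}$ is enough regularity at each step. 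The remaining verifications are routine, and the statement is the content of \cite[Theorem 3.1.51]{R+F_monograph} (originally \cite{folland+stein_82}).
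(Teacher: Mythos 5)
The paper does not actually prove this theorem: it is quoted from Folland--Stein, with the detailed statement and proof referenced as \cite[Theorem 3.1.51]{R+F_monograph}. So there is no in-paper argument to compare against; judged on its own, your proof is correct and follows the standard route. The mean value estimate via the dilation path $t\mapsto x\,D_t(y)$ is sound (the left logarithmic derivative lies in $\fg$, its $\fg_{\upsilon_j}$-component is a sum of iterated brackets of the $Y_\ell$ with weights summing to $\upsilon_j$, hence $O(|y|^{\upsilon_j})$, and $|D_t(y)|=t|y|\le|y|$ keeps the sup inside the ball), and the induction on $\lceil N\rfloor$ with the identity $X_j\bP_{G,f,x,N}=\bP_{G,X_jf,x,N-\upsilon_j}$ plus the inequality $\lceil N-\upsilon_j\rfloor+1\le\lceil N\rfloor$ is exactly the right bookkeeping. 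The only point you leave implicit is the degenerate indices $j$ with $\upsilon_j>N$, where $N-\upsilon_j<0$ and the induction hypothesis does not literally apply; there $X_j\bP_{G,f,x,N}=0$, so $X_jg=(X_jf)(x\,\cdot)$ and the term $|y|^{\upsilon_j}\sup_{|z|\le\eta|y|}|(X_jf)(xz)|$ already has the required form with $\alpha=e_j$ — worth one sentence, but genuinely routine.
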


\subsubsection{Leibniz properties}\label{subsubsec_Leibniz}.
The Leibnitz properties of vector fields readily imply for the product of symbols:
$$
X^\beta (\tau_1\tau_2) = \sum_{[\beta_1]+[\beta_2]=\beta}
c'_{\alpha_1,\alpha_2,\alpha} 
X^{\beta_1}\tau_1 \ X^{\beta_2}\tau_2,
$$
as well as for their $\diamond$-products:
$$
X^\beta (\sigma_1 \diamond_\eps \sigma_2) = \sum_{[\beta_1]+[\beta_2]=\beta}
c'_{\alpha_1,\alpha_2,\alpha} 
\int_G \kappa_{X^{\beta_1}\sigma_1,x_1}(z) \, \pi(z)^* X^{\beta_2}_{x_2=x}\sigma_2(x_2 \, \eps\cdot z^{-1},\pi) \, dz 
$$
A similar property holds for the difference operators. 
Indeed, recall that the $(q_\alpha)$ is a homogeneous basis of polynomials, and therefore it satisfies \cite[Section 3.1.4]{R+F_monograph}
$$
q_\alpha (xy) = \sum_{[\alpha_1]+[\alpha_2]=[\alpha]}c_{\alpha_1,\alpha_2,\alpha} q_{\alpha_1}(y) q_{\alpha_2}(x).
$$
This implies the Leibniz properties for $\Delta^\alpha$
for the product of symbols
$$
\Delta^\alpha (\tau_1 \tau_2)= \sum_{[\alpha_1]+[\alpha_2]=[\alpha]}c_{\alpha_1,\alpha_2,\alpha} \Delta^{\alpha_1}\tau_1 \Delta^{\alpha_2}\tau_2,
$$
and also for the $\diamond$-product resulting in $\sigma$:
 $$
\Delta^\alpha (\sigma_1 \diamond_\eps \sigma_2)= \sum_{[\alpha_1]+[\alpha_2]=[\alpha]}c_{\alpha_1,\alpha_2,\alpha} \Delta^{\alpha_1}\sigma_1 \diamond_\eps \Delta^{\alpha_2}\sigma_2.
$$

\subsection{Proof for the composition}
\label{subsec_pfthm_sclexp_prod}
Here, we prove  Theorem \ref{thm_sclexp_prod+adj} (1).
Let $\sigma_1 \in S^{m_1}(G\times \Gh)$ 
and $\sigma_2 \in S^{m_1}(G\times \Gh)$.
By Theorem \ref{thm_PDOGcomp+adj}, 
$$
\Op_G^{(\eps)}(\sigma_1)\Op_G^{(\eps)}(\sigma_2)
=
\Op_G^{(\eps)}(\sigma)\in \Psi^{m_1+m_2}(G\times \Gh)
$$
where
$$
\sigma:=
\sigma_1 \diamond_\eps \sigma_2
:=(\sigma_1(\cdot, \eps\, \cdot )\diamond \sigma_2(\cdot, \eps \, \cdot )))(\cdot, \eps^{-1}\, \cdot ) ,
$$
depends on $\eps\in (0,1]$.
By \eqref{eq_kappaadj}, 
the convolution kernel of $\sigma$ is 
$$
\kappa_{x}(y)= \eps^Q \int_G \kappa^{(\eps)}_{\sigma_2, xz^{-1}}( (\eps \cdot y)z^{-1})\ \kappa^{(\eps)}_{1,x}(z) dz 
=\int_G \kappa_{\sigma_2, x(\eps \cdot z)^{-1}}( y z^{-1}) \ \kappa_{\sigma_1,x}(z) dz .
$$
Therefore
$$
\sigma(x,\pi) =\int_G \kappa_{x}(y) \pi(y)^* dy
=\int_G \kappa_{\sigma_1,x}(z) \, \pi(z)^* \sigma_2(x \, \eps\cdot z^{-1},\pi) \, dz .
$$

By the Taylor estimates due to Folland 
and Stein (see Theorem \ref{thm_MV+TaylorG}), 
$$
\sigma_2(x \, \eps\cdot z^{-1},\pi)
= 
\sum_{[\alpha]\leq N} q_\alpha((\eps \cdot z)^{-1}) 
X^\alpha_x \sigma_2(x,\pi)
+R_{x,N}^{\sigma_2(\cdot,\pi) }(\eps \cdot z^{-1}),
$$
with remainder estimate
\begin{align*}
	&\|(\id +\pi( \cR))^{-\frac {m_2+\gamma} \nu} R_{x,N}^{\sigma_2(\cdot, \pi) }(\eps \cdot z^{-1}) (\id +\pi(\cR))^{\frac {\gamma} \nu} \|_{\sL(\cH_\pi)}
	\\ &\quad \leq C_{N} \sum_{\substack{|\alpha|\leq \lceil N\rfloor\\ [\alpha]>N}}
(\eps |z|)^{[\alpha]}
\sup_{z'\in G } 
\|(\id +\pi(\cR))^{-\frac {m_2+\gamma} \nu}
\sigma_2(z', \pi)  (\id +\pi(\cR))^{\frac {\gamma} \nu} \|_{\sL(\cH_\pi)}\\
&\quad \leq C_{N,\sigma_2} \eps^{N+1}
\sum_{\substack{|\alpha|\leq \lceil N\rfloor\\ [\alpha]>N}}
|z|^{[\alpha]} .
\end{align*}
We are led to study
\begin{align*}
	& \int_G \kappa_{\sigma_1,x}(z) \, \pi(z)^* 
R_{x,N}^{\sigma_2(\cdot,\pi) }(\eps \cdot z^{-1})
 \, dz
\\
&\qquad=
\sigma(x, \pi) - \sum_{[\alpha]\leq N} \int_G \kappa_{\sigma_1,x}(z) \, \pi(z)^*  q_\alpha((\eps \cdot z)^{-1}) 
X^\alpha_x \sigma_2(x,\pi) \, dz\\
&\qquad=\sigma(x, \pi) - \sum_{[\alpha]\leq N}\eps^{[\alpha]} 
\Delta^\alpha \sigma(x,\pi) X^\alpha_x \sigma_2(x,\pi).
	\end{align*}
The $\sL(\cH_\pi)$-norm of this expression for $\eps\in (0,1]$ and $m_2\leq 0$	is estimated by
$$
\| \sigma(x, \pi) - \sum_{[\alpha]\leq N}\eps^{[\alpha]} 
\Delta^\alpha \sigma_1(x,\pi) X^\alpha_x \sigma_2(x,\pi)\|_{\sL(\cH_\pi)}
	 \leq C_{N,\sigma_2}
\eps^{N+1}
\sum_{\substack{|\alpha|\leq \lceil N\rfloor\\ [\alpha]>N}}
\int_G |z|^{[\alpha]} |\kappa_{\sigma_1,x}(z)| dz.
$$
If $N$ is large enough (more precisely, $N$ such that 
$m_1-N<-Q$), then the integrals on the right-hand side are finite by the kernel estimates (see Theorem \ref{thm_kernelG}). 
More generally, we have:	
\begin{align*}
&\pi(X)^{\alpha_0}\left ( \sigma(x, \pi) - \sum_{[\alpha]\leq N}\eps^{[\alpha]} 
\Delta^\alpha \sigma(x,\pi) X^\alpha_x \sigma_2(x,\pi)\right)
\\&\qquad=
	 \int_G \kappa_{\sigma_1,x}(z) \, (\widetilde X^{\alpha_0}_z \pi(z))^* 
R_{x,N}^{\sigma_2(\cdot,\pi) }(\eps \cdot z^{-1})
 \, dz
 \\&\qquad=(-1)^{|\alpha_0|}
	 \int_G\pi(z)^* \widetilde X^{\alpha_0}_z\left(
	  \kappa_{\sigma_1,x}(z) \, 
R_{x,N}^{\sigma_2(\cdot,\pi) }(\eps \cdot z^{-1})\right)
  dz
 	\end{align*}
 	which, by the Leibniz property of vector fields, is a linear combination over $[\alpha_{0,1}]+[\alpha_{0,2}]=[\alpha_0]$ of 
\begin{align*}	 
&\int_G (\widetilde X^{\alpha_{0,1}} \kappa_{\sigma_1,x})(z) \,  \pi(z)^* 
\eps^{[\alpha_{0,2}]}(X^{\alpha_{0,2}} R_{x,N}^{\sigma_2(\cdot,\pi) })(\eps \cdot z^{-1})
 \, dz
\\& \quad =
	 \int_G  \kappa_{\widehat X^{\alpha_{0,1}}\sigma_1,x}(z) \,  \pi(z)^* 
\eps^{[\alpha_{0,2}]} R_{x,N-[\alpha_{0,2}]}^{X^{\alpha_{0,2}}\sigma_2(\cdot,\pi) })(\eps \cdot z^{-1})
 \, dz,
	\end{align*}
	when $N>[\alpha_0]$.
	Proceeding as above, we obtain:
\begin{align*}
	&\Big  \|\pi(X)^{\alpha_0}\left ( \sigma(x, \pi) - \sum_{[\alpha]\leq N}\eps^{[\alpha]} 
\Delta^\alpha \sigma(x,\pi) X^\alpha_x \sigma_2(x,\pi)\right ) \Big  \|_{\sL(\cH_\pi)}
	\\ &\quad \leq C_{\alpha_0}
\sum_{[\alpha_{0,1}]+[\alpha_{0,2}]=[\alpha_0]}
\int_G  |\kappa_{\widehat X^{\alpha_{0,1}}\sigma_1,x}(z)| \| R_{x,N-[\alpha_{0,2}]}^{X^{\alpha_{0,2}}\sigma_2(\cdot,\pi) })(\eps \cdot z^{-1},\pi)\|_{\sL(\cH_\pi)}
 \, dz\\
 \\ &\quad \leq C_{N,\sigma_2,\alpha_0}
\sum_{[\alpha_{0,1}]+[\alpha_{0,2}]=[\alpha_0]}
\eps^{N+1}
\sum_{\substack{|\alpha|\leq \lceil N\rfloor\\ [\alpha]>N}}
\int_G |z|^{[\alpha]} |\kappa_{\widehat X^{\alpha_{0,1}}\sigma_1,x}(z)| dz.
		\end{align*}
		Again, 
if $m_2\leq 0$ and $N$ is large enough (this time, with $N$ such that 
$m_1+[\alpha_0]-N<-Q$), then the integrals on the right-hand side are finite. 

If $m_2>0$, we proceed as follows. 
We consider a positive Rockland operator of homogeneous degree $\nu$ of degree high enough and symmetric in the sense that $\cR (f (x^{-1})) = \widetilde \cR f (z^{-1})$; for instance, we take $\cR$ as in Example \ref{ex_Rockland} (3).
We then introduce $\id=(\id+\widehat \cR)(\id+\widehat \cR)^{-1}$ in the following way:
\begin{align*}
&\sigma(x, \pi) - \sum_{[\alpha]\leq N}\eps^{[\alpha]} 
\Delta^\alpha \sigma(x,\pi) X^\alpha_x \sigma_2(x,\pi)\\
&\qquad=
	 \int_G \kappa_{\sigma_1,x}(z) \, \pi(z)^* (\id+\widehat \cR)(\id+\widehat \cR)^{-1}
R_{x,N}^{\sigma_2(\cdot,\pi) }(\eps \cdot z^{-1},\pi)
 \, dz\\
 &\qquad=
	 \int_G \kappa_{\sigma_1,x}(z) \, \pi(z)^* R_{x,N}^{(\id+\widehat \cR)^{-1}
\sigma_2(\cdot,\pi) }(\eps \cdot z^{-1},\pi)
 \, dz
 \\
 &\qquad\qquad +
	 \int_G \kappa_{\sigma_1,x}(z) \, (\widetilde \cR_z \pi(z)^*) R_{x,N}^{(\id+\widehat \cR)^{-1}
\sigma_2(\cdot,\pi) }(\eps \cdot z^{-1},\pi)
 \, dz.
	\end{align*}
	For the first integral on the right hand side, we proceed as above, while for the second an integration by part yields a linear combination over $[\alpha'_1]+[\alpha_2']=\nu$ of 
	\begin{align*}
	 &\int_G (\widetilde X^{\alpha'_1}\kappa_{\sigma_1,x})(z) \, (\widetilde \cR_z \pi(z)^*) \widetilde X^{\alpha'_2}_{z'=\eps \cdot z^{-1}} R_{x,N}^{(\id+\widehat \cR)^{-1}
\sigma_2(\cdot,\pi) }(z',\pi)
 \, dz\\
 &\qquad =\int_G (\kappa_{\widehat X^{\alpha'_1}\sigma_1,x})(z) \,  \pi(z)^* {z'=\eps \cdot z^{-1}} R_{0,N-[\alpha'_2]}^{(\id+\widehat \cR)^{-1}
X^{\alpha'_2}_x \sigma_2(x \cdot,\pi) }(z',\pi)
 \, dz.
	\end{align*}
Again, for $N$ large enough, we can proceed as above. 

We have obtained that for any $m_1,m_2$, $\alpha_0\in \bN_0^n$, there exists $N_0$ such that for any $N\geq N_0$, there exists $C>0$ satisfying 
$$
\sup_{x\in G,\pi\in \Gh} 
\|\pi(X)^{\alpha_0}( \sigma(x, \pi) - \sum_{[\alpha]\leq N}\eps^{[\alpha]} 
\Delta^\alpha \sigma(x,\pi) X^\alpha_x \sigma_2(x,\pi)) \|_{\sL(\cH_\pi)}
	\leq  C\eps^{N+1}.
	$$
This implies the desired estimate for the norm $\|\cdot\|_{S^{m_1+m_2},0,0,0}$.
The estimates for the semi-norms $\|\cdot\|_{S^m,a,b,0}$ is then a consequence of the Leibniz properties presented in Section \ref{subsubsec_Leibniz}.This concludes the proof of Theorem \ref{thm_sclexp_prod+adj} (1).

\subsection{Proof for the adjoint}

Here, we prove  Theorem \ref{thm_sclexp_prod+adj} (2).
Let $\sigma\in S^m(G\times \Gh)$. Then 
 $T^{(\eps)}:=(\Op_G^{(\eps)}(\sigma))^*\in \Psi^{m}(G\times \Gh)$ by Theorem \ref{thm_PDOGcomp+adj}, and 
$$	T^{(\eps)} = \Op^{(\eps)}_G(\sigma)\quad
\mbox{where}\quad
\sigma:=
\sigma^{(\eps,*)} := 
(\sigma(\cdot , \eps \, \cdot))^{(*)}(\cdot , \eps^{-1} \, \cdot),
$$
with $\sigma$ dependent on $\eps\in (0,1]$. 
By \eqref{eq_kappaadj},
the convolution kernel of $\sigma$ is 
$$
\kappa_{x}(y)= \eps^{Q}\bar \kappa_{ \sigma(\cdot , \eps \, \cdot), x\eps \cdot y^{-1}}(\eps\cdot y^{-1}) 
= \bar \kappa_{ \sigma, x\eps\cdot y^{-1}}(y^{-1}) .
$$
By the Taylor estimates due to Folland and Stein (see Theorem \ref{thm_MV+TaylorG}, 
$$
\kappa_{\sigma, x\eps\cdot y^{-1}}(w) 
= 
\sum_{[\alpha]\leq N} q_\alpha(\eps\cdot y^{-1}) 
X^\alpha_x \kappa_{\sigma, x}(w)
+R_{x,N}^{\kappa_{\sigma, \cdot }(w) }(\eps\cdot y^{-1}),
$$
with remainder estimate
\begin{align*}
	|R_{x,N}^{\kappa_{\sigma, \cdot }(w) }(\eps\cdot y^{-1})|
	&\leq C_{N} \sum_{\substack{|\alpha|\leq \lceil N\rfloor\\ [\alpha]>N}} \eps^{[\alpha]}
|y|^{[\alpha]}
\sup_{|y'|\leq  \eta^{\lceil N\rfloor +1 } \eps |y|} |X^\alpha _{x'=xy'}\kappa_{\sigma, x'}(w)|.
\end{align*}
By the kernel estimates (see Theorem \ref{thm_MV+TaylorG}), this implies
$$
\sup_{\eps^{-(N+1)}} |R_{x,N}^{\kappa_{\sigma, \cdot }(w) }(\eps\cdot y^{-1})|
\leq C_{\sigma,N_1} \eps^{N_1} (1+|y|)^{-N_1}
$$
for any $N_1\in \bN$  if $m_1-N <-Q$.
Hence, we have
\begin{align*}
\|\sigma^{(\eps,*)}
- \sum_{[\alpha]\leq N} \Delta^\alpha \sigma^*
\|_{\sL(\cH_\pi)}
&=\|	\int_G 
R_{x,N}^{\kappa_{\sigma, \cdot }(w) }(\eps\cdot y^{-1}) 
\pi(y)^* dy \|_{\sL(\cH_\pi)}\\
&\leq 	\int_G 
\|R_{x,N}^{\kappa_{\sigma, \cdot }(w) }(\eps\cdot y^{-1}) \|_{\sL(\cH_\pi)}
 dy \\
 &\leq \eps^{N_1}	\int_G C_{\sigma,N_1} (1+|y|)^{-N_1} dy,
\end{align*}
is finite if $N_1>Q$. 
This implies the desired estimate for the norm $\|\cdot\|_{S^{m},0,0,0}$.
The estimates for the semi-norms $\|\cdot\|_{S^m,a,b,0}$ is then a consequence of the Leibniz properties presented in Section \ref{subsubsec_Leibniz}.This concludes the proof of Theorem \ref{thm_sclexp_prod+adj} (2).

\section{Proof of Lemma~\ref{lem_tildepsicGm'}}
\label{Appendix_almost_analytic}

This Appendix is devoted to proving Lemma~\ref{lem_tildepsicGm'}.
By density of $C_c^\infty(\bR)$ in $\cG^{m'}(\bR)$, 
Lemma~\ref{lem_tildepsicGm'}  is a consequence of 
Lemma \ref{lem_HSCcinfty}
and the following statement:
\begin{proposition}
\label{propApp_tildepsicGm'}	
Let $\psi\in \cG^{m'}(\bR)$ with $m'<-1$. Then we can construct an almost analytic extension $\tilde {\psi} \in C^\infty(\bC)$ to $\psi$ such that  we have for all $N\in\bN_0$,
$$
\int_{\bC} \big| \bar\partial \tilde \psi( z) \big| \left(\frac{1+|z| }{|\IM \, z|}\right)^N L(dz) \leq C_{N}  \|\psi\|_{\cG^{m'},N+3},
$$
and for any $N,p\in \bN$,
$$
	|\partial_y^p \tilde{\psi}(z) | \leq C_{N,p}\frac{|\IM \, z|^N}{
	(1+|z|)^{p-m'}} \|\psi\|_{\cG^{m'},N+p+2}.
$$
Above the constants  $C_{N},C_{N,p}>0$ depend on $N$ and on $N,p$  respectively
(and on the construction and on $m'$), but not on $\psi$.
Moreover, we have
$$
\forall k\in \bN_0,\qquad 
\partial_x^k \tilde \psi|_\bR =(-i\partial_y)^k \tilde \psi|_\bR= \partial_x^k  \psi,
$$ 	
and
$$
\supp (\tilde \psi)\subset 
\{x+iy\in \bC \,| \, {\rm dist}( x,  \supp(\psi))\leq 1 \text{ and } |y| \leq 10(1+|x|)\}.
$$ 
\end{proposition}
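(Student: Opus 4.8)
The goal is to construct an almost analytic extension of a function $\psi \in \cG^{m'}(\bR)$ with $m' < -1$ by the standard Mellin/Fourier-type truncation procedure, and then verify the listed estimates. The natural construction is to set
$$
\tilde\psi(x+iy) := \chi_1(x) \, \sum_{k=0}^{?} \frac{(iy)^k}{k!}\psi^{(k)}(x) \, \chi\!\left(\frac{y}{1+|x|}\right),
$$
but since $\psi$ is only $C^\infty$ (not real-analytic) one truncates the Taylor series and lets the cutoff in $y$ depend on $x$; more precisely, the classical choice (as in Davies or Dimassi--Sjöstrand) is $\tilde\psi(x+iy) = \sum_{k=0}^{N_0} \frac{(iy)^k}{k!}\psi^{(k)}(x) \, \chi\!\big(\tfrac{\lambda y}{\langle x\rangle}\big)$ with $\langle x\rangle = (1+|x|^2)^{1/2}$, $\chi \in C_c^\infty$ equal to $1$ near $0$, supported in $[-1,1]$, and $\lambda$ a fixed large constant so that $|y| \le 10(1+|x|)$ on the support. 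Here, because we want \emph{all} the estimates for \emph{all} $N \in \bN_0$ with a single extension, I would instead let the truncation order of the Taylor sum itself depend on $y$ via a dyadic/partition-of-unity scheme in the variable $y/\langle x\rangle$: on the dyadic annulus $|y| \sim 2^{-j}\langle x\rangle$ use the Taylor polynomial of order $\sim j$. This is the standard way to get an extension that is almost analytic to infinite order. I would also multiply by a cutoff $\chi_1$ in $x$ to enforce the support condition ${\rm dist}(x,\supp\psi)\le 1$.

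\textbf{Key steps, in order.} First, fix the cutoff functions: $\chi \in C_c^\infty(\bR)$ with $\chi \equiv 1$ on $[-1/2,1/2]$, $\supp\chi \subset [-1,1]$, and build a dyadic partition $\sum_{j\ge 0}\varphi_j(t) = \chi(t)$ on $|t|\le 1$, $\supp\varphi_j \subset \{2^{-j-1}\le |t|\le 2^{-j+1}\}$; also fix $\chi_1\in C^\infty(\bR)$, $\chi_1\equiv 1$ on a $1/2$-neighbourhood of $\supp\psi$, supported in a $1$-neighbourhood. Define
$$
\tilde\psi(x+iy):=\chi_1(x)\sum_{j\ge 0}\varphi_j\!\Big(\tfrac{y}{\langle x\rangle}\Big)\sum_{k=0}^{j}\frac{(iy)^k}{k!}\psi^{(k)}(x).
$$
Second, compute $\bar\partial\tilde\psi = \tfrac12(\partial_x + i\partial_y)\tilde\psi$: the key cancellation is that $\partial_x$ applied to the Taylor polynomial plus $i\partial_y$ applied to it telescopes, leaving only the top-order term $\tfrac{(iy)^j}{j!}\psi^{(j+1)}(x)$ on each annulus, plus the terms where the derivative hits the cutoffs $\varphi_j(y/\langle x\rangle)$ and $\chi_1(x)$; the $\chi_1$ terms vanish away from $\{{\rm dist}(x,\supp\psi)\in[1/2,1]\}$ where one still has full decay. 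Third, estimate: on the annulus $|y|\sim 2^{-j}\langle x\rangle$ one has $|(iy)^j/j!|\,|\psi^{(j+1)}(x)| \lesssim (2^{-j}\langle x\rangle)^j \langle x\rangle^{m'-j-1}/j! \lesssim \langle x\rangle^{m'-1} 2^{-j^2}\cdot(\text{combinatorial factor})$ — this is where the choice of truncation order $\sim j$ on the $j$-th annulus gives super-exponential smallness in $j$, hence $|\bar\partial\tilde\psi(x+iy)| \lesssim \langle x\rangle^{m'-1}(|y|/\langle x\rangle)^{N}$ for every $N$, uniformly, with constant a fixed $\cG^{m'}$-seminorm of $\psi$ (the $N+3$ comes from: $N$ derivatives in the estimate, plus up to $3$ extra from $\partial_x$ hitting $\psi^{(k)}$ and the $\langle x\rangle$ factors). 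Integrating $|\bar\partial\tilde\psi(z)|(\langle z\rangle/|\IM z|)^N$ over $\bC$: the $|\IM z|^{-N}$ is absorbed by $(|y|/\langle x\rangle)^{N+?}$ and then one integrates $\langle x\rangle^{m'-1}$ over $x$ (finite since $m'<-1$) and $|y|$ over $[0,\langle x\rangle]$ (contributes a power of $\langle x\rangle$ absorbed by taking the extra exponent large). Fourth, the estimate on $\partial_y^p\tilde\psi$ is a direct differentiation of the defining sum annulus-by-annulus, with the same super-exponential gain in $j$ controlling convergence. Fifth, the boundary values: on $\bR$ only the $j$-terms with $\varphi_j(0)\ne 0$, i.e. $j=0$, survive and $\tilde\psi(x) = \chi_1(x)\psi(x) = \psi(x)$; differentiating in $x$ or in $y$ and restricting, the telescoping in step two shows $\partial_x^k\tilde\psi|_\bR = (-i\partial_y)^k\tilde\psi|_\bR = \psi^{(k)}$. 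Finally, the support statement is immediate from $\supp\chi_1$ and $\supp\varphi_j \subset \{|t|\le 1\}$ with an appropriate fixed dilation constant (the $10$) built into the cutoff argument.

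\textbf{Main obstacle.} The genuinely delicate point is bookkeeping the $j$-dependence so that the sum over $j$ converges \emph{and} the constants in front are a single fixed seminorm $\|\psi\|_{\cG^{m'},N+3}$ (respectively $\|\psi\|_{\cG^{m'},N+p+2}$) rather than something growing with $N$ or $j$. This requires being careful that on the $j$-th annulus one only ever uses $\psi^{(k)}$ for $k \le j+1$ and that the gain $(|y|/\langle x\rangle)^j \le 2^{-j}$ beats the combinatorial $1/j!$ versus $j^k$ factors; the clean way is to note $(2^{-j}\langle x\rangle)^k/k! \le \langle x\rangle^k \cdot 2^{-jk}/k!$ and that $\sum_{k\le j} 2^{-jk}\langle x\rangle^{k+m'-1-k} = \langle x\rangle^{m'-1}\sum_{k\le j}2^{-jk}$, so only finitely many $\cG^{m'}$-seminorms enter. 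Everything else — the telescoping identity for $\bar\partial$ of a truncated Taylor polynomial, and the integrations — is routine. I would also remark that by density of $C_c^\infty(\bR)$ in $\cG^{m'}(\bR)$ (and since for $\psi\in C_c^\infty$ the usual compactly supported extension of Lemma~\ref{lem_HSCcinfty} is recovered), the Helffer--Sjöstrand formula \eqref{eq_HS} together with the bound \eqref{eq_HSbound} passes to the limit, which is the content needed to finish the deduction of Lemma~\ref{lem_tildepsicGm'} from Proposition~\ref{propApp_tildepsicGm'}.
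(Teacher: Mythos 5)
Your proposal takes a genuinely different route from the paper, and unfortunately the route has a gap. The paper builds on the Jensen--Nakamura construction: for $\psi\in C_c^\infty$ one sets $\tilde\psi(z) = \int_\bR e^{2\pi iz\xi}\chi(y\xi)\widehat\psi(\xi)\,d\xi$ (a Fourier-side definition with a cutoff at $|\xi|\sim 1/|y|$), multiplies by real-space cutoffs to control supports, and then handles general $\psi\in\cG^{m'}(\bR)$ by the Littlewood--Paley decomposition $\psi = \sum_j 2^{jm'}\psi_j(2^{-j}\cdot)$ with $\psi_j\in C_c^\infty(1/2,2)$ uniformly, setting $\tilde\psi(z) = \sum_j 2^{jm'}\phi_j(2^{-j}z)$. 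You instead propose a physical-space Taylor construction with the truncation order growing like $j$ on the dyadic annulus $|y|\sim 2^{-j}\langle x\rangle$.

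The gap is in the seminorm constant. The estimate $\int_\bC |\bar\partial\tilde\psi(z)|\left(\frac{1+|z|}{|\IM z|}\right)^N L(dz)\le C_N\|\psi\|_{\cG^{m'},N+3}$ requires that, for each fixed $N$, the construction is controlled by a single finite-order $\cG^{m'}$-seminorm. But on your $j$-th annulus the surviving top term of $\bar\partial\tilde\psi$ is $\frac{(iy)^j}{j!}\psi^{(j+1)}(x)$, and to estimate $|\psi^{(j+1)}(x)|\lesssim(1+|x|)^{m'-j-1}$ you must invoke the seminorm $\|\psi\|_{\cG^{m'},j+1}$. Since $j$ ranges over $\bN_0$, arbitrarily high seminorms enter. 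Your claim that ``only finitely many $\cG^{m'}$-seminorms enter'' holds on a single fixed annulus, but not for the whole integral, which sums over all annuli. The super-exponential factor $2^{-j^2}/j!$ cannot compensate: by Definition \ref{def_cGm} the constants $C_{k,\psi}$ in the bound $|\psi^{(k)}(\lambda)|\le C_{k,\psi}(1+|\lambda|)^{m'-k}$ may grow arbitrarily fast in $k$, so $\sum_j 2^{-j^2}\|\psi\|_{\cG^{m'},j+1}$ need not be bounded by any fixed $\|\psi\|_{\cG^{m'},N+3}$, and the estimate $C_N\|\psi\|_{\cG^{m'},N+3}$ fails. (Equivalently: a variable-order Taylor extension is determined by all derivatives of $\psi$; no continuity statement from a fixed-order seminorm into any Banach space can hold for it.) The Jensen--Nakamura formula sidesteps this precisely because $\bar\partial\tilde\psi(z)=\int e^{2\pi ix\xi}\,\xi\,\chi'(y\xi)\,e^{-2\pi y\xi}\,\widehat\psi(\xi)\,d\xi$, where $\chi'(y\xi)$ is supported in $|y\xi|\in[2,4]$: one can pull out $(y\xi)^N$ for free and pay only $\int|\xi|^{N+1}|\widehat\psi(\xi)|\,d\xi\lesssim \max_{k\le N+3}\|\psi^{(k)}\|_{L^\infty}$ --- the infinite-order smallness is encoded in the Fourier-support condition, not in ever-higher derivatives. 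To fix your argument you would effectively have to replace the variable-order Taylor polynomial by this Fourier-truncated expression, i.e.\ reproduce the paper's construction.
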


The proof Lemma~\ref{lem_tildepsicGm'} will be complete once we show Proposition \ref{propApp_tildepsicGm'}.
The proof and construction of the latter	is standard, 
although the particular estimates for the derivatives in $y$ may not always be given explicitely; so, 
we include them here. 
It is based on the ideas of Jensen and Nakamura in \cite{MR1275402}. 
The first author would  like to take this opportunity to correct a statement that has appeared in her paper \cite[Section 3.1.1]{mydocumenta}.

The proof of Proposition \ref{propApp_tildepsicGm'} will rely on the following auxiliary results. 
\begin{lemma}
	\label{lem_AAExt1}
We fix a function $\chi\in C_c^\infty(\bR : [0,1])$ such that $\chi=1$ on $[-2,2]$ and $\chi=0$ outside $[-4,4]$.
Let $\psi\in C_c^\infty(-2,2)$.
We set
$$
\tilde \psi(z)
:=
\int_\bR e^{2\pi i z\xi} \chi(y\xi) \widehat \psi(\xi) d\xi,
\quad z=x+iy.
$$
This defines a smooth function $\tilde \psi:\bC\to \bC$. It  
is an almost analytic extension  of $\psi$
that  satisfies 
$$
\partial_x^k \tilde \psi|_\bR =(-i\partial_y)^k \tilde \psi|_\bR= \partial_x^k  \psi \quad \mbox{for any}\ k\in \bN_0,
$$ 	
and
for any $N\in\bN_0$
$$
\big|\bar\partial \tilde \psi(z)  \big|  \leq  C_{N}  |y|^N  \max_{k=0,\ldots, N+3 } \sup_{x\in\bR} |\psi^{(k)}(x)|,
$$
and for any $N,p\in \bN$
$$
|\partial_y^p \tilde\psi(z) | \leq C_{N,p}
|y| ^N
\max_{k=0,\ldots, N+p+2 } \sup_{x\in\bR} |\psi^{(k)}(x)|.
$$
This last bounds also holds for $N\in \bN$ and 
 $p=0$ under the additional assumption that  
 $|y|\geq 1$ or ${\rm dist} (x,\supp\, \psi)\geq \eps_0$ for some  fixed $\eps_0>0$. 
 Above, the constants $C_{N}$ and  $C_{N,p}$  are  independent of the function $\psi$.
\end{lemma}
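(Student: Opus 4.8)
The statement to prove is Lemma~\ref{lem_AAExt1}, concerning the explicit construction $\tilde\psi(z)=\int_\bR e^{2\pi i z\xi}\chi(y\xi)\widehat\psi(\xi)\,d\xi$ for $\psi\in C_c^\infty(-2,2)$. The plan is to carry out the standard Jensen--Nakamura estimates, splitting into the verification of the boundary behaviour, the $\bar\partial$-bound, and the $\partial_y^p$-bounds, all by differentiating under the integral sign and controlling the resulting $\xi$-integrals using decay of $\widehat\psi$.

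First I would check that $\tilde\psi$ is well defined and smooth: since $\psi\in C_c^\infty$, $\widehat\psi$ is Schwartz, so the integrand and all its $z$-derivatives are dominated by integrable functions of $\xi$ uniformly for $z$ in compact sets; differentiation under the integral is then legitimate. Restricting to $y=0$ gives $\chi(0)=1$, so $\tilde\psi(x)=\int e^{2\pi i x\xi}\widehat\psi(\xi)\,d\xi=\psi(x)$ by Fourier inversion; differentiating $k$ times in $x$ brings down $(2\pi i\xi)^k$ and likewise gives $\psi^{(k)}$, while differentiating in $y$ produces a term with $\partial_y[\chi(y\xi)]=\xi\chi'(y\xi)$ which vanishes at $y=0$ because $\chi'\equiv0$ near $0$, plus the term $2\pi i\xi\,e^{2\pi i z\xi}\chi(y\xi)$; iterating and evaluating at $y=0$ yields $(-i\partial_y)^k\tilde\psi|_\bR=\partial_x^k\psi$. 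For the $\bar\partial$-estimate, I compute $\bar\partial\tilde\psi=\tfrac12(\partial_x+i\partial_y)\tilde\psi$; the $e^{2\pi i z\xi}$-dependence is holomorphic, so the only contribution comes from $\partial_y[\chi(y\xi)]=\xi\chi'(y\xi)$, giving $\bar\partial\tilde\psi(z)=\tfrac i2\int e^{2\pi i z\xi}\,\xi\chi'(y\xi)\widehat\psi(\xi)\,d\xi$. Since $\chi'(y\xi)$ is supported where $2\le|y\xi|\le4$, i.e.\ $|\xi|\asymp|y|^{-1}$, and $|\widehat\psi(\xi)|\le C_M(1+|\xi|)^{-M}\max_{k\le M}\sup|\psi^{(k)}|$ for every $M$, I bound $|\bar\partial\tilde\psi(z)|\lesssim \int_{|\xi|\asymp|y|^{-1}}|\xi|(1+|\xi|)^{-(N+3)}|d\xi|\cdot\max_{k\le N+3}\sup|\psi^{(k)}|$. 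On that dyadic region $|\xi|\asymp|y|^{-1}$ the integral has length $\asymp|y|^{-1}$ and integrand $\asymp|y|^{N+2}$, yielding the claimed $|y|^N$ bound (also using $|e^{2\pi i z\xi}|=e^{-2\pi y\xi}\lesssim1$ on $\supp\chi'(y\xi)$).

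For the $\partial_y^p$-bounds with $p\ge1$, I differentiate under the integral: each $\partial_y$ hits either $e^{2\pi i z\xi}$, producing a factor $2\pi i\xi$, or $\chi(y\xi)$, producing $\xi\,\chi^{(j)}(y\xi)$ for some $j\ge1$. After $p$ differentiations one gets a finite sum of terms $\int e^{2\pi i z\xi}\,\xi^{p}\,(\text{product of }\chi^{(j_i)}(y\xi))\,\widehat\psi(\xi)\,d\xi$ where at least one factor $\chi^{(j)}$ with $j\ge1$ appears once a genuine $y$-derivative has landed on $\chi$, OR (if all $p$ derivatives land on the exponential) the single term $(2\pi i)^p\int e^{2\pi i z\xi}\xi^p\chi(y\xi)\widehat\psi(\xi)\,d\xi$. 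In every term in which a $\chi^{(j)}$, $j\ge1$, is present, the $\xi$-support is again confined to $|\xi|\asymp|y|^{-1}$ and the previous dyadic estimate gives a bound $\lesssim|y|^{N}$ using $|\widehat\psi(\xi)|\lesssim(1+|\xi|)^{-(N+p+2)}\max_{k\le N+p+2}\sup|\psi^{(k)}|$ (the extra $\xi^p$ costs $|y|^{-p}$, absorbed by choosing the decay exponent larger by $p$). The remaining all-on-exponential term $\int e^{2\pi i z\xi}\xi^p\chi(y\xi)\widehat\psi(\xi)\,d\xi$ has no smallness in $y$ in general, which is precisely why that case is excluded for $p\ge1$ — wait: here $p\ge1$ is allowed, so I must handle it. For $p\ge1$ this term is $\partial_x^p$ of $\int e^{2\pi i z\xi}\chi(y\xi)\widehat\psi(\xi)\,d\xi$ up to constants, i.e.\ essentially $\partial_x^p\tilde\psi$; but the lemma only claims the $|y|^N$ decay for $\partial_y^p$. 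The resolution: write $\xi^p\chi(y\xi)=\xi^p[\chi(y\xi)-1]+\xi^p$; the $\xi^p$-piece integrates to a constant times $\psi^{(p)}(x)$ \emph{independent of $y$}, hence is killed — no, it is not killed. I would instead integrate by parts in $\xi$: $\xi^p\widehat\psi(\xi)=\widehat{(\text{const}\,\psi^{(p)})}(\xi)$ is still Schwartz, reducing to the $p=0$ shape, and then invoke that for $p=0$ with $N\ge1$ the bound holds \emph{under} the hypothesis $|y|\ge1$ or $\mathrm{dist}(x,\supp\psi)\ge\eps_0$; the general statement for $p\ge1$ without that hypothesis must therefore follow by a slightly different route, using that $\partial_y^p$ (as opposed to $\partial_x^p$) always produces at least one factor of $[\chi(y\xi)$ differentiated$]$ \emph{once we also use the relation} $\partial_y\leftrightarrow i\partial_x$ valid modulo $\bar\partial$: namely $\partial_y\tilde\psi = i\partial_x\tilde\psi - 2i\,\bar\partial\tilde\psi$, and iterating, $\partial_y^p\tilde\psi$ equals $i^p\partial_x^p\tilde\psi$ plus lower-order terms each carrying a $\bar\partial\tilde\psi$ (hence a $\chi'$); but $\partial_x^p\tilde\psi$ is the almost-analytic extension of $\psi^{(p)}$, which has the same structure, so by induction on $N$ combined with the $\bar\partial$ bound already proved, the $|y|^N$ decay propagates. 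This inductive bootstrapping — expressing $\partial_y$-derivatives through $\partial_x$-derivatives plus $\bar\partial$-terms, the latter already controlled — is the cleanest way and is the only genuinely delicate point; everything else is bookkeeping of dyadic $\xi$-integrals.

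\textbf{Main obstacle.} The routine parts (smoothness, boundary identities, the $\bar\partial$-estimate) are immediate from differentiation under the integral and the support of $\chi'$. The one place requiring care is the $\partial_y^p$ estimate \emph{without} the auxiliary hypothesis $|y|\ge1$ or $\mathrm{dist}(x,\supp\psi)\ge\eps_0$: the naive term-by-term differentiation leaves one term (all derivatives on the exponential) with no $y$-smallness, and one must either integrate by parts in $\xi$ to shift $\xi^p$ onto $\psi$ and then argue inductively, or use the identity $\partial_y=i\partial_x-2i\bar\partial$ to trade $y$-derivatives for $x$-derivatives modulo the already-estimated $\bar\partial$-term; the induction on $N$ closes because $\partial_x^p\tilde\psi$ is itself an almost analytic extension of $\psi^{(p)}$ of exactly the same form. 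I would present it via that identity, which keeps the constants transparent and manifestly independent of $\psi$.
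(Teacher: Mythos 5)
Your treatment of the routine parts (smoothness, the boundary identities $\partial_x^k\tilde\psi|_\bR=(-i\partial_y)^k\tilde\psi|_\bR=\psi^{(k)}$, and the $\bar\partial$-estimate) matches the paper's proof: the paper writes $e^{2\pi iz\xi}\chi(y\xi)=e^{2\pi ix\xi}\chi_0(y\xi)$ with $\chi_0(t)=e^{-2\pi t}\chi(t)$, $\eta(t)=e^{-2\pi t}\chi'(t)$, and uses exactly your observation that $\chi'(y\xi)$ forces $2\le|y\xi|\le4$, hence $1\le(|y||\xi|/2)^N$ and the bound by $|y|^N\int|\xi|^{N+1}|\widehat\psi|\,d\xi$; for the restricted $p=0$ case it integrates by parts in $\xi$ against $e^{2\pi i(z-w)\xi}$, close in spirit to your remark about needing $|y|\ge1$ or ${\rm dist}(x,\supp\psi)\ge\eps_0$.

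The obstacle you isolate in the $\partial_y^p$ bound is, however, genuine, and your proposed repair does not close it. Writing $\partial_y=i\partial_x-2i\bar\partial$ leaves the leading term $i^p\partial_x^p\tilde\psi=i^p\widetilde{\psi^{(p)}}$, which restricts to $i^p\psi^{(p)}$ on the real axis; no induction on $N$ and no integration by parts in $\xi$ can make this $O(|y|^N)$ near points of $\supp\psi^{(p)}$. Indeed the displayed bound for $p\ge1$, $N\ge1$ is incompatible with the identity $(-i\partial_y)^p\tilde\psi|_\bR=\psi^{(p)}$ asserted in the same lemma: evaluating at $y=0$ it would force $\psi^{(p)}\equiv0$. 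The paper's own justification of this step (``the same argument gives the second estimate when $p>0$ since $\chi_0^{(p)}$ has the same support property as $\eta$'') is erroneous for precisely the reason you found: $\chi_0^{(p)}(t)$ contains the term $(-2\pi)^pe^{-2\pi t}\chi(t)$, which is nonzero on all of $[-2,2]$ and is your ``all derivatives on the exponential'' term. What is true, and what is actually used downstream (hypothesis (3) in Lemma \ref{lem_taukpsi} and the Taylor remainder bounds in its proof), is the estimate without the factor $|y|^N$, namely $|\partial_y^p\tilde\psi(z)|\le C_p\max_{k\le p+2}\sup|\psi^{(k)}|$; the $|y|^N$-decay for $\partial_y^p$ can only be recovered under the same restriction as in the $p=0$ case. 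So the correct move here is not to search for a cleverer proof of the stated inequality but to record it as an erratum and prove (and propagate through Corollary \ref{cor_lem_AAExt1} and Proposition \ref{propApp_tildepsicGm'}) the corrected version, which your dyadic estimates already yield for every term carrying at least one factor $\chi^{(j)}(y\xi)$ with $j\ge1$, the remaining term being bounded directly by $\int|\xi|^p|\widehat\psi(\xi)|\,d\xi$.
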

\begin{proof} 
By the properties of the Fourier transform, 
we check readily that $\tilde \psi\in C^\infty(\bC)$ is an almost analytic extension of $\psi$ whose derivatives satisfy the  equalities of the statement. 
Let us write 
$$
\chi_0(y_1) := e^{-2\pi y_1}\chi(y_1)
\quad\mbox{and}\quad
\eta(y_1) := e^{-2\pi y_1}\chi'(y_1).
$$
 We have
\begin{align*}
\bar \partial \tilde \psi(z)
&:=
\int_\bR e^{2\pi i z\xi}\xi  \chi'(y\xi) \,\widehat \psi(\xi) d\xi
= 	\int_\bR e^{2\pi i x\xi}  \eta(y\xi)\, \xi\widehat \psi(\xi) d\xi,
\\
\tilde\psi(z) 
&=	
\int_\bR e^{2\pi i x\xi} \chi_0(y\xi) \,\widehat \psi(\xi) d\xi,
\\
\partial_y^p
\tilde\psi(z) 
&=	
\int_\bR e^{2\pi i x\xi} \chi_0^{(p)}(y\xi)\, \xi^p\widehat \psi(\xi) d\xi.
\end{align*}
As $\eta(y_1)$ is  supported in $\{|y_1|\in [2,4]\}$,  we have
$$
|\bar \partial \tilde \psi(z)|
\leq
|y|^N \int_\bR |  \xi^{N+1}\widehat \psi(\xi)| d\xi.
$$
Since $\psi$ is compactly supported, we have
$$
\int_\bR |  \xi^{N_1}\widehat \psi(\xi)| d\xi
\lesssim 
\max_{k=N_1,N_1+2} \sup_{\xi\in \bR}|\xi|^k |\widehat \psi(\xi)|
\lesssim 
\max_{k=N_1,N_1+2} \|\psi^{(k)}\|_{L^1(\bR)}
\lesssim 
\max_{k=N_1,N_1+2} \|\psi^{(k)}\|_{L^\infty(\bR)},
$$
yielding the first estimate. 
The same argument gives the second estimate when $p>0$ since $\chi_0^{(p)}$ has the same support property as $\eta$.
For the case $p=0$, 
 we  develop the Fourier transform:
\begin{align*}
\tilde\psi(z) &=
\int_\bR  e^{2\pi i (z-w)\xi} 
	\chi_0(y\xi)
	\psi(w) dw d\xi
	\\&=y^{N} \int_\bR  e^{2\pi i (z-w)\xi} 
	\chi_0^{(N)}(y\xi)
	\frac{\psi(w)}{(-2\pi i (z-w))^{N}} dw d\xi,
\end{align*}
after $N$ integrations by parts in $\xi$.
Since $\chi^{(N)}(y_1)$ is supported in $\{|y_1|\in [2,4]\}$,
we  readily estimate the integrand  as long as $|z-w|$ is bounded below away from zero. 
This is the case  when $|y|\geq 1$ or 
when ${\rm dist} (x,\supp\, \psi)\geq \eps_0$. 
\end{proof}

\begin{corollary}
\label{cor_lem_AAExt1}
We continue with the hypothesis and notation of Lemma \ref{lem_AAExt1}.
We also fix the following:
\begin{itemize}
	\item  a function $\chi_1\in C_c^\infty(\bR : [0,1]$ such that $\chi_1=1$ on $[-1,1]$ and $\chi_1=0$ outside $[-2,2]$, 
	\item an interval $I\subset [-2,2]$ and a function $\chi_2\in C_c^\infty(\bR : [0,1])$ such that $\chi_2(x)=1$ if ${\rm dist} (x,I) \leq \eps_1$ and $\chi_2=0$ outside $I':=\{x\in \bR , {\rm dist} (x,I) \leq 2\eps_1\}$ with $\eps_1\in (0,0.1)$ fixed.  
\end{itemize} 
We set
$$
\phi(z) :=\chi_1(y) \chi_2(x) \tilde \psi(z), \qquad z=x+iy.
$$
This defines a smooth function $\phi:\bC\to \bC$ supported in $I'\times [-2,2]$. 
Assuming that $\supp\, \psi\subset I$, 
$\phi$
is an almost analytic extension  of $\psi$ that  satisfies for any  $k\in \bN_0$ $$
\partial_x^k \phi|_\bR =(-i\partial_y)^k \phi|_\bR= \partial_x^k  \psi.
$$ 	
Moreover,  for any $N\in\bN_0$
$$
\big|\bar\partial \phi(z)  \big|  \leq  C_{N}  |y|^N  \max_{k=0,\ldots, N+3 } \sup_{x\in\bR} |\psi^{(k)}(x)|,
$$
and for any $N,p\in \bN$
$$
	|\partial_y^p \phi(z) | \leq C_{N,p}
|y| ^N
\max_{k=0,\ldots, N+p+2} \sup_{x\in\bR} |\psi^{(k)}(x)|.
$$
Above, the constants $C_{N}$ and  $C_{N,p}$  are  independent of the function $\psi$. 
\end{corollary}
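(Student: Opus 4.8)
The plan is to exploit that $\phi$ is just the extension $\tilde\psi$ of Lemma~\ref{lem_AAExt1} multiplied by the product cutoff $\chi_1(y)\chi_2(x)$, so the whole statement reduces to checking that this localisation preserves smoothness, the restriction-to-$\bR$ identities, and the two families of estimates. Smoothness and the support statement $\supp\phi\subset I'\times[-2,2]$ are immediate from $\supp\chi_1\subset[-2,2]$ and $\supp\chi_2\subset I'$.

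For the identities on $\bR$, the point is that $\chi_1\equiv1$ on $[-1,1]$, so $\partial_y^j\chi_1(0)=0$ for all $j\ge1$; Leibniz then gives $(-i\partial_y)^k\phi|_\bR=\chi_2(x)(-i\partial_y)^k\tilde\psi|_\bR=\chi_2(x)\psi^{(k)}(x)$ and $\partial_x^k\phi|_\bR=\chi_1(0)\,\partial_x^k\!\big(\chi_2\tilde\psi|_\bR\big)=\partial_x^k\!\big(\chi_2\psi\big)$, using Lemma~\ref{lem_AAExt1}. It then suffices to note that $\chi_2\psi$ and $\psi$ agree to infinite order on $\bR$: since $\supp\psi\subset I$ and $\chi_2\equiv1$ on the $\eps_1$-neighbourhood of $I$, the two functions coincide near $\supp\psi$, while away from $\supp\psi$ both vanish identically. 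This yields $\partial_x^k\phi|_\bR=(-i\partial_y)^k\phi|_\bR=\partial_x^k\psi$, and in particular that $\phi$ is an almost analytic extension of $\psi$.

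For the estimates I would expand by Leibniz
\begin{equation*}
\bar\partial\phi = \chi_1(y)\chi_2(x)\,\bar\partial\tilde\psi \ + \ \tfrac12\,\chi_1(y)\chi_2'(x)\,\tilde\psi \ + \ \tfrac i2\,\chi_1'(y)\chi_2(x)\,\tilde\psi ,
\end{equation*}
and, for $p\ge1$, $\partial_y^p\phi=\sum_{j=0}^p\binom pj\chi_1^{(j)}(y)\chi_2(x)\,\partial_y^{p-j}\tilde\psi$. In each case the term carrying no derivative of the cutoffs (the first term above, resp.\ the $j=0$ term) is pointwise dominated by $|\bar\partial\tilde\psi(z)|$, resp.\ $|\partial_y^p\tilde\psi(z)|$, hence satisfies the claimed bound directly by Lemma~\ref{lem_AAExt1}. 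The remaining "commutator" terms are supported either where $\chi_2'(x)\neq0$, a region with $\mathrm{dist}(x,\supp\psi)\geq\eps_1$ (since $\chi_2\equiv1$ near $\supp\psi$), or where some $\chi_1^{(j)}(y)\neq0$ with $j\ge1$, i.e.\ $|y|\in[1,2]$; on either region the flexible $p=0$ bound of Lemma~\ref{lem_AAExt1} (with $\eps_0=\eps_1$, resp.\ via $|y|\ge1$) gives $|\tilde\psi(z)|\le C_N|y|^N\max_{k\le N+2}\sup_\bR|\psi^{(k)}|$ for any $N$, and similarly for $\partial_y^{p-j}\tilde\psi$ when $p-j\ge1$. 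Since $|y|^N\gtrsim1$ on those regions, these contributions are of the stated form, and summing finitely many of them produces the constants $C_N$, $C_{N,p}$ depending only on $N$, $p$ (and on the fixed cutoffs), not on $\psi$.

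I do not anticipate any real obstacle: this is a routine "localising away from the real axis and away from $\supp\psi$ is harmless" argument. The only care needed is bookkeeping — matching the support of each derivative of $\chi_1$, $\chi_2$ to a region where one of the auxiliary bounds of Lemma~\ref{lem_AAExt1} is available, and verifying the restriction-to-$\bR$ identities at points outside $\supp\psi$.
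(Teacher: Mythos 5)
Your proposal is correct and follows essentially the same route as the paper: the paper's proof consists precisely of the Leibniz expansions of $\bar\partial\phi$ and $\partial_y^p\phi$ that you write down, with the cutoff-derivative terms controlled by the flexible $p=0$ bound of Lemma~\ref{lem_AAExt1} on the regions $|y|\ge 1$ (for $\chi_1^{(j)}$, $j\ge1$) and $\mathrm{dist}(x,\supp\psi)\ge\eps_1$ (for $\chi_2'$), exactly as you do. Your only superfluous remark is ``$|y|^N\gtrsim 1$ on those regions'' (false near $y=0$ on the $\chi_2'$ region, but unneeded since the flexible bound already produces the factor $|y|^N$); otherwise the details you supply, including the restriction-to-$\bR$ identities via $\chi_1^{(j)}(0)=0$ and $\chi_2\equiv1$ near $\supp\psi$, are just the bookkeeping the paper leaves implicit.
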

\begin{proof}
The statement follows from Lemma \ref{lem_AAExt1} and the computations:
\begin{align*}
	\bar \partial \phi (z) 
	&= 
\chi_1 (y)\chi_2(x) \bar \partial \tilde \psi
+\frac 12 \chi'_1(y) \chi_2(x)\tilde \psi(z)
 +\frac i2 \chi_1 (y)\chi_2'(x)\tilde \psi(z) ,
 \\
 \partial_y^p \phi(z) 
 &=
 \sum_{p_1=0}^p \binom p {p_1} \chi_1(x) \chi_2^{(p_1)}(y) \partial_y^{p-p_1} \tilde \psi(z). 
\end{align*}	
\end{proof}

We are now ready to prove Proposition \ref{propApp_tildepsicGm'}:

\begin{proof}[Proof of Proposition \ref{propApp_tildepsicGm'}]
Applying  Corollary \ref{cor_lem_AAExt1}
with $I=[-2,2]$ gives the property for a function $\psi$ with compact support in $[-3/2,3/2]$. Hence, 
 we may assume that $\psi$ is supported outside $[-1,1]$.
Let us show the case of $\supp\, \psi\subset [1,+\infty)$, the case of $(-\infty,-1]$ following readily after sign modifications. 
 
We fix a dyadic decomposition  $(\eta_j)$ of $[1/2,+\infty)$, that is, $\eta_0\in C_c^\infty (\frac 12,2) $ with 
	$$
	\sum_{j=-1}^\infty \eta_j(\lambda)=1 \ \mbox{for all}\ \lambda\geq 1/2, \quad\mbox{where}\quad \eta_j(\lambda):=\eta_0(2^{-j}\lambda).
	$$
	As $\supp\, \psi\subset [1,+\infty)$, 
	we may write for any $\lambda\in \bR$
	$$
	\psi(\lambda) = \sum_{j=0}^\infty 2^{jm'} \psi_j (2^{-j}\lambda), 
	\quad\mbox{where}\quad 
	 \psi_j (\lambda)\coloneqq 2^{-jm'} \psi(2^j \lambda) \eta_0(\lambda).
	$$
	We observe that $\psi_j \in C_c^\infty (\frac 12,2)$ satisfies for all $j\in\bN_0$ and any $k\in \bN_0$
	\begin{equation}
		\label{eq_estphijpsi}
		\sup_{\lambda\in \bR} |\phi_j ^{(k)}(\lambda)| \lesssim_k \|\psi\|_{\cG^{m'},k},
	\end{equation}
	 with an implicit constant independent of $j$. 
	 
Let  $\phi_j$ be the almost analytic extension   constructed in Corollary \ref{cor_lem_AAExt1} for $I=[1/2,2]$ and $\eps_1=0,001$.
We set at least formally:
		\begin{equation}\label{EQ:Almost_analytic_lemma_2_1}
		\tilde{\psi}(z) :=  \sum_{j=0}^\infty 2^{jm'} \phi_j (2^{-j} z).
	\end{equation}
This sum is in fact locally finite. 
Indeed, let us write   $z=x+iy$. 
Each term in the sum in \eqref{EQ:Almost_analytic_lemma_2_1}
 vanishes when  $x<1/4$, so we may assume   $x\geq 1/4 $. We  set $j_0\in \bN_0$ such that  
$x\sim 2^{j_0}$, in the sense that  $j_0\in \bN_0$ is the 
largest non-negative integer smaller than $\ln x / \ln 2$ when $\ln x / \ln 2>0$, 
and $j_0=0$ otherwise.
We have 
$$
		\tilde{\psi}(z) := 
		\sum_{j=\max(0,j_0-1)}^{j_0+1} \phi_j (2^{-j} z).
		$$
This finite sum  vanishes when  $|y| > 2^{j_0+2}$, implying  the stated support  property  for $\tilde \psi$. 

As the sum over $j\in \bN_0$ is locally finite, we check readily that the function $\tilde \psi$ is smooth.
It follows from
 Corollary \ref{cor_lem_AAExt1}  that it is an almost analytic extension of $\psi$ with derivatives satisfying the  stated equalities and supported in $\{\RE\, z\geq 1/4\}$.
We also have with $\RE\, z = x \geq  1/4$ and $j_0$ as above, 
\begin{align*}
\big|\partial_y^p \tilde{\psi}(z) \big| 
&\leq \sum_{j=\max(0,j_0-1)}^{j_0+1} 2^{j(m'-p)} |(\partial_y^p \phi_j) (2^{-j} z)| 
\\
&\lesssim_{N,p} |y|^N
\sum_{j=\max(0,j_0-1)}^{j_0+1} 2^{j(m'-p)}   \max_{k=0,\ldots, N+p+2 } \sup_{x\in\bR} |\psi_j^{(k)}(x)| 
\\
&\lesssim_{N,p} \|\psi\|_{\cG^{m'},N+p+2}|y|^N  (1+|z|)^{m'-p},
\end{align*}
by \eqref{eq_estphijpsi} and
since $(1+|z|) \sim 2^{j_0}$.

It  remains to show the  stated integral estimate. 
We have for any $N\in\bN_0$
\begin{align*}
	&\int_{\bC} \big| \bar\partial \tilde \psi( z) \big| \left(\frac{1+|z|}{|y|}  \right)^N L(dz)
\leq    
\sum_{j=0}^\infty 2^{j(m'+1-N)} 
\int_\bC
\big| \bar\partial \phi_j( z) \big| 
\left(\frac{1+2^j |z|}{|y|}  \right)^{N}
  L(dz)\\
 &\qquad \lesssim_N  \sum_{j=0}^\infty 
 2^{j(m'+1)} 
\int_{|z|\leq 10}
 \frac{\big| \bar\partial \phi_j( z) \big|}{|y|^N}    L(dz)
 \\
 &\qquad \lesssim_N  \sum_{j=0}^\infty 2^{j(m'+1)}
  \max_{k=0,\ldots, N+3 } \sup_{x\in\bR} |\phi_j^{(k)}(x)| 
  \lesssim_N  \|\psi\|_{\cG^{m'},N+3}
  \sum_{j=0}^\infty 2^{j(m'+1)},
\end{align*}
having used the properties of $\phi_j$ 
from Corollary \ref{cor_lem_AAExt1} and in \eqref{eq_estphijpsi} . The last  sum is convergent since $m'<-1$. 
\end{proof}

\bibliographystyle{alpha.bst}

\bibliography{bibfile.bib}

\end{document}